\newtheorem{theorem}{Theorem}[section]
\newtheorem{lemma}[theorem]{Lemma}
\newtheorem{corollary}[theorem]{Corollary}
\theoremstyle{definition}
\newtheorem*{example}{Example}
\newtheorem*{remark}{Remark}
\numberwithin{equation}{section}
\mathchardef\hyphen="2D
\begin{document}
\allowdisplaybreaks
\title{Similarity of operators on $l^{p}$}
\author{March T.~Boedihardjo}
\address{Department of Mathematics, University of California, Los Angeles, CA 90095-1555}
\email{march@math.ucla.edu}
\keywords{$l^{p}$ space, Similarity, Voiculescu's theorem, Brown-Douglas-Fillmore theory}
\subjclass[2010]{47A65}
\begin{abstract}
For $1<p<\infty$, we prove (i) a version of Voiculescu's absorption theorem for operators on $l^{p}$, (ii) that $\mathrm{Ext}_{\sim,s}(\mathcal{A},K(l^{p}))$ is a group for certain Banach algebra $\mathcal{A}$, and (iii) homotopy invariance of $\mathrm{Ext}_{\sim,s}(\mathcal{A},K(l^{p}))^{-1}$ in $\mathcal{A}$ for separable Banach algebra $\mathcal{A}$ that is isomorphic to a subalgebra of $B(l^{p})$.
\end{abstract}
\maketitle
\tableofcontents
\section{Introduction}
Over the past 80 years, certain classical results about operators on Hilbert spaces have been extended to operators on more general Banach spaces. Mean ergodic theorem for reflexive Banach spaces is established \cite{Lorch} (for $p=2$, see \cite{Riesz}); commutators on $c_{0}$, $l^{p}$ and $L^{p}$, for $1\leq p\leq\infty$, are characterized \cite{Apostol1}, \cite{Apostol2}, \cite{Dosev1}, \cite{Dosev2}, \cite{Dosev3} (for $p=2$, see \cite{BrownPearcy}); quasitriagularity of operators on $c_{0}$ and $l^{p}$, for $1\leq p<\infty$, is characterized \cite{Apostol4} (for $p=2$, see \cite{Apostol3}); West's result \cite{West} on Riesz operators on Hilbert spaces is generalized to $c_{0}$ and $l^{p}$, for $1\leq p<\infty$ \cite{Davidson}. This list is by no means complete. In fact, there is a recent development of $L^{p}$-operator algebras since \cite{PhillipsCuntz}, \cite{PhillipsCrossProduct}, \cite{PhillipsIsomorphism}.

The purpose of this paper is to study the extent to which results concerning unitary equivalence, up to a small perturbation, of operators on Hilbert spaces hold for operators on $l^{p}$ for $1<p<\infty$. We obtain an $l^{p}$ version of Voiculescu's absorption theorem \cite{Voiculescu}. We introduce a notion of $\mathrm{Ext}_{\sim,s}(\mathcal{A},K(l^{p}))$ for Banach algebra $\mathcal{A}$ that is isomorphic to a subalgebra of $B(l^{p})$, where $K(l^{p})$ is the algebra of compact operators on $l^{p}$. We show that $\mathrm{Ext}_{\sim,s}(\mathcal{A},K(l^{p}))$ is a group when $\mathcal{A}=C(M)$, for any compact metric space $M$ that is homeomorphic to a subset of a Euclidean space, or when $\mathcal{A}$ is the algebra generated by the range of the left regular representation of a countable amenable group on $l^{p}$. Hence, a lifting theorem for homomorphisms from $\mathcal{A}$ into $B(l^{p})/K(l^{p})$ is obtained for these algebras $\mathcal{A}$. We also prove homotopy invariance of $\mathrm{Ext}_{\sim,s}(\mathcal{A},K(l^{p}))^{-1}$ for separable Banach algebra $\mathcal{A}$ that is isomorphic to a subalgebra of $B(l^{p})$. The $p=2$ case of these results are proved in \cite{Brown}, \cite{Arveson}, \cite{Kasparov}.

In the study of single operators, the following consequences are obtained for $1<p<\infty$: (1) the unilateral shift $U$ on $l^{p}$ is approximately similar to the direct sum of the unilateral shift and the bilateral shift on $l^{p}$; (2) the bilateral shift $B$ on $l^{p}$ is approximately similar to the direct sum of circular shifts on finite dimensional $l^{p}$ spaces; (3) there exist $u_{1},u_{2},\ldots\in[0,1]$ such that $U\oplus(\oplus_{k=1}^{\infty}u_{k}B)$ is similar to a compact perturbation of $\oplus_{k=1}^{\infty}u_{k}B$. (For $p=2$, this is a consequence of the Brown-Douglas-Fillmore theorem.)

Note that we have relaxed unitary equivalence to similarity. This is due to the lack of invertible isometries on $l^{p}$: every invertible isometry on $l^{p}$, for $p\in[1,\infty]\backslash\{2\}$, is the composition of a diagonal operator with entries on the unit circle and a permutation operator \cite{Lamperti}.

In the negative direction, we point that some basic results concerning unitary equivalence of operators on Hilbert spaces do not hold for some more general Banach spaces even if we relax unitary equivalence to similarity. The bilateral shift on $l^{2}$ is unitarily equivalent to the sum of a diagonal operator on $l^{2}$ and a compact operator. However, the bilateral shift on $l^{p}$, for $p\in[1,\infty]\backslash\{2\}$, does not have functional calculus for functions in $C(S^{1})$ \cite{Fixman} unlike unitary operators on Hilbert spaces. Here $S^{1}$ is the unit circle on $\mathbb{C}$. As a consequence, the bilateral shift on $l^{p}$, for $p\in(1,\infty)\backslash\{2\}$, is not similar to the sum of a diagonal operator on $l^{p}$ and a compact operator (Corollary \ref{bilateralnondiag}).

Normal operators on a separable complex Hilbert space that have the same essential spectrum are unitarily equivalent modulo compact operators \cite{Berg}. When $p\in(1,\infty)\backslash\{2\}$, if $\mu_{1}$ and $\mu_{2}$ are purely nonatomic mutually singular measures on $[0,1]$, the multiplication operators $M_{\mu_{1}}\in B(L^{p}(\mu_{1}))$ and $M_{\mu_{2}}\in B(L^{p}(\mu_{2}))$ defined by $(M_{\mu_{1}}f)(z)=zf(z)$, for $f\in L^{p}(\mu_{1})$, and $(M_{\mu_{2}}f)(z)=zf(z)$, for $f\in L^{p}(\mu_{2})$, are not similar modulo compact operators \cite{Boedihardjo} even if $\mu_{1}$ and $\mu_{2}$ have the same support.

All trivial extensions of $K(l^{2})$ by a separable $C^{*}$-algebra $\mathcal{A}$ are equivalent \cite{Voiculescu} (when $\mathcal{A}$ is commutative, this is proved in \cite{Brown}). In this paper, we show that when $p\in(1,\infty)\backslash\{2\}$, there are bounded below homomorphisms $\phi_{1}:C[0,1]\to B(l^{p})$ and $\phi_{2}:C[0,1]\to B(l^{p})$ such that $\phi_{1}(z)$ and $\phi_{2}(z)$ are not similar modulo compact operators, where $z\in C[0,1]$ is the identity function.

In Sections 2-6, we prove some preliminary results and introduce some new notions that are needed for Sections 7-10. In Section 7, we state and prove an $l^{p}$ version of Voiculescu's absorption theorem (Theorem \ref{lpVoic}) and obtain various consequences. In Section 8, we define $\mathrm{Ext}_{\sim,s}(\mathcal{A},K(l^{p}))$ and provide nontrivial examples of isomorphic extensions of $K(l^{p})$ by some Banach algebras. In Section 9, we show that $\mathrm{Ext}_{\sim,s}(\mathcal{A},K(l^{p}))$ is a group for certain Banach algebra $\mathcal{A}$ (Theorems \ref{main21} and \ref{main22}). In Section 10, we prove homotopy invariance of $\mathrm{Ext}_{\sim,s}(\mathcal{A},K(l^{p}))^{-1}$ for separable Banach algebra $\mathcal{A}$ that is isomorphic to a subalgebra of $B(l^{p})$ (Corollary \ref{homotopyinv2}).

Throughout this paper, unless stated otherwise, the scalar field is $\mathbb{C}$ and $1<p<\infty$. If $\mathcal{X}$ is a Banach space, $B(\mathcal{X})$ denotes the algebra of operators on $\mathcal{X}$ and $K(\mathcal{X})$ denotes the ideal of compact operators on $\mathcal{X}$. The symmetric difference between two sets $F_{1}$ and $F_{2}$ is denoted by $F_{1}\Delta F_{2}$.

If $F$ is a set then $l^{p}(F)=\{x:F\to\mathbb{C}:\sum_{i\in F}|x(i)|^{p}<\infty\}$ is the $l^{p}$ space on $F$. When $F$ is an interval, $l^{p}(F)$ is understood as $l^{p}(F\cap\mathbb{Z})$. For instance, $l^{p}([1,3])$ is a 3-dimensional $l^{p}$ space. If $F$ is the empty set, $l^{p}(F)=\{0\}$. The canonical basis for $l^{p}(F)$ is denoted by $(e_{j})_{j\in F}$ and $(e_{j}^{*})_{j\in F}$ are the coordinate functionals on $l^{p}(F)$, i.e., $e_{j}^{*}(e_{i})=1$ if $i=j$ and 0 if $i\neq j$. If $M$ is a compact metric space, then $C(M)$ is the algebra of scalar valued continuous functions on $M$.

If $\mathcal{X}_{1},\mathcal{X}_{2},\ldots$ are Banach spaces, then $(\oplus_{n=1}^{\infty}\mathcal{X}_{n})_{l^{p}}$ is the Banach space \[\left\{(x_{1},x_{2},\ldots):x_{n}\in\mathcal{X}_{n},\text{ for }n\in\mathbb{N},\text{ and }\sum_{n=1}^{\infty}\|x_{n}\|^{p}<\infty\right\}\]
with norm
\[\|(x_{1},x_{2},\ldots)\|=\left(\sum_{n=1}^{\infty}\|x_{n}\|^{p}\right)^{\frac{1}{p}}.\]
For each $n\in\mathbb{N}$, let $T_{n}$ be an operator on $\mathcal{X}_{n}$. Assume that $\sup_{n\in\mathbb{N}}\|T_{n}\|<\infty$. Then $T_{1}\oplus T_{2}\oplus\ldots$ is the operator on $(\oplus_{n\in\mathbb{N}}\mathcal{X}_{n})_{l^{p}}$ defined by $(T_{1}\oplus T_{2}\oplus\ldots)(x_{1},x_{2},\ldots)=(T_{1}x_{1},T_{2}x_{2},\ldots)$.

If $\mathcal{X}_{1}$ and $\mathcal{X}_{2}$ are Banach spaces and $T:\mathcal{X}_{1}\to\mathcal{X}_{2}$ is an operator, then $\|T\|_{e}$ is the infimum of $\|T+K\|$ over all compact operator $K:\mathcal{X}_{1}\to\mathcal{X}_{2}$.

A map is a function where no continuity or algebraic property is assumed. Let $\mathcal{A}_{1}$ and $\mathcal{A}_{2}$ be Banach algebras. A {\it homomorphism} $\psi:\mathcal{A}_{1}\to\mathcal{A}_{2}$ is a bounded linear map such that $\psi(ab)=\psi(a)\psi(b)$ for all $a,b\in\mathcal{A}_{1}$. The Banach algebras $\mathcal{A}_{1}$ and $\mathcal{A}_{2}$ are {\it isomorphic} if there exist a bijective homomorphism $\psi:\mathcal{A}_{1}\to\mathcal{A}_{2}$ and $C\geq 1$ such that
\[\frac{1}{C}\|a\|\leq\|\psi(a)\|\leq C\|a\|,\]
for all $a\in\mathcal{A}_{1}$.

Let $\mathcal{X}_{1}$ and $\mathcal{X}_{2}$ be Banach spaces. Let $\lambda\geq 1$. The spaces $\mathcal{X}_{1}$ and $\mathcal{X}_{2}$ are $\lambda$-{\it isomorphic} if there is an invertible operator $S:\mathcal{X}_{1}\to\mathcal{X}_{2}$ such that $\|S\|\|S^{-1}\|\leq\lambda$. The spaces $\mathcal{X}_{1}$ and $\mathcal{X}_{2}$ are {\it isomorphic} if they are $\lambda$-isomorphic for some $\lambda\geq 1$. Let $\Lambda$ be a set. Let $\psi_{1}:\Lambda\to B(\mathcal{X}_{1})$ and $\psi_{2}:\Lambda\to B(\mathcal{X}_{2})$ be maps. The maps $\psi_{1}$ and $\psi_{2}$ are $\lambda$-{\it similar} if there is an invertible operator $S:\mathcal{X}_{1}\to\mathcal{X}_{2}$ such that $\|S\|\|S^{-1}\|\leq\lambda$ and $\psi_{2}(\alpha)=S\psi_{1}(\alpha)S^{-1}$ for all $\alpha\in\Lambda$. The maps $\psi_{1}$ and $\psi_{2}$ are $\lambda$-{\it approximately similar} if there are invertible operators $S_{n}:\mathcal{X}_{1}\to\mathcal{X}_{2}$, for $n\in\mathbb{N}$, such that
\begin{enumerate}[(i)]
\item $\sup_{n\in\mathbb{N}}\|S_{n}\|\|S_{n}^{-1}\|<\infty$;
\item $\psi_{2}(\alpha)-S_{n}\psi_{1}(\alpha)S_{n}^{-1}$ is compact for all $n\in\mathbb{N}$ and $\alpha\in\Lambda$; and
\item $\lim_{n\to\infty}\|\psi_{2}(\alpha)-S_{n}\psi_{1}(\alpha)S_{n}^{-1}\|=0$ for all $\alpha\in\Lambda$.
\end{enumerate}
(Hadwin \cite{Hadwin} defines approximate similarity without condition (ii) but the author \cite{Boedihardjo} shows that these two notions coincide for single operators.) The maps $\psi_{1}$ and $\psi_{2}$ are $\lambda$-{\it similar modulo compact operators} if there is an invertible operator $S:\mathcal{X}_{1}\to\mathcal{X}_{2}$ such that $\|S\|\|S^{-1}\|\leq\lambda$ and $\psi_{2}(\alpha)-S\psi_{1}(\alpha)S^{-1}$ is compact for all $\alpha\in\Lambda$. The maps $\psi_{1}$ and $\psi_{2}$ are {\it similar/approximately similar/similar modulo compact operators} if they are $\lambda$-similar/$\lambda$-approximately similar/$\lambda$-similar modulo compact operators for some $\lambda\geq 1$.

For each $n\in\mathbb{N}$, let $\mathcal{X}_{n}$ be a Banach space and let $\psi_{n}:\Lambda\to B(\mathcal{X}_{n})$ be a map. The {\it direct sum} $\oplus_{n=1}^{\infty}\psi_{n}:\Lambda\to B((\oplus_{n=1}^{\infty}\mathcal{X}_{n})_{l^{p}})$ is defined by $(\oplus_{n=1}^{\infty}\psi_{n})(\alpha)=\oplus_{n=1}^{\infty}\psi_{n}(\alpha)$ for $\alpha\in\Lambda$.

Except for Section 10, the symbol $\pi$ always denotes the quotient map from $B(\mathcal{X})$ onto $B(\mathcal{X})/K(\mathcal{X})$ for some Banach space $\mathcal{X}$. Except for Sections 8 and 10, by abuse of notation, we use the same $\pi$ even for two different Banach spaces $\mathcal{X}_{1}$ and $\mathcal{X}_{2}$. Moreover, if $T_{1}\in B(\mathcal{X}_{1})$ and $T_{2}\in B(\mathcal{X}_{2})$ then we define $\pi(T_{1})\oplus\pi(T_{2})=\pi(T_{1}\oplus T_{2})\in B(\mathcal{X}_{1}\oplus\mathcal{X}_{2})/K(\mathcal{X}_{1}\oplus\mathcal{X}_{2})$. In Section 8, we use different notation for quotient maps onto the Calkin algebras of different Banach spaces. In Section 10, the symbol $\pi$ denotes the quotient map from $B(\mathcal{X}_{1},\mathcal{X}_{2})$ onto $B(\mathcal{X}_{1},\mathcal{X}_{2})/K(\mathcal{X}_{1},\mathcal{X}_{2})$ for some Banach spaces $\mathcal{X}_{1},\mathcal{X}_{2}$, where $B(\mathcal{X}_{1},\mathcal{X}_{2})$ is the Banach space of operators from $\mathcal{X}_{1}$ to $\mathcal{X}_{2}$ and $K(\mathcal{X}_{1},\mathcal{X}_{2})$ is the subspace of compact operators. Moreover, if $T_{1}\in B(\mathcal{X}_{1},\mathcal{X}_{2})$ and $T_{2}\in B(\mathcal{X}_{2},\mathcal{X}_{3})$ then we define $\pi(T_{2})\pi(T_{1})=\pi(T_{2}T_{1})\in B(\mathcal{X}_{1},\mathcal{X}_{3})$.

For a topological vector space $\mathcal{Y}$, a countable infinite set $\Lambda$ and elements $x_{\alpha}\in\mathcal{Y}$, for $\alpha\in\Lambda$, a series $\sum_{\alpha\in\Lambda}x_{\alpha}$ {\it converges unconditionally} if the series $\sum_{i=1}^{\infty}x_{g(i)}$ converges to the same element $x\in\mathcal{Y}$ for all bijection $g:\mathbb{N}\to\Lambda$.

If $\mathcal{X}$ is a Banach space then its dual space is denoted by $\mathcal{X}^{*}$. If $T\in B(\mathcal{X})$ then $T^{*}\in B(\mathcal{X}^{*})$ is defined by $(T^{*}x^{*})(x)=x^{*}(Tx)$ for $x^{*}\in\mathcal{X}^{*}$ and $x\in\mathcal{X}$. The strong operator topology on $B(\mathcal{X})$ is denoted by SOT and the weak operator topology is denoted by WOT.

For $x^{*}\in\mathcal{X}^{*}$, we define a seminorm $|\;|_{x^{*}}$ on $B(\mathcal{X})$ by
\[|T|_{x^{*}}=\|T^{*}x^{*}\|=\sup_{x\in\mathcal{X},\,\|x\|=1}|x^{*}(Tx)|,\]
for $T\in B(\mathcal{X})$. It is easy to see that $|T_{1}T_{2}|_{x^{*}}=|T_{2}|_{T_{1}^{*}x^{*}}$ and \[|T_{1}T_{2}|_{x^{*}}\leq|T_{1}|_{x^{*}}\|T_{2}\|\leq\|x^{*}\|\|T_{1}\|\|T_{2}\|,\]
for all $T_{1},T_{2}\in B(\mathcal{X})$.

More terminologies and notation will be introduced at the beginning of some later sections.
\begin{center}
\begin{tabular}{ll}
$\pi$&Section 1\\
$|\;|_{x^{*}}$&Section 1\\
$\mathrm{supp}(A)$&Section 2\\
refined diagonal approximate identity&Section 3\\
$h(\pi(A))$&Section 4\\
$\mathcal{Y}_{u}^{\oplus k}$&Section 5\\
$(T_{1}\oplus\ldots\oplus T_{k})_{u}$&Section 5\\
$\phi\stackrel{\lambda}{\ll}\rho$&Section 6\\
$\mathrm{Ext}_{\sim,s}(\mathcal{A},K(l^{p}))$&Section 8\\
$\psi\stackrel{\lambda}{\prec}(\eta_{n})_{n\in\mathbb{N}}$&Section 9\\
$(l^{p})^{(\infty)}$ and $T^{(\infty)}$&Section 10
\end{tabular}
\end{center}
\section{Estimates and partitions of unity}
A {\it diagonal operator} $A$ on $l^{p}$ is an operator of the form
\[A(x_{1},x_{2},\ldots)=(w_{1}x_{1},w_{2}x_{2},\ldots),\]
for some $w_{1},w_{2},\ldots\in\mathbb{C}$, which are the {\it diagonal entries} of $A$. The {\it support} of $A$ is the set $\mathrm{supp}(A)=\{i\in\mathbb{N}:a_{i}\neq 0\}$.

\begin{lemma}\label{Diagest}
Let $r\in\mathbb{N}$. Let $(D_{n})_{n\in\mathbb{N}}$ be a sequence of diagonal operators on $l^{p}$ of norms at most $1$ such that every $j\in\mathbb{N}$ is contained in at most $r$ of the sets $\mathrm{supp}\,D_{1},\mathrm{supp}\,D_{2},\ldots$. Then
\begin{enumerate}[(i)]
\item \[\left\|\sum_{i=1}^{\infty}D_{i}x_{i}\right\|\leq r\left(\sum_{i=1}^{\infty}\|D_{i}x_{i}\|^{p}\right)^{\frac{1}{p}},\]
for all bounded sequence $(x_{n})_{n\in\mathbb{N}}$ in $l^{p}$ such that $\sum_{i=1}^{\infty}\|D_{i}x_{i}\|^{p}$ is finite;
\item \[\sum_{i=1}^{\infty}\|D_{i}x\|^{p}\leq r\|x\|^{p},\]
for all $x\in l^{p}$;
\item \[\left\|\sum_{i=1}^{k}D_{i}T_{i}D_{i}\right\|\leq r^{2}\max_{1\leq i\leq k}\|D_{i}T_{i}\|,\]
for all $T_{1},\ldots,T_{k}\in B(l^{p})$ and $k\in\mathbb{N}$; and
\item \[\left\|\sum_{i=1}^{k}D_{i}T_{i}D_{i}\right\|_{e}\leq r^{2}\max_{1\leq i\leq k}\|D_{i}T_{i}\|_{e},\]
for all $T_{1},\ldots,T_{k}\in B(l^{p})$ and $k\in\mathbb{N}$.
\end{enumerate}
\end{lemma}
\begin{proof}
We have
\[\left\|\sum_{i=1}^{\infty}D_{i}x_{i}\right\|^{p}=\sum_{j=1}^{\infty}\left|\sum_{i=1}^{\infty}e_{j}^{*}(D_{i}x_{i})\right|^{p}\leq
\sum_{j=1}^{\infty}\left(\sum_{i=1}^{\infty}|e_{j}^{*}(D_{i}x_{i})|\right)^{p}.\]
For all $j\in\mathbb{N}$ and $(x_{n})_{n\in\mathbb{N}}$ in $l^{p}$, since $e_{j}^{*}(D_{i}x_{i})\neq 0$ for at most $r$ values of $i$,
\[\left(\sum_{i=1}^{\infty}|e_{j}^{*}(D_{i}x_{i})|\right)^{p}\leq r^{p}\sum_{i=1}^{\infty}|e_{j}^{*}(D_{i}x_{i})|^{p}.\]
Therefore,
\[\left\|\sum_{i=1}^{\infty}D_{i}x_{i}\right\|^{p}\leq r^{p}\sum_{j=1}^{\infty}\sum_{i=1}^{\infty}|e_{j}^{*}(D_{i}x_{i})|^{p}=r^{p}\sum_{i=1}^{\infty}\|D_{i}x_{i}\|^{p}.\]
This proves (i). Let $x\in l^{p}$. For each $j\in\mathbb{N}$, since $e_{j}^{*}(D_{i}x)\neq 0$ for at most $r$ values of $i$,
\[\sum_{i=1}^{\infty}\|D_{i}x\|^{p}=\sum_{j=1}^{\infty}\sum_{i=1}^{\infty}|e_{j}^{*}(D_{i}x)|^{p}\leq\sum_{j=1}^{\infty}r\max_{1\leq i\leq n}|e_{j}^{*}(D_{i}x)|^{p}.\]
Since each $D_{i}$ is a diagonal operator of norm at most 1, we have $\displaystyle\max_{1\leq i\leq n}|e_{j}^{*}(D_{i}x)|\leq|e_{j}^{*}(x)|$. Therefore,
\[\sum_{i=1}^{k}\|D_{i}x\|^{p}\leq r\sum_{j=1}^{\infty}|e_{j}^{*}(x)|^{p}=r\|x\|^{p}.\]
This proves (ii). For every $x\in l^{p}$,
\begin{eqnarray*}
\left\|\sum_{i=1}^{k}D_{i}T_{i}D_{i}x\right\|^{p}&\leq&r^{p}\sum_{i=1}^{k}\|D_{i}T_{i}D_{i}x\|^{p}\text{ by (i)}\\&\leq&
r^{p}\max_{1\leq i\leq k}\|D_{i}T_{i}\|^{p}\sum_{i=1}^{k}\|D_{i}x\|^{p}\\&\leq&
r^{p+1}\max_{1\leq i\leq k}\|D_{i}T_{i}\|^{p}\|x\|^{p}\text{ by (ii)}.
\end{eqnarray*}
This proves (iii). Let $n\in\mathbb{N}$. Let $P_{n}$ be the projection from $l^{p}$ onto $l^{p}([1,n])$. By (iii),
\[\left\|\sum_{i=1}^{k}D_{i}T_{i}D_{i}\right\|_{e}\leq\left\|\sum_{i=1}^{k}D_{i}T_{i}(I-P_{n})D_{i}\right\|\leq r^{2}\max_{1\leq i\leq k}\|D_{i}T_{i}(I-P_{n})\|.\]
Taking $n\to\infty$, we obtain
\[\left\|\sum_{i=1}^{k}D_{i}T_{i}D_{i}\right\|_{e}\leq r^{2}\max_{1\leq i\leq k}\|D_{i}T_{i}\|_{e}.\]
This proves (iv).
\end{proof}
\begin{lemma}\label{Embedding}
Let $r\in\mathbb{N}$. Let $D_{1},\ldots,D_{k}$ be diagonal operators on $l^{p}$ such that every $j\in\mathbb{N}$ is contained in at most $r$ of $\mathrm{supp}\,D_{1},\ldots,\mathrm{supp}\,D_{k}$. Define operators $V:l^{p}\to\underbrace{l^{p}\oplus\ldots\oplus l^{p}}_{k}$ and $E:\underbrace{l^{p}\oplus\ldots\oplus l^{p}}_{k}\to l^{p}$ by
\[Vx=(D_{1}x,\ldots,D_{k}x)\quad\text{and}\quad E(y_{1},\ldots,y_{k})=\sum_{i=1}^{k}D_{i}y_{i},\]
for $x\in l^{p}$ and $y_{1},\ldots,y_{k}\in l^{p}$. Then $\displaystyle\|V\|\leq r\max_{1\leq i\leq k}\|D_{i}\|$ and $\displaystyle\|E\|\leq r\max_{1\leq i\leq k}\|D_{i}\|$.
\end{lemma}
\begin{proof}
By homogeneity, we may assume that $D_{1},\ldots,D_{k}$ have norm $1$. By Lemma \ref{Diagest}(ii), we have $\|V\|\leq r$. By Lemma \ref{Diagest}(i), we have $\|E\|\leq r$.
\end{proof}
\begin{lemma}\label{Multidiagest}
Let $r\in\mathbb{N}\cup\{0\}$. Let $\mathcal{X}_{1},\mathcal{X}_{2},\ldots$ be finite dimensional Banach spaces. For each $k\in\mathbb{N}$, let $J_{k}:\mathcal{X}_{k}\to(\oplus_{n\in\mathbb{N}}\mathcal{X}_{n})_{l^{p}}$ and $Q_{k}:(\oplus_{n\in\mathbb{N}}\mathcal{X}_{n})_{l^{p}}\to\mathcal{X}_{k}$ be the canonical embedding and projection, respectively. For $i,j\in\mathbb{N}$, let $T_{i,j}\in B(\mathcal{X}_{j},\mathcal{X}_{i})$. Assume that $\sup_{i,j\in\mathbb{N}}\|T_{i,j}\|<\infty$ and $T_{i,j}=0$ for all $|i-j|>r$. Then the sum $\sum_{i,j\in\mathbb{N}}J_{i}T_{i,j}Q_{j}$ in $B((\oplus_{n\in\mathbb{N}}\mathcal{X}_{n})_{l^{p}})$ converges in SOT unconditionally and
\[\limsup_{j\to\infty}\sup_{i\in\mathbb{N}}\|T_{i,j}\|\leq\left\|\sum_{i,j\in\mathbb{N}}J_{i}T_{i,j}Q_{j}\right\|_{e}\leq
(2r+1)\limsup_{j\to\infty}\sup_{i\in\mathbb{N}}\|T_{i,j}\|\]
\end{lemma}
\begin{proof}
For notational convenience, let $\mathcal{X}_{k}=\{0\}$ for $k<1$ and let $J_{i}=0$ and $T_{i,j}=0$ for $i<1$.  For $s\in\mathbb{Z}$, finite subset $\Omega\subset\mathbb{N}$ and $x\in(\oplus_{n\in\mathbb{N}}\mathcal{X}_{n})_{l^{p}}$,
\begin{eqnarray}\label{diagJPT}
\left\|\sum_{j\in\Omega}J_{j+s}T_{j+s,j}Q_{j}x\right\|&=&\left(\sum_{j\in\Omega}\|T_{j+s,j}Q_{j}x\|^{p}\right)^{\frac{1}{p}}\\&\leq&
\sup_{j\in\Omega}\|T_{j+s,j}\|\left(\sum_{j\in\Omega}\|Q_{j}x\|^{p}\right)^{\frac{1}{p}}.\nonumber
\end{eqnarray}
So $\sum_{j\in\mathbb{N}}J_{j+s}T_{j+s,j}Q_{j}$ converges in SOT unconditionally for every $s\in\mathbb{Z}$. Since $T_{i,j}=0$ for all $|i-j|>r$, it follows that the series $\sum_{i,j\in\mathbb{N}}J_{i}T_{i,j}Q_{j}=\sum_{s=-r}^{r}\sum_{j\in\mathbb{N}}J_{j+s}T_{j+s,j}Q_{j}$ converges in SOT unconditionally.

By (\ref{diagJPT}),
\[\left\|\sum_{j>n}J_{j+s}T_{j+s,j}Q_{j}\right\|\leq\sup_{j>n}\|T_{j+s,j}\|,\]
for all $s\in\mathbb{Z}$ and $n\in\mathbb{N}$. Since $T_{i,j}=0$ for all $|i-j|>r$,
\begin{eqnarray*}
\left\|\sum_{i\in\mathbb{N},\,j>n}J_{i}T_{i,j}Q_{j}\right\|&=&\left\|\sum_{s=-r}^{r}\sum_{j>n}J_{j+s}T_{j+s,j}Q_{j}\right\|\\&\leq&
(2r+1)\sup_{i\in\mathbb{N},\,j>n}\|T_{i,j}\|,
\end{eqnarray*}
for all $n\in\mathbb{N}$. Since $\mathcal{X}_{1},\mathcal{X}_{2},\ldots$ are finite dimensional, $\displaystyle\|T\|_{e}=\lim_{n\to\infty}\|T(I-(Q_{1}+\ldots+Q_{n}))\|$ for every operator $T$ on $(\oplus_{n=1}^{\infty}\mathcal{X}_{n})_{l^{p}}$. It follows that
\begin{eqnarray*}
\left\|\sum_{i\in\mathbb{N}}J_{i}T_{i,j}Q_{j}\right\|_{e}&=&
\lim_{n\to\infty}\left\|\left(\sum_{i,j\in\mathbb{N}}J_{i}T_{i,j}Q_{j}\right)(I-(Q_{1}+\ldots+Q_{n}))\right\|\\&=&
\lim_{n\to\infty}\left\|\sum_{i\in\mathbb{N},\,j>n}J_{i}T_{i,j}Q_{j}\right\|\\&\leq&
\lim_{n\to\infty}(2r+1)\sup_{i\in\mathbb{N},\,j>n}\|T_{i,j}\|=(2r+1)\limsup_{j\to\infty}\sup_{i\in\mathbb{N}}\|T_{i,j}\|.
\end{eqnarray*}
Also,
\begin{eqnarray*}
\left\|\sum_{i,j\in\mathbb{N}}J_{i}T_{i,j}Q_{j}\right\|_{e}&\geq&
\limsup_{n\to\infty}\left\|\left(\sum_{i,j\in\mathbb{N}}J_{i}T_{i,j}Q_{j}\right)Q_{n}\right\|\\&=&
\limsup_{n\to\infty}\left\|\sum_{i\in\mathbb{N}}J_{i}T_{i,n}Q_{n}\right\|\\&\geq&
\limsup_{n\to\infty}\sup_{m\in\mathbb{N}}\left\|Q_{m}\left(\sum_{i\in\mathbb{N}}J_{i}T_{i,n}Q_{n}\right)\right\|\\&=&
\limsup_{n\to\infty}\sup_{m\in\mathbb{N}}\|T_{m,n}\|=\limsup_{j\to\infty}\sup_{i\in\mathbb{N}}\|T_{i,j}\|.
\end{eqnarray*}
Therefore,
\[\limsup_{j\to\infty}\sup_{i\in\mathbb{N}}\|T_{i,j}\|\leq\left\|\sum_{i,j\in\mathbb{N}}J_{i}T_{i,j}Q_{j}\right\|_{e}\leq
(2r+1)\limsup_{j\to\infty}\sup_{i\in\mathbb{N}}\|T_{i,j}\|.\]
\end{proof}
\begin{lemma}\label{specialopcov}
Let $d\in\mathbb{N}$. Let $M$ be a compact subset of $\mathbb{R}^{d}$. Let $\epsilon>0$. Then there exists an open cover $(U_{i})_{1\leq i\leq k}$ of $M$ such that each $U_{i}$ has diameter at most $\epsilon$ and every $v\in M$ is contained in at most $2^{d}$ of the sets $U_{1},\ldots,U_{k}$.
\end{lemma}
\begin{proof}
Without loss of generality, we may assume that $M$ is a subset of $[0,1]^{d}$. Let $n\geq\frac{2d}{\epsilon}$ be a natural number. Note that the intervals $(\frac{j-1}{n},\frac{j+1}{n})$, for $j=0,\ldots,n$ form an open cover of $[0,1]$. Every $c\in[0,1]$ is contained in at most two of these intervals. For $j_{1},\ldots,j_{d}\in\{0,\ldots,n\}$, let $U_{j_{1},\ldots,j_{k}}=(\frac{j_{1}-1}{n},\frac{j_{1}+1}{n})\times\ldots\times(\frac{j_{d}-1}{n},\frac{j_{d}+1}{n})$. Then the open sets $U_{j_{1},\ldots,j_{d}}$, for $j_{1},\ldots,j_{d}\in\{0,\ldots,n\}$, form an open cover of $[0,1]^{d}$. Each $U_{j_{1},\ldots,j_{d}}$ has diameter at most $\epsilon$ with respect to the Euclidean metric. Fix $(v_{1},\ldots,v_{d})\in[0,1]^{d}$. For $1\leq i\leq d$, the number $v_{i}$ is contained in at most two of the intervals $(\frac{j-1}{n},\frac{j+1}{n})$, for $j=0,\ldots,n$. So there are $j_{i}^{(1)},j_{i}^{(2)}\in\{0,\ldots,n\}$ (which may or may not be the same) such that if $v_{i}\in (\frac{j-1}{n},\frac{j+1}{n})$ then $j$ must be $j_{i}^{(1)}$ or $j_{i}^{(2)}$. Thus, if $(v_{1},\ldots,v_{d})\in U_{j_{1},\ldots,j_{d}}$ then for each $1\leq i\leq d$, the number $j_{i}$ must be $j_{i}^{(1)}$ or $j_{i}^{(2)}$. So $(v_{1},\ldots,v_{d})$ is contained in at most $2^{d}$ of the sets $U_{j_{1},\ldots,j_{d}}$, for $j_{1},\ldots,j_{d}\in\{0,\ldots,n\}$.
\end{proof}
\begin{lemma}\label{partitionunity}
Let $d\in\mathbb{N}$. Let $M$ be a nonempty compact subset of $\mathbb{R}^{d}$. Let $\epsilon>0$. Then there exist a partition of unity $(f_{i})_{1\leq i\leq k}$ on $M$ and continuous functions $g_{i}:M\to[0,1]$, for $1\leq i\leq k$, such that
\begin{enumerate}[(1)]
\item the diameter of the support of $g_{i}$ is at most $\epsilon$ for every $i=1,\ldots,k$;
\item every $v\in M$ is contained in at most $2^{d}$ of the sets $\mathrm{supp}\,g_{1},\ldots,\mathrm{supp}\,g_{k}$; and
\item $g_{i}=1$ on the support of $f_{i}$ for every $i=1,\ldots,k$.
\end{enumerate}
\end{lemma}
\begin{proof}
By Lemma \ref{specialopcov}, there exists an open cover $(U_{i})_{1\leq i\leq k}$ of $M$ such that each $U_{i}$ has diameter at most $\epsilon$ and every $v\in M$ is contained in at most $2^{d}$ of the sets $U_{1},\ldots,U_{k}$. Take $(f_{i})_{1\leq i\leq k}$ to be a partition of unity on $M$ subordinate to $(U_{i})_{1\leq i\leq k}$. For each $1\leq i\leq k$, let $g_{i}:M\to[0,1]$ be a continuous function such that $g_{i}=1$ on the support of $f_{i}$ and $\mathrm{supp}\,g_{i}\subset U_{i}$. Since each $v\in M$ is contained in at most $2^{d}$ of the sets $U_{1},\ldots,U_{k}$, it is contained in at most $2^{d}$ of $\mathrm{supp}\,g_{1},\ldots,\mathrm{supp}\,g_{k}$.
\end{proof}
\section{Approximate identity}
A sequence $(A_{n})_{n\in\mathbb{N}}$ of operators on $l^{p}$ is a {\it refined diagonal approximate identity on} $l^{p}$ if
\begin{enumerate}[(1)]
\item each $A_{n}$ is a diagonal operator on $l^{p}$ with diagonal entries in $[0,1]$ and $\mathrm{supp}(A_{n})$ is finite;
\item $A_{n}\to I$ in SOT, as $n\to\infty$; and
\item $(I-A_{n+1})A_{n}=0$ for all $n\in\mathbb{N}$.
\end{enumerate}
It is easy to see that if $(A_{n})_{n\in\mathbb{N}}$ is a refined diagonal approximate identity, then $\mathrm{supp}\,A_{1}\subset\mathrm{supp}\,A_{2}\subset\ldots$ and $(I-A_{n})A_{m}=0$ for all $m<n$ in $\mathbb{N}$.

The following two lemmas are well known results but we include their proofs for convenience.
\begin{lemma}\label{weak}
Let $\mathcal{X}$ be a separable reflexive Banach space. Let $(K_{n})_{n\in\mathbb{N}}$ be a bounded sequence of compact operators on $\mathcal{X}$ converging to 0 in WOT. Then $K_{n}\to 0$ weakly in the sense of Banach space.
\end{lemma}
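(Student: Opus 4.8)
The plan is to reduce the statement to testing the sequence $(K_{n})$ against a norm-dense set of functionals, for which the weak-operator-topology hypothesis is exactly what is needed. First, since $K(\mathcal{Y})$ is a closed subspace of $B(\mathcal{Y})$, Hahn--Banach shows that the weak topology of $B(\mathcal{Y})$ restricts to the weak topology $\sigma(K(\mathcal{Y}),K(\mathcal{Y})^{*})$ on $K(\mathcal{Y})$, so it suffices to prove $K_{n}\to 0$ in $\sigma(K(\mathcal{Y}),K(\mathcal{Y})^{*})$. Because $(K_{n})$ is bounded, a routine equicontinuity (``$\epsilon/3$'') argument then reduces this to proving $\psi(K_{n})\to 0$ for every $\psi$ in some norm-dense subset $D$ of $K(\mathcal{Y})^{*}$: writing $C=\sup_{n}\|K_{n}\|$, given $\Phi\in K(\mathcal{Y})^{*}$ and $\epsilon>0$, choose $\psi\in D$ with $\|\Phi-\psi\|<\epsilon/(C+1)$; then $|\Phi(K_{n})|\le\|\Phi-\psi\|\,\|K_{n}\|+|\psi(K_{n})|$, so $\limsup_{n}|\Phi(K_{n})|\le\epsilon$, and letting $\epsilon\to 0$ gives $\Phi(K_{n})\to 0$.

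For the dense set $D$ I would take the linear span of the rank-one functionals $A\mapsto y^{*}(Ay)$, where $y\in\mathcal{Y}$ and $y^{*}\in\mathcal{Y}^{*}$ --- equivalently, the image of the finite-rank operators on $\mathcal{Y}$ under the canonical trace pairing. The hypothesis that $K_{n}\to 0$ in WOT is precisely the statement that $y^{*}(K_{n}y)\to 0$ for all such $y,y^{*}$, and hence, by linearity, $\psi(K_{n})\to 0$ for every $\psi\in D$. Thus everything comes down to the single assertion that this $D$ is norm-dense in $K(\mathcal{Y})^{*}$.

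This density is where reflexivity of $\mathcal{Y}$ (together with the approximation property, which holds for all the spaces actually used in this paper, e.g.\ $\mathcal{Y}=l^{p}$, $1<p<\infty$, since they possess Schauder bases) enters: for such $\mathcal{Y}$, the space $K(\mathcal{Y})^{*}$ is isometrically the space of nuclear operators on $\mathcal{Y}$, in which the finite-rank operators are dense by the very definition of the nuclear norm; I would invoke the standard reference for this identification rather than reprove it. This is the only non-formal ingredient, and hence the step I expect to be the main obstacle. An equivalent packaging of the argument is to use that $K(\mathcal{Y})^{**}=B(\mathcal{Y})$: any weak-$*$ cluster point in $K(\mathcal{Y})^{**}$ of a subsequence of $(K_{n})$ is also a WOT cluster point, hence equals $0$ since $K_{n}\to 0$ in WOT and WOT is Hausdorff; so $0$ is a weak cluster point of every subsequence of $(K_{n})$, which forces $K_{n}\to 0$ weakly. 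This variant rests on the same identification ($K(\mathcal{Y})^{**}=B(\mathcal{Y})$), so it does not circumvent the difficulty but only repackages it.
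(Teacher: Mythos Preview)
Your argument is correct for spaces with the approximation property, and you are honest that this extra hypothesis is needed for the density of the rank-one functionals in $K(\mathcal{Y})^{*}$ (equivalently, for $K(\mathcal{Y})^{**}=B(\mathcal{Y})$). But the lemma as stated assumes only that $\mathcal{Y}$ is separable and reflexive, so strictly speaking your proof does not establish it; you have reduced everything to an assertion that is essentially equivalent to AP for $\mathcal{Y}^{*}$, and there exist separable reflexive spaces without AP.

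The paper sidesteps this entirely by a different device. Since $\mathcal{Y}$ is separable and reflexive, the product $M=B_{\mathcal{Y}}(0,1)\times B_{\mathcal{Y}^{*}}(0,1)$ of the closed unit balls, each with its weak topology, is compact metrizable. For a compact operator $K$ the function $f_{K}(x,x^{*})=x^{*}(Kx)$ is continuous on $M$ (compactness of $K$ upgrades weak convergence of $x$ to norm convergence of $Kx$), and $K\mapsto f_{K}$ is an isometry of $K(\mathcal{Y})$ into $C(M)$. The WOT hypothesis says $f_{K_{n}}\to 0$ pointwise and boundedly on $M$, so by dominated convergence $\int f_{K_{n}}\,d\mu\to 0$ for every finite Borel measure $\mu$; by Riesz representation this is weak convergence in $C(M)$, and Hahn--Banach pulls it back to weak convergence in $K(\mathcal{Y})$. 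No structure theory of $K(\mathcal{Y})^{*}$ is invoked, so AP never enters. Your approach is perhaps more conceptual when AP is available (and is adequate for every space that actually appears later in the paper), but the paper's $C(M)$-embedding buys the full generality of the lemma with no extra cost.
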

\begin{proof}
Let $B_{\mathcal{X}}(0,1)$ and $B_{\mathcal{X}^{*}}(0,1)$ be the unit balls of $\mathcal{X}$ and $\mathcal{X}^{*}$, respectively, equipped with the weak topologies. Consider the compact Hausdorff space $M=B_{\mathcal{X}}(0,1)\times B_{\mathcal{X}^{*}}(0,1)$ with the product topology. For each $K\in K(\mathcal{X})$, define a continuous function $f_{K}:M\to\mathbb{C}$ by $f_{K}(x,x^{*})=x^{*}(Kx)$, for $(x,x^{*})\in M$. Since $(K_{n})_{n\in\mathbb{N}}$ is uniformly bounded and converges to 0 in WOT, $(f_{K_{n}})_{n\in\mathbb{N}}$ is uniformly bounded and converges to 0 pointwise. So by dominated convergence theorem, $\int f_{K_{n}}\,d\mu\to 0$ for every finite measure $\mu$ on $M$. So $f_{K_{n}}$ converges to 0 weakly in the space $C(M)$ of continuous functions from $M$ into $\mathbb{C}$ equipped with $\|\,\|_{\infty}$.

Note that the map $K\mapsto f_{K}$ defines an isometry from $K(\mathcal{X})$ into $C(M)$. Thus, by Hahn-Banach Theorem, it follows that $K_{n}$ converges to 0 weakly.
\end{proof}
\begin{lemma}\label{qcau}
Let $\Lambda$ be a countable set. Let $\psi:\Lambda\to B(l^{p})$. Then for all finite subsets $\Omega_{1}\subset\Omega_{2}\subset\ldots$ of $\Lambda$ and $\epsilon_{1},\epsilon_{2},\ldots>0$, there exists a refined diagonal approximate identity $(A_{n})_{n\in\mathbb{N}}$ on $l^{p}$ such that $\|A_{n}\psi(\alpha)-\psi(\alpha)A_{n}\|\leq\epsilon_{n}$ for all $\alpha\in\Omega_{n}$ and $n\in\mathbb{N}$.
\end{lemma}
\begin{proof}
For each $n\in\mathbb{N}$, let $P_{n}$ be the projection from $l^{p}$ onto $l^{p}([1,n])$. Then for each $\alpha\in\Lambda$, we have $P_{n}\psi(\alpha)-\psi(\alpha)P_{n}\to 0$ in SOT as $n\to\infty$. So by Lemma \ref{weak}, for each $\alpha\in\Lambda$, we have $P_{n}\psi(\alpha)-\psi(\alpha)P_{n}\to 0$ weakly in the sense of Banach space as $n\to\infty$.

Choose finite rank diagonal operators $A_{1},A_{2},\ldots$ as follows: Take $A_{1}=0$. Suppose that $A_{1},\ldots,A_{i-1}$ have been chosen.  Let $j\in\mathbb{N}$ be large enough so that $j\geq i$ and $(I-P_{j})A_{i-1}=0$. From the previous paragraph, $P_{n}\psi(\alpha)-\psi(\alpha)P_{n}\to 0$ weakly in the sense of Banach space, as $n\to\infty$, for every $\alpha\in\Omega_{j}$. Thus, there exists $A_{i}$ that is a convex combination of $P_{j},P_{j+1},\ldots$ such that $\|A_{i}\psi(\alpha)-\psi(\alpha)A_{i}\|\leq\epsilon_{i}$ for every $\alpha\in\Omega_{j}$.

Since $j\geq i$, it follows that $\|A_{i}\psi(\alpha)-\psi(\alpha)A_{i}\|\leq\epsilon_{i}$ for all $\alpha\in\Omega_{i}$ and $i\in\mathbb{N}$. Since $(I-P_{j})A_{i-1}=0$ and $A_{i}$ is a convex combination of $P_{j},P_{j+1},\ldots$, it is easy to see that $(I-A_{i})A_{i-1}=0$ for all $i\geq 2$. Also, since $j\geq i$ and $A_{i}$ is a convex combination of $P_{j},P_{j+1},\ldots$, we have $A_{n}\to I$ in SOT as $n\to\infty$. Therefore, $(A_{n})_{n\in\mathbb{N}}$ is a refined diagonal approximate identity on $l^{p}$.
\end{proof}
In the sequel, $A_{-1}=A_{0}=0$.
\begin{lemma}\label{qcausum}
Suppose that $(A_{n})_{n\in\mathbb{N}}$ is a refined diagonal approximate identity on $l^{p}$. Then
\begin{equation}\label{qcausum1}
\left(\sum_{n=1}^{\infty}\|(A_{n}-A_{n-1})x\|^{p}\right)^{\frac{1}{p}}\leq 2\|x\|,
\end{equation}
for all $x\in l^{p}$. Moreover, if $(x_{n})_{n\in\mathbb{N}}$ is a sequence in $l^{p}$ such that $\displaystyle\sum_{n=1}^{\infty}\|x_{n}\|^{p}<\infty$, then
\begin{equation}\label{qcausum2}
\left\|\sum_{n=1}^{\infty}(A_{n+1}-A_{n-2})x_{n}\right\|^{p}\leq 4^{p}\sum_{n=1}^{\infty}\|x_{n}\|^{p}.
\end{equation}
\end{lemma}
\begin{proof}
For $n\in\mathbb{N}$, since $(I-A_{n-1})A_{n-2}=(I-A_{n})A_{n-2}=0$, we have $A_{n}-A_{n-1}=0$ on $l^{p}(\mathrm{supp}\,A_{n-2})$ and so $\mathrm{supp}(A_{n}-A_{n-1})\subset\mathrm{supp}\,A_{n}\backslash\mathrm{supp}\,A_{n-2}$. Since $\mathrm{supp}\,A_{1}\subset\mathrm{supp}\,A_{2}\subset\ldots$, it follows that every $j\in\mathbb{N}$ is contained in at most $2$ of $\mathrm{supp}(A_{n}-A_{n-1})$, for $n\in\mathbb{N}$. By Lemma \ref{Diagest}(ii) with $r=2$, we obtain (\ref{qcausum1}).

Since $(I-A_{n+1})A_{n-3}=(I-A_{n-2})A_{n-3}=0$, we have $A_{n+1}-A_{n-2}=0$ on $l^{p}(\mathrm{supp}\,A_{n-3})$ and so $\mathrm{supp}(A_{n+1}-A_{n-2})\subset\mathrm{supp}\,A_{n+1}\backslash\mathrm{supp}\,A_{n-3}$. For each $n\in\mathbb{N}$, let $D_{n}$ be the projection from $l^{p}$ onto $l^{p}(\mathrm{supp}(A_{n+1}-A_{n-2}))$. By Lemma \ref{Diagest}(i) with $r=4$, if $x_{n}$ is in the range of $A_{n+1}-A_{n-2}$ for $n\in\mathbb{N}$, then
\[\left\|\sum_{n=1}^{\infty}x_{n}\right\|^{p}=\left\|\sum_{n=1}^{\infty}D_{n}x_{n}\right\|^{p}\leq 4^{p}\sum_{n=1}^{\infty}\|D_{n}x_{n}\|^{p}\leq
4^{p}\sum_{n=1}^{\infty}\|x_{n}\|^{p}.\]
This proves (\ref{qcausum2}).
\end{proof}
Suppose that $(A_{n})_{n\in\mathbb{N}}$ is a refined diagonal approximate identity on $l^{p}$. For every $n\in\mathbb{N}$,
\[A_{n+1}A_{n}=A_{n}\text{ and }A_{n+1}A_{n-1}=A_{n-1}\]
so
\[A_{n+1}(A_{n}-A_{n-1})=A_{n}-A_{n-1}.\]
On the other hand, for every $n\in\mathbb{N}$,
\[A_{n-2}A_{n}=A_{n-2}=A_{n-2}A_{n-1}\]
so
\[A_{n-2}(A_{n}-A_{n-1})=0.\]
Therefore,
\[(A_{n+1}-A_{n-2})(A_{n}-A_{n-1})=A_{n}-A_{n-1}.\]
So combining Lemma \ref{qcau} and Lemma \ref{qcausum}, we obtain the following result.
\begin{lemma}\label{qcaum}
Let $\Lambda$ be a countable set. Let $\lambda\geq 1$. Suppose that $\mathcal{X}$ is a Banach space that is either finite dimensional or $\lambda$-isomorphic to $l^{p}$. Let $\psi:\Lambda\to B(l^{p})$. Let $\Omega_{1}\subset\Omega_{2}\subset\ldots$ be finite subsets of $\Lambda$. Let $\epsilon_{1},\epsilon_{2},\ldots>0$. Then there exist finite rank operators $A_{1},A_{2},\ldots$ on $\mathcal{X}$ such that
\begin{enumerate}[(i)]
\item $\displaystyle\sup_{n\in\mathbb{N}}\|A_{n}\|\leq\lambda$;
\item $A_{n}\to I$ in SOT as $n\to\infty$;
\item $\displaystyle\|A_{n}\psi(\alpha)-\psi(\alpha)A_{n}\|\leq\epsilon_{n}$ for all $\alpha\in\Omega_{n}$ and $n\in\mathbb{N}$;
\item $(A_{n+1}-A_{n-2})(A_{n}-A_{n-1})=A_{n}-A_{n-1}$ for all $n\in\mathbb{N}$;
\item $\displaystyle\left(\sum_{n=1}^{\infty}\|(A_{n}-A_{n-1})x\|^{p}\right)^{\frac{1}{p}}\leq2\lambda\|x\|$ for $x\in\mathcal{X}$; and
\item  if $(x_{n})_{n\in\mathbb{N}}$ is a sequence in $\mathcal{X}$ such that $\displaystyle\sum_{n=1}^{\infty}\|x_{n}\|^{p}<\infty$, then
\[\left(\left\|\sum_{n=1}^{\infty}(A_{n+1}-A_{n-2})x_{n}\right\|^{p}\right)^{\frac{1}{p}}\leq
4\lambda\left(\sum_{n=1}^{\infty}\|x_{n}\|^{p}\right)^{\frac{1}{p}}.\]
\end{enumerate}
\end{lemma}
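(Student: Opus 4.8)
The plan is to transport the construction of Lemma \ref{qcau} through the given isomorphism and then read off properties (i)--(vi) from Lemmas \ref{qcau} and \ref{qcausum} together with the identity $(B_{n+1}-B_{n-2})(B_n-B_{n-1})=B_n-B_{n-1}$ established just above the statement. First I would dispose of the finite-dimensional case by taking $A_n=I_{\mathcal{M}}$ for every $n\geq1$: these are finite rank, have norm $1\leq\lambda$, converge to $I$ in SOT, and make every commutator in (iii) vanish; with the convention $A_{-1}=A_0=0$ the only nonzero increment is $A_1-A_0=I$, so (iv) is immediate, (v) reduces to $\|x\|\leq2\lambda\|x\|$, and in (vi) only $x_1$ and $x_2$ can be nonzero, so $\|x_1+x_2\|\leq\|x_1\|+\|x_2\|\leq2(\|x_1\|^p+\|x_2\|^p)^{1/p}\leq4\lambda(\sum_n\|x_n\|^p)^{1/p}$.

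For the main case, where $\mathcal{M}$ is $\lambda$-isomorphic to $l^p$, I would fix an invertible $S\colon l^p\to\mathcal{M}$ with $\|S\|\,\|S^{-1}\|\leq\lambda$ and apply Lemma \ref{qcau} to the collection $(S^{-1}X_\alpha S)_{\alpha\in\Lambda}$ on $l^p$, keeping the sets $\Omega_n$ but replacing $\epsilon_n$ by $\epsilon_n/(\|S\|\,\|S^{-1}\|)$. This yields diagonal operators $B_1\ll B_2\ll\cdots$ on $l^p$ with finite supports, converging to $I$ in SOT, with $\|B_nS^{-1}X_\alpha S-S^{-1}X_\alpha SB_n\|<\epsilon_n/(\|S\|\,\|S^{-1}\|)$ for $\alpha\in\Omega_n$. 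I would then set $A_n:=SB_nS^{-1}$; these are finite rank since $B_n$ has finite support. Property (i) follows because $B_n$ is diagonal with entries in $[0,1]$, so $\|A_n\|\leq\|S\|\,\|B_n\|\,\|S^{-1}\|\leq\lambda$; property (ii) follows since $A_ny=SB_nS^{-1}y\to y$ for $y\in\mathcal{M}$ by continuity of $S$; and since $A_nX_\alpha-X_\alpha A_n=S(B_nS^{-1}X_\alpha S-S^{-1}X_\alpha SB_n)S^{-1}$, its norm is at most $\|S\|\,\|S^{-1}\|\cdot\epsilon_n/(\|S\|\,\|S^{-1}\|)=\epsilon_n$, which is (iii).

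For the last three properties I would use $A_{n+1}-A_{n-2}=S(B_{n+1}-B_{n-2})S^{-1}$ and $A_n-A_{n-1}=S(B_n-B_{n-1})S^{-1}$. Property (iv) is then conjugation of the $\ll$-chain identity recalled above. For (v), $\|(A_n-A_{n-1})x\|\leq\|S\|\,\|(B_n-B_{n-1})S^{-1}x\|$, so summing $p$-th powers and using (\ref{qcausum1}) gives $\sum_n\|(A_n-A_{n-1})x\|^p\leq2\|S\|^p\|S^{-1}x\|^p\leq2\lambda^p\|x\|^p$; taking $p$-th roots and using $2^{1/p}\leq2$ gives the stated bound $2\lambda\|x\|$. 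For (vi), if $x_n$ lies in the range of $A_{n+1}-A_{n-2}$ then $S^{-1}x_n$ lies in the range of $B_{n+1}-B_{n-2}$, so (\ref{qcausum2}) applies to $(S^{-1}x_n)_{n\geq1}$, giving convergence of $\sum_n S^{-1}x_n$ and $\|\sum_n S^{-1}x_n\|\leq4(\sum_n\|S^{-1}x_n\|^p)^{1/p}$; applying $S$ and using $\sum_n\|S^{-1}x_n\|^p\leq\|S^{-1}\|^p\sum_n\|x_n\|^p$ yields $\|\sum_n x_n\|\leq4\lambda(\sum_n\|x_n\|^p)^{1/p}$.

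I do not expect any real obstacle: the content is already contained in Lemmas \ref{qcau} and \ref{qcausum}, and the only points needing care are handling the finite-dimensional case on its own (such $\mathcal{M}$ need not embed isomorphically onto an infinite-dimensional $l^p$) and shrinking the input tolerances to $\epsilon_n/(\|S\|\,\|S^{-1}\|)$ so that (iii) emerges with exactly $\epsilon_n$ after conjugating.
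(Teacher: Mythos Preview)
Your proposal is correct and follows exactly the approach the paper intends: the paper states no proof beyond ``combining Lemmas \ref{qcau} and \ref{qcausum}'' together with the $\ll$-chain identity, and your argument spells out precisely this combination, transporting the diagonal quasicentral approximate unit through the isomorphism $S$. Your separate handling of the finite-dimensional case and the shrinking of $\epsilon_n$ to $\epsilon_n/(\|S\|\,\|S^{-1}\|)$ are the right details to fill in; note that the strict inequality in (iii) is preserved because Lemma \ref{qcau}(ii) already gives a strict bound.
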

\section{Functional calculus}
Let $\mathcal{B}$ be a Banach algebra. A function $h:[0,1]\to\mathcal{B}$ is {\it $\mathcal{B}$-simple} if there are $b_{1},\ldots,b_{n}\in\mathcal{B}$ and $f_{1},\ldots,f_{n}\in C[0,1]$ such that $h(t)=\sum_{i=1}^{n}b_{i}f_{i}(t)$ for all $t\in M$.

Suppose that $A$ is a diagonal operator on $l^{p}$ with entries $a_{1},a_{2},\ldots$ in $[0,1]$. For $f\in C[0,1]$, let $f(A)$ be the diagonal operator on $l^{p}$ with entries $f(a_{1}),f(a_{2}),\ldots$ and let $f(\pi(A))=\pi(f(A))\in B(l^{p})/K(l^{p})$.

Let $\mathcal{B}$ be the commutant of $\pi(A)$ in $B(l^{p})/K(l^{p})$. We define $h(\pi(A))$, for every $\mathcal{B}$-simple function $h:[0,1]\to\mathcal{B}$, as follows: if we write $h(t)=\sum_{i=1}^{n}b_{i}f_{i}(t)$, where $b_{1},\ldots,b_{n}\in\mathcal{B}$, we set
\[h(\pi(A))=\sum_{i=1}^{n}b_{i}f_{i}(\pi(A))\in\mathcal{B}.\]
For example, if $h(t)=b_{0}+b_{1}t+b_{2}t^{2}$ then $h(\pi(A))=b_{0}+b_{1}\pi(A)+b_{2}\pi(A^{2})$.

Next we can define $h(\pi(A))$ for all continuous function $h:[0,1]\to\mathcal{B}$. Indeed, we show that every continuous function from $[0,1]$ to $\mathcal{B}$ can be approximated by $\mathcal{B}$-simple functions on $[0,1]$ for all unital Banach algebra $\mathcal{B}$ (Lemma \ref{simpleapprox}). We also prove that
\[\|h(\pi(A))\|\leq4\sup_{0\leq t\leq 1}\|h(t)\|,\]
for all $\mathcal{B}$-simple function $h$ on $[0,1]$ (Corollary \ref{Simpleest}). We define $h(\pi(A))$ as follows: if $(h_{n})_{n\in\mathbb{N}}$ is any sequence of $\mathcal{B}$-simple functions on $[0,1]$ such that $\sup_{0\leq t\leq 1}\|h_{n}(t)-h(t)\|\to 0$, as $n\to\infty$, then we set
\[h(\pi(A))=\lim_{n\to\infty}h_{n}(\pi(A)),\]
where the limit exists and does not depend of the choice of $(h_{n})_{n\in\mathbb{N}}$.

Let $\mathcal{B}$ be a Banach algebra. Let $a\in\mathcal{B}$. A function $h:[0,1]\to\mathcal{B}$ is {\it $a$-continuous} if there exists a continuous function $\widetilde{h}:[0,1]\to\mathcal{B}$ such that $(h(t)-\widetilde{h}(t))a=0$, for all $t\in[0,1]$, and $h(0)=\widetilde{h}(0)$.

Finally, if $A$ is a diagonal operator on $l^{p}$ with entries in $[0,1]$ and $\mathcal{B}$ is the commutant of $\pi(A)$ in $B(l^{p})/K(l^{p})$, we define $h(\pi(A))$, for every $\pi(A)$-continuous function $h:[0,1]\to\mathcal{B}$, as follows: \[h(\pi(A))=\widetilde{h}(\pi(A)),\]
where $\widetilde{h}:[0,1]\to\mathcal{B}$ is any continuous function such that $(h(t)-\widetilde{h}(t))\pi(A)=0$, for all $t\in[0,1]$, and $h(0)=\widetilde{h}(0)$. By Lemma \ref{samecalculus} below, the element $\widetilde{h}(\pi(A))$ of $\mathcal{B}$ does not depend on the choice of $\widetilde{h}$.
\begin{lemma}\label{simpleapprox}
Let $\mathcal{B}$ be a unital Banach algebra. Let $h:[0,1]\to\mathcal{B}$ be a continuous function. Then there is a sequence $(h_{n})_{n\in\mathbb{N}}$ of $\mathcal{B}$-simple functions on $[0,1]$ such that $\displaystyle\sup_{0\leq t\leq 1}\|h_{n}(t)-h(t)\|\to 0$ as $n\to\infty$.
\end{lemma}
\begin{proof}
Let $\epsilon>0$. We need to find a $\mathcal{B}$-simple function $h_{1}$ on $[0,1]$ such that $\sup_{0\leq t\leq 1}\|h_{1}(t)-h(t)\|\leq\epsilon$. There exists $\gamma>0$ such that
\[\|h(t)-h(s)\|\leq\epsilon,\]
for all $t,s\in[0,1]$ such that $|t-s|\leq\gamma$. Let $(U_{i})_{1\leq i\leq k}$ be an open cover of $[0,1]$ such that each $U_{i}$ has diameter at most $\gamma$. Let $(f_{i})_{1\leq i\leq k}$ be a partition of unity on $[0,1]$ subordinate to $(U_{i})_{1\leq i\leq k}$. For each $1\leq i\leq k$, pick $t_{i}\in U_{i}$. Take
\[h_{1}(t)=\sum_{i=1}^{k}h(t_{i})f_{i}(t),\]
for $t\in[0,1]$. For $t\in[0,1]$, we have
\begin{eqnarray*}
\|h_{1}(t)-h(t)\|&=&\left\|\sum_{i=1}^{k}h(t_{i})f_{i}(t)-\sum_{i=1}^{k}h(t)f_{i}(t)\right\|\\&\leq&
\sum_{i=1}^{k}\|h(t_{i})-h(t)\|f_{i}(t)\leq\sup_{i:f_{i}(t)\neq 0}\|h(t_{i})-h(t)\|,
\end{eqnarray*}
where the last inequality follows from the fact that $\sum_{i=1}^{k}f_{i}(t)=1$. However, if $f_{i}(t)\neq 0$ then $t\in U_{i}$. Since $U_{i}$ has diameter at most $\gamma$, this implies that $|t-t_{i}|\leq\gamma$ and so $\|h(t_{i})-h(t)\|\leq\epsilon$. Therefore, $\|h_{1}(t)-h(t)\|\leq\epsilon$ for all $t\in[0,1]$.
\end{proof}
\begin{lemma}\label{Simplecont}
Let $A$ be a diagonal operator on $l^{p}$ with entries in $[0,1]$. Let $\mathcal{B}$ be the commutant of $\pi(A)$ in $B(l^{p})/K(l^{p})$. Let $h$ be a $\mathcal{B}$-simple function. Then for every $\epsilon>0$, there exists $\gamma>0$ such that $\|(h(\pi(A))-h(t))\pi(f(A))\|\leq\epsilon$ for all $t\in[0,1]$ and $f\in C[0,1]$ with $\mathrm{supp}\,f\subset[t-\gamma,t+\gamma]$.
\end{lemma}
\begin{proof}
We write $h(t)=\sum_{i=1}^{n}b_{i}f_{i}(t)$, for $t\in[0,1]$, where $b_{1},\ldots,b_{n}\in\mathcal{B}$ and $f_{1},\ldots,f_{n}\in C[0,1]$. Take $\gamma>0$ to be small enough so that $|f_{i}(s)-f_{i}(t)|\leq\frac{\epsilon}{\|b_{1}\|+\ldots+\|b_{n}\|}$ for all $1\leq i\leq n$ and $s,t\in[0,1]$ such that $|s-t|\leq\gamma$. Then $\|(f_{i}(A)-f_{i}(t))f(A)\|\leq\frac{\epsilon}{\|b_{1}\|+\ldots+\|b_{n}\|}$ for all $t\in[0,1]$ and $f\in C[0,1]$ with $\mathrm{supp}\,f\subset[t-\gamma,t+\gamma]$. Hence,
\begin{eqnarray*}
\|(h(\pi(A))-h(t))\pi(f(A))\|&\leq&\sum_{i=1}^{n}\|b_{i}\|\|(f_{i}(A)-f_{i}(t))f(A)\|\\&\leq&
\sum_{i=1}^{n}\|b_{i}\|\frac{\epsilon}{\|b_{1}\|+\ldots+\|b_{n}\|}=\epsilon.
\end{eqnarray*}
\end{proof}
\begin{lemma}\label{Simpleest}
Let $A$ be a diagonal operator on $l^{p}$ with entries in $[0,1]$. Let $\mathcal{B}$ be the commutant of $\pi(A)$ in $B(l^{p})/K(l^{p})$. Let $h$ be a $\mathcal{B}$-simple function. Then
\[\|h(\pi(A))\|\leq4\sup_{0\leq t\leq 1}\|h(t)\|.\]
\end{lemma}
\begin{proof}
Let $\epsilon>0$. By Lemma \ref{Simplecont}, there exists $\gamma>0$ such that
\begin{equation}\label{Simpleesteq1}
\|(h(\pi(A))-h(t))\pi(f(A))\|\leq\epsilon,
\end{equation}
for all $t\in[0,1]$ and $f\in C[0,1]$ with $\mathrm{supp}\,f\subset[t-\gamma,t+\gamma]$. Let $(U_{i})_{1\leq i\leq k}$ be an open cover of $[0,1]$ such that each $U_{i}$ has diameter at most $\gamma$ and every point on $[0,1]$ is contained in at most $2$ of $U_{1},\ldots,U_{k}$. Let $(f_{i})_{1\leq i\leq k}$ be a partition of unity on $[0,1]$ subordinate to $(U_{i})_{1\leq i\leq k}$. We have $\sum_{i=1}^{k}f_{i}(A)=I$ so
\[h(\pi(A))=h(\pi(A))\sum_{i=1}^{k}\pi(f_{i}(A))=\sum_{i=1}^{k}h(\pi(A))\pi(f_{i}(A)^{\frac{1}{2}})\pi(f_{i}(A)^{\frac{1}{2}}).\]
Since $h(\pi(A))\in\mathcal{B}$, the elements $h(\pi(A))$ and $\pi(A)$ commute. Thus, $h(\pi(A))$ commutes with any polynomial of $\pi(A)$ and so $h(\pi(A))$ commutes with $\pi(f_{i}(A)^{\frac{1}{2}})$. Therefore,
\begin{equation}\label{Simpleesteq2}
h(\pi(A))=\sum_{i=1}^{k}\pi(f_{i}(A)^{\frac{1}{2}})h(\pi(A))\pi(f_{i}(A)^{\frac{1}{2}}).
\end{equation}
Since every point on $[0,1]$ is contained in at most $2$ of $U_{1},\ldots,U_{k}$, every $j\in\mathbb{N}$ is contained in at most $2$ of $\mathrm{supp}\,f_{1}(A)^{\frac{1}{2}},\mathrm{supp}\,f_{2}(A)^{\frac{1}{2}},\ldots$. By Lemma \ref{Diagest}(iv),
\begin{equation}\label{Simpleesteq3}
\left\|\sum_{i=1}^{k}\pi(f_{i}(A)^{\frac{1}{2}})h(\pi(A))\pi(f_{i}(A)^{\frac{1}{2}})\right\|\leq4\max_{1\leq i\leq k}\|\pi(f_{i}(A)^{\frac{1}{2}})h(\pi(A))\|.
\end{equation}
Since each $U_{i}$ has diameter at most $\gamma$, we have $U_{i}\subset[t_{i}-\gamma,t_{i}+\gamma]$ for some $t_{i}\in[0,1]$. So $\mathrm{supp}\,f_{i}\subset[t_{i}-\gamma,t_{i}+\gamma]$ for all $1\leq i\leq k$. Thus, by (\ref{Simpleesteq1}),
\begin{eqnarray*}
\|\pi(f_{i}(A)^{\frac{1}{2}})h(\pi(A))\|&=&\|h(\pi(A))\pi(f_{i}(A)^{\frac{1}{2}})\|\\&\leq&
\|h(t_{i})\pi(f_{i}(A)^{\frac{1}{2}})\|+\|h(\pi(A))-h(t_{i}))\pi(f_{i}(A)^{\frac{1}{2}})\|\\&\leq&\|h(t_{i})\|+\epsilon.
\end{eqnarray*}
So by (\ref{Simpleesteq2}) and (\ref{Simpleesteq3}), we have $\displaystyle\|h(\pi(A))\|\leq 4\sup_{0\leq t\leq 1}\|h(t)\|$.
\end{proof}
As explained at the beginning of this section, having proved Lemmas \ref{simpleapprox}-\ref{Simpleest}, we can define $h(\pi(A))\in\mathcal{B}$ for all diagonal operator $A$ on $l^{p}$ with entries in $[0,1]$ and continuous function $h:[0,1]\to\mathcal{B}$ where $\mathcal{B}$ is the commutant of $\pi(A)$ in $B(l^{p})/K(l^{p})$. Moreover,
\begin{equation}\label{Simpleest2}
\|h(\pi(A))\|\leq4\sup_{0\leq t\leq 1}\|h(t)\|.
\end{equation}
\begin{lemma}\label{samecalculus}
Let $A$ be a diagonal operator on $l^{p}$ with entries in $[0,1]$. Let $\mathcal{B}$ be the commutant of $\pi(A)$ in $B(l^{p})/K(l^{p})$. If $h_{1}:[0,1]\to\mathcal{B}$ and $h_{2}:[0,1]\to\mathcal{B}$ are continuous functions such that $(h_{1}(t)-h_{2}(t))\pi(A)=0$, for all $t\in[0,1]$, and $h_{1}(0)=h_{2}(0)$, then $h_{1}(\pi(A))=h_{2}(\pi(A))$.
\end{lemma}
\begin{proof}
Let $\epsilon>0$. Since $h_{1}(0)=h_{2}(0)$, there exists $\gamma>0$ such that $|h_{1}(t)-h_{2}(t)|\leq\epsilon$ for all $0\leq t\leq\gamma$. Let $f:[0,1]\to[0,1]$ be a continuous function such that $f(0)=1$ and $f(t)=0$ for all $\gamma\leq t\leq 1$. Then $\|(h_{1}-h_{2})f\|\leq\epsilon$ and so $\|(h_{1}(\pi(A))-h_{2}(\pi(A)))f(\pi(A))\|\leq\epsilon$.

Since $(h_{1}(t)-h_{2}(t))\pi(A)=0$ and $1-f(0)=0$, we have $(h_{1}(t)-h_{2}(t))(1-f(\pi(A)))=0$ by polynomial approximation. Hence, by (\ref{Simpleest2}), we have $(h_{1}(\pi(A))-h_{2}(\pi(A)))(1-f(\pi(A)))=0$. Therefore, $\|h_{1}(\pi(A))-h_{2}(\pi(A))\|=\|(h_{1}(\pi(A))-h_{2}(\pi(A)))f(\pi(A))\|\leq\epsilon$. Choose $\epsilon>0$ to be arbitrarily small. The result follows.
\end{proof}
As explained at the beginning of this section, having proved Lemma \ref{samecalculus}, we can define $h(\pi(A))\in\mathcal{B}$ for all diagonal operator $A$ on $l^{p}$ with entries in $[0,1]$ and $\pi(A)$-continuous function $h:[0,1]\to\mathcal{B}$ where $\mathcal{B}$ is the commutant of $\pi(A)$ in $B(l^{p})/K(l^{p})$. Recall that $h$ is $\pi(A)$-continuous if there is a continuous function $\widetilde{h}:[0,1]\to\mathcal{B}$ such that $(h(t)-\widetilde{h}(t))\pi(A)=0$, for all $t\in[0,1]$, and $h(0)=\widetilde{h}(0)$. The following results say that this ``functional calculus" preserves direct sum with $1$.
\begin{lemma}\label{calculusdirectsum}
Let $A$ be a diagonal operator on $l^{p}$ with entries in $[0,1]$. Let $\mathcal{B}$ be the commutant of $\pi(A)$ in $B(l^{p})/K(l^{p})$. Let $h_{1}:[0,1]\to B(l^{p})/K(l^{p})$ and $h_{2}:[0,1]\to\mathcal{B}$ be continuous functions. Let $\mathcal{B}_{2}$ be the commutant of $\pi(I)\oplus\pi(A)$ in $B(l^{p}\oplus l^{p})/K(l^{p}\oplus l^{p})$. Define $h_{3}:[0,1]\to\mathcal{B}_{2}$ by $h_{3}(t)=h_{1}(t)\oplus h_{2}(t)$ for $t\in[0,1]$. Then $h_{3}(\pi(I)\oplus\pi(A))=h_{1}(1)\oplus h_{2}(\pi(A))$.
\end{lemma}
\begin{proof}
Let $\epsilon>0$. There exist $b_{1},\ldots,b_{n}\in B(l^{p})/K(l^{p})$, $c_{1},\ldots,c_{k}\in\mathcal{B}$, $f_{1},\ldots,f_{n},g_{1},\ldots,g_{k}\in C[0,1]$ such that $\|h_{1}(t)-\sum_{i=1}^{n}b_{i}f_{i}(t)\|\leq\epsilon$ and $\|h_{2}(t)-\sum_{j=1}^{k}c_{j}g_{j}(t)\|\leq\epsilon$ for all $t\in[0,1]$. Define $h_{4}:[0,1]\to\mathcal{B}_{2}$ by
\[h_{4}(t)=\left(\sum_{i=1}^{n}b_{i}f_{i}(t)\right)\oplus\left(\sum_{j=1}^{k}c_{j}g_{j}(t)\right)=
\sum_{i=1}^{n}(b_{i}\oplus\pi(0))f_{i}(t)+\sum_{j=1}^{k}(\pi(0)\oplus c_{j})g_{j}(t),\]
for $t\in [0,1]$. Then $h_{4}$ is a $\mathcal{B}_{2}$-simple function and $\|h_{3}(t)-h_{4}(t)\|\leq\epsilon$ for all $t\in[0,1]$. So
\begin{eqnarray*}
h_{4}(\pi(I)\oplus\pi(A))&=&\sum_{i=1}^{n}(b_{i}\oplus\pi(0))f_{i}(\pi(I)\oplus\pi(A))+\sum_{j=1}^{k}(\pi(0)\oplus c_{j})g_{j}(\pi(I)\oplus\pi(A))\\&=&
\left(\sum_{i=1}^{n}b_{i}f_{i}(1)\right)\oplus\left(\sum_{j=1}^{k}c_{j}\pi(g_{j}(A))\right).
\end{eqnarray*}
Thus, $\|h_{4}(\pi(I)\oplus\pi(A))-h_{1}(1)\oplus h_{2}(\pi(A))\|\leq\epsilon$. Therefore, $h_{3}(\pi(I)\oplus\pi(A))=h_{1}(1)\oplus h_{2}(\pi(A))$.
\end{proof}
The following result says that Lemma \ref{calculusdirectsum} still holds if $h_{2}$ is only assumed to be $\pi(A)$-continuous.
\begin{lemma}\label{calculusdirectsum2}
Let $A$ be a diagonal operator on $l^{p}$ with entries in $[0,1]$. Let $\mathcal{B}$ be the commutant of $\pi(A)$ in $B(l^{p})/K(l^{p})$. Suppose that $h_{1}:[0,1]\to B(l^{p})/K(l^{p})$ is continuous and $h_{2}:[0,1]\to\mathcal{B}$ is $\pi(A)$-continuous. Let $\mathcal{B}_{2}$ be the commutant of $\pi(I)\oplus\pi(A)$ in $B(l^{p}\oplus l^{p})/K(l^{p}\oplus l^{p})$. Define $h_{3}:[0,1]\to\mathcal{B}_{2}$ by $h_{3}(t)=h_{1}(t)\oplus h_{2}(t)$ for $t\in[0,1]$. Then $h_{3}$ is $(\pi(I)\oplus\pi(A))$-continuous and $h_{3}(\pi(I)\oplus\pi(A))=h_{1}(1)\oplus h_{2}(\pi(A))$.
\end{lemma}
\begin{proof}
There is a continuous function $\widetilde{h}_{2}:[0,1]\to\mathcal{B}$ such that $(h_{2}(t)-\widetilde{h}_{2}(t))\pi(A)=0$, for all $t\in[0,1]$, and $h_{2}(0)=\widetilde{h}_{2}(0)$. Then $\pi(I)\oplus\pi(A)$ and $h_{1}(t)\oplus\widetilde{h}_{2}(t)$ commute for all $t\in[0,1]$. Define $\widetilde{h}_{3}:[0,1]\to\mathcal{B}_{2}$ by $\widetilde{h}_{3}(t)=h_{1}(t)\oplus\widetilde{h}_{2}(t)$ for $t\in[0,1]$. Then $\widetilde{h}_{3}$ is continuous, $(h_{3}(t)-\widetilde{h}_{3}(t))(\pi(I)\oplus\pi(A))=0$, for all $t\in[0,1]$, and $h_{3}(0)=\widetilde{h}_{3}(0)$. Therefore, $h_{3}$ is $(\pi(I)\oplus\pi(A))$-continuous and $h_{3}(\pi(I)\oplus\pi(A))=\widetilde{h}_{3}(\pi(I)\oplus\pi(A))$. But by Lemma \ref{calculusdirectsum}, we have $\widetilde{h}_{3}(\pi(I)\oplus\pi(A))=h_{1}(1)\oplus\widetilde{h}_{2}(\pi(A))$. Since $\widetilde{h}_{2}(\pi(A))=h_{2}(\pi(A))$, the result follows.
\end{proof}
\section{Unconditional direct sum}
Let $\mathcal{Y}$ be a Banach space. Let $k\in\mathbb{N}$. Define the Banach space
\[\mathcal{Y}_{u}^{\oplus k}=\{(y_{1},\ldots,y_{k}):y_{1},\ldots,y_{k}\in\mathcal{Y}\}\]
with norm
\[\|(y_{1},\ldots,y_{k})\|=\mathbb{E}\left\|\sum_{i=1}^{k}\delta_{i}y_{i}\right\|,\]
where $\delta=(\delta_{1},\ldots,\delta_{k})$ is a random vector uniformly distributed on $\{-1,1\}^{k}$ and $\mathbb{E}$ denotes expectation with respect to $\delta$. For example,
\[\|(y_{1},y_{2})\|_{\mathcal{Y}_{u}^{\oplus 2}}=\frac{1}{2}(\|y_{1}+y_{2}\|+\|y_{1}-y_{2}\|).\]

Let $T_{1},\ldots,T_{k}\in B(\mathcal{Y})$. Define an operator $(T_{1}\oplus\ldots\oplus T_{k})_{u}\in B(\mathcal{Y}_{u}^{\oplus k})$ by
\[(T_{1}\oplus\ldots\oplus T_{k})_{u}(y_{1},\ldots,y_{k})=(T_{1}y_{1},\ldots,T_{k}y_{k}),\]
for $(y_{1},\ldots,y_{k})\in\mathcal{Y}_{u}^{\oplus k}$. It is easy to see that $\displaystyle\|(T_{1}\oplus\ldots\oplus T_{k})_{u}\|\geq\max_{1\leq i\leq k}\|T_{i}\|$. Equality does not necessarily hold. However, when $T_{1},\ldots,T_{k}$ are scalars, $\|(T_{1}\oplus\ldots\oplus T_{k})_{u}\|$ is, up to a constant, bounded by $\max_{1\leq i\leq k}\|T_{i}\|$.
\begin{lemma}\label{unconditional1}
Let $\mathcal{Y}$ be a Banach space. Let $c_{1},\ldots,c_{k}\in\mathbb{C}$. Then the operator $(c_{1}I\oplus\ldots\oplus c_{k}I)_{u}$ on $\mathcal{Y}_{u}^{\oplus k}$ has norm at most $2\max_{1\leq i\leq k}|c_{i}|$.
\end{lemma}
\begin{proof}
If $c_{1},\ldots,c_{k}\in\{1,-1\}$ then for all $y_{1},\ldots,y_{k}\in\mathcal{Y}$,
\begin{eqnarray*}
\|(c_{1}I\oplus\ldots\oplus c_{k}I)_{u}(y_{1},\ldots,y_{k})\|_{\mathcal{Y}_{u}^{\oplus k}}=\mathbb{E}\left\|\sum_{i=1}^{k}\delta_{i}c_{i}y_{i}\right\|=
\mathbb{E}\left\|\sum_{i=1}^{k}\delta_{i}y_{i}\right\|=\|(y_{1},\ldots,y_{k})\|_{\mathcal{Y}_{u}^{\oplus k}}.
\end{eqnarray*}
So $\|(c_{1}I\oplus\ldots\oplus c_{k}I)_{u}\|=1$.

The set $\{(c_{1},\ldots,c_{k})\in\mathbb{C}^{k}:\|(c_{1}I\oplus\ldots\oplus c_{k}I)_{u}\|\leq 1\}$ is convex and contains $\{-1,1\}^{k}$. So it contains $[-1,1]^{k}$. Thus, $\|(c_{1}I\oplus\ldots\oplus c_{k}I)_{u}\|\leq 1$ for all $c_{1},\ldots,c_{k}\in[-1,1]$. If $c_{1},\ldots,c_{k}\in\mathbb{C}$ and $|c_{i}|\leq 1$ for all $1\leq i\leq k$ then writing $c_{i}$ as the sum of its real and imaginary parts, we have $\|(c_{1}I\oplus\ldots\oplus c_{k}I)_{u}\|\leq 2$.
\end{proof}
From Lemma \ref{unconditional1}, we have the following result.
\begin{lemma}\label{normleq2}
Let $\mathcal{Y}$ be a Banach space. Let $M$ be a compact metric space. Let $w_{1},\ldots,w_{k}\in M$. Let $\eta:C(M)\to B(\mathcal{Y}_{u}^{\oplus k})$ be the homomorphism defined by
\[\eta(h)=(h(w_{1})I\oplus\ldots\oplus h(w_{k})I)_{u},\]
for $h\in C(M)$. Then $\|\eta\|\leq 2$.
\end{lemma}
\begin{lemma}\label{unconditional2}
Let $\mathcal{Y}$ be a Banach space. Let $T_{1},\ldots,T_{k}\in B(\mathcal{Y})$. Then
\[\left\|\sum_{i=1}^{k}T_{i}y_{i}\right\|\leq
\max_{\delta\in\{-1,1\}^{k}}\left\|\sum_{i=1}^{k}\delta_{i}T_{i}\right\|\|(y_{1},\ldots,y_{k})\|_{\mathcal{Y}_{u}^{\oplus k}},\]
for all $y_{1},\ldots,y_{k}\in\mathcal{Y}$.
\end{lemma}
\begin{proof}
Since
\[\sum_{i=1}^{k}T_{i}y_{i}=\mathbb{E}\left[\left(\sum_{i=1}^{k}\delta_{i}T_{i}\right)\left(\sum_{j=1}^{k}\delta_{j}y_{j}\right)\right],\]
we have
\[\left\|\sum_{i=1}^{k}T_{i}y_{i}\right\|\leq
\mathbb{E}\left[\left\|\sum_{i=1}^{k}\delta_{i}T_{i}\right\|\left\|\sum_{j=1}^{k}\delta_{j}y_{j}\right\|\right]\leq
\max_{\delta\in\{-1,1\}^{k}}\left\|\sum_{i=1}^{k}\delta_{i}T_{i}\right\|\|(y_{1},\ldots,y_{k})\|_{\mathcal{Y}_{u}^{\oplus k}}.\]
\end{proof}
\begin{lemma}\label{unconditional3}
Let $T_{1},\ldots,T_{k}\in B(l^{p})$. Define operators $W_{1}:l^{p}\to(l^{p})_{u}^{\oplus k}$ and $W_{2}:(l^{p})_{u}^{\oplus k}\to l^{p}$ by
\[W_{1}x=(T_{1}x,\ldots,T_{k}x)\quad\text{and}\quad W_{2}(y_{1},\ldots,y_{k})=\sum_{i=1}^{k}T_{i}y_{i},\]
for $x\in l^{p}$ and $y_{1},\ldots,y_{k}\in l^{p}$. Then $\|W_{1}\|_{e}$ and $\|W_{2}\|_{e}$ are bounded by $\displaystyle\max_{\delta\in\{-1,1\}^{k}}\left\|\sum_{i=1}^{k}\delta_{i}T_{i}\right\|_{e}$.
\end{lemma}
\begin{proof}
For each $n\in\mathbb{N}$, let $P_{n}$ be the projection from $l^{p}$ onto $l^{p}([1,n])$. For $n\in\mathbb{N}$ and $x\in l^{p}$,
\[\|W_{1}(I-P_{n})x\|=\mathbb{E}\left\|\sum_{i=1}^{k}\delta_{i}T_{i}(I-P_{n})x\right\|\leq
\max_{\delta\in\{-1,1\}^{k}}\left\|\sum_{i=1}^{k}\delta_{i}T_{i}(I-P_{n})\right\|\|x\|.\]
Hence,
\[\|W_{1}(I-P_{n})\|\leq\max_{\delta\in\{-1,1\}^{k}}\left\|\sum_{i=1}^{k}\delta_{i}T_{i}(I-P_{n})\right\|,\]
for all $n\in\mathbb{N}$. Taking $n\to\infty$, we have
\[\|W_{1}\|_{e}\leq\max_{\delta\in\{-1,1\}^{k}}\lim_{n\to\infty}\left\|\sum_{i=1}^{k}\delta_{i}T_{i}(I-P_{n})\right\|\leq
\mathbb{E}\left\|\sum_{i=1}^{k}\delta_{i}T_{i}\right\|_{e}.\]

By Lemma \ref{unconditional2}, for $n\in\mathbb{N}$ and $y_{1},\ldots,y_{k}\in l^{p}$,
\begin{eqnarray*}
\|(I-P_{n})W_{2}(y_{1},\ldots,y_{k})\|&\leq&\left\|\sum_{i=1}^{k}(I-P_{n})T_{i}y_{i}\right\|\\&\leq&
\max_{\delta\in\{-1,1\}^{k}}\left\|\sum_{i=1}^{k}\delta_{i}(I-P_{n})T_{i}\right\|\|(y_{1},\ldots,y_{k})\|_{\mathcal{Y}_{u}^{\oplus k}}.
\end{eqnarray*}
So
\[\|(I-P_{n})W_{2}\|\leq\max_{\delta\in\{-1,1\}^{k}}\left\|\sum_{i=1}^{k}\delta_{i}(I-P_{n})T_{i}\right\|,\]
for all $n\in\mathbb{N}$. Taking $n\to\infty$, we have
\[\|W_{2}\|_{e}\leq\max_{\delta\in\{-1,1\}^{k}}\left\|\sum_{i=1}^{k}\delta_{i}T_{i}\right\|_{e}.\]
\end{proof}
In the rest of this section, we prove Lemma \ref{isomorphicspaceu} below.
\begin{lemma}[\cite{Khintchine}]\label{Khintchine}
Let $1<p<\infty$. Then there exists $C_{p}\geq 1$ such that
\[\frac{1}{C_{p}}\left(\sum_{i=1}^{k}|v_{i}|^{2}\right)^{\frac{1}{2}}\leq\left(\mathbb{E}\left|
\sum_{i=1}^{k}\delta_{i}v_{i}\right|^{p}\right)^{\frac{1}{p}}\leq C_{p}\left(\sum_{i=1}^{k}|v_{i}|^{2}\right)^{\frac{1}{2}}\]
for all $k\in\mathbb{N}$ and $v_{1},\ldots,v_{k}\in\mathbb{C}$.
\end{lemma}
\begin{lemma}[\cite{Kahane}]\label{Kahane}
Let $1<p<\infty$. Then there exists $C_{p}\geq 1$ such that
\[\frac{1}{C_{p}}\mathbb{E}\left\|\sum_{i=1}^{k}\delta_{i}y_{i}\right\|\leq\left(\mathbb{E}\left\|
\sum_{i=1}^{k}\delta_{i}y_{i}\right\|^{p}\right)^{\frac{1}{p}}\leq C_{p}\mathbb{E}\left\|\sum_{i=1}^{k}\delta_{i}y_{i}\right\|,\]
for all $k\in\mathbb{N}$ and $y_{1},\ldots,y_{k}$ in a Banach space $\mathcal{Y}$.
\end{lemma}
\begin{lemma}[\cite{Pelczynski}]\label{isomorphicspace}
Let $1<p<\infty$. Then there exists $\lambda_{p}\geq 1$ such that $(\oplus_{n\in\mathbb{N}}\mathcal{H}_{n})_{l^{p}}$ is $\lambda_{p}$-isomorphic to $l^{p}$ for all nonzero finite dimensional Hilbert spaces $\mathcal{H}_{1},\mathcal{H}_{2},\ldots$.
\end{lemma}
\begin{lemma}\label{isomorphicspaceu}
Let $1<p<\infty$. Then there exists $\lambda_{p}\geq 1$ such that $(l^{p})_{u}^{\oplus k}$ is $\lambda_{p}$-isomorphic to $l^{p}$ for every $k\in\mathbb{N}$.
\end{lemma}
\begin{proof}
For each $n\in\mathbb{N}$, let $e_{n}^{*}\in(l^{p})^{*}$ be the $n$th coordinate functional, i.e., $e_{n}^{*}(x_{1},x_{2},\ldots)=x_{n}$ for $(x_{1},x_{2},\ldots)\in l^{p}$ and $n\in\mathbb{N}$. For each $n\in\mathbb{N}$, define $S_{n}:(l^{p})_{u}^{\oplus k}\to l^{2}([1,k])$ by
\[S_{n}(y_{1},\ldots,y_{k})=(e_{n}^{*}(y_{1}),\ldots,e_{n}^{*}(y_{k})),\]
for $(y_{1},\ldots,y_{k})\in(l^{p})_{u}^{\oplus k}$. Then
\[\|S_{n}(y_{1},\ldots,y_{k})\|=\left(\sum_{i=1}^{k}|e_{n}^{*}(y_{i})|^{2}\right)^{\frac{1}{2}}.\]
By Lemma \ref{Khintchine},
\begin{equation}\label{Khintchinesn}
\frac{1}{C_{p}}\left(\mathbb{E}\left|\sum_{i=1}^{k}\delta_{i}e_{n}^{*}(y_{i})\right|^{p}\right)^{\frac{1}{p}}\leq\|S_{n}(y_{1},\ldots,y_{k})\|\leq C_{p}\left(\mathbb{E}\left|\sum_{i=1}^{k}\delta_{i}e_{n}^{*}(y_{i})\right|^{p}\right)^{\frac{1}{p}}.
\end{equation}
Define $S:(l^{p})_{u}^{\oplus k}\to(l^{2}([1,k])\oplus l^{2}([1,k])\oplus\ldots)_{l^{p}}$ by
$Sy=(S_{1}y,S_{2}y,\ldots)$ for $y\in (l^{p})_{u}^{\oplus k}$. Then
\[\|Sy\|=\left(\sum_{n=1}^{\infty}\|S_{n}y\|^{p}\right)^{\frac{1}{p}}\]
so by (\ref{Khintchinesn}),
\[\frac{1}{C_{p}}\left(\mathbb{E}\sum_{n=1}^{\infty}\left|\sum_{i=1}^{k}\delta_{i}e_{n}^{*}(y_{i})\right|^{p}\right)^{\frac{1}{p}}\leq
\|S(y_{1},\ldots,y_{k})\|\leq C_{p}\left(\mathbb{E}\sum_{n=1}^{\infty}\left|\sum_{i=1}^{k}\delta_{i}e_{n}^{*}(y_{i})\right|^{p}\right)^{\frac{1}{p}}.\]
But
\[\mathbb{E}\sum_{n=1}^{\infty}\left|\sum_{i=1}^{k}\delta_{i}e_{n}^{*}(y_{i})\right|^{p}=\mathbb{E}\sum_{n=1}^{\infty}\left|e_{n}^{*}\left(
\sum_{i=1}^{k}\delta_{i}y_{i}\right)\right|^{p}=\mathbb{E}\left(\left\|\sum_{i=1}^{k}\delta_{i}y_{i}\right\|^{p}\right).\]
Thus,
\[\frac{1}{C_{p}^{p}}\mathbb{E}\left(\left\|\sum_{i=1}^{k}\delta_{i}y_{i}\right\|^{p}\right)\leq\|S(y_{1},\ldots,y_{k})\|^{p}\leq C_{p}^{p}\mathbb{E}\left(\left\|\sum_{i=1}^{k}\delta_{i}y_{i}\right\|^{p}\right).\]
So by Lemma \ref{Kahane}, we have $\frac{1}{C_{p}^{2}}\|y\|\leq\|Sy\|\leq C_{p}^{2}\|y\|$ for all $y\in(l^{p})_{u}^{\oplus k}$. It is easy to see that $S$ is surjective. Therefore, $(l^{p})_{u}^{\oplus k}$ is $C_{p}^{2}$-isomorphic to $(l^{2}([1,k])\oplus l^{2}([1,k])\oplus\ldots)_{l^{p}}$. By Lemma \ref{isomorphicspace}, the result follows.
\end{proof}
\section{Domination}
Recall that $|\;|_{x^{*}}$ is defined at the end of Section 1 for a bounded linear functional $x^{*}$ on a Banach space $\mathcal{X}$. Let $\Lambda$ be a set. Let $\mathcal{X}$ and $\mathcal{Y}$ be Banach spaces. Let $\psi:\Lambda\to B(\mathcal{X})$ and $\rho:\Lambda\to B(\mathcal{Y})$ be maps. Let $\lambda\geq 1$. We write $\psi\stackrel{\lambda}{\ll}\rho$ if there exist operators $V_{n}:\mathcal{X}\to\mathcal{Y}$ and $E_{n}:\mathcal{Y}\to\mathcal{X}$, for $n\in\mathbb{N}$, such that
\begin{enumerate}[(i)]
\item $\|V_{n}\|\leq\lambda$ and $\|E_{n}\|\leq 1$ for all $n\in\mathbb{N}$;
\item $E_{n}V_{n}\to I$ in SOT as $n\to\infty$;
\item $V_{n}\psi(\alpha)-\rho(\alpha)V_{n}\to 0$ in SOT, as $n\to\infty$, for all $\alpha\in\Lambda$;
\item $|E_{n}\rho(\alpha)-\psi(\alpha)E_{n}|_{x^{*}}\to 0$, as $n\to\infty$, for all $\alpha\in\Lambda$ and $x^{*}\in\mathcal{X}^{*}$; and
\item $V_{n}\to 0$ in WOT as $n\to\infty$.
\end{enumerate}
We write $\psi\ll\rho$ if $\psi\stackrel{\lambda}{\ll}\rho$ for some $\lambda\geq 1$. The motivation of this notion is the first lemma in \cite{Voiculescu} where it is shown that if $\mathcal{A}$ is a separable $C^{*}$-subalgebra of $B(l^{2})$ with $I\in\mathcal{A}$ and $\phi:\pi(\mathcal{A})\to B(l^{2})$ is a cyclic $*$-representation, then $\phi\circ\pi\ll\mathrm{id}$ where $\mathrm{id}:\mathcal{A}\to B(l^{2})$ is the identity representation.

The following result says that condition (v) above can be strengthened.
\begin{lemma}\label{llimprove}
Let $\Lambda$ be a countable set. Let $\lambda\geq 1$. Let $\mathcal{X}$ be a Banach space. Let $\psi:\Lambda\to B(\mathcal{X})$ and $\rho:\Lambda\to B(l^{p})$ be such that $\psi\stackrel{\lambda}{\ll}\rho$. Then for all numbers $m_{1}\in\mathbb{N}$ and $\epsilon>0$, finite subset $\Omega\subset\Lambda$ and finite dimensional subspaces $\mathcal{F}\subset\mathcal{X}$ and $\mathcal{G}\subset\mathcal{X}^{*}$, there exist $m_{2}>m_{1}$ and operators $V:\mathcal{X}\to l^{p}$ and $E:l^{p}\to\mathcal{X}$ such that
\begin{enumerate}[(i)]
\item $\|V\|\leq\lambda$ and $\|E\|\leq 1$;
\item $\|EVx-x\|\leq\epsilon\|x\|$ for all $x\in\mathcal{F}$;
\item $\|V\psi(\alpha)x-\rho(\alpha)Vx\|\leq\epsilon\|x\|$ for all $\alpha\in\Omega$ and $x\in\mathcal{F}$;
\item $|E\rho(\alpha)-\psi(\alpha)E|_{x^{*}}\leq\epsilon\|x^{*}\|$ for all $\alpha\in\Omega$ and $x^{*}\in\mathcal{G}$; and
\item $Vx\in l^{p}([m_{1}+1,m_{2}])$ for all $x\in\mathcal{X}$ and $Ey=0$ for all $y\in l^{p}([1,m_{1}]\cup(m_{2},\infty))$.
\end{enumerate}
\end{lemma}
\begin{proof}
Fix numbers $m_{1}\in\mathbb{N}$ and $\epsilon>0$, a finite subset $\Omega\subset\Lambda$ and a finite dimensional subspace $\mathcal{F}\subset\mathcal{X}$. By Lemma \ref{qcau}, there exists a finite rank diagonal operator $A_{1}$ on $l^{p}$ with diagonal entries in $[0,1]$ such that $A_{1}x=x$ for all $x\in l^{p}([1,m_{1}])$ and
\begin{equation}\label{llimproveeq1}
\|\rho(\alpha)A_{1}-A_{1}\rho(\alpha)\|\leq\epsilon,
\end{equation}
for all $\alpha\in\Omega$. Since $A_{1}$ is a finite rank diagonal operator, there exists $m_{1}'>m_{1}$ such that $A_{1}x=0$ for all $x\in l^{p}([m_{1}',\infty))$.

For each $m\in\mathbb{N}$, let $P_{m}$ be the projection from $l^{p}$ onto $l^{p}([1,m])$. Since $\psi\stackrel{\lambda}{\ll}\rho$, there exist operators $V_{0}:\mathcal{X}\to l^{p}$ and $E_{0}:l^{p}\to\mathcal{X}$ such that
\begin{enumerate}[(a)]
\item $\|V_{0}\|\leq\lambda$ and $\|E_{0}\|\leq 1$;
\item $\|E_{0}V_{0}x-x\|\leq\epsilon\|x\|$ for all $x\in\mathcal{F}$;
\item $\|V_{0}\psi(\alpha)x-\rho(\alpha)V_{0}x\|\leq\epsilon\|x\|$ for all $\alpha\in\Omega$ and $x\in\mathcal{F}$;
\item $|E_{0}\rho(\alpha)-\psi(\alpha)E_{0}|_{x^{*}}\leq\epsilon\|x^{*}\|$ for all $\alpha\in\Omega$ and $x\in\mathcal{G}$; and
\item $\displaystyle\|P_{m_{1}'}V_{0}x\|\leq\frac{\epsilon}{1+\max_{\alpha\in\Omega}\|\rho(\alpha)\|}\|x\|$ and $\|P_{m_{1}'}V_{0}\psi(\alpha)x\|\leq\epsilon\|x\|$ for all $x\in\mathcal{F}$ and $\alpha\in\Omega$.
\end{enumerate}
Let $N>m_{1}'$ be large enough so that
\begin{equation}\label{llimproveeq2}
\|(I-P_{N})V_{0}x\|\leq\frac{\epsilon}{1+\max_{\alpha\in\Omega}\|\rho(\alpha)\|}\|x\|\quad\text{and}\quad
\|(I-P_{N})V_{0}\psi(\alpha)x\|\leq\epsilon\|x\|,
\end{equation}
for all $x\in\mathcal{F}$ and $\alpha\in\Omega$. By Lemma \ref{qcau}, there exists a finite rank diagonal operator $A_{2}$ on $l^{p}$ with diagonal entries in $[0,1]$ such that $A_{2}x=x$ for all $x\in l^{p}([1,N])$ and
\begin{equation}\label{llimproveeq3}
\|\rho(\alpha)A_{2}-A_{2}\rho(\alpha)\|\leq\epsilon,
\end{equation}
for all $\alpha\in\Omega$. There exists $m_{2}>N$ such that $A_{2}x=0$ for all $x\in l^{p}((m_{2},\infty))$.

Take $V=(P_{N}-P_{m_{1}'})V_{0}$ and $E=E_{0}(A_{2}-A_{1})$. We have $\|V\|\leq\lambda$ and $\|E\|\leq 1$. Since $m_{1}<m_{1}'<N<m_{2}$, the range of $V$ is in $l^{p}([m_{1}+1,m_{2}])$ and $Ey=0$ for all $y\in l^{p}([1,m_{1}]\cup(m_{2},\infty))$. So we obtain (i) and (v).

Since $A_{1}=0$ on $l^{p}([m_{1}',\infty))$ and $N>m_{1}'$, we have $A_{1}(P_{N}-P_{m_{1}'})=0$. Since $A_{2}=I$ on $l^{p}([1,N])$ and $N>m_{1}'$, we have
$A_{2}(P_{N}-P_{m_{1}'})=P_{N}-P_{m_{1}'}$. Therefore,
\[(A_{2}-A_{1})(P_{N}-P_{m_{1}'})=P_{N}-P_{m_{1}'}.\]
Thus, for all $x\in\mathcal{F}$, we have
\begin{eqnarray*}
\|EVx-x\|&=&\|E_{0}(A_{2}-A_{1})(P_{N}-P_{m_{1}'})V_{0}x-x\|\\&=&\|E_{0}(P_{N}-P_{m_{1}'})V_{0}x-x\|\\&\leq&
\|E_{0}V_{0}x-x\|+\|(I-P_{N}+P_{m_{1}'})V_{0}x\|\\&\leq&
\epsilon\|x\|+\|(I-P_{N})V_{0}x\|+\|P_{m_{1}'}V_{0}x\|\text{ by (b)}\\&\leq&3\epsilon\|x\|\text{ by }(\ref{llimproveeq2})\text{ and (e)}.
\end{eqnarray*}
This proves (ii). For all $\alpha\in\Omega$ and $x\in\mathcal{F}$, we have
\begin{align*}
&\|V\psi(\alpha)x-\rho(\alpha)Vx\|\\=&
\|(P_{N}-P_{m_{1}'})V_{0}\psi(\alpha)x-\rho(\alpha)(P_{N}-P_{m_{1}'})V_{0}x\|\\\leq&\|V_{0}\psi(\alpha)x-\rho(\alpha)V_{0}x\|+
\|(I-P_{N}+P_{m_{1}'})V_{0}\psi(\alpha)x\|+\|\rho(\alpha)(I-P_{N}+P_{m_{1}'})V_{0}x\|\\\leq&
\epsilon\|x\|+\|(I-P_{N})V_{0}\psi(\alpha)x\|+\|P_{m_{1}'}V_{0}\psi(\alpha)x\|+\|\rho(\alpha)(I-P_{N})V_{0}x\|+
\|\rho(\alpha)P_{m_{1}'}V_{0}x\|\\&\text{ by (c)}\\\leq&5\epsilon\|x\|\text{ by }(\ref{llimproveeq2})\text{ and (e)}.
\end{align*}
This proves (iii). For all $\alpha\in\Omega$ and $x^{*}\in\mathcal{G}$, we have
\begin{align*}
&|E\rho(\alpha)-\psi(\alpha)E|_{x^{*}}\\=&
|E_{0}(A_{2}-A_{1})\rho(\alpha)-\psi(\alpha)E_{0}(A_{2}-A_{1})|_{x^{*}}\\\leq&
|E_{0}\rho(\alpha)(A_{2}-A_{1})-\psi(\alpha)E_{0}(A_{2}-A_{1})|_{x^{*}}+
\|E_{0}\|\|(A_{2}-A_{1})\rho(\alpha)-\rho(\alpha)(A_{2}-A_{1})\|\|x^{*}\|
\\\leq&|E_{0}\rho(\alpha)-\psi(\alpha)E_{0}|_{x^{*}}\|A_{2}-A_{1}\|+2\|E_{0}\|\epsilon\|x^{*}\|\text{ by }(\ref{llimproveeq1})\text{ and }(\ref{llimproveeq3})\\\leq&
3\epsilon\|x^{*}\|\text{ by (d) and (a)}.
\end{align*}
Thus, (iv) is proved.
\end{proof}
\begin{lemma}\label{EVI}
Let $0<\epsilon<1$. Suppose that $T$ is an operator on a Banach space $\mathcal{X}$ and $\mathcal{F}$ is a finite dimensional subspace of $\mathcal{X}$ such that
\[\|Tx-x\|\leq\frac{\epsilon}{\mathrm{dim}\,\mathcal{F}}\|x\|,\]
for all $x\in\mathcal{F}$. Then there exists an invertible operator $S$ on $\mathcal{X}$ such that $\|I-S^{-1}\|\leq\frac{\epsilon}{1-\epsilon}$ and $S^{-1}Tx=x$ for all $x\in\mathcal{F}$.
\end{lemma}
\begin{proof}
Define an operator $K:\mathcal{F}\to\mathcal{X}$ by $Kx=Tx-x$ for $x\in\mathcal{F}$. By Auerbach's lemma, we can extend $K$ to an operator $\widetilde{K}\in B(\mathcal{X})$ so that
\[\|\widetilde{K}\|\leq(\mathrm{dim}\,\mathcal{F})\|K\|\leq\epsilon.\]
Let $S=I+\widetilde{K}$. Then $S$ is an invertible operator on $\mathcal{X}$ such that $\|I-S^{-1}\|\leq\frac{\epsilon}{1-\epsilon}$ and
\[S^{-1}Tx=S^{-1}(I+K)x=S^{-1}(I+\widetilde{K})x=x,\]
for all $x\in\mathcal{F}$.
\end{proof}
The following result says that in Lemma \ref{llimprove}, we can have equality in (ii) but with (i) being slightly weakened. This can be proved by replacing $E$ in Lemma \ref{llimprove} by $S^{-1}E$ where $S$ is obtained from Lemma \ref{EVI} (for a different $\epsilon$).
\begin{lemma}\label{llimprove2}
Let $\Lambda$ be a countable set. Let $\lambda\geq 1$. Let $\mathcal{X}$ be a Banach space. Let $\psi:\Lambda\to B(\mathcal{X})$ and $\rho:\Lambda\to B(l^{p})$ be such that $\psi\stackrel{\lambda}{\ll}\rho$. Then for all numbers $m_{1}\in\mathbb{N}$ and $\epsilon>0$, finite subset $\Omega\subset\Lambda$ and finite dimensional subspaces $\mathcal{F}\subset\mathcal{X}$ and $\mathcal{G}\subset\mathcal{X}^{*}$, there exist $m_{2}>m_{1}$ and operators $V:\mathcal{X}\to l^{p}$ and $E:l^{p}\to\mathcal{X}$ such that
\begin{enumerate}[(i)]
\item $\|V\|\leq\lambda$ and $\|E\|\leq 1+\epsilon$;
\item $EVx=x$ for all $x\in\mathcal{F}$;
\item $\|V\psi(\alpha)x-\rho(\alpha)Vx\|\leq\epsilon\|x\|$ for all $\alpha\in\Omega$ and $x\in\mathcal{F}$;
\item $|E\rho(\alpha)-\psi(\alpha)E|_{x^{*}}\leq\epsilon$ for all $\alpha\in\Omega$ and $x^{*}\in\mathcal{G}$; and
\item $Vx\in l^{p}([m_{1}+1,m_{2}])$ for all $x\in\mathcal{X}$ and $Ey=0$ for all $y\in l^{p}([1,m_{1}]\cup(m_{2},\infty))$.
\end{enumerate}
\end{lemma}
\begin{lemma}\label{dsl}
Let $\Lambda$ be a countable set. Let $\lambda\geq 1$. Let $\mathcal{X}_{1},\mathcal{X}_{2},\ldots$ be Banach spaces. Let $\rho:\Lambda\to B(l^{p})$. For each $i\in\mathbb{N}$, let $\psi_{i}:\Lambda\to B(\mathcal{X}_{i})$ be such that $\psi_{i}\stackrel{\lambda}{\ll}\rho$. Then $\psi_{1}\oplus\psi_{2}\oplus\ldots\stackrel{\lambda}{\ll}\rho$.
\end{lemma}
\begin{proof}
Fix numbers $m_{1},j_{0}\in\mathbb{N}$ and $\epsilon>0$ and a finite subset $\Omega\subset\Lambda$. For each $1\leq i\leq j_{0}$, fix a finite dimensional subspace $\mathcal{F}_{i}\subset\mathcal{X}_{i}$. Consider the finite dimensional subspace
\[(\mathcal{F}_{1}\oplus\ldots\oplus\mathcal{F}_{j_{0}})_{l^{p}}=\{(x_{1},\ldots,x_{j_{0}},0,0,\ldots):x_{i}\in\mathcal{F}_{i}\text{ for }1\leq i\leq j_{0}\}\]
of $(\mathcal{X}_{1}\oplus\mathcal{X}_{2}\oplus\ldots)_{l^{p}}$ and the finite dimensional subspace
\[(\mathcal{G}_{1}\oplus\ldots\oplus\mathcal{G}_{j_{0}})_{l^{q}}=\{(x_{1}^{*},\ldots,x_{j_{0}}^{*},0,0,\ldots):x_{i}^{*}\in\mathcal{G}_{i}
\text{ for }1\leq i\leq j_{0}\}\]
of the dual $(\mathcal{X}_{1}^{*}\oplus\mathcal{X}_{2}^{*}\oplus\ldots)_{l^{q}}$ of $(\mathcal{X}_{1}\oplus\mathcal{X}_{2}\oplus\ldots)_{l^{p}}$, where $\frac{1}{p}+\frac{1}{q}=1$. It suffices to find $V:(\mathcal{X}_{1}\oplus\mathcal{X}_{2}\oplus\ldots)_{l^{p}}\to l^{p}$ and $E:l^{p}\to(\mathcal{X}_{1}\oplus\mathcal{X}_{2}\oplus\ldots)_{l^{p}}$ such that
\begin{enumerate}[(i)]
\item $\|V\|\leq\lambda$ and $\|E\|\leq 1$;
\item $\|EVx-x\|\leq\epsilon\|x\|$ for all $x\in(\mathcal{F}_{1}\oplus\ldots\oplus\mathcal{F}_{j_{0}})_{l^{p}}$;
\item $\|V(\psi_{1}(\alpha)\oplus\psi_{2}(\alpha)\oplus\ldots)x-\rho(\alpha)Vx\|\leq\epsilon\|x\|$ for all $\alpha\in\Omega$ and $x\in(\mathcal{F}_{1}\oplus\ldots\oplus\mathcal{F}_{j_{0}})_{l^{p}}$;
\item $|E\rho(\alpha)-(\psi_{1}(\alpha)\oplus\psi_{2}(\alpha)\oplus\ldots)E|_{x^{*}}\leq\epsilon\|x^{*}\|$ for all $\alpha\in\Omega$ and $x^{*}\in(\mathcal{G}_{1}\oplus\ldots\oplus\mathcal{G}_{j_{0}})_{l^{q}}$; and
\item $Vx\in l^{p}([m_{1},\infty))$ for all $x\in(\mathcal{X}_{1}\oplus\mathcal{X}_{2}\oplus\ldots)_{l^{p}}$.
\end{enumerate}
By using Lemma \ref{llimprove} recursively, we obtain integers $m_{2}<\ldots<m_{j_{0}+1}$ and operators $V_{i}:\mathcal{X}_{i}\to l^{p}$ and $E_{i}:l^{p}\to\mathcal{X}_{i}$, for $1\leq i\leq j_{0}$, such that $m_{2}>m_{1}$ and for every $1\leq i\leq j_{0}$, we have
\begin{enumerate}[(a)]
\item $\|V_{i}\|\leq\lambda$ and $\|E_{i}\|\leq1$;
\item $\|E_{i}V_{i}x_{i}-x_{i}\|\leq\frac{1}{j_{0}}\epsilon\|x_{i}\|$ for all $x_{i}\in\mathcal{F}_{i}$;
\item $\|V_{i}\psi_{i}(\alpha)x_{i}-\rho(\alpha)V_{i}x_{i}\|\leq\frac{1}{j_{0}}\epsilon\|x_{i}\|$ for all $\alpha\in\Omega$ and $x_{i}\in\mathcal{F}_{i}$;
\item $|E_{i}\rho(\alpha)-\psi_{i}(\alpha)E_{i}|_{x_{i}^{*}}\leq\frac{1}{j_{0}}\epsilon\|x_{i}^{*}\|$ for all $\alpha\in\Omega$ and $x_{i}^{*}\in\mathcal{G}_{i}$; and
\item $V_{i}x_{i}\in l^{p}([m_{i}+1,m_{i+1}])$ for all $x_{i}\in\mathcal{X}_{i}$ and $E_{i}y=0$ for all $y\in l^{p}([1,m_{i}]\cup(m_{i+1},\infty))$.
\end{enumerate}
Take
\[V(x_{1},x_{2},\ldots)=V_{1}x_{1}+\ldots+V_{j_{0}}x_{j_{0}},\]
for $(x_{1},x_{2},\ldots)\in(\mathcal{X}_{1}\oplus\mathcal{X}_{2}\oplus\ldots)_{l^{p}}$, and
\[Ey=(E_{1}y,\ldots,E_{j_{0}}y,0,0,\ldots),\]
for $y\in l^{p}$. For $(x_{1},x_{2},\ldots)\in(\mathcal{X}_{1}\oplus\mathcal{X}_{2}\oplus\ldots)_{l^{p}}$, since $V_{i}x_{i}\in l^{p}([m_{i}+1,m_{i+1}])$ by (e),
\[\|V(x_{1},x_{2},\ldots)\|^{p}=\sum_{i=1}^{j_{0}}\|V_{i}x_{i}\|^{p}\leq\lambda^{p}\sum_{i=1}^{j_{0}}\|x_{i}\|^{p}.\]
Thus, $\|V\|\leq\lambda$. For each $m\in\mathbb{N}$, let $P_{m}$ be the projection from $l^{p}$ onto $l^{p}([1,m])$. Since $E_{i}=E_{i}(P_{m_{i+1}}-P_{m_{i}})$ by (e),
\[\|Ey\|^{p}=\sum_{i=1}^{j_{0}}\|E_{i}y\|^{p}=\sum_{i=1}^{j_{0}}\|E_{i}(P_{m_{i+1}}-P_{m_{i}})y\|^{p}\leq
\sum_{i=1}^{j_{0}}\|(P_{m_{i+1}}-P_{m_{i}})y\|^{p}\leq\|y\|^{p}.\]
So $\|E\|\leq 1$. Hence we obtain (i).

Let $x=(x_{1},\ldots,x_{j_{0}},0,0,\ldots)\in(\mathcal{F}_{1}\oplus\ldots\oplus\mathcal{F}_{j_{0}})_{l^{p}}$. Since $E_{i_{1}}V_{i_{2}}=0$ when $i_{1}\neq i_{2}$,
\[EVx=(E_{1}V_{1}x_{1},\ldots,E_{j_{0}}V_{j_{0}}x_{j_{0}},0,0\ldots).\]
So by (b),
\[\|EVx-x\|\leq\sum_{i=1}^{j_{0}}\|E_{i}V_{i}x_{i}-x_{i}\|\leq\sum_{i=1}^{j_{0}}\frac{1}{j_{0}}\epsilon\|x_{i}\|\leq\epsilon\|x\|.\]
This proves (ii).

For $x=(x_{1},\ldots,x_{j_{0}},0,0,\ldots)\in(\mathcal{F}_{1}\oplus\ldots\oplus\mathcal{F}_{j_{0}})_{l^{p}}$,
\[V(\psi_{1}(\alpha)\oplus\psi_{2}(\alpha)\oplus\ldots)x-\rho(\alpha)Vx=\sum_{i=1}^{j_{0}}(V_{i}\psi_{i}(\alpha)x_{i}-\rho(\alpha)V_{i}x_{i}),\]
so by (c),
\[\|V(\psi_{1}(\alpha)\oplus\psi_{2}(\alpha)\oplus\ldots)x-\rho(\alpha)Vx\|\leq\sum_{i=1}^{j_{0}}\frac{1}{j_{0}}\epsilon\|x_{i}\|\leq\epsilon\|x\|.\]
This proves (iii). For $y\in l^{p}$,
\begin{align*}
&E\rho(\alpha)y-(\psi_{1}(\alpha)\oplus\psi_{2}(\alpha)\oplus\ldots)Ey\\=&
(E_{1}\rho(\alpha)y-\psi_{1}(\alpha)E_{1}y,\,\ldots,\,E_{j_{0}}\rho(\alpha)y-\psi_{j_{0}}(\alpha)E_{j_{0}}y,0,0,\ldots),
\end{align*}
so by (d), for $x^{*}=(x_{1}^{*},\ldots,x_{j_{0}}^{*},0,0,\ldots)\in(\mathcal{G}_{1}\oplus\ldots\oplus\mathcal{G}_{j_{0}})_{l^{q}}$,
\begin{align*}
&|E\rho(\alpha)-(\psi_{1}(\alpha)\oplus\psi_{2}(\alpha)\oplus\ldots)E|_{x^{*}}\\=&
\sup_{\|y\|=1}\left|\sum_{i=1}^{j_{0}}x_{i}^{*}(E_{i}\rho(\alpha)y-\psi_{i}(\alpha)E_{i}y)\right|\\\leq&
\sum_{i=1}^{j_{0}}|E_{i}\rho(\alpha)-\psi_{i}(\alpha)E_{i}|_{x_{i}^{*}}\leq\sum_{i=1}^{j_{0}}\frac{1}{j_{0}}\epsilon\|x_{i}^{*}\|
\leq\epsilon\|x^{*}\|.
\end{align*}
This proves (iv). By (e), we have $V_{i}x_{i}\in l^{p}([m_{1},\infty))$ for all $x\in\mathcal{X}_{i}$ and $i\in\mathbb{N}$. So $Vx\in l^{p}([m_{1},\infty))$ for all $x\in(\mathcal{X}_{1}\oplus\mathcal{X}_{2}\oplus\ldots)_{l^{p}}$. This proves (v).
\end{proof}
\section{$l^{p}$ version of Voiculescu's absorption theorem}
\begin{lemma}\label{embed}
Let $\Lambda$ be a countable set. Let $\lambda\geq 1$. Let $\mathcal{X}$ be a Banach space that is either finite dimensional or $\lambda$-isomorphic to $l^{p}$. Let $\psi:\Lambda\to B(\mathcal{X})$ and $\rho:\Lambda\to B(l^{p})$ be such that $\psi\stackrel{\lambda}{\ll}\rho$. Then for all $\epsilon>0$ and finite subset $\Lambda_{0}\subset\Lambda$, there exist operators $L:\mathcal{X}\to l^{p}$ and $R:l^{p}\to\mathcal{X}$ satisfying
\begin{enumerate}[(I)]
\item $\|L\|\leq2\lambda^{2}$ and $\|R\|\leq4\lambda(1+\epsilon)$;
\item $L\psi(\alpha)-\rho(\alpha)L$ is compact for all $\alpha\in\Lambda$ and has norm at most $\epsilon$ for all $\alpha\in\Lambda_{0}$;
\item $R\rho(\alpha)-\psi(\alpha)R$ is compact for all $\alpha\in\Lambda$ and has norm at most $\epsilon$ for all $\alpha\in\Lambda_{0}$; and
\item $RL=I$.
\end{enumerate}
\end{lemma}
\begin{proof}
Fix $\epsilon>0$ and finite subset $\Lambda_{0}\subset\Lambda$. Let $\Omega_{-1}\subset\Omega_{0}\subset\Omega_{1}\subset\ldots$ be finite subsets of $\Lambda$ such that $\cup_{n=-1}^{\infty}\Omega_{n}=\Lambda$ and $\Lambda_{0}\subset\Omega_{-1}$.

By Lemma \ref{qcaum}, there are finite rank operators $A_{1},A_{2},\ldots$ on $\mathcal{X}$ such that
\begin{enumerate}[(i)]
\item $\displaystyle\sup_{n\in\mathbb{N}}\|A_{n}\|\leq\lambda$;
\item $A_{n}\to I$ in SOT, as $n\to\infty$;
\item $\displaystyle\|A_{n}\psi(\alpha)-\psi(\alpha)A_{n}\|\leq\frac{\epsilon}{2^{n}}$ for all $\alpha\in\Omega_{n}$ and $n\geq -1$;
\item $(A_{n+1}-A_{n-2})(A_{n}-A_{n-1})=A_{n}-A_{n-1}$ for all $n\geq -1$;
\item \[\left(\sum_{n=1}^{\infty}\|(A_{n}-A_{n-1})x\|^{p}\right)^{\frac{1}{p}}\leq2\lambda\|x\|,\quad x\in\mathcal{X};\text{ and}\]
\item  if $(x_{n})_{n\in\mathbb{N}}$ is a sequence in $\mathcal{X}$ such that $\displaystyle\sum_{n=1}^{\infty}\|x_{n}\|^{p}<\infty$, then
\[\left(\left\|\sum_{n=1}^{\infty}(A_{n+1}-A_{n-2})x_{n}\right\|^{p}\right)^{\frac{1}{p}}\leq4\lambda\left(\sum_{n=1}^{\infty}\|x_{n}\|^{p}\right)^{
\frac{1}{p}}.\]
\end{enumerate}
By using Lemma \ref{llimprove2} recursively, we obtain integers $1<m_{1}<m_{2}<\ldots$ and operators $V_{n}:\mathcal{X}\to l^{p}$ and $E_{n}:l^{p}\to\mathcal{X}$, for $n\in\mathbb{N}$, such that for every $n\in\mathbb{N}$, we have
\begin{enumerate}[(a)]
\item $\|V_{n}\|\leq\lambda$ and $\|E_{n}\|\leq 1+\epsilon$;
\item $E_{n}V_{n}x=x$ for all $x$ in the range of $A_{n}-A_{n-1}$;
\item $\displaystyle\|V_{n}\psi(\alpha)x-\rho(\alpha)V_{n}x\|\leq\frac{\epsilon}{2^{n}}\|x\|$ for all $\alpha\in\Omega_{n}$ and $x$ in the range of $A_{n}-A_{n-1}$;
\item $\displaystyle|E_{n}\rho(\alpha)-\psi(\alpha)E_{n}|_{x^{*}}\leq\frac{\epsilon}{2^{n}}\|x^{*}\|$ for all $\alpha\in\Omega_{n}$ and $x^{*}$ in the range of $(A_{n+1}-A_{n-2})^{*}$; and
\item $V_{n}x\in l^{p}([m_{n}+1,m_{n+1}])$ for all $x\in\mathcal{X}$ and $E_{n}y=0$ for all $y\in l^{p}([1,m_{n}]\cup(m_{n+1},\infty))$.
\end{enumerate}
Take
\[Lx=\sum_{n=1}^{\infty}V_{n}(A_{n}-A_{n-1})x,\]
for $x\in\mathcal{X}$, and
\[Ry=\sum_{n=1}^{\infty}(A_{n+1}-A_{n-2})E_{n}y,\]
for $y\in l^{p}$, where $A_{0}=A_{-1}=0$. For every $x\in\mathcal{X}$,
\begin{eqnarray*}
\|Lx\|&=&\left\|\sum_{n=1}^{\infty}V_{n}(A_{n}-A_{n-1})x\right\|
\\&=&\left(\sum_{n=1}^{\infty}\|V_{n}(A_{n}-A_{n-1})x\|^{p}\right)^{\frac{1}{p}}\text{ by (e)}
\\&\leq&\lambda\left(\sum_{n=1}^{\infty}\|(A_{n}-A_{n-1})x\|^{p}\right)^{\frac{1}{p}}\text{ by (a)}
\\&\leq&2\lambda^{2}\|x\|\text{ by (v)}.
\end{eqnarray*}
For each $n\in\mathbb{N}$, let $P_{n}$ be the projection from $l^{p}$ onto $l^{p}([1,n])$. For every $y\in l^{p}$,
\begin{eqnarray*}
\|Ry\|&=&\left\|\sum_{n=1}^{\infty}(A_{n+1}-A_{n-2})E_{n}y\right\|
\\&\leq&4\lambda\left(\sum_{n\in\mathbb{N}}\|E_{n}y\|^{p}\right)^{\frac{1}{p}}\text{ by (vi)}
\\&\leq&4\lambda\left(\sum_{n\in\mathbb{N}}\|E_{n}(P_{m_{n+1}}-P_{m_{n}})y\|^{p}\right)^{\frac{1}{p}}\text{ by (e)}
\\&\leq&4\lambda(1+\epsilon)\left(\sum_{n\in\mathbb{N}}\|(P_{m_{n+1}}-P_{m_{n}})y\|^{p}\right)^{\frac{1}{p}}\text{ by (a)}
\\&\leq&4\lambda(1+\epsilon)\|y\|.
\end{eqnarray*}
Thus (I) is proved.

For all $\alpha\in\Omega_{n-1}$ and $n\in\mathbb{N}$, we have
\begin{align*}
&\|V_{n}(A_{n}-A_{n-1})\psi(\alpha)-V_{n}\psi(\alpha)(A_{n}-A_{n-1})\|\\\leq&\lambda\|(A_{n}-A_{n-1})\psi(\alpha)-\psi(\alpha)(A_{n}-A_{n-1})\|\text{ by (a)}\\\leq&
\lambda(\|A_{n}\psi(\alpha)-\psi(\alpha)A_{n}\|+\|A_{n-1}\psi(\alpha)-\psi(\alpha)A_{n-1}\|)\\\leq&
\lambda\left(\frac{\epsilon}{2^{n}}+\frac{\epsilon}{2^{n-1}}\right)\text{ by (iii)}\\=&
\frac{3\lambda\epsilon}{2^{n}},
\end{align*}
and
\begin{eqnarray*}
\|V_{n}\psi(\alpha)(A_{n}-A_{n-1})-\rho(\alpha)V_{n}(A_{n}-A_{n-1})\|&\leq&\frac{\epsilon}{2^{n}}\|A_{n}-A_{n-1}\|\text{ by (c)}\\&\leq&
\frac{2\lambda\epsilon}{2^{n}}\text{ by (i)}.
\end{eqnarray*}
Combining these two inequalities, we obtain
\begin{equation}\label{boundL}
\|V_{n}(A_{n}-A_{n-1})\psi(\alpha)-\rho(\alpha)V_{n}(A_{n}-A_{n-1})\|\leq\frac{5\lambda\epsilon}{2^{n}},
\end{equation}
for all $\alpha\in\Omega_{n-1}$ and $n\in\mathbb{N}$. For all $\alpha\in\Lambda$ and $x\in\mathcal{X}$,
\[L\psi(\alpha)x=\sum_{n=1}^{\infty}V_{n}(A_{n}-A_{n-1})\psi(\alpha)x\quad\text{and}\quad\rho(\alpha)Lx=
\sum_{n=1}^{\infty}\rho(\alpha)V_{n}(A_{n}-A_{n-1})x.\]
Recall the properties of $\Omega_{n}$ at the beginning of the proof. By (\ref{boundL}),
\[\sum_{n=1}^{\infty}\|V_{n}(A_{n}-A_{n-1})\psi(\alpha)-\rho(\alpha)V_{n}(A_{n}-A_{n-1})\|\]
is finite for all $\alpha\in\Lambda$ and is at most $5\lambda\epsilon$ for all $\alpha\in\Lambda_{0}$. Since
\[V_{n}(A_{n}-A_{n-1})\psi(\alpha)-\rho(\alpha)V_{n}(A_{n}-A_{n-1})\]
has finite rank for every $n\in\mathbb{N}$, it follows that $L\psi(\alpha)-\rho(\alpha)L$ is compact for all $\alpha\in\Lambda$ and has norm at most $5\lambda\epsilon$ for all $\alpha\in\Lambda_{0}$. Thus (II) is proved.

Let $x^{*}\in\mathcal{X}^{*}$. Let $n\in\mathbb{N}$. Let $x_{1}^{*}=(A_{n+1}-A_{n-2})^{*}x^{*}$. For all $\alpha\in\Omega_{n-2}$,
\begin{eqnarray*}
|(A_{n+1}-A_{n-2})E_{n}\rho(\alpha)-(A_{n+1}-A_{n-2})\psi(\alpha)E_{n}|_{x^{*}}&=&|E_{n}\rho(\alpha)-\psi(\alpha)E_{n}|_{x_{1}^{*}}\\&\leq&
\frac{\epsilon}{2^{n}}\|x_{1}^{*}\|\text{ by (d)}\\&\leq&\frac{2\lambda\epsilon}{2^{n}}\|x^{*}\|\text{ by (i)},
\end{eqnarray*}
and
\begin{align*}
&\|(A_{n+1}-A_{n-2})\psi(\alpha)E_{n}-\psi(\alpha)(A_{n+1}-A_{n-2})E_{n}\|\\\leq&
(1+\epsilon)\|(A_{n+1}-A_{n-2})\psi(\alpha)-\psi(\alpha)(A_{n+1}-A_{n-2})\|\text{ by (a)}\\\leq&
(1+\epsilon)(\|A_{n+1}\psi(\alpha)-\psi(\alpha)A_{n+1}\|+\|A_{n-2}\psi(\alpha)-\psi(\alpha)A_{n-2}\|)\\\leq&
(1+\epsilon)\left(\frac{\epsilon}{2^{n+1}}+\frac{\epsilon}{2^{n-2}}\right)\text{ by (iii)}\\\leq&
\frac{5\lambda(1+\epsilon)\epsilon}{2^{n}}.
\end{align*}
Combining these two inequalities, we obtain
\[|(A_{n+1}-A_{n-2})E_{n}\rho(\alpha)-\psi(\alpha)(A_{n+1}-A_{n-2})E_{n}|_{x^{*}}\leq\frac{7\lambda(1+\epsilon)\epsilon}{2^{n}}\|x^{*}\|,\]
for all $\alpha\in\Omega_{n-2}$, functional $x^{*}\in\mathcal{X}^{*}$ and $n\in\mathbb{N}$. Thus,
\begin{equation}\label{boundR}
\|(A_{n+1}-A_{n-2})E_{n}\rho(\alpha)-\psi(\alpha)(A_{n+1}-A_{n-2})E_{n}\|\leq\frac{7\lambda(1+\epsilon)\epsilon}{2^{n}},
\end{equation}
for all $\alpha\in\Omega_{n-2}$ and $n\in\mathbb{N}$. For all $y\in l^{p}$ and $\alpha\in\Lambda$,
\[R\rho(\alpha)y=\sum_{n=1}^{\infty}(A_{n+1}-A_{n-2})E_{n}\rho(\alpha)y\quad\text{and}\quad
\psi(\alpha)Ry=\sum_{n=1}^{\infty}\psi(\alpha)(A_{n+1}-A_{n-2})E_{n}y.\]
By (\ref{boundR}),
\[\sum_{n=1}^{\infty}\|(A_{n}-A_{n-1})E_{n}\rho(\alpha)-\psi(\alpha)E_{n}(A_{n}-A_{n-1})\|\]
is finite for all $\alpha\in\Lambda$ and has norm at most $7\lambda(1+\epsilon)\epsilon$ for all $\alpha\in\Lambda_{0}$. Since
\[(A_{n}-A_{n-1})E_{n}\rho(\alpha)-\psi(\alpha)E_{n}(A_{n}-A_{n-1})\]
has finite rank for every $n\in\mathbb{N}$, it follows that $R\rho(\alpha)-\psi(\alpha)R$ is compact for all $\alpha\in\Lambda$ and has norm at most $7\lambda(1+\epsilon)\epsilon$ for all $\alpha\in\Lambda_{0}$. Thus (III) is proved.

By (e), we have $E_{n_{1}}V_{n_{2}}=0$ when $n_{1}\neq n_{2}$. So for every $x\in\mathcal{X}$,
\begin{eqnarray*}
RLx&=&\sum_{n=1}^{\infty}(A_{n+1}-A_{n-2})E_{n}V_{n}(A_{n}-A_{n-1})x\\&=&
\sum_{n=1}^{\infty}(A_{n+1}-A_{n-2})(A_{n}-A_{n-1})x\text{ by (b)}\\&=&
\sum_{n=1}^{\infty}(A_{n}-A_{n-1})x\text{ by (iv)}\\&=&
x\text{ by (ii)}.
\end{eqnarray*}
Thus, (IV) is proved.
\end{proof}
\begin{lemma}\label{embed2}
Let $\Lambda$ be a countable set. Let $\lambda\geq 1$. Let $1<p<\infty$. Let $\mathcal{X}$ be a Banach space that is either finite dimensional or $\lambda$-isomorphic to $l^{p}$. Let $\psi:\Lambda\to B(\mathcal{X})$ and $\rho:\Lambda\to B(l^{p})$ be such that $\psi\stackrel{\lambda}{\ll}\rho$. Then for all $\epsilon>0$ and finite subset $\Lambda_{0}\subset\Lambda$, there exist a Banach space $\mathcal{Y}$, an invertible operator $S:\mathcal{X}\oplus\mathcal{Y}\to l^{p}$ and a map $\rho_{2}:\Lambda\to B(\mathcal{Y})$ such that
\begin{enumerate}[(i)]
\item $\|S\|\leq2\lambda^{2}+1$ and $\|S^{-1}\|\leq(4\lambda+8\lambda^{3}+1)(1+\epsilon)$; and
\item $\rho(\alpha)-S(\psi(\alpha)\oplus\rho_{2}(\alpha))S^{-1}$ is compact for all $\alpha\in\Lambda$ and has norm at most $\epsilon$ for all $\alpha\in\Lambda_{0}$.
\end{enumerate}
\end{lemma}
\begin{proof}
Let $L:\mathcal{X}\to l^{p}$ and $R:l^{p}\to\mathcal{X}$ be as in Lemma \ref{embed}. Since $RL=I$, the operator $LR$ is an idempotent on $l^{p}$. Take $\mathcal{Y}$ to be the range of $I-LR$. Take $S(x,y)=Lx+y$ for $x\in\mathcal{X}$ and $y\in\mathcal{Y}$. It is easy to see that $S^{-1}z=(Rz,(I-LR)z)$ for $z\in l^{p}$. Thus, by (I) in Lemma \ref{embed}, we have $\|S\|\leq2\lambda^{2}+1$ and $\|S^{-1}\|\leq(4\lambda+8\lambda^{3}+1)(1+\epsilon)$.

Take $\rho_{2}(\alpha)y=(I-LR)\rho(\alpha)y$ for $y\in\mathcal{Y}$ and $\alpha\in\Lambda$. Then
\[(\psi(\alpha)\oplus\rho_{2}(\alpha))S^{-1}z=(\psi(\alpha)Rz,(I-LR)\rho(\alpha)(I-LR)z),\]
for all $\alpha\in\Lambda$ and $z\in l^{p}$. Hence,
\[S(\psi(\alpha)\oplus\rho_{2}(\alpha))S^{-1}=L\psi(\alpha)R+(I-LR)\rho(\alpha)(I-LR),\]
for all $\alpha\in\Lambda$. By (II) and (III) in Lemma \ref{embed}, it follows that $\rho(\alpha)-S(\psi(\alpha)\oplus\rho_{2}(\alpha))S^{-1}$ is compact for all $\alpha\in\Lambda$ and has norm at most $\epsilon$ for all $\alpha\in\Lambda_{0}$.
\end{proof}
\begin{lemma}[\cite{Pelczynski}]\label{prime}
If $P$ is an idempotent on $l^{p}$ then the range of $P$ is either finite dimensional or isomorphic to $l^{p}$.
\end{lemma}
If $\mathcal{X}$ is a Banach space and $\psi:\Lambda\to B(\mathcal{X})$ and $\rho:\Lambda\to B(l^{p})$ are maps such that $\psi\ll\rho$, then there exist operators $V:\mathcal{X}\to l^{p}$ and $E:l^{p}\to\mathcal{X}$ such that $\|EV-I\|<1$. Thus, $EV\in B(\mathcal{X})$ is invertible and $V(EV)^{-1}E$ is an idempotent on $l^{p}$. Moreover, $V$ defines an invertible from $\mathcal{X}$ onto the range of this idempotent. Therefore, by Lemma \ref{prime}, the space $\mathcal{X}$ is either finite dimensional or isomorphic to $l^{p}$.

The following result is an $l^{p}$ version of Voiculescu's absorption theorem where the notion of $\stackrel{\lambda}{\ll}$ is defined at the beginning of Section 6.
\begin{theorem}\label{lpVoic}
Let $1<p<\infty$.  Let $\mathcal{X}_{1},\mathcal{X}_{2},\ldots$ be Banach spaces. Let $\Lambda$ be a countable set. Let $\rho:\Lambda\to B(l^{p})$. Suppose that $\lambda\geq 1$ and for each $i\in\mathbb{N}$, we have $\psi_{i}:\Lambda\to B(\mathcal{X}_{i})$ such that $\psi_{i}\stackrel{\lambda}{\ll}\rho$. Then $\rho$ is approximately similar to $\rho\oplus\psi_{1}\oplus\psi_{2}\oplus\ldots$.
\end{theorem}
\begin{proof}
By Lemma \ref{dsl}, we have $\psi_{1}\oplus\psi_{2}\oplus\ldots\ll\rho$. So it suffices to show that $\rho$ is approximately similar to $\rho\oplus\psi$ for all $\psi:\Lambda\to B(\mathcal{X})$ such that $\psi\ll\rho$ where $\mathcal{X}$ is any Banach space. As remarked above, $\psi\ll\rho$ automatically implies that $\mathcal{X}$ is either finite dimensional or isomorphic to $l^{p}$.

By Lemma \ref{dsl}, if $\psi\ll\rho$ then $\psi\oplus\psi\oplus\ldots\ll\rho$. Let $\epsilon>0$. Let $\Lambda_{0}$ be a finite subset of $\Lambda$. By Lemma \ref{embed2}, there exist a Banach space $\mathcal{Y}$ and maps $\rho_{2}:\Lambda\to B(\mathcal{Y})$ and $\kappa:\Lambda\to K(l^{p})$ such that $\|\kappa(\alpha)\|\leq\epsilon$, for all $\alpha\in\Lambda_{0}$, and $\rho+\kappa$ is similar to $(\psi\oplus\psi\oplus\ldots)\oplus\rho_{2}$, where $(\rho+\kappa)(\alpha)=\rho(\alpha)+\kappa(\alpha)$ for $\alpha\in\Lambda$.

Note that $(\psi\oplus\psi\oplus\ldots)\oplus\rho_{2}$ is similar to $(\psi\oplus\psi\oplus\ldots)\oplus\rho_{2}\oplus\psi$, which is similar to $(\rho+\kappa)\oplus\psi$. Therefore, $\rho+\kappa$ is similar to $(\rho+\kappa)\oplus\psi$. So $\rho$ is approximately similar to $\rho\oplus\psi$.
\end{proof}
Let $T_{1},T_{2}\in B(l^{p})$. Let $\Lambda$ be a singleton and define $\psi_{1}:\Lambda\to B(l^{p})$ and $\psi_{2}:\Lambda\to B(l^{p})$ by $\psi_{1}(\alpha)=T_{1}$ and $\psi_{2}(\alpha)=T_{2}$. We write $T_{1}\ll T_{2}$ if $\psi_{1}\ll\psi_{2}$. The operators $T_{1},T_{2}$ are {\it approximately similar} if $\psi_{1}$ and $\psi_{2}$ are approximately similar. We can also define $\ll$ and the notion of approximate similarity for tuples of operators. The following result is the single operator version of Theorem \ref{lpVoic}.
\begin{theorem}\label{lpVoicsingleoperator}
Let $T_{1}$ and $T_{2}$ be operators on $l^{p}$. If $T_{1}\ll T_{2}$ then $T_{2}$ is approximately similar to $T_{2}\oplus T_{1}\oplus T_{1}\oplus\ldots$.
\end{theorem}
\begin{corollary}\label{UB}
Let $U\in B(l^{p}(\mathbb{N}))$ and $B\in B(l^{p}(\mathbb{Z}))$ be the unilateral and bilateral shifts, respectively, i.e.,
\[Ue_{n}=e_{n+1},\]
for $n\in\mathbb{N}$, and
\[Be_{n}=e_{n+1},\]
$n\in\mathbb{Z}$. Then $U$ is approximately similar to $U\oplus B\oplus B\oplus\ldots$.
\end{corollary}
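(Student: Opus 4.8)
The plan is to realize the bilateral shift $B$ (in fact countably many copies of it, with a uniform similarity constant) as a ``summand'' $\widehat{U}|_{\mathcal{M}}$ of the Calkin representation of $U$, and then to invoke Theorem \ref{lpVoic}. For $p=2$ this is Halmos's observation: $\pi(U)$ is a unitary in the Calkin algebra (since $U^{*}U=I$ and $UU^{*}=I-e_{1}\otimes e_{1}^{*}$ is compact), the $C^{*}$-algebra it generates is isometrically $C(S^{1})$, and $z\mapsto B$ is a $*$-representation of $C(S^{1})$, so Voiculescu's theorem gives that $U$ is approximately unitarily equivalent to $U\oplus B\oplus B\oplus\cdots$. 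For general $1<p<\infty$ there is no ``adjoint'' to exploit, so instead one produces directly an idempotent on $\widehat{l^{p}}$ whose range carries a copy of $B$; for this Lemma \ref{hatsub} is tailor-made, and it reduces the problem to constructing bounded operators $V\colon l^{p}(\mathbb{Z})\to\widehat{l^{p}}$ and $E\colon\widehat{l^{p}}\to l^{p}(\mathbb{Z})$ with $EV=I$, $\widehat{U}V=VB$ and $E\widehat{U}=BE$ (identifying $l^{p}(\mathbb{Z})$ with $l^{p}$).

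The operator $V$ is obtained by ``shifting the canonical basis of $l^{p}(\mathbb{Z})$ out to infinity.'' Fix a nonprincipal ultrafilter $\mathcal{U}$ and set $Ve_{m}=(e_{m+k})_{k,\mathcal{U}}$ for $m\in\mathbb{Z}$, with the convention $e_{j}=0$ for $j\le 0$ (this affects only finitely many $k$ for each $m$, hence is invisible after the ultralimit). For $x=\sum_{m}x_{m}e_{m}\in l^{p}(\mathbb{Z})$ one gets $Vx=\bigl(\sum_{m:\,m+k\ge 1}x_{m}e_{m+k}\bigr)_{k,\mathcal{U}}$, and since $\{m:m+k\ge 1\}$ exhausts $\mathbb{Z}$ as $k\to\infty$, $\|Vx\|=\lim_{k,\mathcal{U}}\bigl(\sum_{m:\,m+k\ge 1}|x_{m}|^{p}\bigr)^{1/p}=\|x\|$; since $e_{m+k}\to 0$ weakly, $Vx\in\widehat{l^{p}}$, and $\widehat{U}Ve_{m}=(e_{m+k+1})_{k,\mathcal{U}}=Ve_{m+1}=VBe_{m}$. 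The left inverse $E$ is the matching ``coordinate-extraction'' map: for $\xi=(x^{(k)})_{k,\mathcal{U}}$ put $(E\xi)_{n}=\lim_{k,\mathcal{U}}x^{(k)}_{n+k}$, $n\in\mathbb{Z}$ (again $x^{(k)}_{j}:=0$ for $j\le 0$). For finite $F\subset\mathbb{Z}$ and all large $k$ the indices $\{n+k:n\in F\}$ are distinct and positive, so $\sum_{n\in F}|x^{(k)}_{n+k}|^{p}\le\|x^{(k)}\|^{p}$; passing to $\lim_{k,\mathcal{U}}$ and then taking the supremum over $F$ gives $\|E\xi\|\le\lim_{k,\mathcal{U}}\|x^{(k)}\|$, so $\|E\|\le 1$ and $E$ is well defined on the ultrapower, in particular on $\widehat{l^{p}}$. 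Finally $(EVe_{m})_{n}=\lim_{k,\mathcal{U}}\delta_{m+k,\,n+k}=\delta_{m,n}$ gives $EV=I$, and $(E\widehat{U}\xi)_{n}=\lim_{k,\mathcal{U}}x^{(k)}_{n+k-1}=(E\xi)_{n-1}=(BE\xi)_{n}$ gives $E\widehat{U}=BE$.

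Now Lemma \ref{hatsub} applied with $(T_{\alpha})_{\alpha\in\Lambda}=(U)$, $(X_{\alpha})_{\alpha\in\Lambda}=(B)$, and the above $V$, $E$ yields an idempotent $P$ on $\widehat{l^{p}}$ with $\|P\|\le\|V\|\,\|E\|=1$ (hence $\|P\|=1$), with $P\widehat{U}=\widehat{U}P$, with range $\mathcal{M}:=P\widehat{l^{p}}$ that is $1$-isomorphic to $l^{p}(\mathbb{Z})\cong l^{p}$, and with $B$ being $1$-similar to $\widehat{U}|_{\mathcal{M}}$. Taking $P_{n}=P$ for every $n\ge 1$, Theorem \ref{lpVoic} (with $\lambda=1$) gives that $U$ is approximately similar to $U\oplus\widehat{U}|_{\mathcal{M}}\oplus\widehat{U}|_{\mathcal{M}}\oplus\cdots$. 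Since $\widehat{U}|_{\mathcal{M}}$ is $1$-similar to $B$, the countable $l^{p}$-direct sum $U\oplus\widehat{U}|_{\mathcal{M}}\oplus\widehat{U}|_{\mathcal{M}}\oplus\cdots$ is $1$-similar to $U\oplus B\oplus B\oplus\cdots$; and composing a similarity with an approximate similarity yields (with a product bound on the constants) an approximate similarity. This proves the corollary.

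I expect the main obstacle to be the construction of the intertwining \emph{left} inverse $E$: for $p=2$ one would simply take $E=V^{*}$, but for $p\ne 2$ no adjoint is available, so the explicit coordinate-extraction formula must be produced by hand, the only delicate point being the estimate $\|E\|\le 1$, which rests on the disjointness of the supports of the relevant coordinates. A pleasant byproduct of $\|V\|=\|E\|=1$ is that $\|P\|=1$, so $\mathcal{M}$ is genuinely $1$-isomorphic to $l^{p}$ and the hypotheses of Theorem \ref{lpVoic} are met with $\lambda=1$.
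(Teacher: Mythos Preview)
Your proof is correct and follows essentially the same route as the paper: construct $V$ by shifting basis vectors to infinity, construct a left inverse $E$ intertwining $\widehat{U}$ and $B$, apply Lemma~\ref{hatsub}, then invoke Theorem~\ref{lpVoic}. The only cosmetic difference is that the paper writes $E(x^{(k)})_{k,\mathcal{U}}=w\text{-}\lim_{k,\mathcal{U}}B^{-k}x^{(k)}$ (identifying $x^{(k)}\in l^{p}(\mathbb{N})$ with its image in $l^{p}(\mathbb{Z})$), whereas you unpack this as the coordinate formula $(E\xi)_{n}=\lim_{k,\mathcal{U}}x^{(k)}_{n+k}$; since $(B^{-k}x)_{n}=x_{n+k}$, these are the same operator.
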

\begin{proof}
Let $P$ be the projection from $l^{p}(\mathbb{Z})$ onto $l^{p}(\mathbb{N})$. For $n\in\mathbb{N}$, define operators $V_{n}:l^{p}(\mathbb{Z})\to l^{p}(\mathbb{N})$ and $E_{n}:l^{p}(\mathbb{N})\to l^{p}(\mathbb{Z})$ by
\[V_{n}x=PB^{n}x\quad\text{and}\quad E_{n}y=B^{-n}y,\]
for $x\in l^{p}(\mathbb{Z})$ and $y\in l^{p}(\mathbb{N})$. It is easy that (i) $\|V_{n}\|=\|E_{n}\|=1$ for all $n\in\mathbb{N}$, (ii) $E_{n}V_{n}\to I$ in SOT, as $n\to\infty$, (iii) $V_{n}B-UV_{n}\to 0$ in SOT, as $n\to\infty$, (iv) $E_{n}U=BE_{n}$ for all $n\in\mathbb{N}$ and (v) $V_{n}\to 0$ in WOT, as $n\to\infty$. So $B\ll U$. By Theorem \ref{lpVoicsingleoperator}, the result follows.
\end{proof}
Let $\mathcal{X}$ be a Banach space. Two operators $T_{1},T_{2}\in B(\mathcal{X})$ are {\it norm approximately similar} if there are invertible  operators $S_{n}:l^{p}\to l^{p}$, for $n\in\mathbb{N}$, such that
\[\sup_{n\in\mathbb{N}}\|S_{n}\|\|S_{n}^{-1}\|<\infty\]
and
\[\lim_{n\to\infty}\|T_{1}-S_{n}T_{2}S_{n}^{-1}\|=0.\]
\begin{corollary}\label{boundedsimorbit}
Let $T_{1},T_{2}\in B(l^{p})$. Suppose that $T_{1}$ and $T_{2}$ are norm approximately similar and that $T_{1}$ is norm approximately similar to $T_{1}\oplus T_{1}\oplus\ldots$. Then $T_{1}$ and $T_{2}$ are approximately similar.
\end{corollary}
\begin{proof}
The operators $T_{2}$ and $T_{1}\oplus T_{1}\oplus\ldots$ are norm approximately similar. So there are invertible operators $W_{n}:l^{p}\to(l^{p}\oplus l^{p}\oplus\ldots)_{l^{p}}$ such that $\displaystyle\sup_{n\in\mathbb{N}}\|W_{n}\|$ and $\displaystyle\sup_{n\in\mathbb{N}}\|W_{n}^{-1}\|$ are finite and
\[\lim_{n\to\infty}\|T_{1}\oplus T_{1}\oplus\ldots-W_{n}T_{2}W_{n}^{-1}\|=0.\]
For each $n\in\mathbb{N}$, let $J_{n}:l^{p}\to(l^{p}\oplus l^{p}\oplus\ldots)_{l^{p}}$ be the canonical embedding that maps $l^{p}$ onto the $n$th component of $(l^{p}\oplus l^{p}\oplus\ldots)_{l^{p}}$ and let $Q_{n}:(l^{p}\oplus l^{p}\oplus\ldots)_{l^{p}}\to l^{p}$ be the canonical projection onto the $n$th component of $(l^{p}\oplus l^{p}\oplus\ldots)_{l^{p}}$. Note that $J_{n}x\to 0$ weakly, as $n\to\infty$, for every $x\in l^{p}$. So there are $k_{1}<k_{2}<\ldots$ such that $W_{n}^{-1}J_{k_{n}}x\to 0$ weakly, as $n\to\infty$, for every $x\in l^{p}$. For $n\in\mathbb{N}$, define operators $V_{n}:l^{p}\to l^{p}$ and $E_{n}:l^{p}\to l^{p}$ by
\[V_{n}x=W_{n}^{-1}J_{k_{n}}x\quad\text{and}\quad E_{n}y=Q_{k_{n}}W_{n}y,\]
for $x\in l^{p}$ and $y\in l^{p}$. It is easy to check that (i) $\sup_{n\in\mathbb{N}}\|V_{n}\|$ and $\sup_{n\in\mathbb{N}}\|E_{n}\|$ are finite, (ii) $E_{n}V_{n}=I$ for all $n\in\mathbb{N}$, (iii) \[\|V_{n}T_{1}-T_{2}V_{n}\|=\|(W_{n}^{-1}(T_{1}\oplus T_{1}\oplus\ldots)-T_{2}W_{n}^{-1})J_{k_{n}}\|\to 0,\]
as $n\to\infty$, (iv)
\[\|E_{n}T_{2}-T_{1}E_{n}\|=\|Q_{k_{n}}(W_{n}T_{2}-(T_{1}\oplus T_{1}\oplus\ldots)W_{n})\|\to 0,\]
as $n\to\infty$, and (v) $V_{n}\to 0$ in WOT as $n\to\infty$. Thus, $T_{1}\ll T_{2}$. By Theorem \ref{lpVoicsingleoperator}, we have that $T_{2}$ is approximately similar to $T_{2}\oplus T_{1}$.

Since $T_{1}$ and $T_{2}$ are norm approximately similar and $T_{1}$ is norm approximately similar to $T_{1}\oplus T_{1}\oplus\ldots$, the operator $T_{2}$ is norm approximately similar to $T_{2}\oplus T_{2}\oplus\ldots$. Thus, interchanging the roles of $T_{1}$ and $T_{2}$ and applying  the conclusion of the previous paragraph, we obtain that $T_{1}$ is approximately similar to $T_{1}\oplus T_{2}$. It follows that $T_{1}$ and $T_{2}$ are approximately similar.
\end{proof}
\begin{lemma}\label{diagspectrum}
Let $n\in\mathbb{N}$. For $0\leq r\leq n$ and $k\in\mathbb{Z}$, let $u_{k,r}\in\mathbb{C}$. Assume that $\sup_{k,r}|u_{k,r}|<\infty$. For each $0\leq r\leq n$, let $D_{r}$ be the diagonal operator on $l^{p}(\mathbb{Z})$ defined by $D_{r}e_{k}=u_{r,k}e_{k}$, for $k\in\mathbb{Z}$. Let $\lambda_{1},\ldots,\lambda_{n}\in\mathbb{C}$. Then\[\inf_{x\in l^{p},\,\|x\|=1}(\|(D_{0}-\lambda_{0})x\|^{p}+\ldots+\|(D_{n}-\lambda_{n})x\|^{p})=0\]if and only if $(\lambda_{0},\ldots,\lambda_{n})$ is in the closure of $\{(u_{0,k},\ldots,u_{n,k}):k\in\mathbb{Z}\}$.
\end{lemma}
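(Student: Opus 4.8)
The plan is to reduce both implications to a coordinatewise computation. Since all the operators $D_{r}$ are diagonal with respect to the same canonical basis $(e_{n})_{n\in\mathbb{Z}}$ of $l^{p}(\mathbb{Z})$, the operator $D_{r}-\lambda_{r}$ multiplies the $n$-th coordinate by $u_{n,r}-\lambda_{r}$; hence for $x=\sum_{n\in\mathbb{Z}}x_{n}e_{n}\in l^{p}(\mathbb{Z})$ we have $\|(D_{r}-\lambda_{r})x\|^{p}=\sum_{n\in\mathbb{Z}}|u_{n,r}-\lambda_{r}|^{p}|x_{n}|^{p}$. (Here $\lambda_{0}$ is included among the $\lambda_{r}$, as the displayed formula indicates.)

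For the ``if'' direction, assume $(\lambda_{0},\ldots,\lambda_{k})$ lies in the closure of $\{(u_{n,0},\ldots,u_{n,k}):n\in\mathbb{Z}\}$ and pick indices $n_{j}$ with $(u_{n_{j},0},\ldots,u_{n_{j},k})\to(\lambda_{0},\ldots,\lambda_{k})$. Testing the infimum against the unit vectors $x=e_{n_{j}}$ gives
\[
\|(D_{0}-\lambda_{0})e_{n_{j}}\|^{p}+\cdots+\|(D_{k}-\lambda_{k})e_{n_{j}}\|^{p}=\sum_{r=0}^{k}|u_{n_{j},r}-\lambda_{r}|^{p}\longrightarrow 0
\]
as $j\to\infty$, so the infimum is $0$.

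For the converse I would argue by contraposition. Suppose $(\lambda_{0},\ldots,\lambda_{k})$ is not in the closure of $\{(u_{n,0},\ldots,u_{n,k}):n\in\mathbb{Z}\}$; then its distance to that set (computed with the $\ell^{p}$ norm on $\mathbb{C}^{k+1}$, say) is positive, so there is $\delta>0$ with $\sum_{r=0}^{k}|u_{n,r}-\lambda_{r}|^{p}\geq\delta^{p}$ for all $n\in\mathbb{Z}$. For any $x=\sum_{n\in\mathbb{Z}}x_{n}e_{n}\in l^{p}(\mathbb{Z})$ we then compute
\[
\sum_{r=0}^{k}\|(D_{r}-\lambda_{r})x\|^{p}=\sum_{r=0}^{k}\sum_{n\in\mathbb{Z}}|u_{n,r}-\lambda_{r}|^{p}|x_{n}|^{p}=\sum_{n\in\mathbb{Z}}|x_{n}|^{p}\sum_{r=0}^{k}|u_{n,r}-\lambda_{r}|^{p}\geq\delta^{p}\|x\|^{p},
\]
where the interchange of the finite sum over $r$ with the sum over $n$ is valid because every summand is nonnegative. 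Hence the infimum over the unit sphere is at least $\delta^{p}>0$.

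Both steps are elementary, and I do not foresee any genuine obstacle. The only point worth a moment's care is the implication ``not in the closure $\Rightarrow$ uniform lower bound $\delta$'', which is immediate since the distance from a point to a set vanishes precisely when the point lies in the closure of the set.
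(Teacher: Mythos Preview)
Your argument is correct and is essentially the same as the paper's: both reduce the question to the coordinatewise identity $\sum_{r}\|(D_{r}-\lambda_{r})x\|^{p}=\sum_{n}\bigl(\sum_{r}|u_{n,r}-\lambda_{r}|^{p}\bigr)|x_{n}|^{p}$. The only cosmetic difference is that the paper compresses the two implications into the single observation that the infimum over unit vectors equals $\inf_{n}\sum_{r}|u_{n,r}-\lambda_{r}|^{p}$, whereas you split into the ``if'' direction (test against $e_{n_{j}}$) and the contrapositive of the ``only if'' direction.
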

\begin{proof}
Let $x=(x_{k})_{k\in\mathbb{Z}}\in l^{p}(\mathbb{Z})$. We have
\begin{eqnarray*}
\|(D_{0}-\lambda_{0})x\|^{p}+\ldots+\|(D_{n}-\lambda_{n})x\|^{p}&=&\sum_{k\in\mathbb{Z}}|(u_{0,k}-\lambda_{0})x_{k}|^{p}+\ldots+
\sum_{k\in\mathbb{Z}}|(u_{n,k}-\lambda_{n})x_{k}|^{p}\\&=&\sum_{k\in\mathbb{Z}}(|u_{0,k}-\lambda_{0}|^{p}+\ldots+|u_{n,k}-\lambda_{n}|^{p})|x_{k}|^{p}.
\end{eqnarray*}
Thus,
\[\inf_{x\in l^{p},\,\|x\|=1}(\|(D_{0}-\lambda_{0})x\|^{p}+\ldots+\|(D_{n}-\lambda_{n})x\|^{p})=\inf_{k\in\mathbb{Z}}(|u_{0,k}-\lambda_{0}|^{p}+\ldots+|u_{n,k}-
\lambda_{n}|^{p}).\]
So the result follows.
\end{proof}
\begin{corollary}\label{DB1}
Let $(u_{k})_{k\in\mathbb{Z}}$ and $(v_{k})_{k\in\mathbb{Z}}$ be bounded two sided sequences in $\mathbb{C}$. Let $D_{u}$ and $D_{v}$ be their corresponding diagonal operators on $l^{p}(\mathbb{Z})$. Let $B$ be the bilateral shift on $l^{p}(\mathbb{Z})$. Then the following statements are equivalent.
\begin{enumerate}[(1)]
\item $(D_{\mathbf{u}},B)$ is approximately similar to $(D_{\mathbf{v}},B)$
\item The closures of $\{(u_{k},\ldots,u_{k+n}):k\in\mathbb{Z}\}$ and $\{(v_{k},\ldots,v_{k+n}):k\in\mathbb{Z}\}$ coincide for every $n\in\mathbb{N}$.
\end{enumerate}
\end{corollary}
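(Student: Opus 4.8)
The plan is to prove the two implications separately. For $(1)\Rightarrow(2)$ the key observation is that the infimum in Lemma \ref{diagspectrum}, applied to the translated diagonal operators $B^{-j}D_uB^{j}$, vanishes on one side exactly when it vanishes on the other side of a bounded similarity. For $(2)\Rightarrow(1)$ the plan is to produce a suitable pair of operators into and out of the Calkin representation, apply Lemma \ref{hatsub}, and then invoke Theorem \ref{lpVoic}, after first disposing of the degenerate case in which $u$ is literally a translate of $v$.

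\emph{Proof of $(1)\Rightarrow(2)$.} Since approximate similarity is symmetric, it suffices to show that $(v_m,\dots,v_{m+k})$ lies in the closure of $\{(u_n,\dots,u_{n+k}):n\in\mathbb{Z}\}$ for every $m\in\mathbb{Z}$ and $k\geq 1$. Let $(S_n)$ implement the approximate similarity, rescaled (which does not change $S_nTS_n^{-1}$) so that $\|S_n\|,\|S_n^{-1}\|\leq\sqrt{\lambda}$, with $\|D_u-S_nD_vS_n^{-1}\|\to 0$ and $\|B-S_nBS_n^{-1}\|\to 0$. For $0\leq j\leq k$ set $D_u^{(j)}=B^{-j}D_uB^{j}$ and $D_v^{(j)}=B^{-j}D_vB^{j}$; these are the diagonal operators with entries $(u_{n+j})_{n}$ and $(v_{n+j})_{n}$. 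Writing $S_nD_v^{(j)}S_n^{-1}=(S_nB^{-1}S_n^{-1})^{j}(S_nD_vS_n^{-1})(S_nBS_n^{-1})^{j}$ and noting that each factor converges in norm (for the inverse factor one uses $\|(S_nBS_n^{-1})^{-1}\|=\|S_nB^{-1}S_n^{-1}\|\leq\lambda$ since $\|B^{-1}\|=1$) and that the factors are uniformly bounded, one gets $S_nD_v^{(j)}S_n^{-1}\to D_u^{(j)}$ in norm for each $j$. By Lemma \ref{diagspectrum}, $\inf_{\|x\|=1}\sum_{j=0}^{k}\|(D_v^{(j)}-v_{m+j})x\|^{p}=0$ (the value $0$ is attained at the basis vectors along $n=m$); pushing a near-minimizer forward by $S_N$ for $N$ large and using the norm control above gives $\inf_{\|x\|=1}\sum_{j=0}^{k}\|(D_u^{(j)}-v_{m+j})x\|^{p}=0$, so Lemma \ref{diagspectrum} in the other direction yields the claim.

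\emph{Proof of $(2)\Rightarrow(1)$.} If $u_j=v_{j-c}$ for some $c\in\mathbb{Z}$, then $D_u=B^{c}D_vB^{-c}$ and $B=B^{c}BB^{-c}$, so $(D_u,B)$ is $1$-similar, hence approximately similar, to $(D_v,B)$. Otherwise: using (2) for all window sizes, choose for each $K\geq 1$ an integer $n_K$ with $|u_{n_K+r}-v_r|<1/K$ for all $|r|\leq K$. If $(n_K)$ had a bounded subsequence it would be constant $\equiv n_*$ along a subsequence, forcing $u_{n_*+r}=v_r$ for all $r$, i.e.\ $u$ a translate of $v$, contrary to assumption; hence $|n_K|\to\infty$ and $u_{n_K+m}\to v_m$ for every $m$. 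Define $V:l^{p}(\mathbb{Z})\to\widehat{l^{p}(\mathbb{Z})}$ by $Ve_m=(e_{n_K+m})_{K,\mathcal{U}}$; this is an isometry landing in $\widehat{l^{p}(\mathbb{Z})}$, because for $K$ large the vectors $e_{n_K+m}$ ($m$ in a fixed finite set) have pairwise disjoint supports and because $|n_K|\to\infty$. Define $E:\widehat{l^{p}(\mathbb{Z})}\to l^{p}(\mathbb{Z})$ by $E(x^{(K)})_{K,\mathcal{U}}=w\hyphen\lim_{K,\mathcal{U}}B^{-n_K}x^{(K)}$, which has norm at most $1$. Then $EV=I$, $\widehat{B}V=VB$ and $E\widehat{B}=BE$ are immediate; $\widehat{D_u}V=VD_v$ follows from $u_{n_K+m}\to v_m$; and $E\widehat{D_u}=D_vE$ follows because $B^{-n_K}D_uB^{n_K}$ is the diagonal operator with entries $(u_{i+n_K})_{i}$, which converges to $D_v$ in SOT, and SOT-convergence of a uniformly bounded sequence is enough to pass to the weak limit defining $E$. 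By Lemma \ref{hatsub} there is an idempotent $P$ on $\widehat{l^{p}(\mathbb{Z})}$ with $\|P\|\leq 1$, commuting with $\widehat{D_u}$ and $\widehat{B}$, whose range is $1$-isomorphic to $l^{p}$, such that $(\widehat{D_u}|_{P\widehat{l^{p}(\mathbb{Z})}},\widehat{B}|_{P\widehat{l^{p}(\mathbb{Z})}})$ is $1$-similar to $(D_v,B)$. Applying Theorem \ref{lpVoic} with $P_1=P$ and $P_2=P_3=\cdots$ the zero idempotent shows that $(D_u,B)$ is approximately similar to $(D_u\oplus D_v,B\oplus B)$. Since both (2) and the relation ``$u$ is a translate of $v$'' are symmetric in $u$ and $v$, the same argument gives that $(D_v,B)$ is approximately similar to $(D_v\oplus D_u,B\oplus B)$, which is $1$-similar to $(D_u\oplus D_v,B\oplus B)$ by swapping the two summands; transitivity of approximate similarity then gives (1).

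The step I expect to be the main obstacle is the construction and verification of the intertwining pair $(V,E)$ in $(2)\Rightarrow(1)$, and in particular the dichotomy that if the wandering sequence $(n_K)$ cannot be forced to infinity then $u$ must literally be a translate of $v$ (so that Theorem \ref{lpVoic} is not needed in that case); the verification of $E\widehat{D_u}=D_vE$, where one must check that SOT-convergence of the conjugated diagonals $B^{-n_K}D_uB^{n_K}$ suffices inside the defining weak limit, also requires care. The norm estimates in $(1)\Rightarrow(2)$ and the bookkeeping in applying Theorem \ref{lpVoic} (so that the direct sum collapses to just one extra copy of $(D_v,B)$) are routine.
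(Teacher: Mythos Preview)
Your proof is correct and follows essentially the same route as the paper: the $(1)\Rightarrow(2)$ direction is the paper's argument made explicit, and for $(2)\Rightarrow(1)$ you run the paper's construction with the roles of $u$ and $v$ interchanged (which is harmless by the symmetry of (2)), then apply Lemma~\ref{hatsub} and Theorem~\ref{lpVoic} and finish with the same direct-sum swap. One caveat on the step you flagged: the assertion that ``SOT-convergence of a uniformly bounded sequence is enough to pass to the weak limit'' is false in general, but it does hold here because $B^{-n_K}D_uB^{n_K}-D_v$ is diagonal, so its adjoint also converges to $0$ strongly and hence $z^*\big((B^{-n_K}D_uB^{n_K}-D_v)y_K\big)=((B^{-n_K}D_uB^{n_K}-D_v)^*z^*)(y_K)\to 0$ for every $z^*$; the paper verifies this coordinatewise, which amounts to the same thing.
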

\begin{proof}
Suppose that $(D_{\mathbf{u}},B)$ is approximately similar to $(D_{\mathbf{v}},B)$. Let $n\in\mathbb{N}$. Then the $(n+1)$-tuples of operators
\[(D_{\mathbf{u}},B^{-1}D_{\mathbf{u}}B,\ldots,B^{-n}D_{\mathbf{u}}B^{n})\quad\text{and}
\quad(D_{\mathbf{v}},B^{-1}D_{\mathbf{v}}B,\ldots,B^{-n}D_{\mathbf{v}}B^{n})\]
are approximately similar. Note that
\[(B^{-r}D_{\mathbf{u}}B^{r})e_{k}=u_{k+r}e_{k}\quad\text{and}\quad(B^{-r}D_{\mathbf{v}}B^{r})e_{k}=v_{k+r}e_{k}\]
for all $0\leq r\leq n$ and $k\in\mathbb{Z}$. Observe that for $\lambda_{0},\ldots,\lambda_{n}\in\mathbb{C}$, the set of all $(n+1)$-tuples $(D_{0},\ldots,D_{n})$ of diagonal operators on $l^{p}(\mathbb{Z})$ satisfying
\[\inf_{x\in l^{p},\,\|x\|=1}(\|(D_{0}-\lambda_{0})x\|^{p}+\ldots+\|(D_{n}-\lambda_{n})x\|^{p})=0\]
is invariant under approximate similarity. So using Lemma \ref{diagspectrum} with $D_{r}=B^{-r}D_{\mathbf{u}}B^{-r}$, for $0\leq r\leq n$, and using Lemma \ref{diagspectrum} with $D_{r}=B^{-r}D_{\mathbf{v}}B^{-r}$, for $0\leq r\leq n$, we obtain that for $\lambda_{0},\ldots,\lambda_{n}\in\mathbb{C}$, the tuple $(\lambda_{0},\ldots,\lambda_{n})$ is in the closure of $\{(u_{n},\ldots,u_{n+k}):k\in\mathbb{Z}\}$ if and only if $(\lambda_{0},\ldots,\lambda_{n})$ is in the closure of $\{(v_{k},\ldots,v_{k+n}):k\in\mathbb{Z}\}$. This means that the closures of $\{(u_{k},\ldots,u_{k+n}):k\in\mathbb{Z}\}$ and $\{(v_{k},\ldots,v_{k+n}):k\in\mathbb{Z}\}$ coincide.

Conversely, suppose that the closures of $\{(u_{k},\ldots,u_{k+n}):k\in\mathbb{Z}\}$ and $\{(v_{k},\ldots,v_{k+n}):k\in\mathbb{Z}\}$ coincide for every $n\in\mathbb{N}$. Let $m\in\mathbb{N}$. Then $(u_{-m},\ldots,u_{m})$ is in the closure of $\{(v_{k},\ldots,v_{k+2m}):k\in\mathbb{Z}\}$. So there exists $k_{m}\in\mathbb{Z}$ such that
\[\|(u_{-m},\ldots,u_{m})-(v_{k_{m}},\ldots,v_{k_{m}+2m})\|_{\infty}\leq\frac{1}{m}.\]
Thus, $|u_{j}-v_{k_{m}+j+m}|\leq\frac{1}{m}$ for all $-m\leq j\leq m$. So there exist $t_{1},t_{2},\ldots\in\mathbb{Z}$ such that $|u_{j}-v_{j+t_{m}}|\leq\frac{1}{m}$ for all $-m\leq j\leq m$ and $m\in\mathbb{N}$ (take $t_{m}=k_{m}+m$). So
\begin{equation}\label{aapprb}
u_{k}=\lim_{m\to\infty}v_{k+t_{m}},
\end{equation}
for all $k\in\mathbb{Z}$.

Since $t_{1},t_{2},\ldots\in\mathbb{Z}$, passing to a subsequence, we have that either $t_{m}=t$ for all $m\in\mathbb{N}$ or $|t_{m}|\to\infty$ as $m\to\infty$. In the first case, $u_{k}=v_{k+t}$ for $k\in\mathbb{Z}$. We have $D_{\mathbf{u}}=B^{-t}D_{\mathbf{v}}B^{t}$ and so $(D_{\mathbf{u}},B)$ and $(D_{\mathbf{v}},B)$ are approximately similar. In the second case, for $n\in\mathbb{N}$, consider operators $V_{n}:l^{p}(\mathbb{Z})\to l^{p}(\mathbb{Z})$ and  $E_{n}:l^{p}(\mathbb{Z})\to l^{p}(\mathbb{Z})$ defined by $V_{n}=B^{t_{n}}$ and $E_{n}=B^{-t_{n}}$. It is easy to check that (i) $\|V_{n}\|=\|E_{n}\|=1$ for all $n\in\mathbb{N}$, (ii) $E_{n}V_{n}=I$ for all $n\in\mathbb{N}$, (iii) $V_{n}B=BV_{n}$ for all $n\in\mathbb{N}$, (iv) $E_{n}B=BE_{n}$ for all $n\in\mathbb{N}$ and (v) $V_{n}\to 0$ in WOT, as $n\to\infty$. This only gives that $B\ll B$. But we want to show that $(D_{\mathbf{u}},B)\ll (D_{\mathbf{v}},B)$. To obtain this, it suffices to show that $V_{n}D_{\mathbf{u}}-D_{\mathbf{v}}V_{n}\to 0$ in SOT and $|E_{n}D_{\mathbf{v}}-D_{\mathbf{u}}E_{n}|_{y^{*}}\to 0$, as $n\to\infty$, for every $y^{*}\in l^{p}(\mathbb{Z})^{*}$.

For all $k\in\mathbb{Z}$ and $n\in\mathbb{N}$, we have $(V_{n}D_{\mathbf{u}}-D_{\mathbf{v}}V_{n})e_{k}=(u_{k}-v_{k+t_{n}})e_{k+t_{n}}$ so by (\ref{aapprb}), we have $\|(V_{n}D_{\mathbf{u}}-D_{\mathbf{v}}V_{n})e_{k}\|\to 0$, as $n\to\infty$. Hence, $V_{n}D_{\mathbf{u}}-D_{\mathbf{v}}V_{n}\to 0$ in SOT, as $n\to\infty$.

Since
\[E_{n}D_{\mathbf{v}}-D_{\mathbf{u}}E_{n}=
B^{-t_{n}}D_{\mathbf{v}}-D_{\mathbf{u}}B^{-t_{n}}=(B^{-t_{n}}D_{\mathbf{v}}B^{t_{n}}-D_{\mathbf{u}})B^{-t_{n}}\]
and $B^{-t_{n}}D_{\mathbf{v}}B^{t_{n}}-D_{\mathbf{u}}$ is a diagonal operator whose $k$th diagonal entry is $v_{k+t_{n}}-u_{k}$, it follows that
\[e_{k}^{*}(E_{n}D_{\mathbf{v}}y-D_{\mathbf{u}}E_{n}y)=e_{k}^{*}((B^{-t_{n}}D_{\mathbf{v}}B^{t_{n}}-D_{\mathbf{u}})B^{-t_{n}}y)=
(v_{k+t_{n}}-u_{k})e_{k}^{*}(B^{-t_{n}}y),\]
and so
\[|e_{k}^{*}(E_{n}D_{\mathbf{v}}y-D_{\mathbf{u}}E_{n}y)|\leq|v_{k+t_{n}}-u_{k}|\|y\|,\]
for all numbers $k\in\mathbb{Z}$ and $n\in\mathbb{N}$ and $y\in l^{p}(\mathbb{Z})$. By (\ref{aapprb}), we deduce that $|E_{n}D_{\mathbf{v}}x-D_{\mathbf{u}}E_{n}|_{e_{k}^{*}}\to 0$, as $n\to\infty$, and so $|E_{n}D_{\mathbf{v}}-D_{\mathbf{u}}E_{n}|_{y^{*}}\to 0$, as $n\to\infty$, for all $y^{*}\in l^{p}(\mathbb{Z})^{*}$.

Therefore, $(D_{\mathbf{u}},B)\ll (D_{\mathbf{v}},B)$. By Theorem \ref{lpVoic}, we have that $(D_{\mathbf{v}},B)$ is approximately similar to $(D_{\mathbf{v}}\oplus D_{\mathbf{u}},B\oplus B)$. Interchanging the roles of $(u_{n})_{n\in\mathbb{Z}}$ and $(v_{n})_{n\in\mathbb{Z}}$, we have that $(D_{\mathbf{u}},B)$ is approximately similar to $(D_{\mathbf{u}}\oplus D_{\mathbf{v}},B\oplus B)$. But $(D_{\mathbf{u}}\oplus D_{\mathbf{v}},B\oplus B)$ is similar to $(D_{\mathbf{v}}\oplus D_{\mathbf{u}},B\oplus B)$. Therefore, $(D_{\mathbf{u}},B)$ is approximately similar to $(D_{\mathbf{v}},B)$.
\end{proof}
The following result follows immediately from Corollary \ref{DB1}. The case $p=2$ was obtained in \cite{ODonovan} (see also \cite{Marcoux}).
\begin{corollary}\label{DB2}
Let $(u_{k})_{k\in\mathbb{Z}}$ and $(v_{k})_{k\in\mathbb{Z}}$ be bounded two sided sequences in $\mathbb{C}$. Assume that the closures of $\{(u_{k},\ldots,u_{k+n}):k\in\mathbb{Z}\}$ and $\{(v_{k},\ldots,v_{k+n}):k\in\mathbb{Z}\}$ coincide for every $n\in\mathbb{Z}$. Define operators $T_{\mathbf{u}},T_{\mathbf{v}}\in B(l^{p}(\mathbb{Z}))$ by
\[T_{\mathbf{u}}e_{k}=u_{k}e_{k+1}\quad\text{and}\quad T_{\mathbf{v}}e_{k}=v_{n}e_{k+1},\]
for $k\in\mathbb{Z}$. Then $T_{\mathbf{u}}$ and $T_{\mathbf{v}}$ are approximately similar.
\end{corollary}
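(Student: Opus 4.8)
The plan is to reduce the statement to Corollary \ref{DB1} by factoring each weighted bilateral shift as the (unweighted) bilateral shift composed with a diagonal operator, and then transporting the approximate similarity of pairs through this factorization.

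First I would observe that $T_{u}=BD_{u}$ and $T_{v}=BD_{v}$, where $D_{u},D_{v}$ are the diagonal operators on $l^{p}(\mathbb{Z})$ appearing in Corollary \ref{DB1}; indeed $BD_{u}e_{n}=B(u_{n}e_{n})=u_{n}e_{n+1}=T_{u}e_{n}$ for all $n\in\mathbb{Z}$, and likewise for $T_{v}$. Since the closures of $\{(u_{n},\ldots,u_{n+k}):n\in\mathbb{Z}\}$ and $\{(v_{n},\ldots,v_{n+k}):n\in\mathbb{Z}\}$ coincide for every $k\geq 1$ by hypothesis, Corollary \ref{DB1} gives that $(D_{u},B)$ is approximately similar to $(D_{v},B)$: there is a sequence $(S_{n})_{n\geq 1}$ of invertible operators on $l^{p}(\mathbb{Z})$ with $\sup_{n}\|S_{n}\|\|S_{n}^{-1}\|<\infty$ such that $D_{v}-S_{n}D_{u}S_{n}^{-1}$ and $B-S_{n}BS_{n}^{-1}$ are compact and tend to $0$ in norm.

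Finally I would use the same sequence $(S_{n})$ and compute $S_{n}T_{u}S_{n}^{-1}-T_{v}=(S_{n}BS_{n}^{-1})(S_{n}D_{u}S_{n}^{-1})-BD_{v}=(S_{n}BS_{n}^{-1}-B)(S_{n}D_{u}S_{n}^{-1})+B(S_{n}D_{u}S_{n}^{-1}-D_{v})$. The first summand is compact (a compact operator times the uniformly bounded operator $S_{n}D_{u}S_{n}^{-1}$) and the second is compact ($B$ times a compact operator), so $S_{n}T_{u}S_{n}^{-1}-T_{v}$ is compact for each $n$; moreover $\|S_{n}D_{u}S_{n}^{-1}\|$ is uniformly bounded while $\|S_{n}BS_{n}^{-1}-B\|\to 0$ and $\|S_{n}D_{u}S_{n}^{-1}-D_{v}\|\to 0$, so $\|S_{n}T_{u}S_{n}^{-1}-T_{v}\|\to 0$. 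Together with $\sup_{n}\|S_{n}\|\|S_{n}^{-1}\|<\infty$, this is precisely the assertion that $T_{u}$ and $T_{v}$ are approximately similar. There is no substantial obstacle here; the only points requiring (routine) care are that the approximating similarities behave multiplicatively under the factorization $T_{u}=BD_{u}$ and that compactness of the error terms is preserved, both of which are immediate from the ideal property of $K(l^{p}(\mathbb{Z}))$.
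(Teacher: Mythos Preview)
Your proof is correct and takes essentially the same approach as the paper: the paper simply states that the result follows immediately from Corollary~\ref{DB1}, and you have spelled out the (routine) details of that deduction via the factorization $T_{u}=BD_{u}$, $T_{v}=BD_{v}$.
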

\begin{remark}
If all $u_{n}$ and $v_{n}$ are chosen independently from the same distribution $\mu$ on a compact subset of $\mathbb{C}$, then the assumption of Corollary \ref{DB2} is satisfied almost surely.
\end{remark}
\begin{corollary}\label{TXR}
Let $T\in B(l^{p})$. For each $k\in\mathbb{N}$, let $P_{k}$ be the projection from $l^{p}$ onto $l^{p}([1,k])$. Consider the operator $X$ on $(\oplus_{k=1}^{\infty}l^{p}([1,k]))_{l^{p}}$,
\[X=\bigoplus_{k=1}^{\infty}P_{k}T|_{l^{p}([1,k])}.\]
Then $X$ is approximately similar to $X\oplus T\oplus T\oplus\ldots$.
\end{corollary}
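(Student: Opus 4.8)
The plan is to follow the template of Corollaries \ref{UB}--\ref{DB1}: realize $T$, up to similarity, as a ``summand'' of the Calkin representation of $X$ by means of Lemma \ref{hatsub}, and then feed this into Theorem \ref{lpVoic}. Write $\mathcal{Y}=(\oplus_{n=1}^{\infty}l^{p}([1,n]))_{l^{p}}$, which is isometrically isomorphic to $l^{p}$, let $J_{n}:l^{p}([1,n])\to\mathcal{Y}$ and $Q_{n}:\mathcal{Y}\to l^{p}([1,n])$ be the canonical embedding and projection of the $n$th summand, and identify $l^{p}([1,n])$ with $R_{n}l^{p}\subset l^{p}$, so that the $n$th coordinate of $X$ is $R_{n}TR_{n}$ and $\widehat{X}$ acts on $\widehat{\mathcal{Y}}$.

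First I would define $V:l^{p}\to\widehat{\mathcal{Y}}$ by $Vx=(J_{k}R_{k}x)_{k,\mathcal{U}}$ and $E:\widehat{\mathcal{Y}}\to l^{p}$ by $E(x^{(k)})_{k,\mathcal{U}}=w\hyphen\lim_{k,\mathcal{U}}Q_{k}x^{(k)}$. Since every $\psi=(\psi_{n})_{n}\in\mathcal{Y}^{*}$ satisfies $|\psi(J_{k}R_{k}x)|\leq\|\psi_{k}\|\,\|x\|\to 0$, the sequence $J_{k}R_{k}x$ tends weakly to $0$, so $V$ indeed maps into $\widehat{\mathcal{Y}}$, with $\|V\|=1$ and $\|E\|\leq 1$. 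The relations $EV=I$ and $\widehat{X}V=VT$ follow at once from $\|R_{k}x-x\|\to 0$. The one identity deserving a word is $E\widehat{X}=TE$: writing $y^{(k)}=Q_{k}x^{(k)}\in R_{k}l^{p}$, one gets $E\widehat{X}(x^{(k)})_{k,\mathcal{U}}=w\hyphen\lim_{k,\mathcal{U}}R_{k}Ty^{(k)}$ and $TE(x^{(k)})_{k,\mathcal{U}}=w\hyphen\lim_{k,\mathcal{U}}Ty^{(k)}$, and these agree because $(I-R_{k})Ty^{(k)}\to 0$ weakly: for $\phi\in l^{q}$ one has $|\phi((I-R_{k})Ty^{(k)})|\leq\|(I-R_{k})\phi\|_{q}\sup_{j}\|Ty^{(j)}\|$, the sequence $(Ty^{(k)})_{k}$ is bounded, and $\|(I-R_{k})\phi\|_{q}\to 0$.

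By Lemma \ref{hatsub} there is then an idempotent $P$ on $\widehat{\mathcal{Y}}$ with $\|P\|\leq 1$, commuting with $\widehat{X}$, such that $T$ is $1$-similar to $\widehat{X}|_{P\widehat{\mathcal{Y}}}$; since a $1$-similarity is a scalar multiple of an isometric isomorphism, $P\widehat{\mathcal{Y}}$ is isometrically isomorphic to $l^{p}$, hence in particular $1$-isomorphic to $l^{p}$. Applying Theorem \ref{lpVoic} to the single operator $X$ with $\lambda=1$ and $P_{n}=P$ for every $n$ yields that $X$ is approximately similar to $X\oplus(\widehat{X}|_{P\widehat{\mathcal{Y}}})\oplus(\widehat{X}|_{P\widehat{\mathcal{Y}}})\oplus\ldots$. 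Finally, because $\widehat{X}|_{P\widehat{\mathcal{Y}}}$ is $1$-similar to $T$, the tail direct sum $(\widehat{X}|_{P\widehat{\mathcal{Y}}})\oplus(\widehat{X}|_{P\widehat{\mathcal{Y}}})\oplus\ldots$ is $1$-similar to $T\oplus T\oplus\ldots$, and an approximate similarity followed by a fixed bounded similarity is again an approximate similarity; thus $X$ is approximately similar to $X\oplus T\oplus T\oplus\ldots$, as desired.

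I do not expect a genuine obstacle here: the argument is essentially a bookkeeping combination of Lemma \ref{hatsub} with Theorem \ref{lpVoic}, exactly as in the preceding corollaries. The only point calling for care is the identity $E\widehat{X}=TE$, that is, the fact that the coordinate tails $(I-R_{k})Ty^{(k)}$ of a uniformly bounded sequence tend weakly to $0$; this rests on the reflexivity of $l^{p}$ (equivalently, on $\|(I-R_{k})\phi\|_{q}\to 0$ for each $\phi\in l^{q}$), the same mechanism that forces $V$ to take values in $\widehat{\mathcal{Y}}$.
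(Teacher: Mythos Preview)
Your proposal is correct and follows essentially the same route as the paper's own proof: the same maps $V$ and $E$, the same verification of $EV=I$, $\widehat{X}V=VT$, $E\widehat{X}=TE$, then Lemma \ref{hatsub} and Theorem \ref{lpVoic}. You are in fact slightly more careful than the paper, explicitly checking that $V$ lands in $\widehat{\mathcal{Y}}$ and that $P\widehat{\mathcal{Y}}$ is $1$-isomorphic to $l^{p}$ so that the hypothesis of Theorem \ref{lpVoic} on the ranges $\mathcal{M}_{n}$ is met.
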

\begin{proof}
For each $n\in\mathbb{N}$, let $J_{n}:l^{p}([1,n])\to(\oplus_{k=1}^{\infty}l^{p}([1,k]))_{l^{p}}$ and $Q_{n}:(\oplus_{k=1}^{\infty}l^{p}([1,k]))_{l^{p}}\to l^{p}([1,n])$ be the canonical embedding and projection, respectively. Let $\mathcal{Y}=(\oplus_{k=1}^{\infty}l^{p}([1,k]))_{l^{p}}$. For $n\in\mathbb{N}$, define operators $V_{n}:l^{p}\to\mathcal{Y}$ and $E_{n}:\mathcal{Y}\to l^{p}$ by
\[V_{n}x=J_{n}P_{n}x\quad\text{and}\quad E_{n}y=Q_{n}y,\]
for $x\in l^{p}$ and $y\in\mathcal{Y}$. It is easy to see that
\begin{enumerate}[(i)]
\item $\|V_{n}\|=\|E_{n}\|=1$ for all $n\in\mathbb{N}$;
\item $E_{n}V_{n}=P_{n}\to I$ in SOT, as $n\to\infty$;
\item\[V_{n}Tx-XV_{n}x=J_{n}P_{n}Tx-XJ_{n}P_{n}x=J_{n}P_{n}Tx-J_{n}P_{n}TP_{n}x=J_{n}P_{n}T(I-P_{n})x,\]
for $n\in\mathbb{N}$ and $x\in l^{p}(\mathbb{Z})$, and so $V_{n}T-XV_{n}\to 0$ in SOT as $n\to\infty$;
\item\[E_{n}Xy-TE_{n}y=Q_{n}Xy-TQ_{n}y=P_{n}TQ_{n}y-TQ_{n}y=(I-P_{n})TQ_{n}y,\]
and so $|E_{n}X-TE_{n}|_{y^{*}}\to 0$, as $n\to\infty$, for every $y^{*}\in l^{p}(\mathbb{Z})^{*}$; and
\item $V_{n}\to 0$ in WOT as $n\to\infty$.
\end{enumerate}
Therefore, $T\ll X$. By Theorem \ref{lpVoicsingleoperator}, the result follows.
\end{proof}
\begin{lemma}\label{amenablesubembed}
Let $G$ be a countable infinite group. Let $H$ be an amenable subgroup of $G$. Consider the homomorphisms $\psi:G\to B(l^{p}(G))$ and $\psi_{H}:G\to B(l^{p}(G/H))$ defined by
\[\psi(s)e_{g}=e_{sg}\quad\text{and}\quad\psi_{H}(s)e_{tH}=e_{stH},\]
for $s,g\in G$. Then $\psi_{H}\stackrel{1}{\ll}\psi$.
\end{lemma}
\begin{proof}
Case 1: $H$ is infinite.

Since $H$ is amenable, there is a F{\o}nler sequence $F_{1},F_{2},\ldots$ for $H$. Let $\mathcal{T}$ be a collection of elements of $G$ such that $t_{1}H\neq t_{2}H$, for all $t_{1}\neq t_{2}$ in $\mathcal{T}$, and $\cup_{t\in\mathcal{T}}tH=G$ (i.e., we pick a representative from each coset). For $n\in\mathbb{N}$, define an operator $V_{n}:l^{p}(G/H)\to l^{p}(G)$ by
\begin{equation}\label{ve1}
V_{n}e_{tH}=\frac{1}{|F_{n}|^{\frac{1}{p}}}\sum_{r\in tF_{n}}e_{r},
\end{equation}
for $t\in\mathcal{T}$. Note that $V_{n}e_{t_{1}H}$ and $V_{n}e_{t_{2}H}$ have disjoint supports for all $t_{1}\neq t_{2}$ in $\mathcal{T}$. So $\|V_{n}\|=1$ for all $n\in\mathbb{N}$.

For each finite subset $F$ of $G$, let $y_{F}^{*}\in l^{p}(G)^{*}$,
\[y_{F}^{*}(x)=\frac{1}{|F|^{1-\frac{1}{p}}}\sum_{s\in F}e_{s}^{*}(x),\]
for $x\in l^{p}(G)$, and let $P_{F}$ be the projection from $l^{p}(G)$ onto $l^{p}(F)$. By H\"older's inequality,
\begin{equation}\label{boundxjast}
|y_{F}^{*}(x)|\leq\|P_{F}x\|,
\end{equation}
for $x\in l^{p}(G)$. Also if $F$ and $F'$ are finite subsets of $G$ with $|F|=|F'|$, then
\begin{equation}\label{xastperturb}
\|y_{F}^{*}-y_{F'}^{*}\|=\frac{|F\Delta F'|^{1-\frac{1}{p}}}{|F|^{1-\frac{1}{p}}}.
\end{equation}
For $n\in\mathbb{N}$, define an operator $E_{n}:l^{p}(G)\to l^{p}(G/H)$ by
\[E_{n}y=\sum_{t\in\mathcal{T}}y_{tF_{n}}^{*}(y)e_{tH},\]
for $y\in l^{p}(G)$. By (\ref{boundxjast}),
\[\|E_{n}y\|^{p}=\sum_{t\in\mathcal{T}}\left|y_{tF_{n}}^{*}(y)\right|^{p}\leq\sum_{t\in\mathcal{T}}\|P_{tH}y\|^{p}=\|y\|^{p},\]
for $y\in l^{p}(G)$. So $\|E_{n}\|\leq 1$ for all $n\in\mathbb{N}$. For every $t\in\mathcal{T}$,
\begin{eqnarray*}
E_{n}V_{n}e_{tH}&=&\sum_{g\in\mathcal{T}}\left(y_{gF_{n}}^{*}\left(\frac{1}{|F_{n}|^{\frac{1}{p}}}\sum_{r\in tF_{n}}e_{r}\right)\right)e_{gH}\\&=&
y_{tF_{n}}^{*}\left(\frac{1}{|F_{n}|^{\frac{1}{p}}}\sum_{r\in tF_{n}}e_{r}\right)e_{tH}+\sum_{g\neq t}\left(y_{gF_{n}}^{*}\left(\frac{1}{|F_{n}|^{\frac{1}{p}}}\sum_{r\in tF_{n}}e_{r}\right)\right)e_{gH}\\&=&e_{tH}+0=e_{tH}.
\end{eqnarray*}
So $E_{n}V_{n}=I$ for all $n\in\mathbb{N}$.

Fix $s\in G$. There exists a permutation $\sigma$ on $\mathcal{T}$ such that $stH=\sigma(t)H$ for all $t\in\mathcal{T}$. Thus, there exists a map $h:\mathcal{T}\to H$ such that $st=\sigma(t)h(t)$ for all $t\in\mathcal{T}$. Thus, for all $t\in\mathcal{T}$,
\begin{eqnarray*}
V_{n}\psi_{H}(s)e_{tH}-\psi(s)V_{n}e_{tH}&=&V_{n}e_{\sigma(t)H}-\frac{1}{|F_{n}|^{\frac{1}{p}}}\sum_{r\in tF_{n}}\psi(s)e_{r}\\&=&
\frac{1}{|F_{n}|^{\frac{1}{p}}}\sum_{r\in \sigma(t)F_{n}}e_{r}-\frac{1}{|F_{n}|^{\frac{1}{p}}}\sum_{r\in stF_{n}}e_{r}\\&=&
\frac{1}{|F_{n}|^{\frac{1}{p}}}\sum_{r\in \sigma(t)F_{n}}e_{r}-\frac{1}{|F_{n}|^{\frac{1}{p}}}\sum_{r\in \sigma(t)h(t)F_{n}}e_{r}.
\end{eqnarray*}
Hence,
\[\|V_{n}\psi_{H}(s)e_{tH}-\psi(s)V_{n}e_{tH}\|=\frac{|\sigma(t)F_{n}\Delta\sigma(t)h(t)F_{n}|^{\frac{1}{p}}}{|F_{n}|^{\frac{1}{p}}}=
\frac{|F_{n}\Delta h(t)F_{n}|^{\frac{1}{p}}}{|F_{n}|^{\frac{1}{p}}}\to 0,\]
as $n\to\infty$. So $V_{n}\psi_{H}(s)-\psi(s)V_{n}\to 0$ in SOT, as $n\to\infty$.

For all $t\in\mathcal{T}$ and $y\in l^{p}(G)$,
\begin{eqnarray*}
e_{tH}^{*}(E_{n}\psi(s^{-1})y)-e_{tH}^{*}(\psi_{H}(s^{-1})E_{n}y)&=&
y_{tF_{n}}^{*}(\psi(s^{-1})y)-e_{stH}^{*}(E_{n}y)\\&=&
y_{stF_{n}}^{*}(y)-e_{\sigma(t)H}^{*}(E_{n}y)\\&=&
y_{\sigma(t)h(t)F_{n}}^{*}(y)-y_{\sigma(t)F_{n}}^{*}(y).
\end{eqnarray*}
By (\ref{xastperturb}), it follows that
\begin{eqnarray*}
|e_{tH}^{*}(E_{n}\psi(s^{-1})y)-e_{tH}^{*}(\psi_{H}(s^{-1})E_{n}y)|&\leq&\|y_{\sigma(t)h(t)F_{n}}^{*}-y_{\sigma(t)F_{n}}^{*}\|\|y\|
\\&=&\frac{|h(t)F_{n}\Delta F_{n}|^{1-\frac{1}{p}}}{|F_{n}|^{1-\frac{1}{p}}}\|y\|,
\end{eqnarray*}
for $t\in\mathcal{T}$ and $y\in l^{p}(G)$. Thus, $|E_{n}\psi(s^{-1})-\psi_{H}(s^{-1})E_{n}|_{e_{tH}^{*}}\to 0$, as $n\to\infty$, for every $t\in\mathcal{T}$. Thus, $|E_{n}\psi(s^{-1})-\psi_{H}(s^{-1})E_{n}|_{y^{*}}\to 0$, as $n\to\infty$, for every $y^{*}\in l^{p}(G)^{*}$. Finally, it is easy to see that $V_{n}\to 0$ in WOT, as $n\to\infty$. Therefore, $\psi_{H}\stackrel{1}{\ll}\psi$.

Case 2: $H$ is finite.

Let $s_{1},s_{2},\ldots$ be distinct elements of $G$. For $n\in\mathbb{N}$, define an operator $V_{n}:l^{p}(G/H)\to l^{p}(G)$ by
\begin{equation}\label{ve2}
V_{n}e_{gH}=\frac{1}{|H|^{\frac{1}{p}}}\sum_{r\in gHs_{n}}e_{r},
\end{equation}
for $g\in G$. For every finite subset $F$ of $G$, let $y_{F}^{*}\in l^{p}(G)^{*}$,
\[y_{F}^{*}(x)=\frac{1}{|F|^{1-\frac{1}{p}}}\sum_{s\in F}e_{s}^{*}(x),\]
for $x\in l^{p}(G)$. For $n\in\mathbb{N}$, define an operator $E_{n}:l^{p}(G)\to l^{p}(G/H)$ by
\[E_{n}y=\sum_{gH\in G/H}y_{gHs_{n}}^{*}(y)e_{gH},\]
for $y\in l^{p}(G)$. It is easy to check that (i) $\|V_{n}\|\leq 1$ and $\|E_{n}\|\leq 1$ for all $n\in\mathbb{N}$, (ii) $E_{n}V_{n}=I$ for all $n\in\mathbb{N}$, (iii) $V\psi_{H}(s)=\psi(s)V_{n}$ for all $n\in\mathbb{N}$, (iv) $E_{n}\psi(s)=\psi_{H}(s)E_{n}$ for all $n\in\mathbb{N}$ and (v) $V_{n}\to 0$ in WOT, as $n\to\infty$. Therefore, $\psi_{H}\stackrel{1}{\ll}\psi$.
\end{proof}
\begin{remark}
In Case 1, the fact that $H$ is infinite implies that $|F_{n}|\to\infty$ so that we have $V_{n}\to 0$ in WOT as $n\to\infty$. In Case 2, we put $s_{n}$ in the definition of $V_{n}$ in (\ref{ve2}) so that $V_{n}\to 0$ in WOT as $n\to\infty$.
\end{remark}
\begin{corollary}\label{grouprepdecomp}
Let $G$ be a countable infinite group. Consider the homomorphism $\psi:G\to B(l^{p}(G))$ defined by $\psi(s)e_{g}=e_{sg}$ for $s,g\in G$. Let $H_{1},H_{2},\ldots$ be amenable subgroups of $G$ such that $\cap_{i=1}^{\infty}\cup_{n=i}^{\infty}H_{n}$ is trivial. For $k\in\mathbb{N}$, let $G/H_{k}$ be the set of all left cosets of $H_{k}$ in $G$ and let $\psi_{k}:G\to B(l^{p}(G/H_{k}))$ be defined by $\psi_{k}(s)e_{gH_{k}}=e_{sgH_{k}}$ for $s,g\in G$. Then $\psi$ and $\oplus_{k=1}^{\infty}\psi_{k}$ are approximately similar.
\end{corollary}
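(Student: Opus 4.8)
Write $\sigma=\oplus_{n\geq 1}\rho_{n}$, acting on $\mathcal{Y}=(\oplus_{n\geq 1}l^{p}(G/H_{n}))_{l^{p}}$, and let $J_{n}\colon l^{p}(G/H_{n})\to\mathcal{Y}$ and $Q_{n}\colon\mathcal{Y}\to l^{p}(G/H_{n})$ be the canonical embeddings and projections. Since $G$ is infinite, the triviality of $\bigcap_{i\geq 1}\bigcup_{n\geq i}H_{n}$ forces $H_{n}\neq G$ for infinitely many $n$, so $\mathcal{Y}$ is infinite dimensional, hence isometrically isomorphic to $l^{p}$, and each summand $l^{p}(G/H_{n})$ is either finite dimensional or isometrically isomorphic to $l^{p}$. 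The plan is to prove that $(\rho(s))_{s\in G}$ is approximately similar to $(\rho(s)\oplus\sigma(s))_{s\in G}$ and that $(\sigma(s))_{s\in G}$ is approximately similar to $(\sigma(s)\oplus\rho(s))_{s\in G}$; since approximate similarity is transitive and a coordinate swap is an isometry, this yields the corollary.

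The first relation is immediate. By Lemma \ref{amenablesubembed} applied to each amenable subgroup $H_{n}$, there are norm one idempotents $P_{n}$ on $\widehat{l^{p}(G)}$ commuting with all $\widehat{\rho(s)}$ and such that $(\rho_{n}(s))_{s\in G}$ is $1$-similar to $(\widehat{\rho(s)}|_{P_{n}\widehat{l^{p}(G)}})_{s\in G}$. The range of $P_{n}$ is isometric to $l^{p}(G/H_{n})$, hence finite dimensional or $1$-isomorphic to $l^{p}$, so Theorem \ref{lpVoic} applies to $(\rho(s))_{s\in G}$ on $l^{p}(G)$ (identified isometrically with $l^{p}$) with these idempotents and $\lambda=1$, showing $(\rho(s))_{s\in G}$ approximately similar to $(\rho(s)\oplus\oplus_{n\geq 1}(\widehat{\rho(s)}|_{P_{n}\widehat{l^{p}(G)}}))_{s\in G}$; assembling the individual $1$-similarities into an isometry of the $l^{p}$-direct sum identifies this with $(\rho(s)\oplus\sigma(s))_{s\in G}$.

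For the second relation, define $V\colon l^{p}(G)\to\widehat{\mathcal{Y}}$ on the canonical basis by $Ve_{g}=(J_{k}e_{gH_{k}})_{k,\mathcal{U}}$ — this lies in $\widehat{\mathcal{Y}}$ because the vectors $J_{k}e_{gH_{k}}$ lie in distinct summands and so tend to $0$ weakly — and $E\colon\widehat{\mathcal{Y}}\to l^{p}(G)$ by $E(y^{(k)})_{k,\mathcal{U}}=\sum_{g\in G}\big(\lim_{k,\mathcal{U}}e_{gH_{k}}^{*}(Q_{k}y^{(k)})\big)e_{g}$. The key point is that, by the triviality of $\bigcap_{i\geq 1}\bigcup_{n\geq i}H_{n}$, for every $g\neq g'$ one has $gH_{n}\neq g'H_{n}$ for all but finitely many $n$; hence for any finite $F\subset G$ there is a cofinite set of indices $k$ for which the cosets $\{gH_{k}:g\in F\}$ are pairwise distinct. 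From this one reads off directly: $V$ is an isometry (on $F$-supported vectors its norm agrees with the $l^{p}(G)$-norm for the cofinitely many good $k$); $E$ is well defined and satisfies $\|E\|\leq 1$ (for a good $k$, $\sum_{g\in F}|e_{gH_{k}}^{*}(Q_{k}y^{(k)})|^{p}\leq\|Q_{k}y^{(k)}\|^{p}\leq\|y^{(k)}\|^{p}$, then let $F$ increase to $G$); and $EV=I$. Moreover $Ve_{sg}=\widehat{\sigma(s)}Ve_{g}$ because $\sigma$ acts componentwise and $\rho_{k}(s)e_{gH_{k}}=e_{sgH_{k}}$, which gives $\widehat{\sigma(s)}V=V\rho(s)$, and an analogous computation using $e_{gH_{k}}^{*}\circ\rho_{k}(s)=e_{s^{-1}gH_{k}}^{*}$ gives $E\widehat{\sigma(s)}=\rho(s)E$ for all $s\in G$. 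Lemma \ref{hatsub} then provides an idempotent $P$ on $\widehat{\mathcal{Y}}$ with $\|P\|\leq 1$ commuting with every $\widehat{\sigma(s)}$ and with $(\rho(s))_{s\in G}$ $1$-similar to $(\widehat{\sigma(s)}|_{P\widehat{\mathcal{Y}}})_{s\in G}$; the range of $P$ is isometric to $l^{p}(G)$, hence to $l^{p}$. Applying Theorem \ref{lpVoic} to $(\sigma(s))_{s\in G}$ on $\mathcal{Y}\cong l^{p}$ with $P_{1}=P$ and $P_{n}=0$ for $n\geq 2$ (and $\lambda=1$) shows $(\sigma(s))_{s\in G}$ approximately similar to $(\sigma(s)\oplus(\widehat{\sigma(s)}|_{P\widehat{\mathcal{Y}}}))_{s\in G}$, and the last summand is $1$-similar to $(\rho(s))_{s\in G}$; this is the second relation.

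The one genuinely delicate point is the boundedness of $E$: reading coefficients indexed by cosets of $H_{k}$ back as coefficients indexed by all of $G$ overcounts each coset by the factor $|H_{k}|$, and the bound $\|E\|\leq 1$ survives only because $\bigcap_{i\geq 1}\bigcup_{n\geq i}H_{n}=\{e\}$ forces any fixed finite set of elements of $G$ eventually into distinct cosets, so the overcounting vanishes after the $\mathcal{U}$-limit. Everything else is a routine assembly of Lemmas \ref{hatsub} and \ref{amenablesubembed}, Theorem \ref{lpVoic}, transitivity of approximate similarity, and the identification of $\rho\oplus\sigma$ with $\sigma\oplus\rho$ by a coordinate swap.
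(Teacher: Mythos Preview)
Your proof is correct and follows essentially the same approach as the paper: both halves --- using Lemma~\ref{amenablesubembed} with Theorem~\ref{lpVoic} to absorb $\sigma$ into $\rho$, and constructing the same $V,E$ pair on $\widehat{\mathcal{Y}}$ to absorb $\rho$ into $\sigma$ via Lemma~\ref{hatsub} and Theorem~\ref{lpVoic} --- are exactly what the paper does (with the two steps in reversed order). Your explicit remarks on why $\|E\|\leq 1$ and why $\mathcal{Y}$ is infinite dimensional are helpful elaborations, but the underlying argument is the same.
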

\begin{proof}
For each $n\in\mathbb{N}$, let $J_{n}:l^{p}(G/H_{n})\to\oplus_{k=1}^{\infty}l^{p}(G/H_{k})$ and $Q_{n}:\oplus_{k=1}^{\infty}l^{p}(G/H_{k})\to l^{p}(G/H_{n})$ be the canonical embedding and projection, respectively. Let $\mathcal{Y}=\oplus_{k=1}^{\infty}l^{p}(G/H_{k})$.

Since $\cap_{i=1}^{\infty}\cup_{n=i}^{\infty}H_{n}$ is trivial, there exist $\mathcal{S}_{1}\subset\mathcal{S}_{2}\subset\ldots$ in $G$ such that $\cup_{n=1}^{\infty}\mathcal{S}_{n}=G$ and $g_{1}H_{n}\neq g_{2}H_{n}$ for all $g_{1}\neq g_{2}$ in $\mathcal{S}_{n}$. For $n\in\mathbb{N}$, define operators $V_{n}:l^{p}(G)\to\mathcal{Y}$ and $E_{n}:\mathcal{Y}\to l^{p}(G)$ by
\[V_{n}e_{g}=\begin{cases}J_{n}e_{gH_{n}},&\quad g\in\mathcal{S}_{n}\\0,&\text{Otherwise}\end{cases}\quad\text{and}\quad E_{n}y=\sum_{g\in\mathcal{S}_{n}}e_{gH_{n}}^{*}(Q_{n}y)e_{g},\]
for $g\in G$ and $y\in\mathcal{Y}$.
\begin{enumerate}[(i)]
\item We have $\|V_{n}\|=1$, for all $n\in\mathbb{N}$, and
\[\|E_{n}y\|^{p}=\sum_{g\in\mathcal{S}_{n}}|e_{gH_{n}}^{*}(Q_{n}y)|^{p}\leq\|Q_{n}y\|^{p},\]
for $n\in\mathbb{N}$ and $y\in\mathcal{Y}$. Thus, $\|E_{n}\|\leq1$ for all $n\in\mathbb{N}$.
\item Since $E_{n}V_{n}e_{g}=e_{g}$ for all $g\in\mathcal{S}_{n}$, we have $E_{n}V_{n}\to I$ in SOT as $n\to\infty$.
\item It is easy to see that $V_{n}\psi(s)e_{g}=(\oplus_{k=1}^{\infty}\psi_{k}(s))V_{n}e_{g}$ for all group elements $s\in G$ and $g\in\mathcal{S}_{n}\cap s^{-1}\mathcal{S}_{n}$ and $n\in\mathbb{N}$. Hence, $V_{n}\psi(s)-(\oplus_{k=1}^{\infty}\psi_{k}(s))V_{n}\to 0$ in SOT, as $n\to\infty$, for all $s\in G$.
\item For all group elements $s\in G$ and $g\in\mathcal{S}_{n}\cap s\mathcal{S}_{n}$, point $y\in\mathcal{Y}$ and $n\in\mathbb{N}$,
\begin{eqnarray*}
e_{g}^{*}(E_{n}(\oplus_{k=1}^{\infty}\psi_{k}(s))y)&=&e_{gH_{n}}^{*}(Q_{n}(\oplus_{k=1}^{\infty}\psi_{k}(s))y)\\&=&
e_{gH_{n}}^{*}(\psi_{n}(s)Q_{n}y)\\&=&
e_{s^{-1}gH_{n}}^{*}(Q_{n}y)=e_{s^{-1}g}^{*}(E_{n}y)=e_{g}^{*}(\psi(s)E_{n}y).
\end{eqnarray*}
So $|E_{n}(\oplus_{k=1}^{\infty}\psi_{k}(s))-\psi(s)E_{n}|_{e_{g}^{*}}\to 0$, as $n\to\infty$, for all $s,g\in G$. Thus, $|E_{n}(\oplus_{k=1}^{\infty}\psi_{k}(s))-\psi(s)E_{n}|_{y^{*}}\to 0$, as $n\to\infty$, for all $s,g\in G$ and $y^{*}\in\mathcal{Y}^{*}$.
\item Clearly $V_{n}\to 0$ in WOT as $n\to\infty$.
\end{enumerate}
Thus, $\psi\ll\oplus_{k=1}^{\infty}\psi_{k}$. By Theorem \ref{lpVoic}, we have that $\oplus_{k=1}^{\infty}\psi_{k}$ is approximately similar to $(\oplus_{k=1}^{\infty}\psi_{k})\oplus\psi$. But by Lemma \ref{amenablesubembed} and Theorem \ref{lpVoic}, we have that $\psi$ is approximately similar to $\psi\oplus\psi_{1}\oplus\psi_{2}\oplus\ldots$. It follows that $\oplus_{k=1}^{\infty}\psi_{k}$ is approximately similar to $\psi$.
\end{proof}
\begin{remark}
A consequence of Corollary \ref{grouprepdecomp} is that if $G$ is a countable, residually finite, amenable group, then there exist finite rank idempotents $P_{1},P_{2},\ldots$ on $l^{p}(G)$ such that $P_{n}\to I$ in SOT and $\|P_{n}\psi(g)-\psi(g)P_{n}\|\to 0$, as $n\to\infty$, for every $g\in G$. An alternative way to prove this result is to use a technique of Orfanos \cite{Orfanos}. Moreover, using this technique, we can have $\|P_{n}\|=1$.
\end{remark}
The following result follows from Corollary \ref{grouprepdecomp} by taking $G=\mathbb{Z}$ and $H_{n}=n\mathbb{Z}$.
\begin{corollary}\label{bilateralcircular}
Let $B$ be the bilateral shift on $l^{p}(\mathbb{Z})$. For each $n\in\mathbb{N}$, let $B_{n}$ be the circular shift on $l^{p}(\mathbb{Z}/n\mathbb{Z})$, i.e., $B_{n}e_{j}=e_{j+1}$ for $j\in\mathbb{Z}/n\mathbb{Z}$. Then $B$ is approximately similar to $B_{1}\oplus B_{2}\oplus\ldots$.
\end{corollary}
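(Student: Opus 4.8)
The plan is to obtain this statement as an immediate specialization of Corollary \ref{grouprepdecomp}, taking $G=\mathbb{Z}$ and $H_n=n\mathbb{Z}$ for $n\geq 1$. First I would verify the hypotheses of that corollary. The group $G=\mathbb{Z}$ is countable and infinite. Each subgroup $H_n=n\mathbb{Z}$ is isomorphic to $\mathbb{Z}$, hence abelian and in particular amenable. It remains to check that $\bigcap_{i\geq 1}\bigcup_{n\geq i}H_n$ is trivial: if $m$ is a nonzero integer, then every positive integer dividing $m$ is at most $|m|$, so $m\notin\bigcup_{n\geq i}n\mathbb{Z}$ as soon as $i>|m|$; thus the intersection over all $i$ contains no nonzero element, i.e.\ it equals $\{0\}$.

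Next I would identify the objects appearing in Corollary \ref{grouprepdecomp} in this situation. The left regular representation $\rho:\mathbb{Z}\to B(l^p(\mathbb{Z}))$, $\rho(s)e_g=e_{sg}$, satisfies $\rho(1)=B$, the bilateral shift. For each $n\geq 1$, the set of left cosets $\mathbb{Z}/H_n$ is naturally identified with $\mathbb{Z}/n\mathbb{Z}$ via $gH_n\mapsto g\bmod n$, and under this identification the quasi-regular representation $\rho_n:\mathbb{Z}\to B(l^p(\mathbb{Z}/n\mathbb{Z}))$, $\rho_n(s)e_{gH_n}=e_{sgH_n}$, satisfies $\rho_n(1)e_j=e_{j+1}$ for $j\in\mathbb{Z}/n\mathbb{Z}$, which is exactly the circular shift $B_n$. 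Likewise the ambient space $\bigoplus_{n\geq 1}l^p(\mathbb{Z}/n\mathbb{Z})$ (with the $l^p$ sum) is the space on which $B_1\oplus B_2\oplus\ldots$ acts.

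Finally, applying Corollary \ref{grouprepdecomp} gives that the indexed collections $(\rho(s))_{s\in\mathbb{Z}}$ and $\bigl(\bigoplus_{n\geq 1}\rho_n(s)\bigr)_{s\in\mathbb{Z}}$ are approximately similar; that is, there is a single sequence $(S_k)_{k\geq 1}$ of invertible operators from $l^p(\mathbb{Z})$ onto $\bigl(\bigoplus_{n\geq 1}l^p(\mathbb{Z}/n\mathbb{Z})\bigr)_{l^p}$ with $\sup_k\|S_k\|\|S_k^{-1}\|<\infty$ such that $\bigoplus_{n\geq 1}\rho_n(s)-S_k\rho(s)S_k^{-1}$ is compact for every $s$ and $k$ and tends to $0$ as $k\to\infty$ for each fixed $s$. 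Restricting to $s=1$ yields that $B=\rho(1)$ is approximately similar to $B_1\oplus B_2\oplus\ldots=\bigoplus_{n\geq 1}\rho_n(1)$, which is the assertion. Since the argument is a direct specialization, there is no genuine obstacle; the only points needing (minor) care are the verification that $\bigcap_{i\geq 1}\bigcup_{n\geq i}H_n=\{0\}$ and the bookkeeping identifying $\rho_n(1)$ with $B_n$ and matching up the underlying Banach spaces.
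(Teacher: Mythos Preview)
Your proposal is correct and follows exactly the paper's approach: the paper simply states that the result follows from Corollary~\ref{grouprepdecomp} by taking $G=\mathbb{Z}$ and $H_n=n\mathbb{Z}$, and you have carried out precisely this specialization with the necessary verifications.
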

\begin{lemma}[\cite{Fixman}]\label{Fixman}
Let $p\in[1,\infty]\backslash\{2\}$. Let $B$ be the bilateral shift on $l^{p}(\mathbb{Z})$. Then there exist Laurent polynomials $f_{1},f_{2},\ldots$ such that $\sup_{|v|=1}|f_{n}(v)|=1$, for all $n\in\mathbb{N}$, and $\|f_{n}(B)\|\to\infty$ as $n\to\infty$.
\end{lemma}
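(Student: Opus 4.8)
The plan is to bound $\|g(B)\|_{B(l^{p}(\mathbb{Z}))}$ from below by the $\ell^{r}$-norm of the coefficient sequence of $g$, where $r=\min\{p,\tfrac{p}{p-1}\}\in[1,2)$, and then to exhibit Laurent polynomials that are normalized in the supremum norm on $S^{1}$ but whose coefficient sequences have $\ell^{r}$-norm tending to $\infty$. Writing $g(z)=\sum_{k}c_{k}z^{k}$, the operator $g(B)$ is the Laurent (Toeplitz) operator on $l^{p}(\mathbb{Z})$ with matrix $[c_{i-j}]_{i,j\in\mathbb{Z}}$, i.e.\ convolution on $\mathbb{Z}$ by the finitely supported kernel $(c_{k})_{k}$.

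First I would record the self-duality $\|g(B)\|_{B(l^{p}(\mathbb{Z}))}=\|g(B)\|_{B(l^{q}(\mathbb{Z}))}$ for conjugate exponents $1<p,q<\infty$: one has $\|g(B)\|_{B(l^{p})}=\|g(B)^{*}\|_{B(l^{q})}$, the Banach-space adjoint $g(B)^{*}$ on $l^{q}(\mathbb{Z})$ has matrix $[c_{j-i}]_{i,j}$, i.e.\ is convolution by the reversed kernel $(c_{-k})_{k}$, and conjugating by the isometric flip $J:l^{q}(\mathbb{Z})\to l^{q}(\mathbb{Z})$, $(Jy)_{n}=y_{-n}$, turns this back into $g(B)$ on $l^{q}(\mathbb{Z})$. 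Hence it suffices to treat $1\le p<2$ (the case $2<p<\infty$ reducing to $1<q<2$, and $p=\infty$ being handled directly). For $1\le p<2$, applying $g(B)$ to the unit vector $e_{0}$ gives $g(B)e_{0}=\sum_{k}c_{k}e_{k}$, so
\[
\|g(B)\|_{B(l^{p}(\mathbb{Z}))}\ \ge\ \|g(B)e_{0}\|_{l^{p}}\ =\ \Big(\sum_{k}|c_{k}|^{p}\Big)^{1/p};
\]
and for $p=\infty$ one uses instead the elementary identity $\|g(B)\|_{B(l^{\infty}(\mathbb{Z}))}=\sum_{k}|c_{k}|$. Thus everything reduces to producing Laurent polynomials $g_{n}$ with $\sup_{|v|=1}|g_{n}(v)|=1$ and $\sum_{k}|c_{k}^{(n)}|^{r}\to\infty$, where $r=p$ when $p<2$ and $r=1$ when $p=\infty$.

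For this I would invoke the Rudin--Shapiro polynomials $P_{n}$: $P_{n}$ has degree $2^{n}-1$, all of its $2^{n}$ coefficients lie in $\{+1,-1\}$, and $\sup_{|v|=1}|P_{n}(v)|\le 2^{(n+1)/2}$ (from the recursive identity $|P_{n}(v)|^{2}+|Q_{n}(v)|^{2}=2^{n+1}$ on $S^{1}$). Put $g_{n}=P_{n}/\sup_{|v|=1}|P_{n}(v)|$, so $\sup_{|v|=1}|g_{n}(v)|=1$; each of its $2^{n}$ nonzero coefficients then has modulus at least $2^{-(n+1)/2}$, whence $\sum_{k}|c_{k}^{(n)}|^{r}\ge 2^{n}\cdot 2^{-r(n+1)/2}=2^{-r/2}\,2^{n(1-r/2)}\to\infty$ since $r<2$. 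Together with the lower bounds of the previous paragraph this gives $\|g_{n}(B)\|\to\infty$ for every $1\le p\le\infty$ with $p\ne 2$. (One could equally well quote the Salem--Zygmund estimate for random $\pm1$ trigonometric polynomials in place of Rudin--Shapiro.)

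The only genuinely non-formal ingredient is this last classical input --- equivalently, the sharpness of the Hausdorff--Young inequality for exponents below $2$: a trigonometric polynomial of length $N$ with unimodular coefficients may still have supremum norm as small as order $N^{1/2}$. Everything operator-theoretic (the convolution description of $g(B)$, the $l^{p}$--$l^{q}$ self-duality of Laurent operators via the flip, and the $e_{0}$ test vector) is routine; the only care needed is the small amount of bookkeeping in the duality reduction and in the endpoint $p=\infty$, where $(l^{\infty})^{*}$ is not an $\ell$-space and one substitutes the direct computation of the convolution-operator norm.
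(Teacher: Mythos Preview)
The paper does not prove this lemma: it is stated with a citation to Fixman and used as a black box, so there is no ``paper's own proof'' to compare against. Your argument is correct and self-contained. The reduction by duality (via the flip $J$) to $1\le p<2$, the test-vector bound $\|g(B)\|_{B(l^{p})}\ge\|g(B)e_{0}\|_{l^{p}}=\|(c_{k})\|_{l^{p}}$, the separate endpoint identity $\|g(B)\|_{B(l^{\infty})}=\sum_{k}|c_{k}|$, and the Rudin--Shapiro construction all check out; the growth estimate $\sum_{k}|c_{k}^{(n)}|^{r}\ge 2^{-r/2}\,2^{n(1-r/2)}\to\infty$ for $r<2$ is exactly what is needed. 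This is a clean way to recover Fixman's result directly, and indeed slightly more transparent than quoting the original paper, since the only nontrivial external input is the classical Rudin--Shapiro (or Salem--Zygmund) flat-polynomial phenomenon.
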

\begin{lemma}\label{UcalkinB}
Let $U$ be the unilateral shift on $l^{p}(\mathbb{N})$. Let $B$ be the bilateral shift on $l^{p}(\mathbb{Z})$. Let $f$ be a Laurent polynomial. Then
\[\|f(\pi(U))\|=\|f(\pi(B))\|=\|f(B)\|.\]
\end{lemma}
\begin{proof}
From the proof of Corollary \ref{UB}, there exist isometries $V_{n}:l^{p}(\mathbb{Z})\to l^{p}(\mathbb{N})$, for $n\in\mathbb{N}$, such that $V_{n}B-UV_{n}\to 0$ in SOT, as $n\to\infty$, and $V_{n}\to 0$ in WOT as $n\to\infty$. Let $U^{-1}$ be the backward shift on $l^{p}(\mathbb{N})$. Then $U^{-1}U=I$ and so $V_{n}B^{-1}-U^{-1}V_{n}=U^{-1}(UV_{n}-V_{n}B)B^{-1}\to 0$ in SOT as $n\to\infty$. Hence, for all $a_{-m},\ldots,a_{m}\in\mathbb{C}$,
\[V_{n}\left(\sum_{j=-m}^{j}a_{j}B^{j}\right)-\left(\sum_{j=-m}^{j}a_{j}U^{j}\right)V_{n}\to 0,\]
in SOT, as $n\to\infty$. Since $V_{n}$ are isometries, for all $x\in l^{p}(\mathbb{Z})$ and $K\in K(l^{p}(\mathbb{Z}))$,
\begin{eqnarray*}
\left\|\sum_{j=-m}^{j}a_{j}B^{j}x\right\|&=&\lim_{n\to\infty}\left\|V_{n}\left(\sum_{j=-m}^{j}a_{j}B^{j}\right)x\right\|\\&=&
\lim_{n\to\infty}\left\|\left(\sum_{j=-m}^{j}a_{j}U^{j}\right)V_{n}x\right\|\\&=&
\lim_{n\to\infty}\left\|\left(\sum_{j=-m}^{j}a_{j}U^{j}+K\right)V_{n}x\right\|,
\end{eqnarray*}
where we used the fact that $V_{n}\to 0$ in WOT, as $n\to\infty$, in the last equality. So
\[\left\|\sum_{j=-m}^{j}a_{j}B^{j}\right\|\leq\left\|\sum_{j=-m}^{j}a_{j}U^{j}+K\right\|,\]
for every $K\in K(l^{p}(\mathbb{Z}))$. Thus, $\|f(B)\|\leq\|f(\pi(U))\|$ for every Laurent polynomial $f$.

Note that $B$ is 1-similar to a rank one perturbation of $U^{-1}\oplus U$. So $\|f(\pi(U))\|\leq\|f(\pi(B))\|$. The inequality $\|f(\pi(B))\|\leq\|f(B)\|$ is trivial.
\end{proof}

\begin{lemma}\label{bilateralnondiag}
Let $p\in(1,\infty)\backslash\{2\}$. The bilateral shift $B$ on $l^{p}(\mathbb{Z})$ is not similar to a compact perturbation of a diagonal operator on $l^{p}$.
\end{lemma}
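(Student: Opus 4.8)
The plan is to argue by contradiction, transferring the hypothesis to the Calkin algebra and exploiting that a Laurent polynomial in a diagonal operator has essential norm governed by the essential spectrum, which in the case of $B$ is contained in the unit circle. So suppose, toward a contradiction, that $B$ were similar to a compact perturbation of a diagonal operator on $l^p$. Identifying $l^p$ with $l^p(\mathbb{Z})$ via a bijection $\mathbb{N}\to\mathbb{Z}$ --- under which diagonal operators correspond to diagonal operators --- I would write $B=S(D+K)S^{-1}$, where $S$ is invertible on $l^p(\mathbb{Z})$, $K$ is compact, and $D$ is diagonal with entries $(w_i)_i$. Writing $\pi_2$ for the quotient map onto $B(l^p(\mathbb{Z}))/K(l^p(\mathbb{Z}))$, we have $\pi_2(B)=\pi_2(S)\pi_2(D)\pi_2(S)^{-1}$, hence $f(\pi_2(B))=\pi_2(S)f(\pi_2(D))\pi_2(S)^{-1}$ and so
\[\|f(\pi_2(B))\|\le C\,\|f(\pi_2(D))\|,\qquad C:=\|S\|\,\|S^{-1}\|,\]
for every Laurent polynomial $f$.

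The next step is to compute $\|f(\pi_2(D))\|$. Since $l^p$ is reflexive, the canonical basis vectors tend weakly to $0$, so $\|Ke_i\|\to 0$ for every compact $K$; together with the compactness of finite-rank diagonal truncations this gives $\|g\|_e=\limsup_i|g_i|$ for any diagonal operator $g$ with entries $(g_i)_i$. Applied to $f(D)$, and using continuity of $f$ and compactness of $\overline{\{w_i:i\geq 1\}}$, this yields $\|f(\pi_2(D))\|=\|f(D)\|_e=\limsup_i|f(w_i)|=\max_{\lambda\in\sigma_e(D)}|f(\lambda)|$, where $\sigma_e(D)$ denotes the essential spectrum. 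Now $\sigma_e$ is invariant under compact perturbation and under similarity in the Calkin algebra, so $\sigma_e(D)=\sigma_e(D+K)=\sigma_e(B)$; and since $B$ is an invertible isometry, $\|B^n\|=1$ for all $n\in\mathbb{Z}$, forcing the spectral radii of $B$ and $B^{-1}$ to be at most $1$, hence $\sigma(B)\subseteq\{|v|=1\}$ and therefore $\sigma_e(D)=\sigma_e(B)\subseteq\{|v|=1\}$. Combining the last two displays gives
\[\|f(\pi_2(B))\|\le C\max_{\lambda\in\sigma_e(D)}|f(\lambda)|\le C\sup_{|v|=1}|f(v)|.\]

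To finish, I would invoke the previous lemma, which gives $\|f(\pi_2(B))\|=\|f(B)\|$, so that $\|f(B)\|\le C\sup_{|v|=1}|f(v)|$ for every Laurent polynomial $f$. This contradicts Lemma \ref{Fixman} (valid since $p\neq 2$), which furnishes Laurent polynomials $g_1,g_2,\ldots$ with $\sup_{|v|=1}|g_n(v)|=1$ for all $n$ while $\|g_n(B)\|\to\infty$. The only mildly delicate points in carrying this out are the two elementary facts about diagonal operators --- the identity $\|g\|_e=\limsup_i|g_i|$ and its consequence that the essential norm of $f(D)$ equals the supremum of $|f|$ over $\sigma_e(D)$ --- together with the observation that it is the full similarity hypothesis $B=S(D+K)S^{-1}$, and not merely $\sigma(D)\subseteq$ unit disc, that pins $\sigma_e(D)$ into the circle; everything else is routine bookkeeping with the quotient norm and the two quoted lemmas, so I do not anticipate any genuine obstacle.
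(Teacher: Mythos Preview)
Your proof is correct and follows the same route as the paper: pass to the Calkin algebra, bound $\|f(\pi_2(B))\|$ by $C\sup_{|v|=1}|f(v)|$ via the diagonal model and the inclusion $\sigma_e(D)=\sigma_e(B)\subseteq S^1$, identify $\|f(\pi_2(B))\|=\|f(B)\|$ by the preceding lemma, and contradict Lemma~\ref{Fixman}. One cosmetic point: writing $f(D)$ for a Laurent polynomial $f$ tacitly assumes $D$ is invertible, but since $\sigma_e(D)\subseteq S^1$ only finitely many diagonal entries can lie near $0$, so a finite-rank correction makes $D$ invertible without changing any essential norm.
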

\begin{proof}
If $B$ is similar to a compact perturbation of a diagonal operator on $l^{p}$, then there exists $C>0$ such that $\|f(\pi(B))\|\leq C\sup_{|v|=1}|f(v)|$ for all Laurent polynomial $f$. By Lemma \ref{UcalkinB}, we have $\|f(\pi(B))\|=\|f(B)\|$. Thus, $\|f(B)\|\leq C\sup_{|v|=1}|f(v)|$. By Lemma \ref{Fixman}, an absurdity follows.
\end{proof}
Let $p\in(1,\infty)\backslash\{2\}$. Let $T$ be an invertible isometry on $l^{p}$. Then $T$ is the composition of a permutation operator and a diagonal operator with entries in the unit circle on $\mathbb{C}$ \cite{Lamperti}. Since every permutation on $\mathbb{N}$ is a product of disjoint finitary cyclic permutations and ``bilateral shift" permutations, $T$ is 1-similar to the direct sum of weighted circular shifts (if any) and weighted bilateral shifts (if any), where the weights are in the unit circle. A weighted bilateral shift is an operator on $l^{p}(\mathbb{Z})$ of the form $e_{j}\mapsto w_{j}e_{j+1}$, for $j\in\mathbb{Z}$. A weighted circular shift is an operator on $l^{p}(\mathbb{Z}/r\mathbb{Z})$ of the form $e_{j}\mapsto w_{j}e_{j+1}$, for $j\in\mathbb{Z}/r\mathbb{Z}$, for some $r\in\mathbb{N}$. Here the $w_{j}$ are called the weights.

Every weighted bilateral shift with weights in $\{v\in\mathbb{C}:|v|=1\}$ is 1-similar to the bilateral shift $B$ (by using conjugations by certain diagonal operators). Every weighted circular shift with weights $\{v\in\mathbb{C}:|v|=1\}$ is 1-similar to $vB_{n}$ for some $v\in\mathbb{C}$ with $|v|=1$ and $n\in\mathbb{N}$, where $B_{n}$ is the circular shift defined in Corollary \ref{bilateralcircular}. Thus every invertible isometry on $l^{p}$ is 1-similar to the direct sum of operators of the form $B$ or $vB_{n}$. For notational convenience, let $B_{\infty}=B$. We have
\begin{lemma}\label{invisomdecomposition}
Let $p\in[1,\infty]\backslash\{2\}$. If $T$ is an invertible isometry on $l^{p}$, then there exist countable collections $(n_{i})_{i\in\mathcal{C}}\subset\mathbb{N}\cup\{\infty\}$ and $(v_{i})_{i\in\mathcal{C}}\subset\{v\in\mathbb{C}:|v|=1\}$ such that $T$ is 1-similar to $\displaystyle\bigoplus_{i\in\mathcal{C}}v_{i}B_{n_{i}}$.
\end{lemma}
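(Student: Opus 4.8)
The plan is to unwind the structural description given just above the statement into a single conjugation by a unit-modulus diagonal operator. First I would invoke Lamperti's theorem \cite{Lamperti}: every invertible (that is, surjective) isometry $T$ of $l^{p}$, $p\neq 2$, factors as $T=P_{\sigma}D$, where $P_{\sigma}$ is the permutation operator $e_{j}\mapsto e_{\sigma(j)}$ of a bijection $\sigma$ of $\mathbb{N}$ and $D$ is a diagonal operator all of whose diagonal entries have modulus $1$. Decompose $\mathbb{N}$ into the orbits $(O_{i})_{i\in\mathcal{C}}$ of $\sigma$; since $\mathbb{N}$ is countable, $\mathcal{C}$ is countable, and each orbit is either a finite set on which $\sigma$ acts as a single cycle, or an infinite set on which $k\mapsto\sigma^{k}(n)$ identifies $O_{i}$ with $\mathbb{Z}$ (a one-sided infinite orbit is impossible for a bijection). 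The decomposition $l^{p}=(\bigoplus_{i\in\mathcal{C}}l^{p}(O_{i}))_{l^{p}}$ reduces $P_{\sigma}$ (each $O_{i}$ is $\sigma$-invariant) and $D$ (trivially), hence reduces $T$; write $T_{i}=T|_{l^{p}(O_{i})}$. Under the identification of $O_{i}$ with $\mathbb{Z}$ or with $\mathbb{Z}/r\mathbb{Z}$, $T_{i}$ becomes a weighted bilateral shift $e_{j}\mapsto w_{j}e_{j+1}$, $j\in\mathbb{Z}$, respectively a weighted circular shift $e_{j}\mapsto w_{j}e_{j+1}$ on $l^{p}(\mathbb{Z}/r\mathbb{Z})$, in each case with $|w_{j}|=1$.

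Next I would normalize each $T_{i}$ by a unit-modulus diagonal conjugation. For a weighted bilateral shift, choose a diagonal operator $\Delta_{i}$ on $l^{p}(\mathbb{Z})$ with $|(\Delta_{i})_{jj}|=1$ by the telescoping recursion $d_{j+1}=w_{j}d_{j}$ starting from $d_{0}=1$ and propagating in both directions; then $\Delta_{i}^{-1}T_{i}\Delta_{i}=B$, and since $\|\Delta_{i}\|=\|\Delta_{i}^{-1}\|=1$ this is a $1$-similarity, so $T_{i}$ is $1$-similar to $B_{\infty}=B$ (take $n_{i}=\infty$, $v_{i}=1$). For a weighted circular shift on $l^{p}(\mathbb{Z}/r\mathbb{Z})$, conjugating by a unit-modulus diagonal operator multiplies the $j$-th weight by a ratio of consecutive diagonal entries, and these ratios telescope to $1$ around the cycle; hence one can force all the new weights to equal a constant $v_{i}$ exactly when $v_{i}^{\,r}=\prod_{j}w_{j}$. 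Choosing $v_{i}$ to be an $r$-th root of $\prod_{j}w_{j}$ (which has $|v_{i}|=1$) and solving the resulting recursion for the diagonal entries produces a unit-modulus diagonal $\Delta_{i}$ with $\Delta_{i}^{-1}T_{i}\Delta_{i}=v_{i}B_{r}$, again a $1$-similarity (take $n_{i}=r$). Finally $\Delta:=\bigoplus_{i\in\mathcal{C}}\Delta_{i}$ is an invertible isometry of $l^{p}$ and $\Delta^{-1}T\Delta=\bigoplus_{i\in\mathcal{C}}v_{i}B_{n_{i}}$, which is the assertion.

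The only step that is not bookkeeping is the circular-shift computation: one must verify that the sole obstruction to absorbing the weights of a unit-modulus weighted cyclic permutation into a diagonal conjugation is the product of the weights around the cycle, and that this obstruction is always removed by allowing a single scalar factor $v_{i}$. Everything else --- Lamperti's factorization, the orbit decomposition reducing $T$, and the obstruction-free telescoping in the bilateral case --- is routine, and the argument uses only Lamperti's theorem and the combinatorial orbit structure of a bijection of $\mathbb{N}$, so it is insensitive to the value of $p$.
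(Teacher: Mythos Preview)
Your proposal is correct and follows essentially the same approach as the paper: the paper's proof is the two paragraphs immediately preceding the lemma, which invoke Lamperti's theorem, decompose the permutation into cycles and bilateral-shift orbits, and then assert (without the explicit telescoping computation you supply) that each unit-modulus weighted bilateral shift is $1$-similar to $B$ and each unit-modulus weighted circular shift is $1$-similar to some $vB_{n}$. Your write-up simply makes explicit the diagonal conjugation that the paper leaves to the reader.
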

\begin{lemma}\label{BT}
Let $p\in(1,\infty)\backslash\{2\}$. Let $T_{0}$ be an invertible isometry on $l^{p}$. Let $B$ be the bilateral shift on $l^{p}(\mathbb{Z})$. Then $B$ is approximately similar to $B\oplus T_{0}$.
\end{lemma}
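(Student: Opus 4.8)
The plan is to realize each summand of $T_{0}$ as the restriction of $\widehat{B}$ to a complemented $\widehat{B}$-invariant subspace of $\widehat{l^{p}(\mathbb{Z})}$ and then to apply Theorem \ref{lpVoic} to the single operator $B$ on $l^{p}(\mathbb{Z})\cong l^{p}$. By Lemma \ref{invisomdecomposition}, $T_{0}$ is $1$-similar to $\bigoplus_{i\in\mathcal{C}}v_{i}B_{n_{i}}$ for some countable set $\mathcal{C}$, where $n_{i}\in\mathbb{N}\cup\{\infty\}$, $|v_{i}|=1$, and $B_{\infty}=B$. Thus, by Theorem \ref{lpVoic} (with $\Lambda$ a singleton and $\lambda=1$), it suffices to produce, for each $i\in\mathcal{C}$, a norm-one idempotent $P_{i}$ on $\widehat{l^{p}(\mathbb{Z})}$ that commutes with $\widehat{B}$, whose range $\mathcal{M}_{i}$ is finite dimensional when $n_{i}<\infty$ and isometric to $l^{p}(\mathbb{Z})$ when $n_{i}=\infty$, and for which $v_{i}B_{n_{i}}$ is $1$-similar to $\widehat{B}|_{\mathcal{M}_{i}}$. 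Indeed, Theorem \ref{lpVoic} then gives that $B$ is approximately similar to $B\oplus\bigl(\bigoplus_{i}\widehat{B}|_{\mathcal{M}_{i}}\bigr)$, and the latter is $1$-similar to $B\oplus\bigl(\bigoplus_{i}v_{i}B_{n_{i}}\bigr)$, hence to $B\oplus T_{0}$ (if $\mathcal{C}$ is finite, pad the list of idempotents with zero idempotents).

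By Lemma \ref{hatsub}, to build $P_{i}$ it is enough to build operators $V\colon l^{p}(\mathbb{Z}/n_{i}\mathbb{Z})\to\widehat{l^{p}(\mathbb{Z})}$ and $E\colon\widehat{l^{p}(\mathbb{Z})}\to l^{p}(\mathbb{Z}/n_{i}\mathbb{Z})$ with $\|V\|,\|E\|\le1$, $EV=I$, $\widehat{B}V=V(v_{i}B_{n_{i}})$ and $E\widehat{B}=(v_{i}B_{n_{i}})E$ (here $\mathbb{Z}/\infty\mathbb{Z}:=\mathbb{Z}$). When $n_{i}=\infty$ I would first observe that $v_{i}B=DBD^{-1}$ for the unit-modulus diagonal operator $De_{m}=v_{i}^{m}e_{m}$, so it is enough to embed $B$ itself: fixing integers $M_{k}\to\infty$, set $Ve_{m}=(e_{m+kM_{k}})_{k,\mathcal{U}}$ and $E(x^{(k)})_{k,\mathcal{U}}=\sum_{m\in\mathbb{Z}}\bigl(\lim_{k,\mathcal{U}}e^{*}_{m+kM_{k}}(x^{(k)})\bigr)e_{m}$. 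Here $m+kM_{k}\to\infty$ forces $Ve_{m}\in\widehat{l^{p}(\mathbb{Z})}$, the coordinate functionals in $E$ are disjointly supported so H\"older gives $\|E\|\le1$, and the intertwining identities are immediate.

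For $n:=n_{i}$ finite I would spread out long blocks. Choose $\ell_{k}\to\infty$ and $c_{k}\to\infty$, put $\alpha_{j,r}=v_{i}^{-j-nr}$ (of modulus $1$) for $0\le j<n$, $0\le r<\ell_{k}$, and set
\[
Ve_{j}=\Bigl(\ell_{k}^{-1/p}\sum_{r=0}^{\ell_{k}-1}\alpha_{j,r}\,e_{c_{k}+j+nr}\Bigr)_{k,\mathcal{U}},
\]
\[
E(x^{(k)})_{k,\mathcal{U}}=\sum_{j\in\mathbb{Z}/n\mathbb{Z}}\Bigl(\lim_{k,\mathcal{U}}\ell_{k}^{-1/q}\sum_{r=0}^{\ell_{k}-1}\overline{\alpha_{j,r}}\,e^{*}_{c_{k}+j+nr}(x^{(k)})\Bigr)e_{j},
\]
where $\tfrac1p+\tfrac1q=1$. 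For each $k$ the $n$ vectors $\ell_{k}^{-1/p}\sum_{r}\alpha_{j,r}e_{c_{k}+j+nr}$ are norm one with pairwise disjoint supports (distinct residues mod $n$ in the interval $[c_{k},c_{k}+n\ell_{k})$), so $V$ is an isometry and, since $c_{k}\to\infty$, lands in $\widehat{l^{p}(\mathbb{Z})}$; the $l^{q}$-functionals used in $E$ are disjointly supported and biorthogonal to these vectors, so $EV=I$ and $\|E\|\le1$. The weights $\alpha_{j,r}=v_{i}^{-j-nr}$ are chosen exactly so that $B\bigl(\ell_{k}^{-1/p}\sum_{r}\alpha_{j,r}e_{c_{k}+j+nr}\bigr)$ equals $v_{i}\,\ell_{k}^{-1/p}\sum_{r}\alpha_{j+1,r}e_{c_{k}+(j+1)+nr}$ exactly for $j<n-1$, and equals $v_{i}\,\ell_{k}^{-1/p}\sum_{r}\alpha_{0,r}e_{c_{k}+nr}$ up to the two ``wrap-around'' coordinates for $j=n-1$, an error of norm $2^{1/p}\ell_{k}^{-1/p}\to0$; passing to the ultrapower yields $\widehat{B}V=V(v_{i}B_{n})$. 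The identity $E\widehat{B}=(v_{i}B_{n})E$ is checked in the same fashion, using $B^{*}e^{*}_{m}=e^{*}_{m-1}$ and $\overline{\alpha_{j,r}}=\bar v_{i}\,\overline{\alpha_{j-1,r}}$.

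With $V$ and $E$ in hand, Lemma \ref{hatsub} produces the idempotents $P_{i}$, and the reduction in the first paragraph finishes the argument. I expect the only real difficulty to be the finite case: one has to arrange the weights on a long arithmetic progression so that the non-periodic single shift $B$ intertwines with the weighted circular shift $v_{i}B_{n}$ modulo a boundary error that dies in the ultrapower; the case $n_{i}=\infty$ and the bookkeeping with direct sums are routine.
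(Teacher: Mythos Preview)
Your proof is correct and takes essentially the same approach as the paper: decompose $T_{0}$ via Lemma~\ref{invisomdecomposition}, realize each summand $v_{i}B_{n_{i}}$ as $\widehat{B}$ restricted to the range of a norm-one commuting idempotent (via Lemma~\ref{hatsub}), and apply Theorem~\ref{lpVoic}. The only difference is packaging: the paper obtains the idempotents by citing Lemma~\ref{amenablesubembed} (with $G=\mathbb{Z}$, $H=n_{i}\mathbb{Z}$) for the unweighted $B_{n_{i}}$ and then conjugates by $\widehat{D}$ (where $De_{m}=v_{i}^{m}e_{m}$, so $v_{i}B=DBD^{-1}$) to absorb the scalar, whereas you build the weight directly into the phases $\alpha_{j,r}$ of your explicit $V,E$---the two constructions are equivalent.
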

\begin{proof}
By Lemma \ref{invisomdecomposition}, there exist countable collections $(n_{i})_{i\in\mathcal{C}}\subset\mathbb{N}\cup\{\infty\}$ and $(v_{i})_{i\in\mathcal{C}}\subset\{v\in\mathbb{C}:|v|=1\}$ such that $T_{0}$ is 1-similar to $\displaystyle\bigoplus_{i\in\mathcal{C}}v_{i}B_{n_{i}}$. By Lemma \ref{amenablesubembed}, we have $B_{n}\stackrel{1}{\ll}B$, for all $n\in\mathbb{N}$, and also $B\stackrel{1}{\ll}B$. So $v_{i}B_{n_{i}}\stackrel{1}{\ll}v_{i}B$ for all $i\in\mathcal{C}$. But $vB$ is 1-similar to $B$ for every $v\in\mathbb{C}$ with $|v|=1$. Thus, $v_{i}B_{n_{i}}\stackrel{1}{\ll}B$. By Theorem \ref{lpVoic}, the result follows.
\end{proof}
Since the only invertible isometries on $l^{p}$, for $p\neq 2$, are compositions of permutation operators and diagonal operators, if $T_{1},T_{2}\in B(l^{p})$ are 1-similar and $T_{1}$ is a diagonal operator, then $T_{2}$ is also a diagonal operator. Here it is important that $T_{1},T_{2}$ are 1-similar rather than just similar, e.g., every circular shift $B_{n}$ is similar to a diagonal operator.
\begin{lemma}\label{invisomdiagbilateral}
Let $p\in(1,\infty)\backslash\{2\}$. Let $T$ be an invertible isometry on $l^{p}$. Let $B$ be the bilateral shift on $l^{p}(\mathbb{Z})$. If there exists $r\in\mathbb{N}$ such that $T^{r}$ is a diagonal operator, then $T$ is similar to a diagonal operator. If $T^{r}$ is not a diagonal operator for any $r\in\mathbb{N}$, then $T$ is approximately similar to $B$.
\end{lemma}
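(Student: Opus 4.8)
The plan is to read off the structure of $T$ from Lemma \ref{invisomdecomposition}: $T$ is $1$-similar to $\bigoplus_{i\in\mathcal{C}}v_iB_{n_i}$ with $n_i\in\mathbb{N}\cup\{\infty\}$, $|v_i|=1$ and $B_\infty=B$, so $T^r$ is $1$-similar to $\bigoplus_{i\in\mathcal{C}}v_i^rB_{n_i}^r$. Since $B_n^re_j=e_{j+r}$, the operator $B_{n_i}^r$ is diagonal exactly when $n_i$ is finite and divides $r$; a direct sum of operators is diagonal iff every summand is; and, as noted just before the statement, a $1$-similarity sends diagonal operators to diagonal operators. Hence $T^r$ is diagonal if and only if every $n_i$ is finite and divides $r$. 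If $T^r$ is diagonal for some $r$, then $\{n_i:i\in\mathcal{C}\}$ lies in the finite set of divisors of $r$; for each divisor $d$, $B_d$ is a $d\times d$ matrix with $d$ distinct eigenvalues, so I fix an invertible $S_d$ on $l^p(\mathbb{Z}/d\mathbb{Z})$ with $S_dB_dS_d^{-1}$ diagonal. Grouping $\mathcal{C}$ by the value of $n_i$ and conjugating by the direct sum of the corresponding copies of the $S_{n_i}$ — an invertible operator whose norm and whose inverse's norm are bounded by $\max_{d\mid r}\|S_d\|$ and $\max_{d\mid r}\|S_d^{-1}\|$, hence finite — turns $\bigoplus_iv_iB_{n_i}$ into the diagonal operator $\bigoplus_iv_i(S_{n_i}B_{n_i}S_{n_i}^{-1})$, so $T$ is similar to a diagonal operator.

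Now suppose $T^r$ is not diagonal for any $r$. Then the $n_i$ cannot all be finite and bounded (else $r=\mathrm{lcm}(1,\dots,N)$ would work), so either some $n_{i_0}=\infty$ or $\sup_in_i=\infty$. I claim that in either case $B$ embeds into the Calkin representation of $T$, i.e.\ there is a norm one idempotent $P$ on $\widehat{l^p}$ commuting with $\widehat{T}$ such that $\widehat{T}|_{P\widehat{l^p}}$ is $1$-similar to $B$. Granting this, Theorem \ref{lpVoic} (used with the single idempotent $P$) gives that $T$ is approximately similar to $T\oplus B$, while Lemma \ref{BT} with $T_0=T$ gives that $B$ is approximately similar to $B\oplus T$; since $T\oplus B$ and $B\oplus T$ are isometrically isomorphic and approximate similarity is symmetric and transitive, $T$ is approximately similar to $B$.

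So everything reduces to the claimed embedding, which I expect to be the only real work and which should be a direct adaptation of the constructions in Lemma \ref{amenablesubembed} and Corollary \ref{grouprepdecomp}. Discarding the summands of $\bigoplus_iv_iB_{n_i}$ that do not witness the unboundedness (their Calkin representation is a direct summand, so $P$ extends by $0$), it suffices to embed $B$ into $\widehat{(\bigoplus_lv_lB_{m_l})}$ where either (a) some $m_l=\infty$, or (b) $m_1<m_2<\cdots$ are finite. In case (b), writing $[n]_k$ for the class of $n$ in $\mathbb{Z}/m_k\mathbb{Z}$ and $J_k,Q_k$ for the inclusion of and projection onto the $k$th summand, I would set $Ve_n=(v_k^nJ_ke_{[n]_k})_{k,\mathcal{U}}$ and $E(x^{(k)})_{k,\mathcal{U}}=\sum_{n\in\mathbb{Z}}(\lim_{k,\mathcal{U}}v_k^{-n}e_{[n]_k}^*(Q_kx^{(k)}))e_n$; in case (a) one works in the single summand $v_{l_0}B$ on $l^p(\mathbb{Z})$ with $Ve_n=(v_{l_0}^ne_{n+a_k})_{k,\mathcal{U}}$ for integers $a_k\to\infty$ and the analogous $E$. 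In both cases $V$ maps into $\widehat{(\cdot)}$ because the relevant basis vectors drift to infinity, the intertwinings $\widehat{T}V=VB$ and $E\widehat{T}=BE$ hold exactly — $B_{m_k}$ genuinely shifts $\mathbb{Z}/m_k\mathbb{Z}$ and the unit-modulus factors $v_k^{\pm n}$ cancel — and $EV=I$. The norm identities $\|V\|=\|E\|=1$ follow as in the cited results: for any fixed finite set of indices the map $n\mapsto[n]_k$ is injective once $m_k$ exceeds its diameter, so the images of those basis vectors are disjointly supported for $\mathcal{U}$-almost every $k$, giving $\|V\|=1$ on a dense subspace, and the same fact applied after $\lim_{k,\mathcal{U}}$ gives $\|E\|\le1$. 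Lemma \ref{hatsub} then produces the idempotent $P$.

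The subtlety to watch, and what I take to be the main obstacle, is precisely the \emph{wrap-around} of the cyclic groups $\mathbb{Z}/m_k\mathbb{Z}$ in case (b): a priori the map $n\mapsto[n]_k$ is far from injective on $\mathbb{Z}$, so one must check that the wrap-around only disturbs coordinates that escape to infinity and is therefore invisible to the ultrafilter limit (this is why $E$, which at each fixed $k$ would be given by a divergent sum over $\mathbb{Z}$, is nonetheless a contraction after passing to $\lim_{k,\mathcal{U}}$, and why the bookkeeping of the weights $v_k^{\pm n}$ on the finite cyclic groups causes no trouble). Once that is seen, the remaining verifications are routine, and the two cases above are exhaustive, so the lemma follows.
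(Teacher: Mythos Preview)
Your proof is correct and follows essentially the same architecture as the paper's: decompose via Lemma~\ref{invisomdecomposition}, handle the bounded-$n_i$ case by a uniform diagonalization of the finitely many circular shifts involved, and in the unbounded case embed the bilateral shift into the Calkin representation and finish with Theorem~\ref{lpVoic} and Lemma~\ref{BT}. The only notable difference is cosmetic: in case~(b) the paper first passes to a subsequence with $v_i\to v$, embeds $vB$ (via the construction of Corollary~\ref{grouprepdecomp}), and then uses $vB\sim B$, whereas you absorb the weights $v_k^{\pm n}$ directly into $V$ and $E$ to embed $B$ in one stroke---both work, and your version avoids the subsequence extraction.
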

\begin{proof}
By Lemma \ref{invisomdecomposition}, there exist countable collections $(n_{i})_{i\in\mathcal{C}}\subset\mathbb{N}\cup\{\infty\}$ and $(v_{i})_{i\in\mathcal{C}}\subset\{v\in\mathbb{C}:|v|=1\}$ such that $T$ is 1-similar to $\displaystyle\bigoplus_{i\in\mathcal{C}}v_{i}B_{n_{i}}$. If there exists $r\in\mathbb{N}$ such that $T^{r}$ is a diagonal operator, then $\sup_{i\in\mathcal{C}}n_{i}<\infty$. Since $B_{n}$ is similar to a diagonal operator for every $n\in\mathbb{N}$, it follows that $T$ is similar to a diagonal operator.

Suppose that $T^{r}$ is not a diagonal operator for any $r\in\mathbb{N}$. Then $\sup_{i\in\mathcal{C}}n_{i}=\infty$. Thus, either (1) $n_{i_{0}}=\infty$ for some $i_{0}\in\mathcal{C}$ or (2) $n_{i}<\infty$ for all $i\in\mathcal{C}$ but $\sup_{i\in\mathcal{C}}n_{i}=\infty$.

In Case (1), $T$ is 1-similar to $B\oplus(\bigoplus_{i\neq i_{0}}v_{i}B_{n_{i}})$. Thus, taking $T_{0}=\bigoplus_{i\neq i_{0}}v_{i}B_{n_{i}}$in Lemma \ref{BT}, we have that $B$ is approximately similar to $B\oplus T_{0}$, which is 1-similar to $T$.

In Case (2), there exists an infinite subset $\mathcal{C}_{0}\subset\mathcal{C}$ such that the $n_{i}$, for $i\in\mathcal{C}_{0}$, are finite and distinct. So $T$ is 1-similar to $(\oplus_{i\in\mathcal{C}_{0}}v_{i}B_{n_{i}})\oplus(\oplus_{i\in\mathcal{C}\backslash\mathcal{C}_{0}}v_{i}B_{n_{i}})$. Replacing $\mathcal{C}_{0}$ by a smaller subset, we may assume that $v=\lim_{i\in\mathcal{C}_{0}}v_{i}$ exists, i.e., $v=\lim_{i\to\infty}v_{g(i)}$ for any bijection $g:\mathbb{N}\to\mathcal{C}_{0}$.

From the proof of Corollary \ref{grouprepdecomp}, we have $B\ll\oplus_{i\in\mathcal{C}_{0}}B_{i}$. So $vB\ll\oplus_{i\in\mathcal{C}_{0}}vB_{i}$. Note that $\oplus_{i\in\mathcal{C}_{0}}vB_{i}$ is a compact perturbation of $\oplus_{i\in\mathcal{C}_{0}}v_{i}B_{i}$. Thus, $vB\ll\oplus_{i\in\mathcal{C}_{0}}v_{i}B_{i}$. By Theorem \ref{lpVoicsingleoperator}, we have that $\oplus_{i\in\mathcal{C}_{0}}v_{i}B_{n_{i}}$ is approximately similar $(\oplus_{i\in\mathcal{C}_{0}}v_{i}B_{n_{i}})\oplus vB$. Therefore, $T$ is approximately similar to $T\oplus vB$, which is similar to $T\oplus B$. So by Lemma \ref{BT}, we have that $T$ is approximately similar to $B$.
\end{proof}
\begin{corollary}\label{characinvisom}
Let $p\in(1,\infty)\backslash\{2\}$. Let $T_{1}$ and $T_{2}$ be invertible isometries on $l^{p}$. Then $T_{1}$ and $T_{2}$ are approximately similar if and only if either
\begin{enumerate}[(1)]
\item $T_{1}^{r}$ and $T_{2}^{r}$ are not diagonal operators for any $r\in\mathbb{N}$, or
\item $T_{1}$ and $T_{2}$ have the same spectrum, $\text{dim ker}(T_{1}-\lambda)=\text{dim ker}(T_{2}-\lambda)$ for all isolated point $\lambda$ in the spectrum, and there exists $r\in\mathbb{N}$ such that $T_{1}^{r}$ and $T_{2}^{r}$ are diagonal operators.
\end{enumerate}
\end{corollary}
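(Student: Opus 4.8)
The plan is to prove both directions of the equivalence simultaneously, organizing everything around the dichotomy supplied by Lemma \ref{invisomdiagbilateral}: for an invertible isometry $T$ on $l^{p}$, either $T^{r}$ is a diagonal operator for some $r\geq 1$, or it is not for any $r$. Suppose first that neither $T_{1}^{r}$ nor $T_{2}^{r}$ is diagonal for any $r\geq 1$. By Lemma \ref{invisomdiagbilateral} each $T_{i}$ is approximately similar to the bilateral shift $B$, and since approximate similarity is transitive (the defining sequences are chained by a diagonal argument, as in the proof of Corollary \ref{boundedsimorbit}), $T_{1}$ and $T_{2}$ are approximately similar; this is the ``if'' direction under alternative (1). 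For the ``only if'' direction, assume $T_{1}$ and $T_{2}$ are approximately similar and, say, $T_{1}^{r}$ is not diagonal for any $r$; then $T_{1}$, hence also $T_{2}$, is approximately similar to $B$. If some $T_{2}^{s}$ were diagonal, then by Lemma \ref{invisomdiagbilateral} $T_{2}$ would be similar to a diagonal operator $D$, so $B$ would be approximately similar to $D$, which (by the very definition of approximate similarity) would make $B$ similar to a compact perturbation of $D$, contradicting Lemma \ref{bilateralnondiag}. Hence no power of $T_{2}$ is diagonal either, and alternative (1) holds; the case where $T_{2}$ has no diagonal power is symmetric. In particular, if $T_{1}$ and $T_{2}$ are approximately similar and (1) fails, then \emph{both} $T_{1}$ and $T_{2}$ have a diagonal power.

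It remains to handle the case where $T_{1}^{r_{1}}$ and $T_{2}^{r_{2}}$ are diagonal; passing to $r=\mathrm{lcm}(r_{1},r_{2})$ (a power of a diagonal operator is diagonal) we may assume $T_{1}^{r}$ and $T_{2}^{r}$ are both diagonal. By Lemma \ref{invisomdecomposition}, each $T_{i}$ is $1$-similar to a direct sum of operators $v\,B_{n}$ with $|v|=1$; since $B_{n}^{r}$ is diagonal exactly when $n\mid r$, only the finitely many $n$ dividing $r$ occur, and each such $v\,B_{n}$ is similar, through a finite Fourier transform on a space of dimension $n\leq r$, to a diagonal operator, with these similarities uniformly bounded. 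Thus $T_{i}=S_{i}D_{i}S_{i}^{-1}$ for invertible $S_{i}$ and diagonal $D_{i}$ with $\sigma(D_{i})=\sigma(T_{i})$ and $\dim\ker(D_{i}-\lambda)=\dim\ker(T_{i}-\lambda)$ for all $\lambda$. Conjugation by a fixed invertible operator preserves approximate similarity, so $T_{1}$ and $T_{2}$ are approximately similar iff $D_{1}$ and $D_{2}$ are. For the ``only if'' part of alternative (2): approximate similarity preserves the spectrum (for $\lambda\notin\sigma(D)$ the conjugates of $D-\lambda$ are uniformly invertible, hence so are their small perturbations) and preserves the rank of the Riesz idempotent at an isolated spectral point (that idempotent of $D_{i}$ agrees, after conjugation, with the one for $S_{n}D_{i}S_{n}^{-1}$, and idempotents at distance $<1$ are similar, hence of equal rank); for a diagonal operator the Riesz idempotent at an isolated point $\lambda$ is just the projection onto $\ker(D_{i}-\lambda)$. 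Hence $\sigma(T_{1})=\sigma(T_{2})$ and the eigenspace dimensions agree at isolated points, which is alternative (2).

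For the remaining ``if'' direction it suffices to prove: if $D_{1},D_{2}$ are diagonal operators on $l^{p}$ with $\sigma(D_{1})=\sigma(D_{2})=:M$ and $\dim\ker(D_{1}-\lambda)=\dim\ker(D_{2}-\lambda)=:m(\lambda)$ for every isolated point $\lambda$ of $M$, then $D_{1}$ and $D_{2}$ are approximately similar. Let $\Lambda_{0}$ be the set of isolated points $\lambda$ of $M$ with $m(\lambda)<\infty$, and decompose, up to permutation similarity, $D_{i}=E\oplus G_{i}$, where $E=\bigoplus_{\lambda\in\Lambda_{0}}\lambda I_{m(\lambda)}$ is the \emph{same} operator for $i=1,2$ and $G_{i}$ carries the remaining diagonal entries of $D_{i}$ (those at non-isolated points of $M$ together with those at isolated points of infinite multiplicity). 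The key step is that $D_{1}$ is approximately similar to $D_{1}\oplus G_{2}$: by Theorem \ref{lpVoic} and Lemma \ref{hatsub} it is enough to produce a norm-one isometry $V$ from the underlying space of $G_{2}$ into the space $\widehat{l^{p}}$ associated with $D_{1}$, and a norm-one $E'$ back, with $E'V=I$, $\widehat{D_{1}}V=VG_{2}$ and $E'\widehat{D_{1}}=G_{2}E'$. One defines $V$ on the standard eigenbasis of $G_{2}$: an eigenvector for a non-isolated $\mu\in M$ is sent to $(e_{i_{k}})_{k,\mathcal{U}}$ with $i_{k}\to\infty$ and $w_{1,i_{k}}\to\mu$ (possible because $\mu$ is an accumulation point of the diagonal entries $(w_{1,i})$ of $D_{1}$), and an eigenvector for an isolated $\lambda$ with $m(\lambda)=\infty$ is sent to $(e_{j_{k}})_{k,\mathcal{U}}$ with $j_{k}\to\infty$ and $w_{1,j_{k}}=\lambda$ (possible because $\{i:w_{1,i}=\lambda\}$ is infinite, as $m_{1}(\lambda)=m(\lambda)=\infty$); choosing all these index sequences pairwise disjoint for each fixed $k$ makes $V$ the required isometry, $\widehat{D_{1}}V=VG_{2}$ holds because $(w_{1,i_{k}}-\mu)e_{i_{k}}\to 0$, and $E'(x^{(k)})_{k,\mathcal{U}}=\sum(\lim_{k,\mathcal{U}}e_{i_{k}}^{*}(x^{(k)}))\cdot(\text{corresponding eigenvector of }G_{2})$ is bounded by a disjoint-supports estimate and intertwines $\widehat{D_{1}}$ with $G_{2}$. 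Thus $D_{1}$ is approximately similar to $D_{1}\oplus G_{2}$, and symmetrically $D_{2}$ is approximately similar to $D_{2}\oplus G_{1}$; since $D_{1}\oplus G_{2}\cong E\oplus G_{1}\oplus G_{2}\cong D_{2}\oplus G_{1}$ up to permutation similarity, it follows that $D_{1}$, $D_{1}\oplus G_{2}$, $D_{2}\oplus G_{1}$, $D_{2}$ are all approximately similar.

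The main obstacle is precisely the treatment of isolated points of $M$ of \emph{finite} multiplicity: these are rigid, their eigenspace dimension being an approximate-similarity invariant by the Riesz idempotent argument above, so the naive ``double and absorb'' strategy (trying to show $D_{1}$ is approximately similar to $D_{1}\oplus D_{2}$) fails there, as it would double those multiplicities. Quarantining exactly these points in the common summand $E$ and absorbing only the non-rigid part $G_{2}$ via Theorem \ref{lpVoic} is what makes the argument work; the remaining verifications (boundedness of $V$ and $E'$, the disjointness bookkeeping, the fact that the underlying space of $G_{2}$ is finite dimensional or isomorphic to $l^{p}$) use only standard ultrapower estimates.
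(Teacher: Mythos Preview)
Your proof is correct and follows the paper's overall structure: the dichotomy from Lemma \ref{invisomdiagbilateral}, the reduction to the bilateral shift $B$ in case (1), and the contradiction via Lemma \ref{bilateralnondiag} to force both operators to have a diagonal power. The difference lies in the ``if'' direction under alternative (2). The paper, once $T_{1}$ and $T_{2}$ are similar to diagonal operators $D_{1},D_{2}$ with the stated spectral data, dispatches the matter in a single sentence: ``$D_{1}$ and $D_{2}$ are approximately similar via conjugations by certain permutation operators,'' i.e., a classical matching argument on the diagonal entries. You instead bring in the full $l^{p}$-Voiculescu machinery, quarantining the finite-multiplicity isolated points in a common summand $E$ and using Theorem \ref{lpVoic} with an explicit $V,E'$ pair to absorb the residual summands $G_{i}$. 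This certainly works and makes the isolated-point obstruction very transparent, but it is considerably heavier than the paper's elementary permutation route. On the other hand, your account is more explicit than the paper's on two points the paper leaves tacit: that approximate similarity preserves the spectrum, and that it preserves the rank of the Riesz idempotent at an isolated spectral point (hence the eigenspace dimension there).
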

\begin{proof}
Suppose that $T_{1}$ and $T_{2}$ are approximately similar. By Lemma \ref{invisomdiagbilateral}, if there exists $r\in\mathbb{N}$ such that $T_{1}^{r}$ is a diagonal operator, then $T_{1}$ is similar to a diagonal operator. By Lemma \ref{bilateralnondiag}, we have that $T_{2}$ is not approximately similar to the bilateral shift $B$. By Lemma \ref{invisomdiagbilateral}, there exists $r\in\mathbb{N}$ such that $T_{2}^{r}$ is a diagonal operator. Thus, we have either (1) or (2). (In (2), if the $r$ in $T_{1}^{r}$ and the $r$ in $T_{2}^{r}$ are different, then we can replace them with their product.)

Conversely, if (1) is true then $T_{1}$ and $T_{2}$ are approximately similar (to $B$) by Lemma \ref{invisomdiagbilateral}. If (2) is true then by Lemma \ref{invisomdiagbilateral}, we have that $T_{1}$ is similar to a diagonal operator $D_{1}$ and $T_{2}$ is similar to a diagonal operator $D_{2}$. The operators $D_{1}$ and $D_{2}$ have the same spectrum and $\text{dim ker}(D_{1}-\lambda I)=\text{dim ker}(D_{2}-\lambda I)$ for all isolated point $\lambda$ in the spectrum. So $D_{1}$ and $D_{2}$ are approximately similar via conjugations by certain permutation operators. Thus the result follows.
\end{proof}
\section{Examples of extensions of $K(l^{p})$}
Let $\mathcal{A}$ be a separable unital Banach algebra. An {\it isomorphic extension} of $K(l^{p})$ by $\mathcal{A}$ is a unital homomorphism $\phi:\mathcal{A}\to B(l^{p})/K(l^{p})$ such that there exists $C\geq 1$ satisfying
\[\frac{1}{C}\|a\|\leq\|\phi(a)\|\leq C\|a\|,\]
for all $a\in\mathcal{A}$. Two isomorphic extensions $\phi_{1}:\mathcal{A}\to B(l^{p})/K(l^{p})$ and $\phi_{2}:\mathcal{A}\to B(l^{p})/K(l^{p})$ are {\it (strongly) equivalent} if there is an invertible operator $S$ on $l^{p}$ such that $\phi_{2}(a)=\pi(S)\phi_{1}(a)\pi(S)^{-1}$ for all $a\in\mathcal{A}$. For each isomorphic extension $\phi$ of $K(l^{p})$ by $\mathcal{A}$, let $[\phi]$ be the equivalence class of isomorphic extensions of $K(l^{p})$ by $\mathcal{A}$ containing $\phi$.

If $\phi_{1}:\mathcal{A}\to B(l^{p})/K(l^{p})$ and $\phi_{2}:\mathcal{A}\to B(l^{p})/K(l^{p})$ are isomorphic extensions, define an isomorphic extension $\phi_{1}\oplus\phi_{2}:\mathcal{A}\to B(l^{p}\oplus l^{p})/K(l^{p}\oplus l^{p})$ by
\[(\phi_{1}\oplus\phi_{2})(a)=\phi_{1}(a)\oplus\phi_{2}(a),\]
for $a\in\mathcal{A}$, where we define $\pi(T_{1})\oplus\pi(T_{2})=\pi(T_{1}\oplus T_{2})$ for $T_{1},T_{2}\in B(l^{p})$. Thus, we can define an operation $+$ on the set of all equivalence classes $[\phi]$ by $[\phi_{1}]+[\phi_{2}]=[\phi_{1}\oplus\phi_{2}]$. The set of all equivalence classes of isomorphic extensions of $K(l^{p})$ by $\mathcal{A}$ equipped with $+$ forms a commutative semigroup $\mathcal{EXT}_{\sim,s}(\mathcal{A},K(l^{p}))$.

An isomorphic extension $\phi:\mathcal{A}\to B(l^{p})/K(l^{p})$ is {\it trivial} if there is a unital homomorphism $\rho:\mathcal{A}\to B(l^{p})$ such that $\phi=\pi\circ\rho$. The set of all equivalence classes $[\phi]$ for trivial isomorphic extensions $\phi:\mathcal{A}\to B(l^{p})/K(l^{p})$ is a subsemigroup of $\mathcal{EXT}_{\sim,s}(\mathcal{A},K(l^{p}))$. The quotient of $\mathcal{EXT}_{\sim,s}(\mathcal{A},K(l^{p}))$ by this subsemigroup is denoted by $\mathrm{Ext}_{\sim,s}(\mathcal{A},K(l^{p}))$.
\begin{example}
Let $U$ and $B$ be the unilateral and bilateral shifts on $l^{p}$, respectively. Let $\mathcal{A}$ be the subalgebra of $B(l^{p})$ generated by $B$ and $B^{-1}$. By Lemma \ref{UcalkinB}, there is a unital homomorphism $\phi:\mathcal{A}\to B(l^{p})/K(l^{p})$ such that $\phi(B)=\pi(U)$. Moreover, $\|\phi(T)\|=\|T\|$ for all $T\in\mathcal{A}$. Thus, $\phi$ is an isomorphic extension. Since $\pi(U)$ has nontrivial Fredholm index, $\phi$ is a nontrivial isomorphic extension. Thus, the semigroup $\mathrm{Ext}_{\sim,s}(\mathcal{A},K(l^{p}))$ is nontrivial.
\end{example}
In this section, we show that
\begin{enumerate}[1.]
\item for every separable closed unital subalgebra $\mathcal{A}$ of $B(l^{2})/K(l^{2})$, there is an isomorphic extension $\phi:\mathcal{A}\to B(l^{p})/K(l^{p})$ such that $\phi(a)$ and $a$ have the same index for all invertible $a\in\mathcal{A}$ (Theorem \ref{isompreserving});
\item as a consequence, for all nonempty compact subset $M$ of $\mathbb{C}$ and numbers $n_{i}$ associated with each hole $O_{i}$ of $M$, there is an isomorphic extension $\phi:\mathcal{A}\to B(l^{p})/K(l^{p})$ such that $\phi(z-\lambda)$ has index $n_{i}$ for all $\lambda\in O_{i}$ where $z\in C(M)$ is the identity function (Corollary \ref{indexany}); and
\item if $p\neq 2$, there are two trivial isomorphic extensions $\phi_{1}:C[0,1]\to B(l^{p})/K(l^{p})$ and $\phi_{2}:C[0,1]\to B(l^{p})/K(l^{p})$ that are not equivalent (Theorem \ref{notequivtrivial}).
\end{enumerate}
Let $\mathcal{X}_{1},\mathcal{X}_{2},\ldots$ be Banach spaces. Let $\mathcal{V}$ be the vector space of all formal infinite matrix $(T_{i,j})_{i,j\in\mathbb{N}}$ such that
\begin{enumerate}[(1)]
\item $T_{i,j}$ is an operator from $\mathcal{X}_{j}$ into $\mathcal{X}_{i}$ for all $i,j\in\mathbb{N}$,
\item $\sup_{i,j\in\mathbb{N}}\|T_{i,j}\|<\infty$,
\item there exists $r\geq 0$ such that $T_{i,j}=0$ for all $|i-j|>r$.
\end{enumerate}
Let $(T_{i,j}^{(1)})_{i,j\in\mathbb{N}},(T_{i,j}^{(2)})_{i,j\in\mathbb{N}}\in\mathcal{V}$. Note that for all $i,k\in\mathbb{N}$, the infinite summation $\sum_{j\in\mathbb{N}}T_{i,j}^{(1)}T_{j,k}^{(2)}$ contains only finitely many nonzero terms. Moreover, the infinite matrix $(\sum_{j\in\mathbb{N}}T_{i,j}^{(1)}T_{j,k}^{(2)})_{i,k\in\mathbb{N}}$ is in $\mathcal{V}$. So matrix multiplication is a well defined operation on $\mathcal{V}$. Thus $\mathcal{V}$ becomes an algebra.

For each integer $r\geq 0$, let $\mathcal{V}_{r}$ be the set of all $(T_{i,j})_{i,j\in\mathbb{N}}$ in $\mathcal{V}$ such that $T_{i,j}=0$ for all $|i-j|>r$. Note that $\mathcal{V}=\cup_{r\geq 0}\mathcal{V}_{r}$.
\begin{lemma}\label{Vop}
Let $\mathcal{X}_{1},\mathcal{X}_{2},\ldots$ be finite dimensional Banach spaces. Let $\mathcal{X}^{(p)}=(\oplus_{n\in\mathbb{N}}\mathcal{X}_{n})_{l^{p}}$. Let $\pi_{p}:B(\mathcal{X}^{(p)})\to B(\mathcal{X}^{(p)})/K(\mathcal{X}^{(p)})$ be the quotient map. For each $k\in\mathbb{N}$, let $J_{k}^{(p)}:\mathcal{X}_{k}\to(\oplus_{n\in\mathbb{N}}\mathcal{X}_{n})_{l^{p}}$ and $Q_{k}^{(p)}:(\oplus_{n\in\mathbb{N}}\mathcal{X}_{n})_{l^{p}}\to\mathcal{X}_{k}$ be the canonical embedding and projection, respectively. Then the map $\Phi_{p}:\mathcal{V}\to B(\mathcal{X}^{(p)})$, defined by
\[\Phi_{p}[(T_{i,j})_{i,j\in\mathbb{N}}]=\sum_{i,j\in\mathbb{N}}J_{i}^{(p)}T_{i,j}Q_{j}^{(p)},\]
for $(T_{i,j})_{i,j\in\mathbb{N}}\in\mathcal{V}$, is a unital (algebra) homomorphism such that
\begin{enumerate}[(i)]
\item for all integer $r\in\mathbb{N}$ and $(T_{i,j})_{i,j\in\mathbb{N}}\in\mathcal{V}_{r}$,
    \[\limsup_{j\to\infty}\sup_{i\in\mathbb{N}}\|T_{i,j}\|\leq
    \|\pi_{p}(\Phi_{p}[(T_{i,j})_{i,j\in\mathbb{N}}])\|\leq(2r+1)\limsup_{j\to\infty}\sup_{i\in\mathbb{N}}\|T_{i,j}\|,\]
\item for every integer $r\in\mathbb{N}$, the set $\Phi_{p}(\mathcal{V}_{r})$ consists of all operators $T\in B(\mathcal{X}^{(p)})$ such that $Q_{i}^{(p)}TJ_{j}^{(p)}=0$ for all $|i-j|>r$.
\end{enumerate}
\end{lemma}
\begin{proof}
That the sum $\sum_{i,j\in\mathbb{N}}J_{i}^{(p)}T_{i,j}Q_{j}^{(p)}$ converges in SOT unconditionally and that (i) is satisfied follow from Lemma \ref{Multidiagest}. For $(T_{i,j}^{(1)})_{i,j\in\mathbb{N}},(T_{i,j}^{(2)})_{i,j\in\mathbb{N}}\in\mathcal{V}$ and $r,s\in\mathbb{N}$, we have
\begin{align*}
&J_{r}^{(p)}\Phi_{p}[(T_{i,j}^{(1)})_{i,j\in\mathbb{N}}]\Phi_{p}[(T_{i,j}^{(2)})_{i,j\in\mathbb{N}}]Q_{s}^{(p)}\\=&
\left(\sum_{j\in\mathbb{N}}J_{r}^{(p)}T_{r,j}^{(1)}Q_{j}^{(p)}\right)\left(\sum_{i\in\mathbb{N}}J_{i}^{(p)}T_{i,s}^{(2)}Q_{s}^{(p)}\right)\\=&
\sum_{j\in\mathbb{N}}J_{r}^{(p)}T_{r,j}^{(1)}T_{j,s}^{(2)}Q_{s}^{(p)}\\=&
J_{r}^{(p)}\Phi_{p}[(T_{i,j}^{(1)})_{i,j\in\mathbb{N}}(T_{i,j}^{(2)})_{i,j\in\mathbb{N}}]Q_{s}^{(p)}.
\end{align*}
Thus, $\Phi_{p}$ is a unital (algebra) homomorphism. Finally to prove (ii), note that if $T\in\Phi_{p}(\mathcal{V}_{r})$ then $Q_{i}^{(p)}TJ_{j}^{(p)}=0$ for all $|i-j|>r$. Conversely, if $Q_{i}^{(p)}TJ_{j}^{(p)}=0$ for all $|i-j|>r$, then setting $T_{i,j}=Q_{i}^{(p)}TJ_{j}^{(p)}$, for $i,j\in\mathbb{N}$, we have $T=\phi_{p}[(T_{i,j})_{i,j\in\mathbb{N}}]$. Thus, the result follows.
\end{proof}
For convenience, we repeat Lemma \ref{Vop} for $p=2$.
\begin{lemma}\label{Vop2}
Let $\mathcal{X}_{1},\mathcal{X}_{2},\ldots$ be finite dimensional Banach spaces. Let $\mathcal{X}^{(2)}=(\oplus_{n\in\mathbb{N}}\mathcal{X}_{n})_{l^{2}}$. Let $\pi_{2}:B(\mathcal{X}^{(2)})\to B(\mathcal{X}^{(2)})/K(\mathcal{X}^{(2)})$ be the quotient map. For each $k\in\mathbb{N}$, let $J_{k}^{(2)}:\mathcal{X}_{k}\to(\oplus_{n\in\mathbb{N}}\mathcal{X}_{n})_{l^{2}}$ and $Q_{k}^{(2)}:(\oplus_{n\in\mathbb{N}}\mathcal{X}_{n})_{l^{2}}\to\mathcal{X}_{k}$ be the canonical embedding and projection, respectively. Then the map $\Phi_{2}:\mathcal{V}\to B(\mathcal{X}^{(2)})$, defined by
\[\Phi_{2}[(T_{i,j})_{i,j\in\mathbb{N}}]=\sum_{i,j\in\mathbb{N}}J_{i}^{(2)}T_{i,j}Q_{j}^{(2)},\]
for $(T_{i,j})_{i,j\in\mathbb{N}}\in\mathcal{V}$, is a unital (algebra) homomorphism such that
\begin{enumerate}[(i)]
\item for all integer $r\in\mathbb{N}$ and $(T_{i,j})_{i,j\in\mathbb{N}}\in\mathcal{V}_{r}$,
    \[\limsup_{j\to\infty}\sup_{i\in\mathbb{N}}\|T_{i,j}\|\leq
    \|\pi_{2}(\Phi_{2}[(T_{i,j})_{i,j\in\mathbb{N}}])\|\leq(2r+1)\limsup_{j\to\infty}\sup_{i\in\mathbb{N}}\|T_{i,j}\|,\]
\item for every integer $r\in\mathbb{N}$, the set $\Phi_{2}(\mathcal{V}_{r})$ consists of all operators $T\in B(\mathcal{X}^{(2)})$ such that $Q_{i}^{(2)}TJ_{j}^{(2)}=0$ for all $|i-j|>r$.
\end{enumerate}
\end{lemma}

Note that the algebra $\mathcal{V}$ depends only on $\mathcal{X}_{1},\mathcal{X}_{2},\ldots$ but not on $p$ so the algebra $\mathcal{V}$ and the set $\mathcal{V}_{r}$ in Lemma \ref{Vop} and Lemma \ref{Vop2} are the same.

\begin{lemma}[\cite{Voiculescu}, Proof of Lemma 1.2]\label{triquasidiagonalp2}
Let $\mathcal{B}$ be a separable subalgebra of $B(l^{2})$. Then there are $0=m_{1}<m_{2}<\ldots$ such that $T-\sum_{k=1}^{\infty}(Q_{k-1}+Q_{k}+Q_{k+1})TQ_{k}$ is compact for every $T\in\mathcal{B}$, where $Q_{0}=0$ and $Q_{k}$ is the canonical projection from $l^{2}$ onto $l^{2}([m_{k}+1,m_{k+1}])$.
\end{lemma}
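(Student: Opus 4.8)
The plan is to produce the breakpoints $0=m_1<m_2<\cdots$ by an inductive diagonal argument applied to a countable norm-dense subset of $\mathcal{B}$, and then pass from this dense subset to all of $\mathcal{B}$ by a closedness argument. For $m\ge 0$ let $P_{[1,m]}$ be the coordinate projection of $l^2$ onto $\operatorname{span}\{e_1,\dots,e_m\}$; once the $m_k$ are chosen, write $R_k=P_{[1,m_k]}$ (so $R_1=0$) and $Q_k=R_{k+1}-R_k$, the projection onto the $k$th block. First I would record that $E(T):=\sum_k(Q_{k-1}+Q_k+Q_{k+1})TQ_k$ is a bounded linear map on $B(l^2)$: splitting the sum over $k$ into the three residue classes modulo $3$, within each class the summands $(Q_{k-1}+Q_k+Q_{k+1})TQ_k$ have pairwise orthogonal domains and pairwise orthogonal ranges (disjoint blocks, since $|k-k'|\ge 3$), so each class contributes an operator of norm $\le\|T\|$, whence $\|E\|\le 3$. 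Consequently $\{T\in B(l^2):T-E(T)\in K(l^2)\}$ is the preimage of the norm-closed subspace $K(l^2)$ under the bounded linear map $T\mapsto T-E(T)$, hence is itself a norm-closed linear subspace of $B(l^2)$. Therefore it suffices to arrange $T-E(T)\in K(l^2)$ for every $T$ in a fixed countable norm-dense subset $\{T_1,T_2,\dots\}$ of $\mathcal{B}$ (which exists because $\mathcal{B}$ is separable).

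For the inductive construction, set $m_1=0$ and, having chosen $m_1<\cdots<m_k$ (hence fixed $R_k=P_{[1,m_k]}$, a finite-rank projection), choose $m_{k+1}>m_k$ so large that for every $n\le k$,
\[
\bigl\|(I-P_{[1,m_{k+1}]})\,T_n\,R_k\bigr\|<2^{-k}\qquad\text{and}\qquad\bigl\|R_k\,T_n\,(I-P_{[1,m_{k+1}]})\bigr\|<2^{-k}.
\]
This is possible because $T_nR_k$ and $R_nT_k$... more precisely $T_nR_k$ and $R_kT_n$ are finite-rank operators, and $\|(I-P_{[1,m]})F\|\to 0$ and $\|F(I-P_{[1,m]})\|\to 0$ as $m\to\infty$ for any finite-rank $F$; only the finitely many conditions indexed by $n\le k$ are imposed at stage $k$, and each involves only the already-fixed $R_k$ and the single new parameter $m_{k+1}$, so the induction is well posed.

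Next I would check that these breakpoints work. Fix $n$ and write $T_n-E(T_n)=\sum_{|i-j|\ge 2}Q_iT_nQ_j$, split according to the sign of $i-j$ into the ``upper'' part $\sum_j(I-R_{j+2})T_nQ_j$ (using $\sum_{i\ge j+2}Q_i=I-R_{j+2}$) and the ``lower'' part $\sum_j R_{j-1}T_nQ_j$ (using $\sum_{i\le j-2}Q_i=R_{j-1}$). For the lower part, since $(I-R_j)Q_j=Q_j$, the stage-$(j-1)$ inequality gives $\|R_{j-1}T_nQ_j\|=\|R_{j-1}T_n(I-R_j)Q_j\|<2^{-(j-1)}$ whenever $j-1\ge n$; each $R_{j-1}T_nQ_j$ has finite rank, so the partial sums are finite rank and the tails are norm-summable, hence $\sum_j R_{j-1}T_nQ_j$ converges in operator norm to a compact operator. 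For the upper part, writing $Q_j=R_{j+1}-R_j$ and using $R_{j+1}R_j=R_j$, the stage-$(j+1)$ inequality gives $\|(I-R_{j+2})T_nQ_j\|\le 2\|(I-R_{j+2})T_nR_{j+1}\|<2^{-j}$ whenever $j+1\ge n$, and the same reasoning shows this series converges in operator norm to a compact operator. Hence $T_n-E(T_n)\in K(l^2)$ for all $n$, and by the closedness observation of the first paragraph, $T-E(T)\in K(l^2)$ for every $T\in\mathcal{B}$, which is exactly the assertion of the lemma.

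I do not expect a serious obstacle. The only points demanding care are (a) the boundedness of $E$, which is the reason for the splitting modulo $3$, and (b) ensuring that the finitely many conditions imposed at each stage of the induction depend only on the previously fixed projection $R_k$ and the one new integer $m_{k+1}$, so that they can all be met by taking $m_{k+1}$ large — both of which are routine.
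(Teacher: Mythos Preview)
Your argument is correct and follows essentially the same route as the paper. The paper does not spell out a proof here (it cites Voiculescu and points forward to the $l^{p}$ generalization, Lemma~\ref{triquasidiagonal}), but that generalization is proved exactly as you do: inductively choose $m_{r+1}$ so that $\|R_{m_r}X(I-R_{m_{r+1}})\|$ and $\|(I-R_{m_{r+1}})XR_{m_r}\|$ are summably small for finitely many $X$ at each stage, then split $T-E(T)$ into an upper and a lower off-tridiagonal piece and check that each is a norm-convergent sum of finite-rank operators.

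The one genuine addition in your write-up is the observation that $E$ is bounded (via the mod-$3$ splitting) and hence $\{T:T-E(T)\in K(l^{2})\}$ is norm-closed, which lets you pass from a countable dense subset to all of the separable subalgebra $\mathcal{B}$. The paper's explicit proof (of the generalization) is phrased for a countable subset $\mathcal{C}$ rather than a separable subalgebra, so this closure step is not needed there; your version handles the lemma as stated. A minor remark: in the upper-part estimate you can save the factor $2$ by using $Q_j=R_{j+1}Q_j$ directly, giving $\|(I-R_{j+2})T_nQ_j\|\le\|(I-R_{j+2})T_nR_{j+1}\|<2^{-(j+1)}$, but of course this does not affect the conclusion.
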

Let $0=m_{1}<m_{2}<\ldots$. Note that in Lemma \ref{Vop2}, if we take $\mathcal{X}_{k}=l^{2}([m_{k}+1,m_{k+1}])$ for $k\in\mathbb{N}$, then $\mathcal{X}^{(2)}=(\oplus_{n\in\mathbb{N}}l^{2}([m_{n}+1,m_{n+1}]))_{l^{2}}=l^{2}(\mathbb{N})$ and the projection $Q_{k}^{(2)}$ coincides with the projection $Q_{k}$ in Lemma \ref{triquasidiagonalp2}.
\begin{lemma}\label{separablesubalgebra}
Let $\mathcal{X}^{(p)}=(\oplus_{n\in\mathbb{N}}l^{2}([m_{n}+1,m_{n+1}]))_{l^{p}}$. Let $U$ be the unilateral shift on $l^{2}$. Let $\pi_{2}:B(l^{2})\to B(l^{2})/K(l^{2})$ be the quotient map. Let $\mathcal{A}$ be a separable closed unital subalgebra of $B(l^{2})/K(l^{2})$ containing $\pi_{2}(U)$. Then there exist $0=m_{1}<m_{2}<\ldots$ and a unital homomorphism $\phi:\mathcal{A}\to B(\mathcal{X}^{(p)})/K(\mathcal{X}^{(p)})$ such that
\begin{equation}\label{separablesubalgebraeq1}
\frac{1}{3}\|a\|\leq\|\phi(a)\|\leq 3\|a\|,
\end{equation}
for all $a\in\mathcal{A}$, and $\phi(\pi_{2}(U))$ has Fredholm index $-1$.
\end{lemma}
\begin{proof}
Let $\mathcal{B}=\pi_{2}^{-1}(\mathcal{A})$. By Lemma \ref{triquasidiagonalp2}, there are $0=m_{1}<m_{2}<\ldots$ such that $T-\sum_{k=1}^{\infty}(Q_{k-1}^{(2)}+Q_{k}^{(2)}+Q_{k+1}^{(2)})TQ_{k}^{(2)}$ is compact for every $T\in\mathcal{B}$, where $Q_{k}^{(2)}$ is the canonical projection from $l^{2}(\mathbb{N})$ onto $l^{2}([m_{k}+1,m_{k+1}])$ and $Q_{0}^{(2)}=0$. In Lemma \ref{Vop} and Lemma \ref{Vop2}, take $\mathcal{X}_{k}=l^{2}([m_{k}+1,m_{k+1}])$ for $k\in\mathbb{N}$. We have that the unital homomorphisms $\Phi_{2}:\mathcal{V}\to B(\mathcal{X}^{(2)})$ and $\Phi_{p}:\mathcal{V}\to B(\mathcal{X}^{(p)})$, defined by
\[\Phi_{p}[(T_{i,j})_{i,j\in\mathbb{N}}]=\sum_{i,j\in\mathbb{N}}J_{i}^{(p)}T_{i,j}Q_{j}^{(p)}\quad\text{and}\quad
\Phi_{2}[(T_{i,j})_{i,j\in\mathbb{N}}]=\sum_{i,j\in\mathbb{N}}J_{i}^{(2)}T_{i,j}Q_{j}^{(2)},\]
for $(T_{i,j})_{i,j\in\mathbb{N}}\in\mathcal{V}$, satisfy
\begin{enumerate}[(i)]
\item for all integer $r\in\mathbb{N}$ and $(T_{i,j})_{i,j\in\mathbb{N}}\in\mathcal{V}_{r}$,
\[\limsup_{j\to\infty}\sup_{i\in\mathbb{N}}\|T_{i,j}\|\leq\|\pi_{p}(\Phi_{p}[(T_{i,j})_{i,j\in\mathbb{N}}])\|\leq
(2r+1)\limsup_{j\to\infty}\sup_{i\in\mathbb{N}}\|T_{i,j}\|,\]
and
\[\limsup_{j\to\infty}\sup_{i\in\mathbb{N}}\|T_{i,j}\|\leq\|\pi_{2}(\Phi_{2}[(T_{i,j})_{i,j\in\mathbb{N}}])\|\leq
(2r+1)\limsup_{j\to\infty}\sup_{i\in\mathbb{N}}\|T_{i,j}\|,\]
\item $\Phi_{2}(\mathcal{V}_{1})$ consists of all operators $T\in B(\mathcal{X}^{(2)})$ such that $Q_{i}^{(2)}TJ_{j}^{(2)}=0$ for all $|i-j|>1$.
\end{enumerate}
So
\begin{equation}\label{boundp2}
\frac{1}{2r+1}\|\pi_{2}(\Phi_{2}[(T_{i,j})_{i,j\in\mathbb{N}}])\|\leq\|\pi_{p}(\Phi_{p}[(T_{i,j})_{i,j\in\mathbb{N}}])\|\leq
(2r+1)\|\pi_{2}(\Phi_{2}[(T_{i,j})_{i,j\in\mathbb{N}}])\|
\end{equation}
for all $(T_{i,j})_{i,j\in\mathbb{N}}\in\mathcal{V}_{r}$ and integer $r\geq 0$. Thus, the map $\phi_{0}:\pi_{2}(\Phi_{2}(\mathcal{V}))\to B(\mathcal{X}^{(p)})/K(\mathcal{X}^{(p)})$,
\[\phi_{0}(\pi_{2}(\Phi_{2}[(T_{i,j})_{i,j\in\mathbb{N}}])))=\pi_{p}(\Phi_{p}[(T_{i,j})_{i,j\in\mathbb{N}}]),\]
for $(T_{i,j})_{i,j\in\mathbb{N}}\in\mathcal{V}$, is a well defined unital homomorphism that is bounded on $\pi_{2}(\Phi_{2}(\mathcal{V}_{r}))$ for each $r\geq 0$.

For every $T\in B(l^{2})$,
\[Q_{i}^{(2)}\left(\sum_{k=1}^{\infty}(Q_{k-1}^{(2)}+Q_{k}^{(2)}+Q_{k+1}^{(2)})TQ_{k}^{(2)}\right)J_{j}^{(2)}=0\]
for all $|i-j|>1$. So by (ii), the operator $\sum_{k=1}^{\infty}(Q_{k-1}^{(2)}+Q_{k}^{(2)}+Q_{k+1}^{(2)})TQ_{k}^{(2)}$ is in $\Phi_{2}(\mathcal{V}_{1})$. Thus, from the beginning from this proof, every operator $T\in\mathcal{B}$ is the sum of a compact operator and an operator in $\Phi_{2}(\mathcal{V}_{1})$. Hence $\mathcal{A}\subset\pi_{2}(\Phi_{2}(\mathcal{V}_{1}))$.

Take $\phi$ to be the restriction of $\phi_{0}$ to $\mathcal{A}$. By (\ref{boundp2}), we have $\frac{1}{3}\|a\|\leq\|\phi(a)\|\leq 3\|a\|$ for all $a\in\mathcal{A}$. Thus, (\ref{separablesubalgebraeq1}) is proved. It remains to show that $\phi(\pi_{2}(U)))$ has Fredholm index $-1$.

Let $(e_{s})_{s\in\mathbb{N}}$ be the canonical basis for $\mathcal{X}^{(2)}=(\oplus_{n\in\mathbb{N}}l^{2}([m_{n}+1,m_{n+1}]))_{l^{2}}=l^{2}(\mathbb{N})$. Let $(x_{s})_{s\in\mathbb{N}}$ be the canonical basis for $\mathcal{X}^{(p)}=(\oplus_{n\in\mathbb{N}}l^{2}([m_{n}+1,m_{n+1}]))_{l^{p}}$, i.e., for each $n\in\mathbb{N}$, we have that $(x_{s})_{m_{n}+1\leq s\leq m_{n+1}}$ is the canonical basis for $l^{2}([m_{n}+1,m_{n+1}])$. Since $Ue_{s}=e_{s+1}$ for every $s\in\mathbb{N}$, we have that $Q_{i}^{(2)}UJ_{j}^{(2)}=0$ for all $|i-j|>1$. So by (ii), we have that $U\in\Phi_{2}(\mathcal{V}_{1})$. Thus there exists $(T_{i,j}^{(0)})_{i,j\in\mathbb{N}}\in\mathcal{V}_{1}$ such that $\Phi_{2}[(T_{i,j}^{(0)})_{i,j\in\mathbb{N}}]=U$. It is easy to see that if $U^{(p)}=\Phi_{p}[(T_{i,j}^{(0)})_{i,j\in\mathbb{N}}]$ then $U^{(p)}x_{s}=x_{s+1}$ for every $s\in\mathbb{N}$. Thus $\phi(\pi_{2}(U))=\phi_{0}(\pi_{2}(U))=\pi_{p}(\Phi_{p}[(T_{i,j}^{(0)})_{i,j\in\mathbb{N}}])=\pi_{p}(U^{(p)})$ has Fredholm index $-1$.
\end{proof}
\begin{lemma}[\cite{Conway}]\label{indexconnect}
For every $k\in\mathbb{Z}$, the set of all invertible $T\in B(l^{2})/K(l^{2})$ with Fredholm index $k$ is path connected.
\end{lemma}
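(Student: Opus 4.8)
The plan is to reduce everything to the classical fact that the invertible group $GL(B(l^2))$ is path connected (by polar decomposition this reduces to path connectedness of the unitary group $U(l^2)$, and every unitary $W$ equals $e^{iH}$ for a bounded self-adjoint $H$ produced by the measurable functional calculus, so $t\mapsto e^{itH}$ joins $I$ to $W$). Fix $k\in\mathbb{Z}$. By Atkinson's theorem and the definition of $\operatorname{ind}$ recalled in the preliminaries, the set in question is precisely $\{\pi(T):T\in B(l^2)\text{ Fredholm},\ \operatorname{ind}T=k\}$. I would also fix a model Fredholm operator of index $k$: let $U$ be the unilateral shift on $l^2$ and set $T_k=(U^{\ast})^{k}$ for $k\ge 0$ and $T_k=U^{|k|}$ for $k<0$. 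Since $U^{\ast}U=I$ and $UU^{\ast}$ is a finite-rank perturbation of $I$, one has $\pi(T_kT_{-k})=\pi(T_{-k}T_k)=\pi(I)$, so $\pi(T_k)$ is invertible in $B(l^2)/K(l^2)$ with inverse $\pi(T_{-k})$, and $\operatorname{ind}\pi(T_k)=k$.

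The key step is the index-zero case: \emph{if $T\in B(l^2)$ is Fredholm with $\operatorname{ind}T=0$, then $\pi(T)$ is joined to $\pi(I)$ by a path of invertibles in $B(l^2)/K(l^2)$.} Indeed, $\ker T$ and $(\operatorname{ran}T)^{\perp}$ are finite-dimensional of the same dimension; choosing a linear isomorphism between them and extending by $0$ on $(\ker T)^{\perp}$ yields a finite-rank operator $F$ such that $T+F$ is bijective, hence invertible in $B(l^2)$ by the open mapping theorem, while $\pi(T+F)=\pi(T)$ because $F$ is compact. A continuous path in $GL(B(l^2))$ from $I$ to $T+F$ maps, under the continuous quotient map $\pi$, to a continuous path of invertibles in $B(l^2)/K(l^2)$ from $\pi(I)$ to $\pi(T)$.

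For general $k$, let $T_0,T_1\in B(l^2)$ be Fredholm of index $k$. Then $T_{-k}T_j$ is Fredholm of index $0$ for $j=0,1$, so by the previous step there are continuous paths of invertibles in $B(l^2)/K(l^2)$ from $\pi(I)$ to $\pi(T_{-k})\pi(T_j)=\pi(T_{-k}T_j)$; realize each such path as $t\mapsto\pi(S^{(j)}_t)$ with $S^{(j)}_t\in GL(B(l^2))$, $S^{(j)}_0=I$, $S^{(j)}_1=T_{-k}T_j+(\text{compact})$. Then $t\mapsto\pi(T_kS^{(j)}_t)$ is a continuous path from $\pi(T_k)$ to $\pi(T_kT_{-k}T_j)=\pi(T_j)$, and each $T_kS^{(j)}_t$ is Fredholm of index $\operatorname{ind}T_k+0=k$, so the whole path stays inside the index-$k$ set. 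Hence $\pi(T_0)$ and $\pi(T_1)$ are each joined to $\pi(T_k)$ within that set, and therefore to each other.

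The argument presents no genuine obstacle: it is bookkeeping on top of the path connectedness of $GL(B(l^2))$ (equivalently of $U(l^2)$). The only points needing mild care are checking that the finite-rank correction in the index-zero step really produces an invertible operator, and verifying the identities $\pi(T_kT_{-k})=\pi(T_{-k}T_k)=\pi(I)$ that make the model element $\pi(T_k)$ an honest invertible in the Calkin algebra with the expected inverse.
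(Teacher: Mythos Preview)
Your argument is correct. The paper does not actually prove this lemma: it is stated as one of two ``well known'' results and simply attributed to \cite{Conway}, with no proof given. So there is nothing to compare against in the paper itself; you have supplied a valid self-contained proof where the paper only gives a citation.

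For what it is worth, your route---reduce to the index-zero case via a fixed shift power $T_k$, then in the index-zero case perturb by a finite-rank operator to land in $GL(B(l^2))$ and use the path connectedness of $GL(B(l^2))$ (equivalently of $U(l^2)$ via $e^{itH}$)---is exactly the standard textbook approach, so it is consistent in spirit with what a reference to Conway would unpack to. One small stylistic remark: once you have the index-zero step, you do not really need to lift the paths back to $GL(B(l^2))$ for the general-$k$ reduction; you can work entirely in the Calkin algebra, noting that left multiplication by the fixed invertible $\pi(T_k)$ is a homeomorphism of the invertible group of $B(l^2)/K(l^2)$ that carries the identity component (index $0$) onto the index-$k$ component. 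This avoids the bookkeeping with $S^{(j)}_t$, but what you wrote is fine as is.
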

\begin{lemma}[\cite{Murphy}]\label{indexopen}
Let $\mathcal{X}$ be a Banach space. For every $k\in\mathbb{Z}$, the set of all invertible $T\in B(\mathcal{X})/K(\mathcal{X})$ with Fredholm index $k$ is open.
\end{lemma}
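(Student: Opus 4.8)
The plan is to reduce the assertion to two classical facts: the openness of the group of invertible elements of a unital Banach algebra, and the local constancy of the Fredholm index on the set of Fredholm operators on $\mathcal{Y}$. Write $\pi:B(\mathcal{Y})\to B(\mathcal{Y})/K(\mathcal{Y})$ for the quotient map, and recall Atkinson's theorem: an element $a\in B(\mathcal{Y})/K(\mathcal{Y})$ is invertible if and only if every (equivalently, some) $T\in B(\mathcal{Y})$ with $\pi(T)=a$ is Fredholm, and in that case $\mathrm{ind}\,a=\mathrm{ind}\,T$. Fix $k\in\mathbb{Z}$ and let $\mathcal{O}_{k}=\{a\in B(\mathcal{Y})/K(\mathcal{Y}):a\text{ invertible and }\mathrm{ind}\,a=k\}$; the goal is to show that $\mathcal{O}_{k}$ is open.

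First I would note that the set $G$ of invertible elements of the unital Banach algebra $B(\mathcal{Y})/K(\mathcal{Y})$ is open, by the usual Neumann series argument: if $a\in G$ and $\|b-a\|<\|a^{-1}\|^{-1}$, then $b\in G$. So it suffices to prove that $\mathrm{ind}$ is constant on a neighbourhood of each point of $G$. For this, fix $a\in G$, pick $T\in B(\mathcal{Y})$ with $\pi(T)=a$, so that $T$ is Fredholm, and invoke the classical perturbation theorem: there is $\delta>0$ such that every $S\in B(\mathcal{Y})$ with $\|S-T\|<\delta$ is Fredholm with $\mathrm{ind}\,S=\mathrm{ind}\,T$. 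This is the one place where there is actual content: it is proved by choosing a parametrix $R\in B(\mathcal{Y})$ with $RT-I$ and $TR-I$ of finite rank --- such $R$ exists because $\ker T$ is finite-dimensional, hence complemented, and $\mathrm{ran}\,T$ is finite-codimensional, hence complemented --- and observing that for $\|S-T\|$ small the operators $I+R(S-T)$ and $I+(S-T)R$ are invertible, so that $RS$ and $SR$ are Fredholm and hence so is $S$; the equality of indices then follows from the additivity $\mathrm{ind}(AB)=\mathrm{ind}\,A+\mathrm{ind}\,B$ applied to $RS$. Equivalently, one quotes that $\mathrm{ind}:\mathrm{Fred}(\mathcal{Y})\to\mathbb{Z}$ is continuous, hence locally constant; this is precisely what is recorded in \cite{Murphy}.

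Finally I would transfer the estimate to the Calkin algebra. Let $b\in B(\mathcal{Y})/K(\mathcal{Y})$ satisfy $\|b-a\|<\delta$ and choose any $S_{0}\in B(\mathcal{Y})$ with $\pi(S_{0})=b$. Since $\|b-a\|=\|\pi(S_{0}-T)\|=\inf_{K\in K(\mathcal{Y})}\|S_{0}-T-K\|<\delta$, there is a compact operator $K$ with $\|(S_{0}-K)-T\|<\delta$; put $S=S_{0}-K$, so that $\pi(S)=b$ and $\|S-T\|<\delta$. By the perturbation theorem $S$ is Fredholm with $\mathrm{ind}\,S=\mathrm{ind}\,T$, hence $b$ is invertible and $\mathrm{ind}\,b=\mathrm{ind}\,a=k$. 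Thus the open ball of radius $\delta$ about $a$ is contained in $\mathcal{O}_{k}$, so $\mathcal{O}_{k}$ is open. The only genuine obstacle is the Banach-space perturbation theorem for the Fredholm index, which is classical; everything else in the argument is soft.
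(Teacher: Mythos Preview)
Your argument is correct. The paper does not supply its own proof of this lemma: it is stated as a well-known fact with a citation to \cite{Murphy}, so there is nothing to compare against beyond noting that your reduction to Atkinson's theorem and the local constancy of the Fredholm index is exactly the standard proof one would find in that reference.
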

\begin{lemma}\label{p2isom}
Let $\mathcal{X}^{(p)}=(\oplus_{n\in\mathbb{N}}l^{2}([m_{n}+1,m_{n+1}]))_{l^{p}}$. Let $\mathcal{A}$ be a separable closed unital subalgebra of $B(l^{2})/K(l^{2})$. Then there exist $0=m_{1}<m_{2}<\ldots$ and a unital homomorphism $\phi:\mathcal{A}\to B(\mathcal{X}^{(p)})/K(\mathcal{X}^{(p)})$such that
\begin{equation}\label{p2isomeq1}
\frac{1}{3}\|a\|\leq\|\phi(a)\|\leq 3\|a\|,
\end{equation}
for all $a\in\mathcal{A}$, and $\phi(a)$ and $a$ have the same Fredholm index for every $a\in\mathcal{A}$ that is invertible in $B(l^{2})/K(l^{2})$.
\end{lemma}
\begin{proof}
Let $\mathcal{C}$ be a countable dense subset of the set of all $a\in\mathcal{A}$ that is invertible in $B(l^{2})/K(l^{2})$. Let $U$ be the unilateral shift on $l^{2}$. Let $\pi_{2}:B(l^{2})\to B(l^{2})/K(l^{2})$ be the quotient map. By Lemma \ref{indexconnect}, for every $a\in\mathcal{C}$, there is a path $f_{a}:[0,1]\to B(l^{2})/K(l^{2})$ such that
\begin{enumerate}[(1)]
\item $f_{a}(0)=a$;
\item $f_{a}(1)=\pi_{2}(U^{-\text{ind }a})$; and
\item $f_{a}(t)$ is invertible in $B(l^{2})/K(l^{2})$ for all $t\in[0,1]$,
\end{enumerate}
where $\text{ind }a$ is the Fredholm index of $a$. Since $f_{a}$ is continuous, $\{f_{a}(t):t\in[0,1]\}$ is separable.

Let $\mathcal{A}_{1}$ be the closed subalgebra of $B(l^{2})/K(l^{2})$ generated by $\mathcal{A}$, $\pi_{2}(U)$, $\pi_{2}(U)^{-1}$ and $f_{a}(t)$ for $a\in\mathcal{C}$ and $t\in[0,1]$. Note that $\mathcal{A}_{1}$ is separable. By Lemma \ref{separablesubalgebra}, there exist $0=m_{1}<m_{2}<\ldots$ and a unital homomorphism $\phi_{1}$ from $\mathcal{A}_{1}$ into $B(\mathcal{X}^{(p)})/K(\mathcal{X}^{(p)})$ such that (\ref{p2isomeq1}) is satisfied and $\phi_{1}(\pi_{2}(U))$ has Fredholm index $-1$.

Let $a\in\mathcal{C}$. Then $\phi_{1}\circ f_{a}$ is a path in the set of all invertible elements of $B(\mathcal{X}^{(p)})/K(\mathcal{X}^{(p)})$. By Lemma \ref{indexopen}, the subset $\{t\in[0,1]:\text{ind }\phi_{1}(f_{a}(t))=\text{ind }\phi_{1}(f_{a}(0))\}$ of $[0,1]$ is closed and open in $[0,1]$. Thus $\phi_{1}(f_{a}(1))$ and $\phi_{1}(f_{a}(0))$ have the same Fredholm index. Since $\phi_{1}(f_{a}(0))=\phi_{1}(a)$ and
\[\phi_{1}(f_{a}(1))=\phi_{1}(\pi_{2}(U^{-\text{ind }a}))=\phi_{1}(\pi_{2}(U))^{-\text{ind }a},\]
it follows that $\phi_{1}(a)$ and $a$ have the same Fredholm index. Thus the Fredholm indices of $\phi_{1}(a)$ and $a$ coincide for all $a\in\mathcal{C}$. Since $\mathcal{C}$ is dense in the set of all $a\in\mathcal{A}$ that is invertible in $B(l^{2})/K(l^{2})$, the Fredholm indices of $\phi_{1}(a)$ and $a$ coincide for all $a\in\mathcal{A}$ that is invertible in $B(l^{2})/K(l^{2})$. The result follows by taking $\phi$ to be the restriction of $\phi_{1}$ to $\mathcal{A}$.
\end{proof}
Combining Lemmas \ref{p2isom} and \ref{isomorphicspace}, we obtain the following result.
\begin{theorem}\label{isompreserving}
Let $\mathcal{A}$ be a separable closed unital subalgebra of $B(l^{2})/K(l^{2})$. Then there exist a unital homomorphism $\phi:\mathcal{A}\to B(l^{p})/K(l^{p})$ and $C\geq 1$ such that
\[\frac{1}{C}\|a\|\leq\|\phi(a)\|\leq C\|a\|,\]
for all $a\in\mathcal{A}$, and $\phi(a)$ and $a$ have the same Fredholm index for every $a\in\mathcal{A}$ that is invertible in $B(l^{2})/K(l^{2})$.
\end{theorem}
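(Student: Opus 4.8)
The plan is to combine Lemmas \ref{p2isom} and \ref{isomorphicspace}. First I would invoke Lemma \ref{p2isom} directly on the given separable closed unital subalgebra $\mathcal{A}$ of $B(l^{2})/K(l^{2})$: this hands me integers $0=m_{1}<m_{2}<\ldots$ and a unital isomorphism $\phi_{1}$ of $\mathcal{A}$ into $B(\mathcal{Y}^{(p)})/K(\mathcal{Y}^{(p)})$, where $\mathcal{Y}^{(p)}=(\oplus_{n\geq 1}l^{2}([m_{n}+1,m_{n+1}]))_{l^{p}}$, such that $\phi_{1}(a)$ and $a$ carry the same Fredholm index whenever $a$ is invertible in $B(l^{2})/K(l^{2})$. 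All of the real content of the theorem is already packaged in that lemma; what is left is purely a change of the underlying Banach space, from the $l^{p}$-sum $\mathcal{Y}^{(p)}$ of finite dimensional spaces to $l^{p}$ itself.

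Next I would record that, because $m_{n+1}\geq m_{n}+1$ for every $n$, each summand $l^{2}([m_{n}+1,m_{n+1}])$ is a nonzero finite dimensional Hilbert space, so Lemma \ref{isomorphicspace} applies and furnishes an invertible operator $S:\mathcal{Y}^{(p)}\to l^{p}$ with $\|S\|\,\|S^{-1}\|\leq\lambda_{p}$. I would then check that conjugation by $S$ maps $K(\mathcal{Y}^{(p)})$ onto $K(l^{p})$ and therefore descends to a surjective unital homomorphism $\psi:B(\mathcal{Y}^{(p)})/K(\mathcal{Y}^{(p)})\to B(l^{p})/K(l^{p})$ given by $\psi(\pi(T))=\pi(STS^{-1})$, with $\psi$ and $\psi^{-1}$ both of norm at most $\lambda_{p}$; hence $\psi$ is an isomorphism in the sense of this paper. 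Since $T$ and $STS^{-1}$ are similar operators, they are simultaneously Fredholm and have the same Fredholm index, so $\psi$ preserves the Fredholm index of every invertible element of $B(\mathcal{Y}^{(p)})/K(\mathcal{Y}^{(p)})$.

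Finally I would set $\phi=\psi\circ\phi_{1}$. This is a unital isomorphism of $\mathcal{A}$ into $B(l^{p})/K(l^{p})$, and for $a\in\mathcal{A}$ invertible in $B(l^{2})/K(l^{2})$ we get $\text{ind }\phi(a)=\text{ind }\phi_{1}(a)=\text{ind }a$, as required. The step I expect to require the most attention is not any of these — there is essentially no genuine obstacle once Lemma \ref{p2isom} is in hand — but rather the two bookkeeping points just mentioned: verifying that the finite dimensional summands produced by Lemma \ref{p2isom} are nonzero so that Lemma \ref{isomorphicspace} is genuinely applicable, and verifying that conjugation by a mere Banach space isomorphism (not an isometry) between $\mathcal{Y}^{(p)}$ and $l^{p}$ still yields a well-defined algebra isomorphism of the two Calkin algebras that respects the Fredholm index. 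Both are routine.
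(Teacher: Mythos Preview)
Your proposal is correct and matches the paper's own proof exactly: the paper simply states that Theorem \ref{isompreserving} follows by combining Lemmas \ref{p2isom} and \ref{isomorphicspace}, and your write-up spells out precisely that combination, including the two bookkeeping points (nonzero summands and index preservation under conjugation by a Banach space isomorphism) that make the composition go through.
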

\begin{corollary}
Let $\mathcal{A}$ be a separable closed unital subalgebra of $B(l^{2})/K(l^{2})$. Then there exists an isomorphic extension of $K(l^{p})$ by $\mathcal{A}$.
\end{corollary}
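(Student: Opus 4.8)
The plan is to deduce this statement immediately from Theorem \ref{isompreserving}. First I would note that a separable closed unital subalgebra $\mathcal{A}$ of $B(l^{2})/K(l^{2})$ is, in particular, a separable unital Banach algebra, so the notion of an isomorphic extension of $K(l^{p})$ by $\mathcal{A}$ is well defined: recall from Section 1 that such an extension is precisely a unital isomorphism from $\mathcal{A}$ into $B(l^{p})/K(l^{p})$, where an isomorphism is a homomorphism $\phi$ admitting a constant $C\geq 1$ with $\frac{1}{C}\|a\|\leq\|\phi(a)\|\leq C\|a\|$ for all $a\in\mathcal{A}$.

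Next I would apply Theorem \ref{isompreserving} to $\mathcal{A}$, which produces a unital isomorphism $\phi$ from $\mathcal{A}$ into $B(l^{p})/K(l^{p})$ (it moreover preserves Fredholm indices of elements invertible in $B(l^{2})/K(l^{2})$, but that extra property is irrelevant here). By the definition just recalled, $\phi$ is an isomorphic extension of $K(l^{p})$ by $\mathcal{A}$, which is exactly the assertion of the corollary. The only thing that needs checking is the trivial point that neither separability nor unitality is lost in passing from the hypotheses of Theorem \ref{isompreserving} to those of the corollary, which is immediate.

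Thus there is essentially no obstacle in this final step; all of the work sits in Theorem \ref{isompreserving}, which in turn is assembled from Lemma \ref{p2isom} (built from Lemma \ref{separablesubalgebra}, together with the connectedness and openness statements Lemma \ref{indexconnect} and Lemma \ref{indexopen} about index classes in the Calkin algebra) and the uniform isomorphism Lemma \ref{isomorphicspace} identifying $(\oplus_{n\geq 1}\mathcal{H}_{n})_{l^{p}}$ with $l^{p}$ up to a fixed constant. If anything, the only mildly delicate ingredient upstream is the quasitriangularization Lemma \ref{triquasidiagonalp2} used inside Lemma \ref{separablesubalgebra}, but all of that is available to us, so the proof of the corollary itself is a one-line invocation of Theorem \ref{isompreserving}.
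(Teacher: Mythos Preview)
Your proposal is correct and matches the paper's approach exactly: the paper states this corollary without proof, treating it as an immediate consequence of Theorem \ref{isompreserving}, which is precisely what you do.
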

\begin{corollary}\label{indexany}
Let $M$ be a nonemepty compact subset of $\mathbb{C}$. Let $O_{1},O_{2},\ldots$ be the bounded connected components of $\mathbb{C}\backslash M$. For each $i\in\mathbb{N}$, let $n_{i}\in\mathbb{Z}$. Let $z\in C(M)$ be the identity function on $M$. Then there exists an isomorphic extension $\phi:C(M)\to B(l^{p})/K(l^{p})$ such that $\phi(z-\lambda)$ has Fredholm index $n_{i}$ for all $\lambda\in O_{i}$ and $i\in\mathbb{N}$.
\end{corollary}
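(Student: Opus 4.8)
\textbf{Proof proposal for Corollary~\ref{indexany}.}

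The plan is to transport a classical Brown--Douglas--Fillmore model from $l^2$ to $l^p$ using the index-preserving isomorphism of Theorem~\ref{isompreserving}. Throughout, $\pi_2:B(l^2)\to B(l^2)/K(l^2)$ denotes the quotient map.

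The first step is to produce an essentially normal operator $T$ on $l^2$ with essential spectrum equal to $M$ and with $\text{ind}(T-\lambda I)=n_i$ for all $\lambda\in O_i$ and all $i\geq 1$. One builds $T=N\oplus\bigoplus_{i\geq 1}V_i$, where $N$ is a diagonal operator whose diagonal entries run over a dense subset of $M$ with each value repeated infinitely often (so that the essential spectrum of $N$ is $M$ and $N-\lambda I$ has Fredholm index $0$ for $\lambda\notin M$), and each $V_i$ is an essentially normal operator with $\sigma(V_i)\subseteq\overline{O_i}$, essential spectrum contained in $\partial O_i\subseteq M$, and $\text{ind}(V_i-\lambda I)=n_i$ for $\lambda\in O_i$ --- for instance a suitably scaled power of the unilateral shift or of its adjoint when $O_i$ is simply connected, or a one- or two-sided weighted shift adapted to $O_i$ in general. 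Since a point of $\mathbb{C}\setminus M$ cannot be a cluster point of $\bigcup_i\overline{O_i}$, choosing the $V_i$ of uniformly bounded norm makes $T$ a bounded operator with essential spectrum exactly $M$. The existence of such a $T$ is the constructive half of the Brown--Douglas--Fillmore theorem \cite{Brown1},\cite{Brown2}.

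Let $\mathcal{A}$ be the unital $C^*$-subalgebra of $B(l^2)/K(l^2)$ generated by $\pi_2(T)$. Since $T$ is essentially normal, $\pi_2(T)$ is a normal element of the Calkin algebra whose spectrum is the essential spectrum of $T$, namely $M$; hence $\mathcal{A}$ is a separable closed unital subalgebra of $B(l^2)/K(l^2)$, and continuous functional calculus gives a $*$-isomorphism $\iota:C(M)\to\mathcal{A}$, in particular isometric, with $\iota(z)=\pi_2(T)$ where $z$ is the identity function on $M$. Applying Theorem~\ref{isompreserving} to $\mathcal{A}$ produces a unital isomorphism $\psi:\mathcal{A}\to B(l^p)/K(l^p)$ such that $\psi(a)$ and $a$ have the same Fredholm index for every $a\in\mathcal{A}$ that is invertible in $B(l^2)/K(l^2)$.

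Finally, set $\phi=\psi\circ\iota:C(M)\to B(l^p)/K(l^p)$. As a composition of the isometric isomorphism $\iota$ with the isomorphism $\psi$, the map $\phi$ is a unital isomorphism, i.e.\ an isomorphic extension of $K(l^p)$ by $C(M)$. For $\lambda\in O_i$ we have $\lambda\notin M$, so $T-\lambda I$ is Fredholm and $\iota(z-\lambda)=\pi_2(T-\lambda I)$ is invertible in $B(l^2)/K(l^2)$ with Fredholm index $\text{ind}(T-\lambda I)=n_i$; the index-preserving property of $\psi$ then gives that $\phi(z-\lambda)=\psi(\pi_2(T-\lambda I))$ has Fredholm index $n_i$. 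I expect the only substantive difficulty to be the first step --- the classical construction of an essentially normal operator on $l^2$ realizing a prescribed essential spectrum together with prescribed Fredholm index data over the bounded complementary components --- but this is precisely the part of Brown--Douglas--Fillmore theory that is constructive and independent of the hard homotopy-invariance arguments, so it may be cited directly.
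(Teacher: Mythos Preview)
Your proposal is correct and follows essentially the same route as the paper: produce (via Brown--Douglas--Fillmore) a unital $*$-isomorphism $\iota:C(M)\to\mathcal{A}\subset B(l^2)/K(l^2)$ with the prescribed index data, then compose with the index-preserving isomorphism from Theorem~\ref{isompreserving}. The paper simply cites \cite{Brown1} for the first step rather than sketching the construction of $T$, but the logical structure is identical.
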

\begin{proof}
Brown, Douglas and Fillmore \cite{Brown1} showed that there exists an isomorphic extension $\phi_{1}:C(M)\to B(l^{2})/K(l^{2})$ such that $\phi_{1}(z-\lambda)$ has Fredholm index $n_{i}$ for all $\lambda\in O_{i}$ and $i\in\mathbb{N}$. Let $\mathcal{A}$ be the range of $\phi_{1}$. By Theorem \ref{isompreserving}, there is an isomorphic extension $\phi_{2}:\mathcal{A}\to B(l^{p})/K(l^{p})$ such that $\phi_{2}(a)$ and $a$ have the same Fredholm index for every $a\in\mathcal{A}$ that is invertible in $B(l^{2})/K(l^{2})$. The result follows by taking $\phi=\phi_{2}\circ\phi_{1}$.
\end{proof}
\begin{corollary}\label{indexscalaress}
There exist a Fredholm operator $T\in B(l^{p})$ and $C\geq 1$ such that the Fredholm index of $T$ is $-1$ and
\[\|f(\pi(T))\|\leq C\sup_{|v|=1}|f(v)|\]
for every Laurent polynomial $f$.
\end{corollary}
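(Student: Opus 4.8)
The plan is to read this off from Corollary \ref{indexany} by taking $M=S^{1}$. First I would set $M=S^{1}=\{v\in\mathbb{C}:|v|=1\}$; the complement $\mathbb{C}\setminus S^{1}$ has exactly one bounded connected component, the open unit disk $O_{1}=\{z\in\mathbb{C}:|z|<1\}$, which contains $0$. Applying Corollary \ref{indexany} with this $M$ and with $n_{1}=-1$ produces an isomorphic extension $\phi:C(S^{1})\to B(l^{p})/K(l^{p})$ such that $\phi(z-\lambda)$ has Fredholm index $-1$ for every $\lambda\in O_{1}$; in particular, taking $\lambda=0$, the element $\phi(z)$ is invertible in $B(l^{p})/K(l^{p})$ with Fredholm index $-1$. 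Since $\phi$ is an isomorphism in the sense of the introduction, there is a constant $C\geq 1$ with $\|\phi(a)\|\leq C\|a\|$ for all $a\in C(S^{1})$.

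Next I would pick any $T\in B(l^{p})$ with $\pi(T)=\phi(z)$. Because $z$ is invertible in $C(S^{1})$ (it has no zeros on $S^{1}$) and $\phi$ is a unital homomorphism, $\phi(z)$ is invertible in the Calkin algebra with $\phi(z)^{-1}=\phi(z^{-1})$, so $T$ is Fredholm and $\text{ind}\,T=\text{ind}\,\phi(z)=-1$. For any Laurent polynomial $f=\sum_{k=-m}^{m}c_{k}z^{k}$, the identities $\phi(z^{k})=\phi(z)^{k}$ for all $k\in\mathbb{Z}$ (again using that $\phi$ is a unital homomorphism and $\phi(z^{-1})=\phi(z)^{-1}$) give $f(\pi(T))=f(\phi(z))=\phi(f)$, where on the right $f$ is regarded as an element of $C(S^{1})$. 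Hence
\[\|f(\pi(T))\|=\|\phi(f)\|\leq C\|f\|_{C(S^{1})}=C\sup_{|v|=1}|f(v)|,\]
which is exactly the asserted estimate, and $T$ has Fredholm index $-1$.

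I do not expect any genuine obstacle here: all the substance is in Corollary \ref{indexany}, and the remaining points are elementary (that $z$ is invertible in $C(S^{1})$, that a unital homomorphism carries inverses to inverses, and that $\|f\|_{C(S^{1})}=\sup_{|v|=1}|f(v)|$). If one prefers to avoid invoking Corollary \ref{indexany}, one can instead start from the classical Brown--Douglas--Fillmore fact that $C(S^{1})$ embeds as a closed unital subalgebra of $B(l^{2})/K(l^{2})$ with $z$ going to the image of the unilateral shift (index $-1$), apply Theorem \ref{isompreserving} to transport this embedding to $B(l^{p})/K(l^{p})$ preserving Fredholm indices, and then run the same norm estimate using the isomorphism constant of the transported map.
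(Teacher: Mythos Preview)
Your proof is correct and follows essentially the same route as the paper: apply Corollary \ref{indexany} with $M=S^{1}$ and $n_{1}=-1$, lift $\phi(z)$ to an operator $T$, and use the isomorphism bound to get $\|f(\pi(T))\|=\|\phi(f)\|\leq C\sup_{|v|=1}|f(v)|$. The only difference is cosmetic: you spell out why $\phi(z)$ is invertible and why $f(\phi(z))=\phi(f)$, whereas the paper takes these as evident.
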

\begin{proof}
By Corollary \ref{indexany}, there exists a unital homomorphism $\phi:C(S^{1})\to B(l^{p})/K(l^{p})$ such that $\phi(z)$ has Fredholm index $-1$. Note that
\[\|f(\phi(z))\|=\|\phi(f)\|\leq\|\phi\|\sup_{|v|=1}|f(v)|\]
for every Laurent polynomial $f$. The result follows by taking $T\in B(l^{p})$ such that $\pi(T)=\phi(z)$.
\end{proof}
\begin{remark}
An explicit operator satisfying the conclusion of Corollary \ref{indexscalaress} can be constructed as follows. Let $0=m_{1}<m_{2}<\ldots$ be such that $m_{s+1}-m_{s}\to\infty$ as $s\to\infty$. Let $(x_{s})_{s\in\mathbb{N}}$ be the canonical basis for $\mathcal{X}^{(p)}=(\oplus_{n\in\mathbb{N}}l^{2}([m_{n}+1,m_{n+1}]))_{l^{p}}$. From the proof of Lemma \ref{separablesubalgebra}, one can check that the operator $T_{0}$ on $\mathcal{X}^{(p)}$ defined by $T_{0}x_{s}=x_{s+1}$, for $s\in\mathbb{N}$, has Fredholm index $-1$ and
\[\|f(\pi(T_{0}))\|\leq3\sup_{|v|=1}|f(v)|\]
for every Laurent polynomial $f$. Let $S:\mathcal{X}^{(p)}\to l^{p}$ be an invertible operator which exists by Lemma \ref{isomorphicspace}. Then $ST_{0}S^{-1}$ satisfies the conclusion of Corollary \ref{indexscalaress}.
\end{remark}
Next we show that if $p\in(1,\infty)\backslash\{2\}$ then there are two trivial isomorphic extensions of $K(l^{p})$ by $C[0,1]$ that are not equivalent.
\begin{lemma}\label{Diagonalp21}
Let $T$ be a diagonal operator on $l^{2}([1,n])$ with distinct diagonal entries $u_{1},\ldots,u_{n}$. Let $D$ be a diagonal operator on $l^{p}$. Let $\beta\geq 1$. Let $L:l^{2}([1,n])\to l^{p}$ be an operator such that $\frac{1}{\beta}\|x\|\leq\|Lx\|\leq\beta\|x\|$ for all $x\in l^{2}([1,n])$. Let $\epsilon=\min_{i\neq j}|u_{i}-u_{j}|$. Then
\begin{equation}\label{TD1}
\frac{1}{\beta}\sqrt{n}\leq\frac{2}{\epsilon}\|DL-LT\|n+\beta n^{\frac{1}{p}}.
\end{equation}
and
\begin{equation}\label{TD2}
\frac{1}{\beta}n^{\frac{1}{p}}\leq\frac{4}{\epsilon}\|DL-LT\|n+\beta\sqrt{n}.
\end{equation}
\end{lemma}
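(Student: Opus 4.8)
The plan is to exploit the spectral separation $\epsilon$ of the diagonal entries $u_1,\dots,u_n$ of $T$. Write $w_1,w_2,\dots$ for the diagonal entries of $D$ and $e_1,\dots,e_n$ for the standard basis of $l^2([1,n])$, so the vectors $Le_1,\dots,Le_n$ are the columns of $L$, with $\frac1\beta\le\|Le_i\|\le\beta$. The guiding idea is that if $DL$ is close to $LT$ then each $Le_i$ is an approximate eigenvector of the diagonal operator $D$ with eigenvalue $u_i$, hence is essentially supported on the coordinate set $\Sigma_i=\{k:|w_k-u_i|<\epsilon/2\}$; and because $|u_i-u_j|\ge\epsilon$ for $i\ne j$, the sets $\Sigma_1,\dots,\Sigma_n$ are pairwise disjoint. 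I would let $P_i$ be the coordinate projection of $l^p$ onto $\Sigma_i$, so that $\|P_i\|\le 1$ and the ranges of $P_1,\dots,P_n$ are supported on pairwise disjoint index sets.

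The first key step is the quantitative localization bound
\[
\|(I-P_i)Le_i\|\le\frac{2}{\epsilon}\,\|DL-LT\|,\qquad i=1,\dots,n .
\]
This will follow from three observations: on the support of $(I-P_i)Le_i$ the diagonal multiplier $D-u_iI$ has modulus at least $\epsilon/2$, so $\frac{\epsilon}{2}\|(I-P_i)Le_i\|\le\|(D-u_iI)(I-P_i)Le_i\|$; the vectors $(D-u_iI)P_iLe_i$ and $(D-u_iI)(I-P_i)Le_i$ have disjoint supports, so $\|(D-u_iI)(I-P_i)Le_i\|\le\|(D-u_iI)Le_i\|$; and $(D-u_iI)Le_i=DLe_i-u_iLe_i=DLe_i-LTe_i=(DL-LT)e_i$, which has norm at most $\|DL-LT\|$ since $\|e_i\|_{l^2}=1$.

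The second key step is to run the unit vector $x=\frac{1}{\sqrt n}\sum_{i=1}^n e_i$ (of $l^2$-norm $1$) through $L$, so that $\frac1\beta\le\|Lx\|\le\beta$. I would decompose $\sqrt n\,Lx=\sum_i Le_i=v+r$ with $v=\sum_i P_iLe_i$ and $r=\sum_i (I-P_i)Le_i$. Disjointness of supports gives $\|v\|=(\sum_i\|P_iLe_i\|^p)^{1/p}$, while the triangle inequality and the localization bound give $\|r\|\le\sum_i\|(I-P_i)Le_i\|\le\frac{2}{\epsilon}\|DL-LT\|\,n$. From $\|P_iLe_i\|\le\beta$ one gets $\|v\|\le\beta\,n^{1/p}$; combining with $\frac1\beta\le\|Lx\|\le\frac{1}{\sqrt n}(\|v\|+\|r\|)$ and multiplying by $\sqrt n$ yields (\ref{TD1}). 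For (\ref{TD2}), one uses instead $\|P_iLe_i\|\ge\|Le_i\|-\|(I-P_i)Le_i\|\ge\frac1\beta-\frac{2}{\epsilon}\|DL-LT\|$, so $\|v\|\ge n^{1/p}\big(\frac1\beta-\frac{2}{\epsilon}\|DL-LT\|\big)$ whenever this quantity is nonnegative; then $\beta\ge\|Lx\|\ge\frac{1}{\sqrt n}(\|v\|-\|r\|)$ rearranges (using $n^{1/p}\le n$) to (\ref{TD2}). In the complementary case $\frac{2}{\epsilon}\|DL-LT\|\ge\frac1\beta$, the bound (\ref{TD2}) holds trivially because its right-hand side already exceeds $\frac{n}{\beta}\ge\frac{n^{1/p}}{\beta}$.

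I do not expect a genuine obstacle. The one point that needs care is the bookkeeping with $l^p$ norms: one must invoke disjointness of supports exactly where it is available --- to replace $\|\sum(\cdot)\|$ by $(\sum\|\cdot\|^p)^{1/p}$ --- and otherwise fall back on the triangle inequality, and it is precisely this that produces the factor $n$ rather than $n^{1/p}$ in the error terms. The case $n=1$ is degenerate: there is no pair $i\ne j$, so $\epsilon$ is vacuously $+\infty$, the error terms vanish, and both inequalities reduce to $\frac1\beta\le\beta$.
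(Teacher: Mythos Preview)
Your proof is correct and follows essentially the same approach as the paper: the same disjoint index sets $\Sigma_i$ and projections $P_i$, the same localization bound $\|(I-P_i)Le_i\|\le\frac{2}{\epsilon}\|DL-LT\|$ via $(D-u_iI)Le_i=(DL-LT)e_i$, and the same two-sided estimates on $\big\|\sum_i Le_i\big\|$ (the paper works with $\sum_i e_i$ directly rather than its normalization $x=\frac{1}{\sqrt n}\sum_i e_i$, but this is purely cosmetic). Your explicit treatment of the degenerate case $\frac{2}{\epsilon}\|DL-LT\|\ge\frac1\beta$ is a nice touch; the paper leaves it implicit.
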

\begin{proof}
Let $(e_{i}^{(2)})_{1\leq i\leq n}$ be the canonical basis for $l^{2}([1,n])$. Let $(e_{j}^{(p)})_{j\in\mathbb{N}}$ be the canonical basis for $l^{p}$. Let $v_{1},v_{2},\ldots$ be the diagonal entries of $D$. We have $Te_{i}^{(2)}=u_{i}e_{i}^{(2)}$, for all $1\leq i\leq n$, and $De_{j}^{(p)}=v_{j}e_{j}^{(p)}$ for all $j\in\mathbb{N}$. For each $1\leq i\leq n$, let $\mathcal{J}_{i}=\{j\in\mathbb{N}:|v_{j}-u_{i}|<\frac{\epsilon}{2}\}$ (which could be empty). Note that $\mathcal{J}_{1},\ldots,\mathcal{J}_{n}$ are disjoint. For each $1\leq i\leq n$, let $P_{i}$ be the canonical projection from $l^{p}$ onto $l^{p}(\mathcal{J}_{i})$. For all $1\leq i\leq n$ and $x=(x_{1},x_{2},\ldots)\in(I-P_{i})l^{p}=l^{p}(\mathbb{N}\backslash\mathcal{J}_{i})$,
\begin{eqnarray*}
\|(D-u_{i}I)x\|&=&\left(\sum_{j=1}^{\infty}|v_{j}-u_{i}|^{p}|x_{j}|^{p}\right)^{\frac{1}{p}}\\&\geq&
\left(\sum_{j\in\mathbb{N}\backslash\mathcal{J}_{i}}|v_{j}-u_{i}|^{p}|x_{j}|^{p}\right)^{\frac{1}{p}}\\&\geq&
\frac{\epsilon}{2}\left(\sum_{j\in\mathbb{N}\backslash\mathcal{J}_{i}}|x_{j}|^{p}\right)^{\frac{1}{p}}=\frac{\epsilon}{2}\|x\|.
\end{eqnarray*}
Hence $\|(D-u_{i}I)(I-P_{i})x\|\geq\frac{\epsilon}{2}\|(I-P_{i})x\|$ for all $1\leq i\leq n$ and $x\in l^{p}$. Thus,
\begin{eqnarray*}
\frac{\epsilon}{2}\|(I-P_{i})Le_{i}^{(2)}\|&\leq&\|(D-u_{i}I)(I-P_{i})Le_{i}^{(2)}\|\\&=&
\|(I-P_{i})(D-u_{i}I)Le_{i}^{(2)}\|\\&\leq&\|(D-u_{i}I)Le_{i}^{(2)}\|=\|(DL-LT)e_{i}^{(2)}\|\leq\|DL-LT\|,
\end{eqnarray*}
for all $1\leq i\leq n$, where the first equality follows from the fact that $D$ and $P_{i}$ commute. So
\begin{equation}\label{Lelocal}
\|(I-P_{i})Le_{i}^{(2)}\|\leq\frac{2}{\epsilon}\|DL-LT\|,
\end{equation}
for all $1\leq i\leq n$. Since $\mathcal{J}_{1},\ldots,\mathcal{J}_{n}$ are disjoint,
\begin{equation}\label{sumPLe}
\left\|\sum_{i=1}^{n}P_{i}Le_{i}^{(2)}\right\|=\left(\sum_{i=1}^{n}\|P_{i}Le_{i}^{(2)}\|^{p}\right)^{\frac{1}{p}}.
\end{equation}
So by (\ref{Lelocal}),
\begin{eqnarray*}
\left\|\sum_{i=1}^{n}Le_{i}^{(2)}\right\|&\leq&\sum_{i=1}^{n}\|(I-P_{i})Le_{i}^{(2)}\|+\left\|\sum_{i=1}^{n}P_{i}Le_{i}^{(2)}\right\|\\&\leq&
\frac{2}{\epsilon}\|DL-LT\|n+\left(\sum_{i=1}^{n}\|P_{i}Le_{i}^{(2)}\|^{p}\right)^{\frac{1}{p}}\\&\leq&
\frac{2}{\epsilon}\|DL-LT\|n+\left(\sum_{i=1}^{n}\|Le_{i}^{(2)}\|^{p}\right)^{\frac{1}{p}}\leq\frac{2}{\epsilon}\|DL-LT\|n+\beta n^{\frac{1}{p}}.
\end{eqnarray*}
Since
\[\left\|\sum_{i=1}^{n}Le_{i}^{(2)}\right\|\geq\frac{1}{\beta}\left\|\sum_{i=1}^{n}e_{i}^{(2)}\right\|=\frac{1}{\beta}\sqrt{n},\]
(\ref{TD1}) is proved.

By (\ref{Lelocal}),
\[\|P_{i}Le_{i}^{(2)}\|\geq\|Le_{i}^{(2)}\|-\frac{2}{\epsilon}\|DL-LT\|\geq\frac{1}{\beta}-\frac{2}{\epsilon}\|DL-LT\|,\]
for all $1\leq i\leq n$. So by (\ref{sumPLe}),
\[\left\|\sum_{i=1}^{n}P_{i}Le_{i}^{(2)}\right\|\geq\left(\frac{1}{\beta}-\frac{2}{\epsilon}\|DL-LT\|\right)n^{\frac{1}{p}}.\]
But by (\ref{Lelocal}),
\begin{eqnarray*}
\left\|\sum_{i=1}^{n}P_{i}Le_{i}^{(2)}\right\|&\leq&\sum_{i=1}^{n}\|(I-P_{i})Le_{i}^{(2)}\|+\left\|\sum_{i=1}^{n}Le_{i}^{(2)}\right\|\\&\leq&
\frac{2}{\epsilon}\|DL-LT\|n+\beta\left\|\sum_{i=1}^{n}e_{i}^{(2)}\right\|\\&=&\frac{2}{\epsilon}\|DL-LT\|n+\beta\sqrt{n}.
\end{eqnarray*}
Hence
\[\left(\frac{1}{\beta}-\frac{2}{\epsilon}\|DL-LT\|\right)n^{\frac{1}{p}}\leq\frac{2}{\epsilon}\|DL-LT\|n+\beta\sqrt{n}.\]
So
\[\frac{1}{\beta}n^{\frac{1}{p}}\leq\frac{2}{\epsilon}\|DL-LT\|(n+n^{\frac{1}{p}})+\beta\sqrt{n}.\]
Thus (\ref{TD2}) is proved.
\end{proof}
\begin{lemma}\label{Diagonalp22}
Let $p\in(1,\infty)\backslash\{2\}$. For each $n\in\mathbb{N}$, let $T_{n}$ be the diagonal operator on $l^{2}([1,n])$ with diagonal entries $\frac{1}{n},\ldots,\frac{n}{n}$. Let $(n_{k})_{k\in\mathbb{N}}$ be a sequence in $\mathbb{N}$ such that every natural number appears in the sequence $n_{1},n_{2},\ldots$ infinitely many times. Let $\mathcal{X}=(\oplus_{k\in\mathbb{N}}l^{2}([1,n_{k}]))_{l^{p}}$. Consider the operator $T=\oplus_{k\in\mathbb{N}}T_{n_{k}}$ on $\mathcal{X}$. Let $D$ be a diagonal operator on $l^{p}$. Then there is no operator $L:\mathcal{X}\to l^{p}$ such that $\inf\{\|Lx\|:\|x\|=1\}>0$ and $DL-LT$ is compact.
\end{lemma}
\begin{proof}
Suppose, for contradiction, that there is an operator $L:\mathcal{X}\to l^{p}$ such that $\inf\{\|Lx\|:\|x\|=1\}>0$ and $DL-LT$ is compact. Let $\beta\geq 1$ be such that $\frac{1}{\beta}\|x\|\leq\|Lx\|\leq\beta\|x\|$ for all $x\in\mathcal{X}$. For each $i\in\mathbb{N}$, let $J_{i}:l^{2}([1,n_{i}])\to(\oplus_{k\in\mathbb{N}}l^{2}([1,n_{k}]))_{l^{p}}$ be the canonical embedding. We have $\frac{1}{\beta}\|x\|\leq\|(LJ_{i})x\|\leq\beta\|x\|$ for all $x\in l^{2}([1,n_{i}])$. Since $DL-LT$ is compact, $\|D(LJ_{i})-(LJ_{i})T_{n_{i}}\|=\|(DL-LT)J_{i}\|\to 0$ as $i\to\infty$.

By Lemma \ref{Diagonalp21},
\[\frac{1}{\beta}\sqrt{n_{i}}\leq2\|D(LJ_{i})-(LJ_{i})T_{n_{i}}\|n_{i}^{2}+\beta n_{i}^{\frac{1}{p}},\]
and
\[\frac{1}{\beta}n_{i}^{\frac{1}{p}}\leq 4\|D(LJ_{i})-(LJ_{i})T_{n_{i}}\|n_{i}^{2}+\beta\sqrt{n_{i}},\]
for all $i\in\mathbb{N}$. Since $\|D(LJ_{i})-(LJ_{i})T_{n_{i}}\|\to 0$ as $i\to\infty$ and every natural number appears in $n_{1},n_{2},\ldots$ infinitely many times, it follows that
\[\frac{1}{\beta}\sqrt{n}\leq\beta n^{\frac{1}{p}},\]
and
\[\frac{1}{\beta}n^{\frac{1}{p}}\leq\beta\sqrt{n},\]
for all $n\in\mathbb{N}$. An absurdity follows since $p\neq 2$.
\end{proof}
If $T$ is an operator on a Banach space $\mathcal{X}_{0}$ and there is a constant $C\geq 1$ such that
\[\|f(T)\|\leq C\sup_{0\leq v\leq 1}|f(v)|,\]
for every polynomial $f$, then we can define a unital homomorphism $\rho:C[0,1]\to B(\mathcal{X}_{0})$ by setting $\rho(z)=T$ where $z\in C[0,1]$ is the identity function on $[0,1]$.
\begin{theorem}\label{notequivtrivial}
Let $p\in(1,\infty)\backslash\{2\}$. There exist trivial isomorphic extensions $\phi_{1},\phi_{2}$ of $K(l^{p})$ by $C[0,1]$ that are not equivalent. Moreover, $[\phi_{1}]\neq[\phi_{2}]+[\phi_{3}]$ for every isomorphic extension $\phi_{3}$ of $K(l^{p})$ by $C[0,1]$.
\end{theorem}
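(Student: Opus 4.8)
The plan is to exhibit the two extensions explicitly and to separate them with the rigidity estimates of Lemmas \ref{Diagonalp21}--\ref{Diagonalp23}. For $\phi_{1}$, let $T=\bigoplus_{r\geq 1}T_{r}$ act on $\mathcal{Y}=(\bigoplus_{r\geq 1}l^{2}([1,r]))_{l^{p}}$ as in Lemma \ref{Diagonalp23}. Since $T$ is block diagonal, $\|f(T)\|=\sup_{r}\|f(T_{r})\|=\sup_{r}\max_{1\leq i\leq r}|f(i/r)|\leq\sup_{0\leq v\leq 1}|f(v)|$ for every polynomial $f$, and as $\{i/r\}$ is dense in $[0,1]$ the spectral mapping theorem gives $\sigma(T)=[0,1]$ together with the reverse inequality. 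By Lemma \ref{isomorphicspace} there is an invertible $W\colon\mathcal{Y}\to l^{p}$; then $WTW^{-1}\in B(l^{p})$ still has spectrum $[0,1]$ and satisfies $\|f(WTW^{-1})\|\leq\|W\|\,\|W^{-1}\|\sup_{[0,1]}|f|$, so by the paragraph preceding the theorem it defines a unital isomorphism $\rho_{1}\colon C[0,1]\to B(l^{p})$ with $\rho_{1}(z)=WTW^{-1}$, and I set $\phi_{1}=\pi\circ\rho_{1}$. For $\phi_{2}$, let $D$ be a diagonal operator on $l^{p}$ whose diagonal entries $(v_{j})_{j\geq 1}$ are distinct and dense in $[0,1]$; then $\|f(D)\|=\sup_{[0,1]}|f|$ and $\sigma(D)=[0,1]$, so $\phi_{2}=\pi\circ\rho_{2}$ with $\rho_{2}(z)=D$ is again a trivial isomorphic extension.

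Next I would prove $\phi_{1}$ and $\phi_{2}$ are not equivalent. If $[\phi_{1}]=[\phi_{2}]$, there is an invertible $S$ on $l^{p}$ with $\pi(D)=\pi(S)\pi(WTW^{-1})\pi(S)^{-1}$, hence $S(WTW^{-1})S^{-1}=D+K$ for a compact $K$; writing $L=SW\colon\mathcal{Y}\to l^{p}$ this gives $LT=(D+K)L$, i.e.\ $DL-LT=-KL$ is compact, while $L$, being invertible, is bounded below. This contradicts Lemma \ref{Diagonalp23} (where the hypothesis $p\neq 2$ is used), so $[\phi_{1}]\neq[\phi_{2}]$.

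For the ``moreover'' clause I would argue by contradiction. Suppose $[\phi_{1}]=[\phi_{2}]+[\phi_{3}]$. Unwinding the definitions of $+$ and of equivalence, and using that $C[0,1]$ is generated by $z$ so that it suffices to compare the extensions at $z$, one obtains $T_{3}\in B(l^{p})$ with $\pi(T_{3})=\phi_{3}(z)$ and an invertible operator $R\colon l^{p}\oplus l^{p}\to\mathcal{Y}$ such that $TR-R(D\oplus T_{3})$ is compact. Restricting $R$ to the first summand yields $V=R|_{l^{p}\oplus\{0\}}\colon l^{p}\to\mathcal{Y}$, which is bounded below and satisfies $TV-VD=(TR-R(D\oplus T_{3}))|_{l^{p}\oplus\{0\}}$ compact. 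Thus the task reduces to proving the ``transpose'' of Lemma \ref{Diagonalp23}: there is no bounded below $V\colon l^{p}\to\mathcal{Y}$ with $TV-VD$ compact. To do this I would write $V_{r}=Q_{r}V\colon l^{p}\to l^{2}([1,r])$, with $Q_{r}$ the $r$-th coordinate projection of $\mathcal{Y}$; then $\sum_{r}\|V_{r}x\|^{p}=\|Vx\|^{p}\gtrsim\|x\|^{p}$, and $\|T_{r}V_{r}-V_{r}D\|\to 0$ as $r\to\infty$ since $TV-VD$ is compact and $Q_{r}\to 0$ strongly. Smallness of $(T_{r}-v_{j}I)V_{r}e_{j}$ shows each $V_{r}e_{j}$ is, up to an error of order $r\|T_{r}V_{r}-V_{r}D\|$, a multiple of the eigenvector of $T_{r}$ for the eigenvalue $i/r$ nearest $v_{j}$; feeding this into the computations behind Lemmas \ref{Diagonalp21}--\ref{Diagonalp22}, run in dual form (with $l^{2}([1,r])$ as source after passing to adjoints, so $l^{q}$ replaces $l^{p}$), one produces for $n\mid r$ the two inequalities comparing $\sqrt{n}$ and $n^{1/p}$, and combining them over a suitable sequence of blocks against the lower bound $\sum_{r}\|V_{r}x\|^{p}\gtrsim\|x\|^{p}$ forces the usual contradiction when $p\neq 2$. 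Hence no such $\phi_{3}$ exists.

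The hard part will be the last step. The intertwiners one naturally extracts from $[\phi_{1}]=[\phi_{2}]+[\phi_{3}]$ are either ``projection type'' surjections $\mathcal{Y}\to l^{p}$ (from $q_{1}R^{-1}$) or bounded below injections $l^{p}\to\mathcal{Y}$ (from $R|_{l^{p}\oplus\{0\}}$), and neither is literally of the form demanded by Lemma \ref{Diagonalp23}; so Lemma \ref{Diagonalp23} cannot just be quoted, and its transposed version has to be established. The real difficulty there is that no individual block compression $Q_{r}V$ need be surjective onto $l^{2}([1,r])$, so Lemma \ref{Diagonalp21} is not available block by block, and the $\sqrt{n}$-versus-$n^{1/p}$ estimates must be aggregated across the blocks, exploiting that they collectively recover a fixed fraction of the norm of every vector of $l^{p}$.
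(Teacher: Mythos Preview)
Your construction of $\phi_{1}$ and $\phi_{2}$ and the argument that they are not equivalent match the paper exactly. The discrepancy is in the ``moreover'' clause, where you make the problem harder than it needs to be.

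The theorem is existential in $\phi_{1},\phi_{2}$, so you are free to choose which label to assign to which extension. If you \emph{swap} them---take $\phi_{1}$ to be the diagonal extension (built from $D$) and $\phi_{2}$ to be the block-Hilbert one (built from $WTW^{-1}$)---then $[\phi_{1}]=[\phi_{2}]+[\phi_{3}]$ would yield an invertible $S\colon l^{p}\oplus l^{p}\to l^{p}$ with $S\bigl(WTW^{-1}\oplus T_{3}\bigr)S^{-1}-D$ compact. Restricting $S$ to the first summand and composing with $W$ gives $L=(S|_{l^{p}\oplus\{0\}})\,W\colon\mathcal{Y}\to l^{p}$, which is bounded below (restriction of an invertible operator composed with an isomorphism) and satisfies $DL-LT$ compact. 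This contradicts Lemma~\ref{Diagonalp23} \emph{directly}, with no transpose version required. This is precisely what the paper's proof does (the paper's proof text in fact assigns the labels $\rho_{1},\rho_{2}$ the same way you did, but the computation it carries out---``$T\oplus T_{3}$ is not similar to any compact perturbation of $D$''---corresponds to the swapped labeling; the existential statement makes this harmless).

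With your labeling you are forced to rule out a bounded-below $V\colon l^{p}\to\mathcal{Y}$ with $TV-VD$ compact, and this does \emph{not} follow from Lemma~\ref{Diagonalp23} by naive duality: the adjoint $V^{*}\colon\mathcal{Y}^{*}\to l^{q}$ is surjective rather than bounded below, so one cannot simply invoke Lemma~\ref{Diagonalp23} with $q$ in place of $p$. Your sketched block-compression route via $V_{r}=Q_{r}V$ then runs into exactly the obstruction you flag---no individual $V_{r}$ need hit a substantial part of $l^{2}([1,r])$---and the proposed ``aggregation across blocks'' of the $\sqrt{n}$ versus $n^{1/p}$ estimate is not actually carried out. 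As written this step is a genuine gap. The remedy is not to prove the transpose statement but simply to swap the labels so that Lemma~\ref{Diagonalp23} applies verbatim.
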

\begin{proof}
Let $T$ and $\mathcal{X}$ be as in Lemma \ref{Diagonalp22}. Let $D$ be a diagonal operator on $l^{p}$ whose entries are dense in $[0,1]$. Each of $T$ and $D$ has spectrum $[0,1]$ and
\[\|f(T)\|=\|f(T)\|_{e}=\|f(D)\|=\|f(D)\|_{e}=\sup_{0\leq v\leq 1}|f(v)|,\]
for every polynomial $f$. Let $S:\mathcal{X}\to l^{p}$ be an invertible operator which exists by Lemma \ref{isomorphicspace}. Define unital homomorphisms $\rho_{1},\rho_{2}$ from $C[0,1]$ into $B(l^{p})$ by setting $\rho_{1}(z)=STS^{-1}$ and $\rho_{2}(z)=D$. Take $\phi_{1}=\pi\circ\rho_{1}$ and $\phi_{2}=\pi\circ\rho_{2}$. By the conclusion of Lemma \ref{Diagonalp22}, we have that $T\oplus T_{0}$ is not similar to a compact perturbation of $D$ for any operator $T_{0}$ on any Banach space $\mathcal{X}_{0}$. So $\rho_{1}(z)\oplus T_{0}$ is not similar to a compact perturbation of $\rho_{2}(z)$ for any operator $T_{0}$ on any Banach space $\mathcal{X}_{0}$. By taking $\mathcal{X}_{0}=\{0\}$, we have that $\phi_{1}$ and $\phi_{2}$ are not equivalent. By taking $\mathcal{X}_{0}=l^{p}$ and $\pi(T_{0})=\phi_{3}(z)$, we have that $[\phi_{1}]\neq[\phi_{2}]+[\phi_{3}]$.
\end{proof}
\section{$\mathrm{Ext}_{\sim,s}(\mathcal{A},K(l^{p}))$ is a group for certain $\mathcal{A}$}
Let $\Lambda$ be a set. Let $\mathcal{X}$ and $\mathcal{Y}_{n}$, for $n\in\mathbb{N}$, be Banach spaces. Let $\psi:\Lambda\to B(\mathcal{X})$ and $\eta_{n}:\Lambda\to B(\mathcal{Y}_{n})$, for $n\in\mathbb{N}$, be maps. Let $\lambda\geq 1$. We write $\psi\stackrel{\lambda}\prec(\eta_{n})_{n\in\mathbb{N}}$ if there exist operators $V_{n}:l^{p}\to\mathcal{Y}_{n}$ and $E_{n}:\mathcal{Y}_{n}\to l^{p}$, for $n\in\mathbb{N}$, such that
\begin{enumerate}[(i)]
\item $\|V_{n}\|_{e}\leq\lambda$ for all $n\in\mathbb{N}$;
\item $\|E_{n}\|_{e}\leq1$ for all $n\in\mathbb{N}$;
\item $\lim_{n\to\infty}\|V_{n}\psi(\alpha)-\rho(\alpha)V_{n}\|_{e}=0$ for every $\alpha\in\Lambda$;
\item $\lim_{n\to\infty}\|E_{n}\rho(\alpha)-\psi(\alpha)E_{n}\|_{e}=0$ for every $\alpha\in\Lambda$; and
\item $\lim_{n\to\infty}\|E_{n}V_{n}-I\|_{e}=0$.
\end{enumerate}
Note that in (i) and (ii), we can have $\|V_{n}\|\leq\lambda$ and $\|E_{n}\|\leq 1$ by replacing $V_{n}$ by $(1-\frac{1}{n})V_{n}(I-P_{n})$ and $E_{n}$ by $(1-\frac{1}{n})(I-P_{n})E_{n}$ for some finite rank operator $P_{n}$ on $l^{p}$.
\begin{lemma}\label{psietaembed}
Let $\Lambda$ be a countable set. Let $\lambda\geq1$. Let $\psi:\Lambda\to B(l^{p})$. For $n\in\mathbb{N}$, let $\mathcal{Y}_{n}$ be a Banach space and let $\eta_{n}:\Lambda\to B(\mathcal{Y}_{n})$. Suppose that $\psi\stackrel{\lambda}\prec(\eta_{n})_{n\in\mathbb{N}}$. Then there exist operators $L:l^{p}\to(\mathcal{Y}_{1}\oplus\mathcal{Y}_{2}\oplus\ldots)_{l^{p}}$ and $R:(\mathcal{Y}_{1}\oplus\mathcal{Y}_{2}\oplus\ldots)_{l^{p}}\to l^{p}$ such that
\begin{enumerate}[(I)]
\item $\|L\|\leq2\lambda$ and $\|R\|\leq4$;
\item $L\psi(\alpha)-(\oplus_{n=1}^{\infty}\eta_{n}(\alpha))L$ is compact for all $\alpha\in\Lambda$;
\item $R(\oplus_{n=1}^{\infty}\eta_{n}(\alpha))-\psi(\alpha)R$ is compact for all $\alpha\in\Lambda$; and
\item $RL-I$ is compact.
\end{enumerate}
\end{lemma}
\begin{proof}
There exist operators $V_{n}:l^{p}\to\mathcal{Y}_{n}$ and $E_{n}:\mathcal{Y}_{n}\to l^{p}$, for $n\in\mathbb{N}$, such that $\|V_{n}\|\leq\lambda$ and $\|E_{n}\|\leq 1$, for all $n\in\mathbb{N}$, and (iii)-(v) are satisfied. Let $\Omega_{1}\subset\Omega_{2}\subset\ldots$ be finite subsets of $\Lambda$ such that $\cup_{n=1}^{\infty}\Omega_{n}=\Lambda$. By Lemma \ref{qcau}, there is a refined diagonal approximate identity $(A_{n})_{n\in\mathbb{N}}$ on $l^{p}$ such that
\begin{equation}\label{psietaembedeq1}
\|A_{n}\psi(\alpha)-\psi(\alpha)A_{n}\|\leq\frac{1}{2^{n}},
\end{equation}
for all $\alpha\in\Omega_{n}$ and $n\in\mathbb{N}$. Since $A_{n}\to 0$ in SOT, as $n\to\infty$, passing to a subsequence of $(A_{n})_{n\in\mathbb{N}}$, we have
\begin{enumerate}[(a)]
\item $\|V_{n}\|\leq\lambda$;
\item $\|E_{n}\|\leq 1$;
\item $\|(V_{n}\psi(\alpha)-\eta_{n}(\alpha)V_{n})(A_{n}-A_{n-1})\|\leq\frac{1}{2^{n}}$ for every $\alpha\in\Omega_{n}$;
\item $\|(A_{n+1}-A_{n-2})(E_{n}\eta_{n}(\alpha)-\psi(\alpha)E_{n})\|\leq\frac{1}{2^{n}}$ for every $\alpha\in\Omega_{n}$; and
\item $\|(E_{n}V_{n}-I)(A_{n}-A_{n-1})\|\leq\frac{1}{2^{n}}$,
\end{enumerate}
for all $n\geq 3$. Since $(A_{n})_{n\in\mathbb{N}}$ is a refined diagonal approximate identity, by Lemma \ref{qcausum} and (\ref{psietaembedeq1}),
\begin{enumerate}[(1)]
\item $\displaystyle\|A_{n}\psi(\alpha)-\psi(\alpha)A_{n}\|\leq\frac{1}{2^{n}}$ for all $\alpha\in\Omega_{n}$ and $n\in\mathbb{N}$;
\item \[\left(\sum_{n=1}^{\infty}\|(A_{n}-A_{n-1})x\|^{p}\right)^{\frac{1}{p}}\leq2\|x\|,\]
for all $x\in l^{p}$; and
\item  if $(x_{n})_{n\in\mathbb{N}}$ is a sequence in $l^{p}$ such that $\displaystyle\sum_{n=1}^{\infty}\|x_{n}\|^{p}<\infty$, then
\[\left(\left\|\sum_{n=1}^{\infty}(A_{n+1}-A_{n-2})x_{n}\right\|^{p}\right)^{\frac{1}{p}}\leq
4\left(\sum_{n=1}^{\infty}\|x_{n}\|^{p}\right)^{\frac{1}{p}}.\]
\end{enumerate}
For each $n\in\mathbb{N}$, let $J_{n}:\mathcal{Y}_{n}\to(\mathcal{Y}_{1}\oplus\mathcal{Y}_{2}\oplus\ldots)_{l^{p}}$ and $Q_{n}:(\mathcal{Y}_{1}\oplus\mathcal{Y}_{2}\oplus\ldots)_{l^{p}}\to\mathcal{Y}_{n}$ be the canonical embedding and projection, respectively. Take
\[Lx=\sum_{n=3}^{\infty}J_{n}V_{n}(A_{n}-A_{n-1})x,\]
for $x\in l^{p}$, and
\[Ry=\sum_{n=3}^{\infty}(A_{n+1}-A_{n-2})E_{n}Q_{n}y,\]
for $y\in(\mathcal{Y}_{1}\oplus\mathcal{Y}_{2}\oplus\ldots)_{l^{p}}$. For $x\in l^{p}$,
\begin{eqnarray*}
\|Lx\|&=&\left\|\sum_{n=3}^{\infty}J_{n}V_{n}(A_{n}-A_{n-1})x\right\|\\&=&
\left(\sum_{n=3}^{\infty}\|V_{n}(A_{n}-A_{n-1})x\|^{p}\right)^{\frac{1}{p}}\\&\leq&
\lambda\left(\sum_{n=3}^{\infty}\|(A_{n}-A_{n-1})x\|^{p}\right)^{\frac{1}{p}}\text{ by (a)}\\&\leq&
2\lambda\|x\|\text{ by (2)}.
\end{eqnarray*}
For $y\in(\mathcal{Y}_{1}\oplus\mathcal{Y}_{2}\oplus\ldots)_{l^{p}}$,
\begin{eqnarray*}
\|Ry\|&=&\left\|\sum_{n=3}^{\infty}(A_{n+1}-A_{n-2})E_{n}Q_{n}y\right\|\\&\leq&
4\left(\sum_{n=3}^{\infty}\|E_{n}Q_{n}y\|^{p}\right)^{\frac{1}{p}}\text{ by (3)}\\&\leq&
4\left(\sum_{n=3}^{\infty}\|Q_{n}y\|^{p}\right)^{\frac{1}{p}}\text{ by (b)}\\&\leq&
4\|y\|.
\end{eqnarray*}
Thus, (I) is proved.

Let $\alpha\in\Lambda$. Since $\Omega_{1}\subset\Omega_{2}\subset\ldots$ and $\cup_{n=1}^{\infty}\Omega_{n}=\Lambda$, there exists $n_{0}\geq 2$ such that $\alpha\in\Omega_{n}$ for all $n\geq n_{0}$. For $n\geq n_{0}+1$,
\begin{align*}
&\|V_{n}(A_{n}-A_{n-1})\psi(\alpha)-V_{n}\psi(\alpha)(A_{n}-A_{n-1})\|\\
\leq&\lambda\|(A_{n}-A_{n-1})\psi(\alpha)-\psi(\alpha)(A_{n}-A_{n-1})\|\text{ by (a)}\\
\leq&\lambda(\|A_{n}\psi(\alpha)-\psi(\alpha)A_{n}\|+\|A_{n-1}\psi(\alpha)-\psi(\alpha)A_{n-1}\|)\\
\leq&\lambda\left(\frac{1}{2^{n}}+\frac{1}{2^{n-1}}\right)\text{ by (1)}.
\end{align*}
By (c), we have $\|(V_{n}\psi(\alpha)-\eta_{n}(\alpha)V_{n})(A_{n}-A_{n-1})\|\leq\frac{1}{2^{n}}$ for $n\geq n_{0}$. Therefore,
\begin{align*}
&\|V_{n}(A_{n}-A_{n-1})\psi(\alpha)-\eta_{n}(\alpha)V_{n}(A_{n}-A_{n-1})\|\\\leq&
\lambda\left(\frac{1}{2^{n}}+\frac{1}{2^{n-1}}\right)+\frac{1}{2^{n}},
\end{align*}
for all $n\geq n_{0}+1$. Thus,
\begin{equation}\label{boundforL}
\sum_{n=3}^{\infty}\|V_{n}(A_{n}-A_{n-1})\psi(\alpha)-\eta_{n}(\alpha)V_{n}(A_{n}-A_{n-1})\|<\infty,
\end{equation}
for every $\alpha\in\Lambda$. For $x\in l^{p}$ and $\alpha\in\Lambda$,
\[L\psi(\alpha)x=\sum_{n=3}^{\infty}J_{n}V_{n}(A_{n}-A_{n-1})\psi(\alpha)x\]
and
\[(\oplus_{n=1}^{\infty}\eta_{n}(\alpha))Lx=\sum_{n=3}^{\infty}J_{n}\eta_{n}(\alpha)V_{n}(A_{n}-A_{n-1})x.\]
Since $V_{n}(A_{n}-A_{n-1})\psi(\alpha)-\eta_{n}(\alpha)V_{n}(A_{n}-A_{n-1})$ has finite rank for $n\geq 3$, by (\ref{boundforL}), $L\psi(\alpha)-(\oplus_{n=1}^{\infty}\eta_{n}(\alpha))L$ is compact for all $\alpha\in\Lambda$. So (II) is proved.

Let $\alpha\in\Lambda$. There exists $n_{0}\in\mathbb{N}$ such that $\alpha\in\Omega_{n}$ for all $n\geq n_{0}$. For $n\geq n_{0}+2$,
\begin{align*}
&\|(A_{n+1}-A_{n-2})\psi(\alpha)E_{n}-\psi(\alpha)(A_{n+1}-A_{n-2})E_{n}\|\\\leq&
\lambda\|(A_{n+1}-A_{n-2})\psi(\alpha)-\psi(\alpha)(A_{n+1}-A_{n-2})\|\text{ by (b)}\\\leq&
\lambda\left(\frac{1}{2^{n+1}}+\frac{1}{2^{n-2}}\right)\text{ by (1)}.
\end{align*}
By (d), we have $\|(A_{n+1}-A_{n-2})(E_{n}\eta_{n}(\alpha)-\psi(\alpha)E_{n})\|\leq\frac{1}{2^{n}}$ for all $n\geq n_{0}$. Therefore,
\begin{equation}\label{boundforR}
\sum_{n=3}^{\infty}\|(A_{n+1}-A_{n-2})E_{n}\eta_{n}(\alpha)-\psi(\alpha)(A_{n+1}-A_{n-2})E_{n}\|<\infty
\end{equation}
for every $\alpha\in\Lambda$. For $y\in(\mathcal{Y}_{1}\oplus\mathcal{Y}_{2}\oplus\ldots)_{l^{p}}$ and $\alpha\in\Lambda$,
\[R(\oplus_{n=1}^{\infty}\eta_{n}(\alpha))y=\sum_{n=3}^{\infty}(A_{n+1}-A_{n-2})E_{n}\eta_{n}(\alpha)Q_{n}y\]
and
\[\psi(\alpha)Ry=\sum_{n=3}^{\infty}\psi(\alpha)(A_{n+1}-A_{n-2})E_{n}Q_{n}y.\]
Therefore, $R\eta(\alpha)-\psi(\alpha)R$ is compact for all $\alpha\in\Lambda$. So (III) is proved.

For $x\in l^{p}$,
\[RLx=\sum_{n=3}^{\infty}(A_{n+1}-A_{n-2})E_{n}V_{n}(A_{n}-A_{n-1})x\]
and since $(A_{n})_{n\in\mathbb{N}}$ is a refined diagonal approximate unit on $l^{p}$,
\[x=\sum_{n=1}^{\infty}(A_{n}-A_{n-1})x=\sum_{n=1}^{\infty}(A_{n+1}-A_{n-2})(A_{n}-A_{n-1})x.\]
By (e), we have $\|(E_{n}V_{n}-I)(A_{n}-A_{n-1})\|\leq\frac{1}{2^{n}}$ for $n\geq 3$. Therefore, $RL-I$ is compact. So (IV) is proved.
\end{proof}
\begin{lemma}\label{psietaembed2}
Let $\Lambda$ be a countable set. Let $\psi:\Lambda\to B(l^{p})$ and $\eta:\Lambda\to B(l^{p})$. Let $\lambda\geq 1$. Suppose that there exist operators $L,R\in B(l^{p})$ such that
\begin{enumerate}[(I)]
\item $\|L\|\leq\lambda$ and $\|R\|\leq\lambda$;
\item $L\psi(\alpha)-\eta(\alpha)L$ is compact for all $\alpha\in\Lambda$;
\item $R\eta(\alpha)-\psi(\alpha)R$ is compact for all $\alpha\in\Lambda$; and
\item $RL-I$ is compact.
\end{enumerate}
Then there exist a Banach space $\mathcal{Y}$, an invertible operator $S:l^{p}\oplus\mathcal{Y}\to l^{p}\oplus l^{p}$ and a map $\zeta:\Lambda\to B(\mathcal{Y})$ such that
\begin{enumerate}[(i)]
\item $\|S\|\leq\lambda+2$ and $\|S^{-1}\|\leq2\lambda^{2}+6\lambda+5$; and
\item $\eta(\alpha)\oplus\eta(\alpha)-S(\psi(\alpha)\oplus\zeta(\alpha))S^{-1}$ is compact for all $\alpha\in\Lambda$.
\end{enumerate}
\end{lemma}
\begin{proof}
There exists $m\in\mathbb{N}$ such that $\|(RL-I)(I-P)\|\leq\frac{1}{2}$ where $P$ is the projection from $l^{p}$ onto $l^{p}([1,m])$. Define $L_{1}:l^{p}\to l^{p}\oplus l^{p}$ and $R_{1}:l^{p}\oplus l^{p}\to l^{p}$ by
\[L_{1}x=(L(I-P)x,Px)\quad\text{and}\quad R_{1}(y_{1},y_{2})=Ry_{1}+Py_{2},\]
for $x\in l^{p}$ and $y_{1},y_{2}\in l^{p}$. We have
\begin{enumerate}[(a)]
\item $\|L_{1}\|\leq\lambda+1$ and $\|R_{1}\|\leq\lambda+1$;
\item $L_{1}\psi(\alpha)-(\eta(\alpha)\oplus\eta(\alpha))L_{1}$ is compact for all $\alpha\in\Lambda$;
\item $R_{1}(\eta(\alpha)\oplus\eta(\alpha))-\psi(\alpha)R_{1}$ is compact for all $\alpha\in\Lambda$; and
\item $R_{1}L_{1}-I$ is compact and has norm at most $\frac{1}{2}$.
\end{enumerate}
Since $\|R_{1}L_{1}-I\|\leq\frac{1}{2}$, the operator $R_{1}L_{1}$ is invertible and $\|(R_{1}L_{1})^{-1}\|\leq 2$. Define $L_{2}:l^{p}\to l^{p}\oplus l^{p}$ and $R_{2}:l^{p}\oplus l^{p}\to l^{p}$ by $L_{2}=L_{1}$ and $R_{2}=(R_{1}L_{1})^{-1}R_{1}$. Then
\begin{enumerate}[(1)]
\item $\|L_{2}\|\leq\lambda+1$ and $\|R_{2}\|\leq2\lambda+2$;
\item $L_{2}\psi(\alpha)-(\eta(\alpha)\oplus\eta(\alpha))L_{2}$ is compact for all $\alpha\in\Lambda$;
\item $R_{2}(\eta(\alpha)\oplus\eta(\alpha))-\psi(\alpha)R_{2}$ is compact for all $\alpha\in\Lambda$; and
\item $R_{2}L_{2}=I$.
\end{enumerate}
The rest of this proof proceeds in the same way as the proof of Lemma \ref{embed2}. The operator $L_{2}R_{2}$ is an idempotent on $l^{p}\oplus l^{p}$. Take $\mathcal{Y}$ to be the range of $I-L_{2}R_{2}$. Take $S(x,y)=L_{2}x+y$ for $x\in l^{p}$ and $y\in\mathcal{Y}$. Then $S^{-1}z=(R_{2}z,(I-L_{2}R_{2})z)$ for $z\in l^{p}\oplus l^{p}$. We have $\|S\|\leq\lambda+2$ and $\|S^{-1}\|\leq(2\lambda+2)+(\lambda+1)(2\lambda+2)+1$. Take $\zeta(\alpha)=(I-L_{2}R_{2})(\eta(\alpha)\oplus\eta(\alpha))|_{\mathcal{Y}}$ for $\alpha\in\Lambda$. It is easy to check that $\eta(\alpha)\oplus\eta(\alpha)-S(\psi(\alpha)\oplus\zeta(\alpha))S^{-1}$ is compact for all $\alpha\in\Lambda$.
\end{proof}
\begin{lemma}\label{psietaembed3}
Let $\Lambda$ be a countable set. Let $\lambda\geq1$. Let $\psi:\Lambda\to B(l^{p})$. For $n\in\mathbb{N}$, let $\mathcal{Y}_{n}$ be a Banach space and let $\eta_{n}:\Lambda\to B(\mathcal{Y}_{n})$. Suppose that $\psi\stackrel{\lambda}{\prec}(\eta_{n})_{n\in\mathbb{N}}$ and $(\oplus_{n=1}^{\infty}\mathcal{Y}_{n})_{l^{p}}$ is $\lambda$-isomorphic to $l^{p}$. Then there exist a Banach space $\mathcal{Y}$ and a map $\zeta:\Lambda\to B(\mathcal{Y})$ such that $\oplus_{n=1}^{\infty}(\eta_{n}\oplus\eta_{n})$ is $(2500\lambda^{4})$-similar to $\psi\oplus\zeta$ modulo compact operators.
\end{lemma}
\begin{proof}
Let $W:(\oplus_{n=1}^{\infty}\mathcal{Y}_{n})_{l^{p}}\to l^{p}$ be an invertible operator such that $\|W\|\leq1$ and $\|W^{-1}\|\leq\lambda$. Let $L:l^{p}\to(\oplus_{n=1}^{\infty}\mathcal{Y}_{n})_{l^{p}}$ and $R:(\oplus_{n=1}^{\infty}\mathcal{Y}_{n})_{l^{p}}\to l^{p}$ be obtained by applying Lemma \ref{psietaembed}. Then
\begin{enumerate}[(I)]
\item $\|WL\|\leq2\lambda$ and $\|RW^{-1}\|\leq4\lambda$;
\item $WL\psi(\alpha)-(W(\oplus_{n=1}^{\infty}\eta_{n}(\alpha))W^{-1})WL$ is compact for all $\alpha\in\Lambda$;
\item $RW^{-1}(W(\oplus_{n=1}^{\infty}\eta_{n}(\alpha))W^{-1})-\psi(\alpha)RW^{-1}$ is compact for all $\alpha\in\Lambda$; and
\item $(RW^{-1})(WL)-I$ is compact.
\end{enumerate}
Taking $\eta(\alpha)=W(\oplus_{n=1}^{\infty}\eta_{n}(\alpha))W^{-1}$, for $\alpha\in\Lambda$, in Lemma \ref{psietaembed2}, we have that there exist a Banach space $\mathcal{Y}$ and a map $\zeta:\Lambda\to B(\mathcal{Y})$ such that $\eta\oplus\eta$ is $(2500\lambda^{3})$-similar to $\psi\oplus\zeta$ modulo compact operators. But $\eta\oplus\eta$ is $\lambda$-similar to $\oplus_{n=1}^{\infty}(\eta_{n}\oplus\eta_{n})$. Thus, the result follows.
\end{proof}
\begin{lemma}\label{psietalocalembed}
Let $d\in\mathbb{N}$. Let $M$ be a nonempty compact subset of $\mathbb{R}^{d}$. Let $\psi:C(M)\to B(l^{p})$ be a map such that $\pi\circ\psi:C(M)\to B(l^{p})/K(l^{p})$ is a unital homomorphism. Let $\Omega$ be a finite subset of $C(M)$. Let $M_{0}$ be a dense subset of $M$. Let $\epsilon>0$. There exist $k\in\mathbb{N}$, points $w_{1},\ldots,w_{k}\in M_{0}$, operators $V:l^{p}\to(l^{p})_{u}^{\oplus k}$ and $E:(l^{p})_{u}^{\oplus k}\to l^{p}$ such that
\begin{enumerate}[(i)]
\item $\|V\|_{e}\leq\|\pi\circ\psi\|$ and $\|E\|_{e}\leq 2^{d}\|\pi\circ\psi\|$;
\item $EV-I$ is compact;
\item $\|V\psi(h)-\eta(h)V\|_{e}\leq\epsilon$ for all $h\in\Omega$; and
\item $\|E\eta(h)-\psi(h)E\|_{e}\leq\epsilon$ for all $h\in\Omega$,
\end{enumerate}
where $\eta(h)=(h(w_{1})I\oplus\ldots\oplus h(w_{k})I)_{u}$ for $h\in C(M)$.
\end{lemma}
\begin{proof}
Let $\gamma>0$ be such that
\begin{equation}\label{unifcontin}
|h(v)-h(w)|\leq\frac{\epsilon}{2^{d}},
\end{equation}
for all $v,w\in M$ with distance at most $\gamma$ and $h\in\Omega$. By Lemma \ref{partitionunity}, there exist a partition of unity $(f_{i})_{1\leq i\leq k}$ on $M$ and continuous functions $g_{i}:M\to[0,1]$, for $1\leq i\leq k$, such that
\begin{enumerate}[(1)]
\item the diameter of the support of $g_{i}$ is at most $\gamma$ for every $1\leq i\leq k$,
\item every $v\in M$ is contained in at most $2^{d}$ of $\mathrm{supp}\,g_{1},\ldots,\mathrm{supp}\,g_{k}$,
\item $g_{i}=1$ on the support of $f_{i}$ for every $1\leq i\leq k$.
\end{enumerate}
Without loss of generality, we may assume that each $f_{i}$ is not a zero function. Thus, $\mathrm{supp}\,g_{i}$ has nonempty interior. Since $M_{0}$ is dense, there exists $w_{i}\in M_{0}\cap\mathrm{supp}\,g_{i}$.

Take
\[Vx=(\psi(f_{1})x,\ldots,\psi(f_{k})x),\]
for all $x\in l^{p}$, and
\[E(y_{1},\ldots,y_{k})=\sum_{i=1}^{k}\psi(g_{i})y_{i},\]
for all $(y_{1},\ldots,y_{k})\in(l^{p})_{u}^{\oplus k}$.

Since $(f_{i})_{1\leq i\leq k}$ is a partition of unity on $M$, we have $\|\sum_{i=1}^{k}\delta_{i}f_{i}\|\leq 1$ for all $\delta\in\{-1,1\}^{k}$. So
\[\left\|\sum_{i=1}^{k}\delta_{i}\psi(f_{i})\right\|_{e}=\left\|\sum_{i=1}^{k}\delta_{i}\pi\circ\psi(f_{i})\right\|\leq\|\pi\circ\psi\|,\]
for all $\delta\in\{-1,1\}^{k}$. So by Lemma \ref{unconditional3}, we have $\|V\|_{e}\leq\|\pi\circ\psi\|$.

By (2), we have $\|\sum_{i=1}^{k}\delta_{i}g_{i}\|\leq 2^{d}$ for all $\delta\in\{-1,1\}^{k}$. Thus,
\[\left\|\sum_{i=1}^{k}\delta_{i}\psi(g_{i})\right\|_{e}=\left\|\sum_{i=1}^{k}\delta_{i}\pi\circ\psi(g_{i})\right\|\leq 2^{d}\|\pi\circ\psi\|,\]
for all $\delta\in\{-1,1\}^{k}$. So by Lemma \ref{unconditional3}, we have $\|E\|_{e}\leq 2^{d}\|\pi\circ\psi\|$. Thus, (i) is proved.

By (3), we have $g_{i}f_{i}=f_{i}$ for all $1\leq i\leq k$. Since $\displaystyle EV=\sum_{i=1}^{k}\psi(g_{i})\psi(f_{i})$, it follows that
\[\pi(EV)=\sum_{i=1}^{k}\pi\circ\psi(g_{i}f_{i})=\sum_{i=1}^{k}\pi\circ\psi(f_{i})=1.\]
Hence, (ii) is proved.

Since $w_{i}$ is in the support of $g_{i}$, by (1) and (\ref{unifcontin}), we have $|h(v)-h(w_{i})|\leq\frac{\epsilon}{2^{d}}$ for all $v$ in $\mathrm{supp}\,g_{i}$ and $h\in\Omega$. By (2), we have $\sum_{i=1}^{k}g_{i}\leq 2^{d}$ and so
\[\sum_{i=1}^{k}|h(v)-h(w_{i})|g_{i}(v)\leq\sum_{i=1}^{k}\frac{\epsilon}{2^{d}}g_{i}(v)\leq\epsilon.\]
Since $0\leq f_{i}\leq g_{i}$, we also have
\[\sum_{i=1}^{k}|h(v)-h(w_{i})|f_{i}(v)\leq\epsilon.\]
Therefore,
\[\left|\sum_{i=1}^{k}\delta_{i}(h(v)-h(w_{i}))g_{i}(v)\right|\leq\epsilon\quad\text{and}\quad
\left|\sum_{i=1}^{k}\delta_{i}(h(v)-h(w_{i}))f_{i}(v)\right|\leq\epsilon\]
for all $v\in M$. So
\begin{equation}\label{psietalocalembedeq1}
\left\|\sum_{i=1}^{k}\delta_{i}(h-h(w_{i}))g_{i}\right\|\leq\epsilon\quad\text{and}\quad
\left\|\sum_{i=1}^{k}\delta_{i}(h-h(w_{i}))f_{i}\right\|\leq\epsilon,
\end{equation}
for all $h\in\Omega$. Note that
\[V\psi(h)x-\eta(h)Vx=(\psi(f_{1})(\psi(h)x-h(w_{1}))x,\ldots,\psi(f_{k})(\psi(h)x-h(w_{k}))x),\]
for all $x\in l^{p}$, and
\[(E\eta(h)-\psi(h)E)(y_{1},\ldots,y_{k})=\sum_{i=1}^{k}(h(w_{i})-\psi(h))\psi(g_{i})y_{i},\]
for all $y_{1},\ldots,y_{k}\in l^{p}$. By (\ref{psietalocalembedeq1}) and Lemma \ref{unconditional3}, we obtain (iii) and (iv).
\end{proof}
Modulo some technicalities, the following result follows by combining Lemmas \ref{psietaembed3} and \ref{psietalocalembed}.
\begin{theorem}\label{main21}
Let $d\in\mathbb{N}$. Let $M$ be a nonempty compact subset of $\mathbb{R}^{d}$. Then $\mathrm{Ext}_{\sim,s}(C(M),K(l^{p}))$ is a group.
\end{theorem}
\begin{proof}
It suffices to show that for every isomorphic extension $\phi:C(M)\to B(l^{p})/K(l^{p})$, there exists an isomorphic extension $\phi^{(-1)}:C(M)\to B(l^{p})/K(l^{p})$ such that $\phi\oplus\phi^{(-1)}:C(M)\to B(l^{p}\oplus l^{p})/K(l^{p}\oplus l^{p})$ is a trivial isomorphic extension. Let $\psi:C(M)\to B(l^{p})/K(l^{p})$ be any map such that $\pi\circ\psi=\phi$. Let $\Lambda$ be a countable dense subset of $C(M)$. By Lemmas \ref{psietalocalembed} and \ref{normleq2}, there exist $k_{1},k_{2},\ldots\in\mathbb{N}$ and unital homomorphisms $\eta_{n}:C(M)\to B((l^{p})_{u}^{\oplus k_{n}})/K((l^{p})_{u}^{\oplus k_{n}})$, for $n\in\mathbb{N}$, such that $\psi|_{\Lambda}\stackrel{\lambda}{\prec}(\eta_{n}|_{\Lambda})_{n\in\mathbb{N}}$ where $\lambda=2^{d}\|\phi\|^{2}$. By Lemma \ref{isomorphicspaceu}, we have that $(\oplus_{n=1}^{\infty}(l^{p})_{u}^{\oplus k_{n}})_{l^{p}}$ is isomorphic to $l^{p}$. By Lemma \ref{psietaembed3}, there exist a Banach space $\mathcal{Y}$ and a map $\zeta:\Lambda\to B(\mathcal{Y})$ such that $\oplus_{n=1}^{\infty}(\eta_{n}|_{\Lambda}\oplus\eta_{n}|_{\Lambda})$ is similar to $(\psi|_{\Lambda})\oplus\zeta$ modulo compact operators. Since $\oplus_{n=1}^{\infty}(\eta_{n}\oplus\eta_{n})$ is a unital homomorphism and $\Lambda$ is dense in $C(M)$, it follows that we can extend $\zeta$ to $\widetilde{\zeta}:C(M)\to B(\mathcal{Y})$ so that $\oplus_{n=1}^{\infty}(\eta_{n}\oplus\eta_{n})$ is similar to $\psi\oplus\widetilde{\zeta}$ modulo compact operators. In particular, $\pi\circ\widetilde{\zeta}$ is a unital homomorphism.

Let $\zeta_{0}:C(M)\to B(l^{p})$ be defined by sending $h\in C(M)$ to the diagonal operator on $l^{p}$ with entries $h(w_{1}),h(w_{2}),\ldots$ where $(w_{n})_{n\in\mathbb{N}}$ is a fixed sequence that is dense in $M$ and each $w_{n}$ occurs infinitely many times. Then $\oplus_{n=1}^{\infty}(\eta_{n}\oplus\eta_{n})\oplus\zeta_{0}$ is similar to $\psi\oplus\widetilde{\zeta}\oplus\zeta_{0}$ modulo compact operators.

Note that $\oplus_{n=1}^{\infty}(\eta_{n}\oplus\eta_{n})\oplus\zeta_{0}$ is a unital homomorphism and so $\pi\circ(\psi\oplus\widetilde{\zeta}\oplus\zeta_{0})$ is a trivial isomorphic extension. Since $\mathcal{Y}$ is isomorphic to the range of an idempotent on $l^{p}$, by Lemma \ref{prime}, the direct sum $\mathcal{Y}\oplus l^{p}$ is isomorphic to $l^{p}$. Let $S:\mathcal{Y}\oplus l^{p}\to l^{p}$ be any invertible operator. Define $\psi^{(-1)}:C(M)\to B(l^{p})$ by $\psi^{(-1)}(h)=S(\widetilde{\zeta}(h)\oplus\zeta_{0}(h))S^{-1}$ for $h\in C(M)$. Then $\pi\circ\psi^{(-1)}:C(M)\to B(l^{p})/K(l^{p})$ is an isomorphic extension. Since $\pi\circ(\psi\oplus\widetilde{\zeta}\oplus\zeta_{0})$ is a trivial isomorphic extension, $(\pi\circ\psi)\oplus(\pi\circ\psi^{(-1)})$ is a trivial isomorphic extension. The result follows.
\end{proof}
\begin{lemma}\label{varphithetaembed}
Let $G$ be a countable amenable group. Let $\mathcal{X}=(\oplus_{g\in G}l^{p})_{l^{p}}$. Define $\theta:G\to B(\mathcal{X})$ by
\[\theta(s)[(y_{g})_{g\in G}]=(y_{s^{-1}g})_{g\in G},\]
for $s\in S$ and $(y_{g})_{g\in G}\in\mathcal{X}$. Let $\psi:G\to B(l^{p})$ be a map such that $\pi\circ\psi$ is a unital (group) homomorphism and $\sup_{s\in G}\|\psi(s)\|<\infty$. Let $F_{0}$ be a finite subset of $G$. Let $\epsilon>0$. Then there exist operators $V,E\in B(l^{p})$ such that
\begin{enumerate}[(i)]
\item $\|V\|_{e}\leq\sup_{s\in G}\|\psi(s)\|$ and $\|E\|\leq\sup_{s\in G}\|\psi(s)\|$;
\item $EV-I$ is compact;
\item $\|V\psi(s)-\theta(s)V\|_{e}\leq\epsilon$ for all $s\in F_{0}$; and
\item $\|E\theta(s)-\psi(s)E\|_{e}\leq\epsilon$ for all $s\in F_{0}$.
\end{enumerate}
\end{lemma}
\begin{proof}
For each $s\in G$, let $J_{s}:l^{p}\to\mathcal{X}$ be the canonical embedding that maps $l^{p}$ onto component $s$ of $\mathcal{X}=(\oplus_{g\in G}l^{p})_{l^{p}}$ and let $Q_{s}:\mathcal{X}\to l^{p}$ be the canonical projection from $\mathcal{X}=(\oplus_{g\in G}l^{p})_{l^{p}}$ onto its component $s$. Since $G$ is amenable, there is a nonempty finite subset $F$ of $G$ such that
\begin{equation}\label{varphithetaembedeq1}
\frac{|s_{0}F\Delta F|}{|F|}\leq\epsilon,
\end{equation}
for all $s_{0}\in F_{0}$. Let $\frac{1}{p}+\frac{1}{q}=1$. Take
\[Vx=\frac{1}{|F|^{\frac{1}{p}}}\sum_{s\in F}J_{s}\psi(s^{-1})x,\]
for $x\in l^{p}$, and
\[Ey=\frac{1}{|F|^{\frac{1}{q}}}\sum_{s\in F}\psi(s)Q_{s}y,\]
for $y\in\mathcal{X}$. We have
\begin{eqnarray*}
\|Vx\|&=&\left\|\frac{1}{|F|^{\frac{1}{p}}}\sum_{s\in F}J_{s}\psi(s^{-1})x\right\|\\&=&
\frac{1}{|F|^{\frac{1}{p}}}\left(\sum_{s\in F}\|\psi(s^{-1})x\|^{p}\right)^{\frac{1}{p}}\leq\sup_{s\in G}\|\psi(s)\|\|x\|,
\end{eqnarray*}
for $x\in l^{p}$, and
\begin{eqnarray*}
\|Ey\|&=&\left\|\frac{1}{|F|^{\frac{1}{q}}}\sum_{s\in F}\psi(s)Q_{s}y\right\|\\&=&
\sup_{s\in G}\|\psi(s)\|\frac{1}{|F|^{\frac{1}{q}}}\sum_{s\in F}\|Q_{s}y\|\\&\leq&
\sup_{s\in G}\|\psi(s)\|\left(\sum_{s\in F}\|Q_{s}y\|^{p}\right)^{\frac{1}{p}}\leq\sup_{s\in G}\|\psi(s)\|\|y\|,
\end{eqnarray*}
where the first inequality follows from H\"older's inequality. Thus, (i) is proved.

We have
\begin{eqnarray*}
EV&=&\frac{1}{|F|^{\frac{1}{q}}}\sum_{s\in F}\psi(s)Q_{s}\left(\frac{1}{|F|^{\frac{1}{q}}}\sum_{g\in F}J_{g}\psi(g^{-1})\right)\\&=&
\frac{1}{|F|^{\frac{1}{q}}}\sum_{s\in F}\psi(s)Q_{s}\left(\frac{1}{|F|^{\frac{1}{q}}}J_{s}\psi(s^{-1})\right)\\&=&
\frac{1}{|F|}\sum_{s\in F}\psi(s)\psi(s^{-1}).
\end{eqnarray*}
Since $\pi\circ\psi$ is a unital homomorphism, $EV-I$ is compact. So (ii) is proved.

For $s_{0}\in G$, modulo compact operators, we have
\begin{eqnarray*}
V\psi(s_{0})&=&\frac{1}{|F|^{\frac{1}{p}}}\sum_{s\in F}J_{s}\psi(s^{-1})\psi(s_{0})\\&=&
\frac{1}{|F|^{\frac{1}{p}}}\sum_{s\in F}J_{s}\psi(s^{-1}s_{0})\\&=&
\frac{1}{|F|^{\frac{1}{p}}}\sum_{s\in F}J_{s}\psi((s_{0}^{-1}s)^{-1})\\&=&
\frac{1}{|F|^{\frac{1}{p}}}\sum_{s_{0}s\in F}J_{s_{0}s}\psi(s^{-1})\\&=&
\frac{1}{|F|^{\frac{1}{p}}}\sum_{s\in s_{0}^{-1}F}\theta(s_{0})J_{s}\psi(s^{-1}).
\end{eqnarray*}
Thus,
\begin{eqnarray*}
\|V\psi(s_{0})-\theta(s_{0})V\|_{e}&=&\left\|\frac{1}{|F|^{\frac{1}{p}}}\sum_{s\in s_{0}^{-1}F\Delta F}\theta(s_{0})J_{s}\psi(s^{-1})\right\|_{e}\\&=&
\left\|\frac{1}{|F|^{\frac{1}{p}}}\sum_{s\in s_{0}^{-1}F\Delta F}J_{s}\psi(s^{-1})\right\|_{e}.
\end{eqnarray*}
Since
\begin{eqnarray*}
\left\|\frac{1}{|F|^{\frac{1}{p}}}\sum_{s\in s_{0}^{-1}F\Delta F}J_{s}\psi(s^{-1})x\right\|&=&\frac{1}{|F|^{\frac{1}{p}}}
\left(\sum_{s\in s_{0}^{-1}F\Delta F}\|\psi(s^{-1})x\|^{p}\right)^{\frac{1}{p}}\\&\leq&
\left(\frac{|s_{0}^{-1}F\Delta F|}{|F|}\right)^{\frac{1}{p}}\sup_{s\in G}\|\psi(s)\|\|x\|,
\end{eqnarray*}
for all $x\in l^{p}$, it follows that
\[\|V\psi(s_{0})-\theta(s_{0})V\|_{e}\leq\left(\frac{|s_{0}^{-1}F\Delta F|}{|F|}\right)^{\frac{1}{p}}\sup_{s\in G}\|\psi(s)\|.\]
Thus, by (\ref{varphithetaembedeq1}), we have
\[\|V\psi(s_{0})-\theta(s_{0})V\|_{e}\leq\epsilon^{\frac{1}{p}}\sup_{s\in G}\|\psi(s)\|,\]
for all $s_{0}\in F_{0}$. Thus, (iii) is proved.

For $s_{0}\in G$, modulo compact operators, we have
\begin{eqnarray*}
\psi(s_{0})E&=&\frac{1}{|F|^{\frac{1}{q}}}\sum_{s\in F}\psi(s_{0})\psi(s)Q_{s}\\&=&
\frac{1}{|F|^{\frac{1}{q}}}\sum_{s\in F}\psi(s_{0}s)Q_{s}\\&=&
\frac{1}{|F|^{\frac{1}{q}}}\sum_{s_{0}^{-1}s\in F}\psi(s)Q_{s_{0}^{-1}s}\\&=&
\frac{1}{|F|^{\frac{1}{q}}}\sum_{s\in s_{0}F}\psi(s)Q_{s}\theta(s_{0}).
\end{eqnarray*}
Thus,
\begin{eqnarray*}
\|\psi(s_{0})E-E\theta(s_{0})\|_{e}&=&\left\|\frac{1}{|F|^{\frac{1}{q}}}\sum_{s\in s_{0}F\Delta F}\psi(s)Q_{s}\theta(s_{0})\right\|_{e}\\&=&
\left\|\frac{1}{|F|^{\frac{1}{q}}}\sum_{s\in s_{0}F\Delta F}\psi(s)Q_{s}\right\|_{e}.
\end{eqnarray*}
Since
\begin{eqnarray*}
\left\|\frac{1}{|F|^{\frac{1}{q}}}\sum_{s\in s_{0}F\Delta F}\psi(s)Q_{s}y\right\|&\leq&\sup_{s\in G}\|\psi(s)\|\frac{1}{|F|^{\frac{1}{q}}}
\sum_{s\in s_{0}F\Delta F}\|Q_{s}y\|\\&\leq&
\sup_{s\in G}\|\psi(s)\|\left(\frac{|s_{0}F\Delta F|}{|F|}\right)^{\frac{1}{q}}\left(\sum_{s\in s_{0}F\Delta F}\|Q_{s}y\|^{p}\right)^{\frac{1}{p}}\\&\leq&
\sup_{s\in G}\|\psi(s)\|\left(\frac{|s_{0}F\Delta F|}{|F|}\right)^{\frac{1}{q}}\|y\|,
\end{eqnarray*}
for all $y\in\mathcal{X}$, it follows that
\[\|E\theta(s_{0})-\psi(s_{0})E\|_{e}\leq\sup_{s\in G}\|\psi(s)\|\left(\frac{|s_{0}F\Delta F|}{|F|}\right)^{\frac{1}{q}}.\]
So by (\ref{varphithetaembedeq1}), we have
\[\|E\theta(s_{0})-\psi(s_{0})E\|_{e}\leq\sup_{s\in G}\|\psi(s)\|\epsilon^{\frac{1}{q}},\]
for all $s_{0}\in F_{0}$. Thus, (iv) is proved.
\end{proof}
Modulo some technicalities, the following result follows by combining Lemmas \ref{psietaembed3} and \ref{varphithetaembed}.
\begin{theorem}\label{main22}
Let $G$ be a countable amenable group. Let $\omega:G\to B(l^{p}(G))$ be the left regular representation, i.e., $\omega(s)e_{g}=e_{sg}$ for $s,g\in G$, where $(e_{g})_{g\in G}$ is the canonical basis for $l^{p}(G)$. Let $\mathcal{A}$ be the subalgebra of $B(l^{p}(G))$ generated by $\{\omega(s):s\in G\}$. Then $\mathrm{Ext}_{\sim,s}(\mathcal{A},K(l^{p}))$ is a group.
\end{theorem}
\begin{proof}
It suffices to show that for every isomorphic extension $\phi:\mathcal{A}\to B(l^{p})/K(l^{p})$, there exists an isomorphic extension $\phi^{(-1)}:\mathcal{A}\to B(l^{p})/K(l^{p})$ such that $\phi\oplus\phi^{(-1)}:\mathcal{A}\to B(l^{p}\oplus l^{p})/K(l^{p}\oplus l^{p})$ is a trivial isomorphic extension. Let $\psi:\mathcal{A}\to B(l^{p})$ be any map such that $\pi\circ\psi=\phi$ and $\|\psi(a)\|\leq\|\psi(a)\|_{e}+1=\|\phi(a)\|+1$ for all $a\in\mathcal{A}$. Then $\sup_{s\in G}\|\psi\circ\omega(s)\|\leq\|\phi\|+1<\infty$. By Lemma \ref{varphithetaembed}, we have $\psi\circ\omega\stackrel{\lambda}{\prec}(\theta)_{n\in\mathbb{N}}$ where $\lambda=(\|\phi\|+1)^{2}$.

By Lemma \ref{psietaembed3}, there exist a Banach space $\mathcal{Y}$ and a map $\zeta:G\to B(\mathcal{Y})$ such that $\oplus_{n=1}^{\infty}(\theta\oplus\theta)$ is similar to $(\psi\circ\omega)\oplus\zeta$ modulo compact operators. Note that $\theta$ is 1-similar to $\omega\oplus\omega\oplus\ldots$. Thus, there is a unital isometric homomorphism $\eta:\mathcal{A}\to B(l^{p})$ such that $\eta(\omega(s))=\theta(s)$ for all $s\in G$. Therefore, $\oplus_{n=1}^{\infty}(\eta\circ\omega\oplus\eta\circ\omega)$ is similar to $(\psi\circ\omega)\oplus\zeta$ modulo compact operators. Since $\mathcal{A}$ is generated by $\{\omega(s):s\in G\}$ and $\eta$ is a unital homomorphism, it follows that there is a map $\widetilde{\zeta}:\mathcal{A}\to B(\mathcal{Y})$ such that $\oplus_{n=1}^{\infty}(\eta\oplus\eta)$ is similar to $\psi\oplus\widetilde{\zeta}$ modulo compact operators. (In particular, $\pi\circ\widetilde{\zeta}$ is a unital homomorphism.) Thus, $\oplus_{n=1}^{\infty}(\eta\oplus\eta)$ is similar to $\psi\oplus(\widetilde{\zeta}\oplus\eta)$ modulo compact operators.

By Lemma \ref{prime}, the direct sum $\mathcal{Y}\oplus l^{p}$ is isomorphic to $l^{p}$. Let $S:\mathcal{Y}\oplus l^{p}\to l^{p}$ be any invertible operator. Define $\psi^{(-1)}:\mathcal{A}\to B(l^{p})$ by $\psi^{(-1)}(a)=S(\widetilde{\zeta}(a)\oplus\eta(a))S^{-1}$ for $a\in\mathcal{A}$. Then $\pi\circ\psi^{(-1)}:\mathcal{A}\to B(l^{p})/K(l^{p})$ is an isomorphic extension. Therefore, $(\pi\circ\psi)\oplus(\pi\circ\psi^{(-1)})$ is a trivial isomorphic extension. The result follows.
\end{proof}
\begin{corollary}
Suppose that either $\mathcal{A}=C(M)$, for some compact subset $M$ of a Euclidean space, or $\mathcal{A}$ is the subalgebra of $B(l^{p})$ generated by the range of the left regular representation of a countable amenable group $G$. Let $\phi:\mathcal{A}\to B(l^{p})/K(l^{p})$ be a unital homomorphism. Then there exist $L,R\in B(l^{p})$ and a unital homomorphism $\eta:\mathcal{A}\to B(l^{p})$ such that $\phi(a)=\pi(R\eta(a)L)$ and $\eta(a)LR-LR\eta(a)$ is compact for all $a\in\mathcal{A}$.
\end{corollary}
\begin{proof}
For each $a\in\mathcal{A}$, let $\psi(a)\in B(l^{p})$ be such that $\pi(\psi(a))=\phi(a)$. By Theorems \ref{main21} and \ref{main22}, there exist a unital homomorphism $\eta:\mathcal{A}\to B(l^{p})$, a map $\psi^{(-1)}:\mathcal{A}\to B(l^{p})/K(l^{p})$ and an invertible operator $S:l^{p}\oplus l^{p}\to l^{p}$ such that $\eta(a)-S(\psi(a)\oplus\psi^{(-1)}(a))S^{-1}$ modulo compact operators. Let $Q$ be the projection from $l^{p}\oplus l^{p}$ onto $l^{p}\oplus 0$. Take $L=S|_{l^{p}\oplus 0}$ and $R=QS^{-1}$. Then $\psi(a)-R\eta(a)L$ and $\eta(a)LR-LR\eta(a)$ are compact for all $a\in\mathcal{A}$. So the result follows.
\end{proof}
\section{Homotopy invariance of $\mathrm{Ext}_{\sim,s}(\mathcal{A},K(l^{p}))^{-1}$}
We write $(l^{p})^{(\infty)}=(l^{p}\oplus l^{p}\oplus\ldots)_{l^{p}}$ and for $T\in B(l^{p})$, we write $T^{(\infty)}=T\oplus T\oplus\ldots$.
\begin{lemma}\label{localize}
Let $A$ be a diagonal operator on $l^{p}$ with diagonal entries in $[0,1]$. Let $\epsilon>0$. Let $k\geq\frac{1}{\epsilon}$ be an integer. Then there exist operators $V:l^{p}\to\underbrace{l^{p}\oplus\ldots\oplus l^{p}}_{k+1}$ and $E:\underbrace{l^{p}\oplus\ldots\oplus l^{p}}_{k+1}\to l^{p}$ such that
\begin{enumerate}[(i)]
\item $\|V\|\leq2$ and $\|E\|\leq 2$;
\item $EV=I$;
\item $\|VA-(\frac{0}{k}I\oplus\ldots\oplus\frac{k}{k}I)V\|\leq\epsilon$ and $VT-(T\oplus\ldots\oplus T)V$ is compact;
\item $\|E(\frac{0}{k}I\oplus\ldots\oplus\frac{k}{k}I)-AE\|\leq\epsilon$ and $E(T\oplus\ldots\oplus T)-TE$ is compact; and
\item $V=(I\oplus f(A)\oplus\ldots\oplus f(A))V$ and $E=E(I\oplus f(A)\oplus\ldots\oplus f(A))$,
\end{enumerate}
for all $T\in B(l^{p})$ such that $TA-AT$ is compact and for all $f\in C[0,1]$ such that $f(t)=1$ for all $\frac{2}{5k}\leq t\leq 1$.
\end{lemma}
\begin{proof}
For $1\leq i\leq k+1$, let $U_{i}=(\frac{i-1.6}{k},\frac{i-0.4}{k})$. Note that $(U_{i})_{1\leq i\leq k+1}$ is an open cover of $[0,1]$. Let $(f_{i})_{1\leq i\leq k+1}$ be a partition of unity  on $[0,1]$ subordinate to $(U_{i})_{1\leq i\leq k+1}$. For each $1\leq i\leq k+1$, let $g_{i}:[0,1]\to[0,1]$ be a continuous function such that $\mathrm{supp}\,g_{i}\subset U_{i}$ and $g_{i}=1$ on $\mathrm{supp}\,f_{i}$.

Take
\[Vx=(f_{1}(A)x,\ldots,f_{k+1}(A)x),\]
for all $x\in l^{p}$, and
\[E(y_{1},\ldots,y_{k+1})=\sum_{i=1}^{k+1}g_{i}(A)y_{i},\]
for all $y_{1},\ldots,y_{k+1}\in l^{p}$.

Let $a_{1},a_{2},\ldots$ be the diagonal entries of $A$. Every $t\in[0,1]$ is contained in at most $2$ of the sets $U_{1},\ldots,U_{k+1}$. Note that
\[\mathrm{supp}\,g_{i}(A)=\{j\in\mathbb{N}:g_{i}(a_{j})\neq 0\}\subset\{j\in\mathbb{N}:a_{j}\in U_{i}\},\]
and $\mathrm{supp}\,f_{i}(A)\subset\mathrm{supp}\,g_{i}(A)$ for all $1\leq i\leq k+1$. Therefore, every $j\in\mathbb{N}$ is contained in at most $2$ of the sets $\mathrm{supp}\,f_{1}(A),\ldots,\mathrm{supp}\,f_{k+1}(A)$ and every $j\in\mathbb{N}$ is contained in at most $2$ of the sets $\mathrm{supp}\,g_{1}(A),\ldots,\mathrm{supp}\,g_{k+1}(A)$. By Lemma \ref{Embedding}, we have $\|V\|\leq 2$ and $\|E\|\leq 2$. Thus, (i) is proved.

Since $g_{i}f_{i}=f_{i}$ for all $1\leq i\leq k+1$, we have $\displaystyle EV=\sum_{i=1}^{k+1}g_{i}(A)f_{i}(A)=\sum_{i=1}^{k+1}f_{i}(A)=I$. Hence, (ii) is proved.

Since $|t-\frac{i-1}{k}|\leq\frac{1}{k}\leq\epsilon$ for all $t\in U_{i}$ and $\mathrm{supp}\,f_{i}\subset\mathrm{supp}\,g_{i}\subset U_{i}$,
\begin{equation}\label{localizeeq1}
\|(A-\frac{i-1}{k}I)g_{i}(A)\|\leq\epsilon\quad\text{and}\quad\|(A-\frac{i-1}{k}I)f_{i}(A)\|\leq\epsilon.
\end{equation}
Note that
\[VAx-(\frac{0}{k}I\oplus\ldots\oplus\frac{k}{k}I)Vx=((A-\frac{0}{k}I)f_{1}(A)x,\ldots,(A-\frac{k}{k}I)f_{k}(A)x),\]
for all $x\in l^{p}$, and
\[(E(\frac{0}{k}I\oplus\ldots\oplus\frac{0}{k}I)-AE)(y_{1},\ldots,y_{k+1})=\sum_{i=1}^{k+1}(A-\frac{i-1}{k}I)g_{i}(A)y_{i},\]
for all $y_{1},\ldots,y_{k+1}\in l^{p}$. By (\ref{localizeeq1}) and Lemma \ref{Embedding}, we obtain
\[\|VA-(\frac{0}{k}I\oplus\ldots\oplus\frac{k}{k}I)V\|\leq\epsilon\quad\text{and}
\quad\|E(\frac{0}{k}I\oplus\ldots\oplus\frac{k}{k}I)-AE\|\leq\epsilon.\]
If $T$ is an operator on $l^{p}$ such that $TA-AT$ is compact, then $Tf_{i}(A)-f_{i}(A)T$ and $Tg_{i}(A)-g_{i}(A)T$ are compact for all $1\leq i\leq k+1$. Thus, $VT-(T\oplus\ldots\oplus T)V$ and $E(T\oplus\ldots\oplus T)-TE$ are compact. So (iii) and (iv) are proved.

For $2\leq i\leq k+1$, we have $U_{i}\subset(\frac{0.4}{k},\infty)$ and so
\[\mathrm{supp}\,f_{i}(A)\subset\mathrm{supp}\,g_{i}(A)\subset\{j\in\mathbb{N}:a_{j}\in U_{i}\}\subset\{j\in\mathbb{N}:a_{j}>\frac{2}{5k}\}.\]
Thus, $f_{i}(A)=f(A)f_{i}(A)$ and $g_{i}(A)=g_{i}(A)f(A)$ for all $2\leq i\leq k+1$ and $f\in C[0,1]$ such that $f(t)=1$ for all $\frac{2}{5k}\leq t\leq 1$. So (v) is proved.
\end{proof}
\begin{lemma}\label{homotopyembed}
Let $A$ be a diagonal operator on $l^{p}$ with diagonal entries in $[0,1]$. Let $\mathcal{B}$ be the commutant of $\pi(A)$ in $B(l^{p})/K(l^{p})$. Let $\Lambda$ be a set. For $\alpha\in\Lambda$, let $h_{\alpha}:[0,1]\to\mathcal{B}$ be a $\pi(A)$-continuous function. For $t\in[0,1]$, let $\rho_{t}:\Lambda\to B(l^{p})$. Let $W_{1},W_{2}\in B(l^{p})$ be such that $W_{2}W_{1}=I$ and $\|W_{1}\|=\|W_{2}\|=1$. Suppose that
\begin{enumerate}[(a)]
\item $(\pi(W_{1})h_{\alpha}(t)-\pi(\rho_{t}(\alpha)W_{1}))\pi(A)=0$ and $\pi(A)(\pi(W_{2}\rho_{t}(\alpha))-h_{\alpha}(t)\pi(W_{2}))=0$ for all $t\in[0,1]$ and $\alpha\in\Lambda$; and
\item $\pi(W_{1})h_{\alpha}(0)=\pi(\rho_{0}(\alpha)W_{1})$ and $\pi(W_{2}\rho_{0}(\alpha))=h_{\alpha}(0)\pi(W_{2})$ for all $\alpha\in\Lambda$.
\end{enumerate}
Let $\xi:\Lambda\to B(l^{p})$ be a map such that $\pi\circ\xi(\alpha)=h_{\alpha}(\pi(A))$ for $\alpha\in\Lambda$. Then $\xi\stackrel{4}{\prec}(\oplus_{r=0}^{k}\rho_{\frac{r}{k}})_{k\in\mathbb{N}}$.
\end{lemma}
\begin{proof}
By Lemma \ref{localize}, there exist operators $V_{k}:l^{p}\to\underbrace{l^{p}\oplus\ldots\oplus l^{p}}_{k+1}$ and $E_{k}:\underbrace{l^{p}\oplus\ldots\oplus l^{p}}_{k+1}\to l^{p}$, for $k\in\mathbb{N}$, such that
\begin{enumerate}[(i)]
\item $\|V_{k}\|\leq2$ and $\|E_{k}\|\leq 2$ for all $k\in\mathbb{N}$;
\item $E_{k}V_{k}=I$ for all $k\in\mathbb{N}$;
\item $\lim_{k\to\infty}\|V_{k}A-(\frac{0}{k}I\oplus\ldots\oplus\frac{k}{k}I)V_{k}\|=0$ and $V_{k}T-(T\oplus\ldots\oplus T)V_{k}$ is compact for all $k\in\mathbb{N}$;
\item $\lim_{k\to\infty}\|E_{k}(\frac{0}{k}I\oplus\ldots\oplus\frac{k}{k}I)-AE_{k}\|=0$ and $E_{k}(T\oplus\ldots\oplus T)-TE_{k}$ is compact for all $k\in\mathbb{N}$; and
\item $V_{k}=(I\oplus f(A)\oplus\ldots\oplus f(A))V_{k}$ and $E_{k}=E_{k}(I\oplus f(A)\oplus\ldots\oplus f(A))$,
\end{enumerate}
for all $T\in B(l^{p})$ such that $TA-AT$ is compact and for all $f\in C[0,1]$ such that $f(t)=1$ for all $\frac{2}{5k}\leq t\leq 1$. For each $\alpha\in\Lambda$, since $h_{\alpha}$ is $\pi(A)$-continuous, there exists a continuous function $\widetilde{h}_{\alpha}:[0,1]\to\mathcal{B}$ such that $(\widetilde{h}_{\alpha}(t)-h_{\alpha}(t))\pi(A)=0$, for all $t\in[0,1]$, and $\widetilde{h}_{\alpha}(0)=h_{\alpha}(0)$. Since $\widetilde{h}_{\alpha}$ can approximated by $\mathcal{B}$-simple functions, by (iii) and (iv), we have
\begin{equation}\label{homotopyembedeq1}
\lim_{k\to\infty}\|\pi(V_{k})\widetilde{h}_{\alpha}(\pi(A))-(\widetilde{h}_{\alpha}(\frac{0}{k})\oplus\ldots\oplus \widetilde{h}_{\alpha}(\frac{k}{k}))\pi(V_{k})\|=0,
\end{equation}
and
\begin{equation}\label{homotopyembedeq2}
\lim_{k\to\infty}\|\pi(E_{k})(\widetilde{h}_{\alpha}(\frac{0}{k})\oplus\ldots\oplus\widetilde{h}_{\alpha}(\frac{k}{k}))-
\widetilde{h}_{\alpha}(\pi(A))\pi(E_{k})\|=0,
\end{equation}
for all $\alpha\in\Lambda$. For $k\in\mathbb{N}$, fix $f_{k}\in C[0,1]$ such that $f_{k}(0)=0$ and $f_{k}(t)=1$ for all $\frac{2}{5k}\leq t\leq 1$. Since $f_{k}(0)=0$, by (a), we have
\begin{equation}\label{homotopyembedeq3}
(\pi(W_{1})h_{\alpha}(t)-\pi(\rho_{t}(\alpha)W_{1}))\pi(f_{k}(A))=0,
\end{equation}
and
\begin{equation}\label{homotopyembedeq4}
\pi(f_{k}(A))(\pi(W_{2}\rho_{t}(\alpha))-h_{\alpha}(t)\pi(W_{2}))=0,
\end{equation}
for all $t\in[0,1]$ and $\alpha\in\Lambda$. Since $(\widetilde{h}_{\alpha}(t)-h_{\alpha}(t))\pi(A)=0$ and $f_{k}(0)=0$, we have
\begin{equation}\label{homotopyembedeq5}
(\widetilde{h}_{\alpha}(t)-h_{\alpha}(t))\pi(f_{k}(A))=\pi(f_{k}(A))(\widetilde{h}_{\alpha}(t)-h_{\alpha}(t))=0.
\end{equation}
 So
\begin{align*}
&\left[\pi(W_{1})\widetilde{h}_{\alpha}\left(\frac{0}{k}\right)\oplus\pi(W_{1})\widetilde{h}_{\alpha}\left(\frac{1}{k}\right)\oplus\ldots\oplus
\pi(W_{1})\widetilde{h}_{\alpha}\left(\frac{k}{k}\right)\right]\pi(V_{k})\\=&
\left[\pi(W_{1})\widetilde{h}_{\alpha}\left(\frac{0}{k}\right)\oplus\pi(W_{1})\widetilde{h}_{\alpha}\left(\frac{1}{k}\right)\pi(f_{k}(A))\oplus\ldots
\oplus\pi(W_{1})\widetilde{h}_{\alpha}\left(\frac{k}{k}\right)\pi(f_{k}(A))\right]\pi(V_{k})\text{ by (v)}\\=&
\left[\pi(W_{1})h_{\alpha}\left(\frac{0}{k}\right)\oplus\pi(W_{1})h_{\alpha}\left(\frac{1}{k}\right)\pi(f_{k}(A))\oplus\ldots
\oplus\pi(W_{1})h_{\alpha}\left(\frac{k}{k}\right)\pi(f_{k}(A))\right]\pi(V_{k})\\&\text{by }(\ref{homotopyembedeq5})\text{ and }\widetilde{h}_{\alpha}(0)=h_{\alpha}(0)\\=&
\left[\pi(\rho_{\frac{0}{k}}(\alpha)W_{1})\oplus\pi(\rho_{\frac{1}{k}}(\alpha)W_{1}f_{k}(A))\oplus\ldots\oplus
\pi(\rho_{\frac{k}{k}}(\alpha)W_{1}f_{k}(A))\right]\pi(V_{k})\text{ by }(\ref{homotopyembedeq3})\text{ and (b)}\\=&
\left[\pi(\rho_{\frac{0}{k}}(\alpha)W_{1})\oplus\pi(\rho_{\frac{1}{k}}(\alpha)W_{1})\oplus\ldots\oplus\pi(\rho_{\frac{k}{k}}(\alpha)W_{1})
\right]\pi(V_{k})\text{ by (v)}
\end{align*}
and
\begin{align*}
&\pi(E_{k})\left[\widetilde{h}_{\alpha}\left(\frac{0}{k}\right)\pi(W_{2})\oplus\widetilde{h}_{\alpha}\left(\frac{1}{k}\right)\pi(W_{2})\oplus\ldots
\oplus\widetilde{h}_{\alpha}\left(\frac{k}{k}\right)\pi(W_{2})\right]\\=&
\pi(E_{k})\left[\widetilde{h}_{\alpha}\left(\frac{0}{k}\right)\pi(W_{2})\oplus\pi(f_{k}(A))\widetilde{h}_{\alpha}\left(\frac{1}{k}\right)\pi(W_{2})
\oplus\ldots\oplus\pi(f_{k}(A))\widetilde{h}_{\alpha}\left(\frac{k}{k}\right)\pi(W_{2})\right]\text{ by (v)}\\=&
\pi(E_{k})\left[h_{\alpha}\left(\frac{0}{k}\right)\pi(W_{2})\oplus\pi(f_{k}(A))h_{\alpha}\left(\frac{1}{k}\right)\pi(W_{2})
\oplus\ldots\oplus\pi(f_{k}(A))h_{\alpha}\left(\frac{k}{k}\right)\pi(W_{2})\right]\\&\text{by }(\ref{homotopyembedeq5})\text{ and }\widetilde{h}_{\alpha}(0)=h_{\alpha}(0)\\=&
\pi(E_{k})\left[\pi(W_{2}\rho_{\frac{0}{k}}(\alpha))\oplus\pi(f_{k}(A)W_{2}\rho_{\frac{1}{k}}(\alpha))\oplus\ldots\oplus \pi(f_{k}(A)W_{2}\rho_{\frac{k}{k}}(\alpha))\right]\text{ by }(\ref{homotopyembedeq4})\text{ and (b)}\\=&
\pi(E_{k})\left[\pi(W_{2}\rho_{\frac{0}{k}}(\alpha))\oplus\pi(W_{2}\rho_{\frac{1}{k}}(\alpha))\oplus\ldots\oplus \pi(W_{2}\rho_{\frac{k}{k}}(\alpha))\right]\text{ by (v)}
\end{align*}
Therefore, by (\ref{homotopyembedeq1}) and (\ref{homotopyembedeq2}), we have
\begin{align*}
&\lim_{k\to\infty}\|(\pi(W_{1})\oplus\ldots\oplus\pi(W_{1}))\pi(V_{k})\widetilde{h}_{\alpha}(\pi(A))\\&-
(\pi(\rho_{\frac{0}{k}}(\alpha))\oplus\ldots\oplus\pi(\rho_{\frac{k}{k}}(\alpha)))(\pi(W_{1})\oplus\ldots\oplus\pi(W_{1}))\pi(V_{k})\|=0,
\end{align*}
and
\begin{align*}
&\lim_{k\to\infty}\|\pi(E_{k})(\pi(W_{2})\oplus\ldots\oplus\pi(W_{2}))(\pi(\rho_{\frac{0}{k}}(\alpha))\oplus\ldots\oplus \pi(\rho_{\frac{k}{k}}(\alpha)))\\&
-\widetilde{h}_{\alpha}(\pi(A))\pi(E_{k})(\pi(W_{2})\oplus\ldots\oplus\pi(W_{2}))\|=0.
\end{align*}
By definition, $h_{\alpha}(\pi(A))=\widetilde{h}_{\alpha}(\pi(A))$. By (i), we have $\|(\pi(W_{1})\oplus\ldots\oplus\pi(W_{1}))\pi(V_{k})\|\leq 2$ and $\|\pi(E_{k})(\pi(W_{1})\oplus\ldots\oplus\pi(W_{1}))\|\leq 2$ for all $k\in\mathbb{N}$. By (ii),
\[\pi(E_{k})(\pi(W_{2})\oplus\ldots\oplus\pi(W_{2}))(\pi(W_{1})\oplus\ldots\oplus\pi(W_{1}))\pi(V_{k})=\pi(E_{k}V_{k})=1,\]
for all $k\in\mathbb{N}$. Therefore, $\xi\stackrel{4}{\prec}(\oplus_{r=0}^{k}\rho_{\frac{r}{k}})_{k\in\mathbb{N}}$.
\end{proof}
\begin{lemma}\label{homotopyinv1}
Let $W_{1},W_{2}\in B(l^{p})$ be such that $W_{2}W_{1}=I$ and $\|W_{1}\|=\|W_{2}\|=1$. Let $\Lambda$ be a countable set. For $t\in[0,1]$, let $\psi_{t}:\Lambda\to B(l^{p})$ and $\rho_{t}:\Lambda\to B(l^{p})$. Suppose that
\begin{enumerate}[(a)]
\item $t\mapsto\pi\circ\psi_{t}(\alpha)$ is a continuous function from $[0,1]$ to $B(l^{p})/K(l^{p})$ for every $\alpha\in\Lambda$;
\item $W_{1}\psi_{t}(\alpha)-\rho_{t}(\alpha)W_{1}$ and $W_{2}\rho_{t}(\alpha)-\psi_{t}(\alpha)W_{2}$ are compact for all $t\in[0,1]$ and $\alpha\in\Lambda$; and
\item $W_{1}\psi_{0}(\alpha)=\rho_{0}(\alpha)W_{1}$ and $W_{2}\rho_{0}(\alpha)=\psi_{0}(\alpha)W_{2}$.
\end{enumerate}
Then there are rational numbers $u_{1},u_{2},\ldots$ such that $\psi_{1}\oplus(\oplus_{k\in\mathbb{N}}\rho_{u_{k}})$ is $(2500^{2}\cdot 4^{8})$-similar to $\oplus_{k\in\mathbb{N}}\rho_{u_{k}}$ modulo compact operators.
\end{lemma}
\begin{proof}
Since $t\mapsto\pi\circ\psi_{t}(\alpha)$ is continuous for all $\alpha\in\Lambda$, the set $\{\pi(\psi_{t}(\alpha)):t\in[0,1],\;\alpha\in\Lambda\}$ in $B(l^{p})/K(l^{p})$ is separable. By Lemma \ref{qcau}, there are finite rank diagonal operators $A_{1},A_{2},\ldots$ on $l^{p}$ with diagonal entries in $[0,1]$ such that
\begin{enumerate}[(i)]
\item $\|A_{n}\psi_{t}(\alpha)-\psi_{t}(\alpha)A_{n}\|\to 0$, as $n\to\infty$, for all $t\in[0,1]$ and $\alpha\in\Lambda$; and
\item $\mathrm{supp}(I-A_{1})\supset\mathrm{supp}(I-A_{2})\supset$ and $\cap_{n=1}^{\infty}\mathrm{supp}(I-A_{n})=\emptyset$.
\end{enumerate}
Replacing the sequence $A_{1},A_{2},\ldots$ by $A_{1},\frac{1}{2}A_{1}+\frac{1}{2}A_{2},A_{2},\frac{2}{3}A_{2}+\frac{1}{3}A_{3},\frac{1}{3}A_{2}+\frac{2}{3}A_{3},A_{3},\frac{3}{4}A_{3}+
\frac{1}{4}A_{4},\ldots$, we may assume that $\|A_{n}-A_{n+1}\|\to 0$ as $n\to\infty$.

Take $A=(I-A_{1})\oplus(I-A_{2})\oplus\ldots$. By (i), we have that $A(\psi_{t}(\alpha)^{(\infty)})-(\psi_{t}(\alpha)^{(\infty)})A$ is compact for all $t\in[0,1]$ and $\alpha\in\Lambda$. Let $\mathcal{B}$ be the commutant of $\pi(A)$ in $B((l^{p})^{(\infty)})/K((l^{p})^{(\infty)})$. For $\alpha\in\Lambda$, define $h_{\alpha}:[0,1]\to\mathcal{B}$ by $h_{\alpha}(t)=\pi(\psi_{t}(\alpha)^{(\infty)})$ for $t\in[0,1]$. We claim that $h_{\alpha}$ is $\pi(A)$-continuous. For $n\in\mathbb{N}$, let $P_{n}$ be the projection from $l^{p}$ onto $l^{p}(\mathrm{supp}(I-A_{n}))$. Then $P_{n}\to 0$ in SOT as $n\to\infty$. Note that $P=P_{1}\oplus P_{2}\oplus\ldots$ is the projection from $l^{p}\oplus l^{p}\oplus\ldots$ onto $l^{p}(\mathrm{supp}\,A)$. By (a), the map $t\mapsto\pi(\psi_{t}(\alpha)^{(\infty)}P)$ from $[0,1]$ to $B(l^{p})/K(l^{p})$ is continuous for every $\alpha\in\Lambda$. Thus,
\[\widetilde{h}_{\alpha}(t)=\pi(\psi_{0}(\alpha)^{(\infty)}+(\psi_{t}(\alpha)^{(\infty)}-\psi_{0}(\alpha)^{(\infty)})P)\]
defines a continuous function from $[0,1]$ to $B((l^{p})^{(\infty)})/K((l^{p})^{(\infty)})$. As explained above, $\pi(A)$ commutes with $\pi(\psi_{t}(\alpha)^{(\infty)})$ for all $t\in[0,1]$. Since $P$ is the projection onto $l^{p}(\mathrm{supp}\,A)$, we have $AP=A=PA$. So $\pi(A)$ commutes with $\widetilde{h}_{\alpha}(t)$ and
\begin{eqnarray*}
\widetilde{h}_{\alpha}(t)\pi(A)&=&\pi(\psi_{0}(\alpha)^{(\infty)}A+(\psi_{t}(\alpha)^{(\infty)}-\psi_{0}(\alpha)^{(\infty)})PA)\\&=&
\pi(\psi_{0}(\alpha)^{(\infty)}A+(\psi_{t}(\alpha)^{(\infty)}-\psi_{0}(\alpha)^{(\infty)})A)
\\&=&\pi(\psi_{t}(\alpha)^{(\infty)})\pi(A)=h_{\alpha}(t)\pi(A).
\end{eqnarray*}
Also $\widetilde{h}_{\alpha}(0)=\pi(\psi_{0}(\alpha)^{(\infty)})=h_{\alpha}(0)$. Therefore, $h_{\alpha}:[0,1]\to\mathcal{B}$ is a $\pi(A)$-continuous function.

Since $I-A_{n}\to 0$ in SOT, by (b), the operators
$(W_{1}^{(\infty)}\psi_{t}(\alpha)^{(\infty)}-\rho_{t}(\alpha)^{(\infty)}W_{1}^{(\infty)})A$ and $A(W_{2}^{(\infty)}\rho_{t}(\alpha)^{(\infty)}-\psi_{t}(\alpha)^{(\infty)}W_{2}^{(\infty)})$ are compact. So
\[(\pi(W_{1}^{(\infty)})h_{\alpha}(t)-\pi(\rho_{t}(\alpha)^{(\infty)}W_{1}^{(\infty)}))\pi(A)=0,\]
and
\[\pi(A)(\pi(W_{2}^{(\infty)}\rho_{t}(\alpha)^{(\infty)})-h_{\alpha}(t)\pi(W_{2}^{(\infty)}))=0.\]
By (c), we have
\[\pi(W_{1}^{(\infty)})h_{\alpha}(0)=\pi(W_{1}^{(\infty)}\psi_{0}(\alpha)^{(\infty)})=\pi(\rho_{0}(\alpha)^{(\infty)}W_{1}^{(\infty)}),\]
and
\[\pi(W_{2}^{(\infty)}\rho_{0}(\alpha)^{(\infty)})=\pi(\psi_{0}(\alpha)^{(\infty)}W_{2}^{(\infty)})=h_{\alpha}(0)\pi(W_{2}^{(\infty)}).\]
Let $\xi:\Lambda\to B((l^{p})^{(\infty)})$ be a map such that $\pi\circ\xi(\alpha)=h_{\alpha}(\pi(A))$ for $\alpha\in\Lambda$. By Lemma \ref{homotopyembed}, we have $\xi\stackrel{4}{\prec}(\oplus_{r=0}^{k}\rho_{\frac{r}{k}})_{k\in\mathbb{N}}$. By Lemma \ref{psietaembed3}, there are rational numbers $u_{1},u_{2},\ldots$ in $[0,1]$, a Banach space $\mathcal{Y}$ and a map $\zeta:\Lambda\to B(\mathcal{Y})$ such that $\oplus_{k\in\mathbb{N}}\rho_{u_{k}}$ is $(2500\cdot 4^{4})$-similar to $\xi\oplus\zeta$ modulo compact operators.

Since $\|A_{n}-A_{n+1}\|\to 0$ as $n\to\infty$, the operator $A-I\oplus A$ is compact so $\pi(A)=\pi(I)\oplus\pi(A)$ commute. Since $h_{\alpha}(t)=\pi(\psi_{t}(\alpha))\oplus h_{\alpha}(t)$ for all $t\in[0,1]$, by Lemma \ref{calculusdirectsum2}, we have $h_{\alpha}(\pi(A))=h_{\alpha}(\pi(I)\oplus\pi(A))=\pi(\psi_{1}(\alpha))\oplus h_{\alpha}(\pi(A))$. So $\xi$ and $\psi_{1}\oplus\xi$ coincide modulo compact operators. So $\xi\oplus\zeta$ and $\psi_{1}\oplus\xi\oplus\zeta$ coincide modulo compact operators. Therefore, from the conclusion of the previous paragraph, $\oplus_{k\in\mathbb{N}}\rho_{u_{k}}$ is $(2500^{2}\cdot 4^{8})$-similar to $\psi_{1}\oplus(\oplus_{k\in\mathbb{N}}\rho_{u_{k}})$ modulo compact operators.
\end{proof}
\begin{theorem}\label{main3}
Let $\Lambda$ be a countable set. For $t\in[0,1]$, let $\psi_{t}:\Lambda\to B(l^{p})$, $\zeta_{t}:\Lambda\to B(l^{p})$ and $\rho_{t}:\Lambda\to B(l^{p}\oplus l^{p})$. Suppose that
\begin{enumerate}[(a)]
\item $t\mapsto\pi\circ\psi_{t}(\alpha)$ is a continuous function from $[0,1]$ to $B(l^{p})/K(l^{p})$ for every $\alpha\in\Lambda$; and
\item $\rho_{t}(\alpha)-(\psi_{t}(\alpha)\oplus\zeta_{t}(\alpha))$ is compact for all $t\in[0,1]$ and $\alpha\in\Lambda$.
\end{enumerate}
Then there are rational numbers $u_{1},u_{2},\ldots$ such that $\psi_{1}\oplus(\oplus_{k\in\mathbb{N}}\rho_{u_{k}})$ is $(2500^{2}\cdot 4^{8})$-similar to $\psi_{0}\oplus(\oplus_{k\in\mathbb{N}}\rho_{u_{k}})$ modulo compact operators.
\end{theorem}
\begin{proof}
Define $W\in B(l^{p}\oplus l^{p}\oplus l^{p}\oplus l^{p})$ by
\[W(x_{1},x_{2},x_{3},x_{4})=(x_{1},x_{4},x_{3},x_{2}),\]
for $x_{1},x_{2},x_{3},x_{4}\in l^{p}$. For $t\in[0,1]$ and $\alpha\in\Lambda$, let $\kappa_{0}(\alpha)=\rho_{0}(\alpha)-(\psi_{0}(\alpha)\oplus\zeta_{0}(\alpha))\in K(l^{p}\oplus l^{p})$,
\[\widehat{\psi}_{t}(\alpha)=\kappa_{0}(\alpha)+(\psi_{t}(\alpha)\oplus\zeta_{0}(\alpha)),\]
and
\[\widehat{\rho}_{t}(\alpha)=\begin{cases}W(\rho_{t}(\alpha)\oplus\rho_{0}(\alpha))W^{-1},&t>0\\\rho_{0}(\alpha)\oplus\rho_{0}(\alpha),&t=0
\end{cases}.\]
Define $W_{1}:l^{p}\oplus l^{p}\to l^{p}\oplus l^{p}\oplus l^{p}\oplus l^{p}$ and $W_{2}:l^{p}\oplus l^{p}\oplus l^{p}\oplus l^{p}\to l^{p}\oplus l^{p}$ by
\[W_{1}(x_{1},x_{2})=(x_{1},x_{2},0,0)\quad\text{and}\quad W_{2}(x_{1},x_{2},x_{3},x_{4})=(x_{1},x_{2}),\]
for $x_{1},x_{2},x_{3},x_{4}\in l^{p}$. Since $\widehat{\rho}_{t}(\alpha)-(\psi_{t}(\alpha)\oplus\zeta_{0}(\alpha)\oplus\psi_{0}(\alpha)\oplus\zeta_{t}(\alpha))$ is compact, by (b), we have that $W_{1}\widehat{\psi}_{t}(\alpha)-\widehat{\rho}_{t}(\alpha)W_{1}$ and $W_{2}\widehat{\rho}_{t}(\alpha)-\widehat{\psi}_{t}(\alpha)W_{2}$ are compact for all $t\in[0,1]$ and $\alpha\in\Lambda$. Also since $\widehat{\psi}_{0}(\alpha)=\rho_{0}(\alpha)\in B(l^{p}\oplus l^{p})$, we have $W_{1}\widehat{\psi}_{0}(\alpha)=\widehat{\rho}_{0}(\alpha)W_{1}$ and $W_{2}\widehat{\rho}_{0}(\alpha)=\widehat{\psi}_{0}(\alpha)W_{2}$ for all $\alpha\in\Lambda$. Note that $t\mapsto\pi\circ\widehat{\psi}_{t}(\alpha)$ is continuous for every $\alpha\in\Lambda$. By Lemma \ref{homotopyinv1}, it follows that there are rational numbers $u_{1},u_{2},\ldots$ in $[0,1]$ such that $\widehat{\psi}_{1}\oplus(\oplus_{k\in\mathbb{N}}\widehat{\rho}_{u_{k}})$ is $(2500^{2}\cdot 4^{8})$-similar to $\oplus_{k\in\mathbb{N}}\widehat{\rho}_{u_{k}}$ modulo compact operators. Thus, $\psi_{1}\oplus\zeta_{0}\oplus(\oplus_{k\in\mathbb{N}}\rho_{u_{k}})\oplus(\oplus_{k\in\mathbb{N}}\rho_{0})$ is $(2500^{2}\cdot 4^{8})$-similar to $(\oplus_{k\in\mathbb{N}}\rho_{u_{k}})\oplus(\oplus_{k\in\mathbb{N}}\rho_{0})$ modulo compact operators. Since $\psi_{0}\oplus\zeta_{0}$ and $\rho_{0}$ coincide modulo compact operators, it follows that $\psi_{1}\oplus(\oplus_{k\in\mathbb{N}}\rho_{u_{k}})\oplus(\oplus_{k\in\mathbb{N}}\rho_{0})$ is $(2500^{2}\cdot 4^{8})$-similar to $\psi_{0}\oplus(\oplus_{k\in\mathbb{N}}\rho_{u_{k}})\oplus(\oplus_{k\in\mathbb{N}}\rho_{0})$ modulo compact operators.
\end{proof}
\begin{corollary}
Let $U$ and $B$ be the unilateral and bilateral shifts on $l^{p}$, respectively. Then there are rational numbers $u_{1},u_{2},\ldots$ in $[0,1]$ such that $U\oplus(\oplus_{k\in\mathbb{N}}u_{k}B)$ is similar to a compact perturbation of $\oplus_{k\in\mathbb{N}}u_{k}B$.
\end{corollary}
\begin{proof}
Let $U^{-1}$ be the backward shift on $l^{p}$. Then $U\oplus U^{-1}$ is similar to a rank one perturbation of $B$. In Theorem \ref{main3}, take $\Lambda$ to be a singleton, take $\psi_{t}(\alpha)=tU$, take $\zeta_{t}(\alpha)=tU^{-1}$ and take $\rho_{t}(\alpha)=tB$. The result follows.
\end{proof}
With the notation at the beginning of Section 8, for a unital Banach algebra $\mathcal{A}$ that is isomorphic to a subalgebra of $l^{p}$, if $\phi:\mathcal{A}\to B(l^{p})/K(l^{p})$ is an isomorphic extension, we denote the image of $[\phi]$ in $\mathrm{Ext}_{\sim,s}(\mathcal{A},K(l^{p}))$ by $[\![\phi]\!]$.

Let $\mathcal{A}_{1},A_{2}$ be unital Banach algebras that are isomorphic to subalgebras of $B(l^{p})$. If $\tau:\mathcal{A}_{1}\to\mathcal{A}_{2}$ is a unital homomorphism, $\phi:\mathcal{A}_{2}\to B(l^{p})/K(l^{p})$ is an isomorphic extension and $\theta:\mathcal{A}_{1}\to B(l^{p})/K(l^{p})$ is a trivial isomorphic extension, then the map $(\phi\circ\tau)\oplus\theta:\mathcal{A}_{1}\to B(l^{p})/K(l^{p})$, defined as $a\mapsto\phi(\tau(a))\oplus\theta(a)$, is an isomorphic extension. The map $[\![\phi]\!]\to[\![(\phi\circ\tau)\oplus\theta]\!]$ is a well defined homomorphism from the semigroup $\mathrm{Ext}_{\sim,s}(\mathcal{A}_{2},K(l^{p}))$ into the semigroup $\mathrm{Ext}_{\sim,s}(\mathcal{A}_{1},K(l^{p}))$. We denote this homomorphism by $\tau_{*}$
\begin{corollary}\label{homotopyinv2}
Let $\mathcal{A}_{1},\mathcal{A}_{2}$ be separable unital Banach algebra that is isomorphic to subalgebras of $B(l^{p})$. Let $\phi:\mathcal{A}_{2}\to B(l^{p})/K(l^{p})$ be an isomorphic extension such that $[\![\phi]\!]$ is invertible in $\mathrm{Ext}_{\sim,s}(\mathcal{A}_{2},K(l^{p}))$. For $t\in[0,1]$, let $\tau_{t}:\mathcal{A}_{1}\to\mathcal{A}_{2}$ be a unital homomorphism. Suppose that $t\mapsto\tau_{t}(a)$ is a continuous function from $[0,1]$ to $\mathcal{A}_{2}$ for every $a\in\mathcal{A}_{1}$. Then $(\tau_{1})_{*}[\![\phi]\!]=(\tau_{0})_{*}[\![\phi]\!]$.
\end{corollary}
\begin{proof}
Let $\Lambda$ be a countable dense subset of $\mathcal{A}_{1}$. For $t\in[0,1]$, let $\psi_{t}:\Lambda\to B(l^{p})$ be any map such that $\pi\circ\psi_{t}(a)=\phi\circ\tau_{t}(a)$ for all $a\in\Lambda$. Since $[\![\phi]\!]$ is invertible in $\mathrm{Ext}_{\sim,s}(\mathcal{A}_{2},K(l^{p}))$, there exist an isomorphic extension $\phi^{(-1)}:\mathcal{A}_{2}\to B(l^{p})/K(l^{p})$ and a unital homomorphism $\rho:\mathcal{A}_{2}\to B(l^{p}\oplus l^{p})/K(l^{p}\oplus l^{p})$ such that $\phi(a)\oplus\phi^{(-1)}(a)=\pi\circ\rho(a)$ for $a\in\mathcal{A}_{2}$. By Theorem \ref{main3}, there are rational numbers $u_{1},u_{2},\ldots$ in $[0,1]$ such that $\psi_{1}\oplus(\oplus_{k\in\mathbb{N}}\rho\circ\tau_{u_{k}}|_{\Lambda})$ is similar to $\psi_{0}\oplus(\oplus_{k\in\mathbb{N}}\rho\circ\tau_{u_{k}}|_{\Lambda})$ modulo compact operators. Since $\Lambda$ is dense in $\mathcal{A}_{1}$, it follows that $(\phi\circ\tau_{1})\oplus(\oplus_{k\in\mathbb{N}}\rho\circ\tau_{u_{k}})$ is similar to $(\phi\circ\tau_{0})\oplus(\oplus_{k\in\mathbb{N}}\rho\circ\tau_{u_{k}})$ modulo compact operators. Since $\oplus_{k\in\mathbb{N}}\rho\circ\tau_{u_{k}}$ is a unital homomorphism from $\mathcal{A}_{1}$ to $B((l^{p})^{(\infty)})$, we conclude that $(\tau_{1})_{*}[\![\phi]\!]=(\tau_{0})_{*}[\![\phi]\!]$.
\end{proof}
\begin{corollary}\label{homotopyinv3}
Let $M$ be a contractible subset of a Euclidean space. Let $\phi:C(M)\to B(l^{p})/K(l^{p})$ be an isomorphic extension. Then there is a trivial isomorphic extension $\theta:C(M)\to B(l^{p})/K(l^{p})$ such that $[\theta]=[\phi]+[\theta]$.
\end{corollary}
\begin{proof}
By Theorem \ref{main21}, the semigroup $\mathrm{Ext}_{\sim,s}(C(M),K(l^{p}))$ is a group. Since $M$ is contractible, by Corollary \ref{homotopyinv2}, this group is trivial. So the result follows.
\end{proof}
\noindent{\bf Acknowledgements:} The author is grateful to William B. Johnson for useful discussions.

\end{document}